\pgfplotsset{compat=1.8}
\tikzset{
	every pin/.style={fill=yellow!50!white,rectangle,rounded corners=3pt},
	small dot/.style={fill=black,circle,inner sep=1pt}
}
\begin{document}

\newcommand{\A}{{\mathcal A}}
\newcommand{\Alb}{{\text{Alb}}}
\newcommand{\alg}{\text{alg}}
\newcommand{\bk}{{\overline{k}}}
\newcommand{\C}{{\mathbb C}}
\newcommand{\cris}{\text{cris}}
\newcommand{\CRIS}{\text{CRIS}}
\newcommand{\dom}{\operatorname{\text{Dom}}}
\newcommand{\dR}{\text{dR}}
\newcommand{\Ext}{{\rm Ext}\,}
\newcommand{\F}{{\mathbb F}}
\newcommand{\Fil}{\operatorname{\text{Fil}}}
\newcommand{\gr}{\operatorname{\text{gr}}}
\newcommand{\Hom}{\operatorname{\text{Hom}}}
\newcommand{\image}{\operatorname{\text{image}}}
\newcommand{\length}{\operatorname{\text{length}}}
\newcommand{\M}{{\mathcal M}}
\newcommand{\mult}{{\operatorname{\text{mult}}}}
\newcommand{\N}{\operatorname{\mathbb{N}}}
\renewcommand{\O}{{\mathcal O}}
\newcommand{\ord}{{\operatorname{\text{ord}}}}
\renewcommand{\P}{{\mathbb P}}
\newcommand{\PH}{{\mathcal{PH}}}
\newcommand{\Pic}{{\rm Pic\,}}
\newcommand{\PN}{{\mathcal{PN}}}
\newcommand{\PNH}{{\mathcal{PNH}}}
\newcommand{\Q}{{\mathbb Q}}
\newcommand{\R}{{\mathbb R}}
\newcommand{\rank}{{\rm rk}\,}
\newcommand{\sbull}{{\scriptstyle{\bullet}}}
\newcommand{\Spec}{{\text{Spec}}}
\newcommand{\tors}{\text{tors}}
\newcommand{\Z}{{\mathbb Z}}
\newcommand{\bbH}{{\mathbb H}}
\newcommand{\sH}{{\mathscr H}}
\newcommand{\hod}{{\rm Hod}}
\newcommand{\new}{{\rm New}}
\let\liminv=\projlim
\let\isom=\simeq
\let\rk=\rank
\let\tensor=\otimes
\let\into=\hookrightarrow
\let\mydot=\sbull

\newcommand{\be}{\begin{equation}}
\newcommand{\ee}{\end{equation}}
\newcommand{\bes}{\begin{equation*}}
\newcommand{\ees}{\end{equation*}}
\newcommand{\bea}{\begin{eqnarray}}
\newcommand{\eea}{\end{eqnarray}}
\newcommand{\beas}{\begin{eqnarray}}
\newcommand{\eeas}{\end{eqnarray}}
\newcommand{\ben}{\begin{note}}
\newcommand{\een}{\end{note}}
\newcommand{\bexl}{\vskip0.1em\noindent\hrulefill\vskip1em\begin{ExerciseList}}
\newcommand{\eexl}{\end{ExerciseList}\hrulefill}
\newcommand{\exr}{\Exercise}

\newcommand{\bthm}{\begin{theorem}}
\newcommand{\ethm}{\end{theorem}}
\newcommand{\bpro}{\begin{prop}}
\newcommand{\epro}{\end{prop}}
\newcommand{\bcor}{\begin{corollary}}
\newcommand{\ecor}{\end{corollary}}
\newcommand{\bcon}{\begin{conjecture}}
\newcommand{\econ}{\end{conjecture}}
\newcommand{\bp}{\begin{proof}}
\newcommand{\ep}{\end{proof}}
\newcommand{\blem}{\begin{lemma}}
\newcommand{\elem}{\end{lemma}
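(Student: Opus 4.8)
For the asserted identification of the torsion subgroup of $\operatorname{CH}_0(X)^0$ with $\Alb_X(\bk)_{\tors}$ (a Roitman-type statement), the plan is to separate the prime-to-$p$ and the $p$-primary parts and to reduce each to the one-dimensional case, where it is Abel's theorem. After a projective embedding, a Bertini argument produces a smooth ample hypersurface $Y\into X$ for which $\operatorname{CH}_0(Y)^0\to\operatorname{CH}_0(X)^0$ and $\Alb_Y\to\Alb_X$ are surjective (the latter with connected kernel once $\dim X\ge 2$); a decomposition-of-the-diagonal / moving-lemma input then shows that injectivity of the Albanese map for $X$ can be tested after restriction to such a $Y$. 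Iterating down to a smooth projective curve $C$, the assertion becomes the classical identification of $\operatorname{Jac}(C)(\bk)_{\tors}$ with the torsion of the degree-zero divisor class group, so it remains only to propagate injectivity back up the tower.

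For the prime-to-$p$ part I would run Bloch's argument with $\ell$-adic \'etale cohomology: the torsion of $\operatorname{CH}_0(X)^0$ away from $p$ injects, via the cycle class, into $H^{2d-1}(X,\Z_\ell(d))$, and hard Lefschetz together with the coniveau filtration cuts out of the latter precisely the image of $H^1$, that is, the Tate module of $\Alb_X$. Compatibility of the cycle class with the Albanese morphism then shows that the composite $\operatorname{CH}_0(X)^0_{\tors}\to\Alb_X(\bk)_{\tors}$ is an isomorphism on prime-to-$p$ torsion.

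The crux is the $p$-primary part, and here the de Rham--Witt and crystalline machinery enters. I would replace the constant sheaves $\Z/p^n$ by the logarithmic Hodge--Witt sheaves $W_n\Omega^{\sbull}_{X,\log}$ and use the Bloch--Gros--Suwa Gersten resolution to identify $\operatorname{CH}_0(X)/p^n\isom H^d_{\mathrm{Zar}}(X,W_n\Omega^d_{X,\log})$; a Poincar\'e-type duality for logarithmic Hodge--Witt sheaves then pairs this with $H^1$ of the complementary sheaf, which the slope spectral sequence for $H^{\sbull}_{\cris}(X/W)$ identifies with the $p$-primary part of $\Alb_X(\bk)$, and once more one checks compatibility with the Albanese map. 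Injectivity on $p$-torsion follows because the higher-coniveau contributions carry no $p$-torsion, a slope argument. The main obstacle I expect is exactly this step: the logarithmic de Rham--Witt sheaves are far from locally constant, and their finiteness and continuity properties are delicate; one must pin down the correct integral normalization in the duality; and one must first establish that $\operatorname{CH}_0(X)^0$ is an extension of a torsion group of finite corank by a uniquely divisible group, so that the ``torsion versus cotorsion'' comparison even makes sense. Granting the duality and these finiteness facts, the passage from curves back up to $X$ and the comparison with $\Alb_X$ follow formally by the five lemma applied along the coniveau filtration.
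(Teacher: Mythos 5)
Your proposal does not address any statement that actually occurs in this paper. The ``statement'' you were given is only a fragment of the preamble (the macro definitions \verb|\blem|/\verb|\elem| that wrap \verb|\begin{lemma}|\dots\verb|\end{lemma}|), and the argument you have written is a proof sketch of Roitman's theorem --- the identification of the torsion subgroup of $\mathrm{CH}_0(X)^0$ with $\Alb_X(\bk)_{\tors}$, including its $p$-primary part via logarithmic Hodge--Witt sheaves and the Gersten resolution of Bloch--Gros--Suwa. No such statement is asserted, used, or proved anywhere in the paper. The lemmas that the paper does prove are of an entirely different and much more elementary nature: that $h^{1,1}_W$ increases by one under a blowup at a point (so negativity of $h^{1,1}_W$ descends to a minimal model); Crew's formula $\length(M/VM)=\sum_\lambda(1-\lambda)m_\lambda$ for an $F$-crystal with slopes in $[0,1)$; the vanishing $H^i(X,W(\O_X))=0$ when $H^i(X,\O_X)=0$ (proved by induction on the length of Witt vectors); the consequent absence of exotic and $V$-torsion when $p_g=0$; and a Riemann--Roch computation of $\chi(\Omega^1_X)$ for threefolds. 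Your text cannot be matched against any of these, so there is no basis on which to certify it as a correct alternative to the paper's argument.

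Beyond the mismatch of target, I would caution that even as a standalone sketch of Roitman's theorem in characteristic $p$ your outline leaves the hardest points as assumptions: the claim that injectivity of the Albanese map on torsion ``can be tested after restriction to'' an ample hypersurface needs a genuine argument (the Lefschetz-type surjectivity of $\mathrm{CH}_0(Y)^0\to\mathrm{CH}_0(X)^0$ does not by itself control injectivity on torsion), and the $p$-primary duality for logarithmic Hodge--Witt sheaves together with the requisite finiteness of $\mathrm{CH}_0(X)^0_{\tors}$ is precisely the substance of the Gros--Suwa and Milne work, not a formality one can ``grant.'' If your intended target was one of the paper's actual lemmas, you should restart from the correct statement; none of them requires any of this machinery.
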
}
\newcommand{\bn}{\begin{note}}
\newcommand{\en}{\end{note}}
\newcommand{\benum}{\begin{enumerate}}
\newcommand{\eenum}{\end{enumerate}}
\newcommand{\bed}{\begin{defn}}
\newcommand{\eed}{\end{defn}}
\newcommand{\brem}{\begin{remark}}
\newcommand{\erem}{\end{remark}}

\newcommand{\btik}{\begin{tikzpicture}\begin{axis}[scale=0.5,axis y line=center, axis x line=middle]}
\newcommand{\etik}{\end{axis}\end{tikzpicture}}

\let\into=\hookrightarrow
\let\mapsto=\longmapsto
\let\cong=\equiv

\newcommand{\upperRomannumeral}[1]{\uppercase\expandafter{\romannumeral#1}}
\newcommand{\citeega}[3]{(see [{\bf EGA}\,\upperRomannumeral{#1}${}_{#2}$, #3])\nocite{ega#1-#2}}

\theoremstyle{definition}
\newtheorem{theorem}[equation]{Theorem}      
\newtheorem{lemma}[equation]{Lemma}          %
\newtheorem{corollary}[equation]{Corollary}  
\newtheorem{proposition}[equation]{Proposition}
\newtheorem{conj}[equation]{Conjecture}
\newtheorem{example}[equation]{Example}
\newtheorem{question}[equation]{Question}
\newtheorem{defn}[equation]{Definition}
\newtheorem{remark}[equation]{Remark}

\numberwithin{equation}{subsection}
\newtheorem*{theorem*}{Theorem}

\title{Crystalline aspects of geography of low dimensional varieties I:
        Numerology}
\author{Kirti Joshi}
\address{Department of Mathematics, University of Arizona, Tucson, AZ, 85721}
\keywords{crystalline cohomology, de Rham-Witt complex, domino numbers, Hodge-Witt numbers, chern number inequalities, Bogomolov-Miyaoka-Yau inequality, Calab-Yau varieties, quintic threefolds, hypersurfaces, Frobenius split varieties, algebraic surfaces, projective surfaces}

\begin{abstract}
This is a modest attempt to study, in a systematic manner, the
structure of low dimensional varieties in positive characteristics using $p$-adic invariants.
The main objects of interest in this paper are surfaces and
threefolds.  
There are many results we prove in this paper and not all can be listed in this abstract. Here are some of the results we prove inequalities related to the Bogomolov-Miyaoka-Yau inequality: in Corollary~\ref{surfaces-with-non-negative-h11-cor} that $c_1^2\leq \max(5c_2+6b_1,6c_2)$ holds for a large class of surfaces of general type. In Theorem~\ref{strange-thm} we prove that for a smooth, projective, Hodge-Witt, minimal surface of general type  (with additional assumptions such as slopes of Frobenius on $H^2_{cris}(X)$ are $\geq\frac{1}{2}$) that $$c_1^2\leq 5c_2.$$ We do not assume any lifting, and novelty of our method lies in our use of slopes of Frobenius and the slope spectral sequence.  We also construct new birational invariants of surfaces.  Applying our methods to threefolds, we characterize Calabi-Yau threefolds with $b_3=0$. We show that for any Calabi-Yau threefold $b_2\geq \frac{1}{2}c_3-1$ and that threefolds which lie on the line $b_2=\frac{1}{2}c_3-1$ are precisely those with $b_3=0$ and threefolds with $b_2=\frac{1}{2}c_3$ are characterized as Hodge-Witt rigid (included are rigid Calabi-Yau threefolds which have torsion-free crystalline cohomology and whose Hodge-de Rham spectral sequence degenerates).
\end{abstract}

 \maketitle
\epigraph{Torsten Ekedahl and Michel Raynaud}{In memoriam}
\markboth{Kirti Joshi}{Geography of surfaces I}
\tableofcontents

\section{Introduction}\label{introduction}
This is a modest attempt to study, in a systematic manner, the
structure of low dimensional varieties using $p$-adic invariants.
The main objects of interest in this paper are surfaces and
threefolds. It is known that there are many examples of unexpected behavior, including the failure of the famous Bogomolov-Miyaoka-Yau inequality, 
in surfaces and even exotic behavior in threefolds. Our focus  here is on obtaining general positive results.

We will take as thoroughly understood, the theory of algebraic surfaces over complex numbers. In
Section~\ref{notations} we recall results in crystalline
cohomology, and the theory of de Rham Witt complex which we use in
this paper. Reader familiar with \cite{illusie79b},
\cite{illusie83b}, \cite{ekedahl84}, \cite{ekedahl85} and
\cite{ekedahl-diagonal} is strongly advised to skip this section.

Here are some questions which this paper attempts to
address.

\subsection{The problem of Chern number inequalities for surfaces}
In Section~\ref{hodge-witt-of-surfaces} we begin with one of the
main themes of the paper. Let $X/k$ be a smooth, projective surface over an algebraically closed field $k$, of characteristic $p>0$.  There has been considerable work, of great depth and beauty, on Bogomolov-Miyaoka-Yau inequality in positive characteristic (as well as in characteristic zero).  Despite this, our understanding of the issue remains, at the best, rather primitive.  It is has been known for some time that the famous inequality of Bogomolov-Miyaoka-Yau  (see \cite{bogomolov78}, \cite{miyaoka77}, \cite{yau77}):
\be
c_1^2\leq 3c_2
\ee for Chern numbers of $X$  fails in general.
In fact it is possible to give examples of surfaces of general type in characteristic $p\ge5$ such that
\be
c_1^2>pc_2,
\ee
and even surfaces for which
\be
c_1^2>p^nc_2,
\ee
for suitable $n\geq 1$.

The following questions arise at this point:
\begin{enumerate}
\item What is the weakest inequality for Chern numbers which holds for a large class of surfaces?
\item Is there a class of surfaces for which one can prove some inequality of the form
$c_1^2\leq Ac_2$ (with $A>0$)?
\item Where does the obstruction to  Chern class inequality for $X$ originate?
\end{enumerate}
The questions are certainly quite vague, but also quite difficult: as the examples of surfaces of general type with $c_1^2>pc_2$ illustrates, and are not new (for example the first was proposed in \cite{shepherd-barron91b}).  In this paper we provide some answers to all of these questions. Our answers are not satisfactory, at least to us, but should serve as starting point for future investigations and to pose more precise questions.

Much of the existent work on this subject has been carried out from the geometric point view. However as we point out in this paper, the problem is not only purely of \textit{geometric} origin, but rather of  \textit{arithmetic} origin. More precisely, \textit{infinite $p$-torsion} of the \textit{slope spectral sequence} intervenes in a crucial way, already, in any attempt to prove the weaker inequality first considered in \cite{deven76a}:
\be
c_1^2\leq 5c_2.
\ee
Before proceeding further, let us dispel the notion that infinite torsion in the slope spectral sequence is in any way ``pathological.'' Indeed any supersingular K3 surface; any abelian variety of dimension $n$ and of $p$-rank at most $n-2$ and  any product of smooth, projective curves with supersingular Jacobians, and even Fermat varieties of large degree (for $p$ satisfying a suitable congruence modulo the degree) all have infinite torsion in the slope spectral sequence (this list is by no means exhaustive or complete) and as is well-known, this class of varieties is quite reasonable from every other geometric and cohomological point of view. So we must view the presence of infinite torsion  as the entry of the \textit{subtler arithmetic of the slope spectral sequence} of the variety \textit{into the question of its geometry} rather than as a manifestation of any pathological behavior.

Consider the first question. In Section~\ref{chern-inequalities} we take up the topic of Chern
class inequalities of the type $c_1^2\leq 5c_2$ and $c_1^2\leq
5c_2+6b_1$ (such inequalities were first considered in \cite{deven76a}).
We show in Theorem~\ref{surfaces-with-non-negative-h11} that
\bes c_1^2\leq 5c_2+6b_1
\ees
for a large class of surfaces. In particular one has for such surfaces (Corollary~\ref{surfaces-with-non-negative-h11-cor}):
\bes c_1^2\leq \max(5c_2+6b_1,6c_2).
\ees
and  if $b_1=0$ then 
$c_1^2\leq 5c_2$. Surfaces covered in Theorem~\ref{surfaces-with-non-negative-h11} includes surfaces  whose Hodge-de Rham spectral sequence degenerates at $E_1$ or which lift to $W_2$ and have torsion free crystalline cohomology (i.e. Mazur-Ogus surfaces), or surfaces which are ordinary or more generally Hodge-Witt.  
\textit{Moreover there also exists surfaces which do not satisfy these inequalities.} 

Readers familiar with  geography of surfaces over $\C$ will recall that geography of surfaces (see \cite{persson85}) over $\C$ is planar--with the numbers $c_1^2,c_2$ serving as variables in a plane and Bogomolov-Miyaoka-Yau line $c_1^2=3c_2$ representing the absolute boundary beyond which no surfaces of general type can live. On the other hand we show that the geography of surfaces in positive characteristic is non-planar and three dimensional, involving variables $c_1^2,c_2,b_1$  with the plane $c_1^2= 5c_2+6b_1$ serving as a natural boundary and unlike the classical case  of surfaces over $\C$, the region $c_1^2>5c_2+6b_1$  is also populated. In our view surfaces which live in the region $c_1^2\leq 5c_2+6b_1$ are the ones which can hope to understand.

Proposition~\ref{chern-inequalities2} provides the following important criterion:
  $$c_1^2\leq 5c_2\iff m^{1,1}-2T^{0,2}\geq b_1,$$
here the term $T^{0,2}$  is of de Rham-Witt origin and is a measure of infinite torsion in the slope spectral sequence, and the term $m^{1,1}$ is of crystalline origin involving slopes of Frobenius in the second crystalline cohomology of $X$. This criterion translates the problem of Chern number  inequality to an inequality involving slopes of Frobenius, de Rham-Witt contributions and is the reason for our contention that the problem of the Bogomolov-Miyaoka-Yau inequality in positive characteristic is intimately related to understanding the influence of the infinite torsion in the slope spectral sequence. 

But the  inequality $m^{1,1}-2T^{0,2}\geq b_1$  is still difficult to deal with in practice: the $m^{1,1}$ term and the $b_1$ terms live in two different cohomologies ($H^1_{cris}$ and $H^2_{cris}$ resp.) and these two cohomologies are not as correlated as they are in characteristic zero (the proof of \cite{deven76a} can be viewed as establishing a correlation between $H^1_{dR}$ and $H^2_{dR}$ in characteristic zero). 

The next step is to resolve this difficulty. This is carried out in Proposition~\ref{m11andpg}  (resp. Proposition~\ref{m11andpg2}) for Hodge-Witt surfaces (resp. for Mazur-Ogus surfaces), which show that 
$$m^{1,1}\geq 2p_g\implies c_1^2\leq 5c_2\ \ (\text{resp. }m^{1,1}\geq 4p_g\implies c_1^2\leq 5c_2).$$
(Here $m^{1,1}$ is the slope
number (see \ref{slope-number-definition} for the definition) and
$p_g$ is the geometric genus.) The criterion of Proposition~\ref{m11andpg} is used in the proof of one of the main results of this paper: Theorem~\ref{strange-thm} which we discuss next.

\begin{theorem*}
 If $X/k$ is a smooth, projective, minimal surface of general type over a perfect field of characteristic $p>0$ which satisfies the following conditions:
 \begin{enumerate}
 	\item $c_2>0$,
 	\item $p_g>0$,
 	\item $X$ is Hodge-Witt
 	\item $\Pic(X)$ is reduced or $H^2_{cris}(X/W)$ is torsion-free,
 	\item and $H^2_{cris}(X/W)$ has no slope $<\frac{1}{2}$.
 \end{enumerate}
 Then
 $$
 c_1^2\leq 5c_2.
 $$	
\end{theorem*}

\emph{The novelty of our method lies in our use of slopes of Frobenius to prove such an inequality (when de Rham-Witt torsion is controlled--by the Hodge-Witt hypothesis).} It seems to us that this is certainly not the most optimal result which can be obtained by our methods, but should serve as a starting point for understanding Bogomolov-Miyaoka-Yau type inequality for surfaces using slopes of Frobenius.  
For a detailed discussion of our hypothesis and simple examples of surfaces of general type which satisfy all of the above hypothesis reader is referred to the remarks preceding and following the theorem, but here we point out that in Theorem~\ref{th:lower-bound-for-slopes} we show that surfaces which have no slope zero part in $H^2_{cris}$ have slopes  bounded from below by $\frac{1}{p_g+1}$. At any rate we demonstrate that the class of surfaces which satisfy  assumptions (1)--(5) is a non-empty (in general) and form a  locally closed subset in moduli if we assume fixed Chern classes. 

Let us also point out that after the 2012 version of this paper was circulated on the preprint arxiv, I was informed by Adrian Langer that he has obtained a proof of Bogomolov-Miyaoka-Yau inequality for surfaces lifting to $W_2$ (see \cite{langer15}). This is certainly an important development in the subject. However note that in Theorem~\ref{strange-thm} we  do not assume any lifting hypothesis and our methods have no overlap with those of \cite{langer15}.

Let us remark that for  \textit{ordinary surfaces}   $m^{1,1}-2T^{0,2}\geq b_1$, as $T^{0,2}=0$ by a Theorem of \cite{illusie83b}, one reduces to the inequality
\bes
m^{1,1}\geq b_1
\ees
and this is still non-trivial to prove (and we do not know how to prove it). If we assume in addition to ordinarity that $X$ has torsion-free crystalline cohomology, then $m^{1,1}=h^{1,1}$ and hence the inequality is purely classical--involving Hodge and Betti numbers. But the proof of the inequality in this case would, nevertheless be non-classical. In this sense the ordinary case is closest to the classical case, but it appears to us that this case nevertheless lies beyond classical geometric methods. 
In Subsection~\ref{recurringfantasy} we describe a \textit{recurring fantasy} to prove $$c_1^2\leq 5c_2+6$$ for ordinary surfaces (this was not included in any  version prior to the 2012 version of this paper) and also $$c_1^2\leq 6c_2$$ for all ordinary surfaces except for those with $0\leq c_1^2<36,0\leq c_2<6$ (which form a bounded family at any rate). This recurring fantasy should be considered the \textit{de Rham-Witt avatar of van de Ven's Theorem} \cite{deven76a}.

Surfaces which do not satisfy $c_1^2\leq 5c_2+6b_1$ are
particularly extreme cases of failure of the
Bogomolov-Miyaoka-Yau inequality and their properties are studied in Theorem~\ref{negative-hw11} and the remarks following it. They all exhibit following properties:  are not Hodge-Witt, exhibit
non-degeneration of Hodge de Rham or presence of crystalline
torsion, and if $c_2>0$ then $\Omega^1_X$ is unstable. 

The key tool in these and other results proved in this paper are certain invariants of non-classical nature, called Hodge-Witt numbers, which were
introduced by T.~Ekedahl (see \cite{ekedahl-diagonal}) and use a
remarkable formula of R.~Crew (see loc. cit.) and in particular for surfaces the key tool is
the Hodge-Witt number $h^{1,1}_W$ of surfaces.  This integer can be negative (and its negativity signals failure of Chern class inequalities) and in fact it can be arbitrarily negative. In Proposition~\ref{non-negative-hw-for-surfaces} we
note that for a smooth, projective surface $h^{1,1}_W$ is the only
one which can be negative, the rest of $h^{i,j}$ are non-negative.  In
Subsection~\ref{enriques} we use the Enriques classification of
surfaces to prove that  if
$h^{1,1}_W$ is negative then $X$ is of general type or quasi-elliptic (these occur only for $p=2,3$). In Subsection~\ref{lower-bounds} we investigate lower bounds on
$h_W^{1,1}$. For instance we note that if $X$ is of general type
then $-c_1^2\leq h_W^{1,1}\leq h^{1,1}$, except possibly for
$p\leq 7$ and $X$ is fibered over a curve of genus at least two and
the generic fiber is a singular rational curve of arithmetic genus at most
four. It seems rather optimistic to conjecture that if $b_1\neq 0$ and $h^{1,1}_W<0$ then
$X\to \Alb(X)$ has one dimensional image (and so such surfaces
admit a fibration with an irrational base).

\subsection{A Chern inequality for Calabi-Yau Threefolds}
In Section~\ref{hodge-witt-numbers-of-threefolds} we take up the
study of Hodge-Witt numbers of smooth projective threefolds.
Applying our methods to Calabi-Yau threefolds, We show that up to symmetry, the only possibly negative Hodge-Witt number is $h^{1,2}_W$ and
in
Theorem~\ref{non-liftable-b3} we obtain  a
complete characterization (in any positive characteristic) of
Calabi-Yau threefolds with $h^{1,2}_W<0$.  These are
are precisely the threefolds for which the Betti number $b_3=0$. 
As a   corollary we deduce (in Corollary~\ref{cor:calabi-yau-h12-lower-bound}) that  if $X$ is any smooth, projective Calabi-Yau threefold then $h^{1,2}_W\geq -1$, which is equivalent to the inequality (valid for all projective Calabi-Yau threefolds in positive characteristic):
\bes
b_2\geq \frac{1}{2}c_3-1.
\ees
On the other hand the condition  $h^{1,2}_W\geq 0$ (for Calabi-Yau threefolds) is equivalent to the geometric inequality (always valid in characteristic zero):
\bes
b_2\geq \frac{1}{2}c_3,
\ees
In particular such threefolds which do not satisfy this inequality  cannot lift to characteristic zero.
Most known examples of non-liftable Calabi-Yau threefolds have this
property ($b_3=0$) and it is quite likely, at least if $H^2_{cris}(X/W)$ is torsion-free, that all non-liftable Calabi-Yau
threefolds are of this sort. 
In Section~\ref{Calabi-Yau-Geography} we establish some results in geography of Calabi-Yau threefolds. In particular we note that unlike characteristic zero, in characteristic $p>0$ there is a new region in the geography of such threefolds  which is unconnected from the realm of classical Calabi-Yau threefolds and many non-liftable Calabi-Yau threefolds live on this island(see the section for the definition).

We expect that if a Calabi-Yau threefold has non-negative Hodge-Witt numbers and torsion free crystalline cohomology then it lifts to $W_2$ (for $p\geq 5$). 
Our conjecture is that if $H^2_{cris}(X/W)$ is torsion-free, equivalently if $H^0(X,\Omega^1_X)=0$, for a Calabi-Yau threefold, and if this inequality holds then $X$ lifts to $W_2$ (for $p\geq 5$). In Section~\ref{sec:yobuko-theorem} we describe our formulation of a remarkable recent  result of  F.~Yobuko (see \cite{yobuko16}) which provides some evidence towards our conjecture. In Proposition~\ref{pro:hodge-witt-rigidity} we show that if $X$ is Mazur-Ogus then $$c_3=2b_2\iff h^{1,2}=0 \iff X \text{is a rigid Calabi-Yau threefold}.$$

\subsection{Other results proved in the paper}
Here are some of the other results we prove in this paper. 
\subsubsection{Computing Domino numbers of Mazur-Ogus varieties}
We prove (see
Theorem~\ref{domino-numbers-for-mazur-ogus}) that the domino
numbers of a smooth, projective variety whose Hodge de Rham
 spectral sequence degenerates and whose crystalline cohomology is
 torsion free are completely determined by the Hodge numbers and
 the slope numbers (in other words they are completely determined
 by the Hodge numbers and the slopes of Frobenius). This had been
 previously proved by Ekedahl~\cite{illusie90a} for abelian
 varieties. In Subsection~\ref{explicitformulas} we compute domino numbers of smooth hypersurfaces in $\P^n$ (for $n\leq 4$).
 
 \subsubsection{Birational invariance of Domino numbers of smooth, projective surfaces}
 The other question of interest, especially for surfaces is: how do
 the various $p$-adic invariants reflect in the Enriques-Kodaira
 classification? Of course, the behavior of cohomological
 invariants is quite well-understood. But in positive
 characteristic there are other invariants which are of a
 non-classical nature. These new invariants of surfaces are
 at moment defined only for smooth, projective surfaces. But they are of
 birational nature in the sense that: two smooth, projective
 surfaces which are birational surfaces have the same invariants.
 These are: the $V$-torsion, the N\'eron-Severi torsion, the exotic
 torsion and in Proposition~\ref{birational-invariance-of-domino} we prove that if
$X'$ and $X$ are two smooth surfaces which are birational then
 the dominos associated
 to the corresponding differentials 
 \bes
 H^2(X,W(\O_X))\to H^2(X,W\Omega^1_X)
 \ees
 and 
 \bes
 H^2(X',W(\O_{X'}))\to H^2(X',W\Omega^1_{X'})
 \ees
are naturally isomorphic.  In particular the dimension of the domino, denoted 
 by $T^{0,2}(X)$, is a new birational numerical invariant of smooth, projective
 surfaces which lives only in positive characteristic. We also study torsion in $H^2_{cris}(X/W)$ in terms of
 the Enriques classification, making precise several results found
 in the existing literature.
 
\subsubsection{Crystalline torsion and a question of V.~B.~Mehta}
In Section~\ref{geography-of-crystalline-torsion} we digress a
little from our main themes. This section may be well-known to
the experts. We consider torsion in the second crystalline
cohomology of $X$. It is well-known that torsion in the second
cohomology of a smooth projective surface is a birational
invariant, and in fact the following variant of this is true in
positive characteristic (see
Proposition~\ref{torsion-species-and-blowups}): torsion of every
species (i.e. N\'eron-Severi, the $V$-torsion, and the exotic
torsion) is a birational invariant. We also note that surfaces of
Kodaira dimension at most zero do not have exotic torsion. The
section ends with a criterion for absence of exotic torsion which
is often useful in practice. 

In Theorem~\ref{precise-torsion} we show that if $X$ is a smooth,
projective surface of general type with exotic torsion then $X$
 has Kodaira dimension $\kappa(X)\geq 1$. A surface with
 $V$-torsion must either have $\kappa(X)\geq 1$ or $\kappa(X)=0$,
 $b_2=2$ and $p_g=1$ or $p=2$, $b_2=10$ and $p_g=1$. This theorem
 is proved via
 Proposition~\ref{classification-of-torsion-for-kappa0} where we
 describe torsion in $H^2_{cris}(X/W)$ for surfaces with Kodaira
 dimension zero.

In Section~\ref{mehta-question} 
we answer a question of Mehta (for surfaces)  about crystalline torsion. We show that
any smooth, projective surface  $X$ of Kodaira dimension at most
zero has a Galois \'etale cover $X'\to X$ such that
$H^2_{cris}(X'/W)$ is torsion free. Thus crystalline torsion in
these situations, can in some sense, be uniformized, or
controlled. We do not know if this result should be true without
the assumption on Kodaira dimension.

In \cite{joshi05b} which is a thematic sequel (if we get around to completing it) to this paper we will study the properties of a refined Artin invariant of families of Mazur-Ogus surfaces and related stratifications. While bulk of this paper was written more than a decade ago, and after a few rejections from journals where I thought (rather naively) that the paper could (or perhaps should) appear, I lost interest in its publication and the paper has gestated at least since 2007. Over time there have been many additions which I have made to this paper (and I have rewritten the introduction to reflect my current thinking on this matter), but theme of the paper remains unchanged. Notable additions are: Theorem~\ref{strange-thm} and Propositions~\ref{m11andpg} and Proposition~\ref{m11andpg2} which are of later vintage (being proved around 2012-13); and Subsection~\ref{ordinaryconj} has matured for many years now, but has been added more recently; I also added a computation of Domino numbers of hypersurfaces Subsection~\ref{explicitformulas} around 2009. In 2017 during the course of revisions suggested by the referee I also proved and added Theorem~\ref{th:lower-bound-for-slopes} (which sheds some light on the hypothesis of Theorem~\ref{strange-thm}), Theorem~\ref{th:hodge-newton-polygons}. Theorem~\ref{th:yobuko-reformulation} was added upon reading F.~Yobuko's preprint which also appeared in 2017.

\subsection{Acknowledgements}
Untimely death of Torsten Ekedahl in 2011 reminded me of this manuscript again and I decided to revive it from its slumber (though it takes a while to wake up after such a  long slumber). Unfortunately Michel Raynaud also passed away while this paper was being revised for publication.   

\emph{I dedicate this paper to the memory of Torsten Ekedahl and Michel Raynaud. I  never had the opportunity to meet Ekedahl, but his work has been a source of inspiration for a long time. The few meetings I had with Raynaud during my visits to Orsay, I recall with great pleasure.}

This paper clearly owes its existence to the work of Richard Crew, Torsten Ekedahl, Luc Illusie and Michel Raynaud.  I take this opportunity to thank Luc Illusie and Michel Raynaud for encouragement.  Thanks  are also due to Minhyong Kim for constant encouragement while early versions of this paper were being written (he was at Arizona at the time the paper was written). I would like to thank the Korea Institute of Advanced Study and especially thank the organizers of the
International Workshop on Arithmetic Geometry in the fall of 2001,
where some of the early results were announced, for support.

I am also deeply indebted to the referee for a number of suggestions and corrections which have vastly improved this manuscript and also to the Editors of this Journal, especially Fedor Bogomolov, for their patience in face of the long delay in revising this manuscript.

I have tried to keep this paper as self-contained as possible. This may leave the casual reader the feeling that these results are elementary, and to a certain extent they are; but we caution the reader that this feeling is ultimately illusory as we use  deep work  of Crew, Ekedahl, Illusie and Raynaud  on the slope spectral sequence which runs into more than  four hundred pages of rather profound and beautiful mathematics.

\section{Notations and Preliminaries}\label{notations}
\subsection{Witt vectors} Let $p$ be a prime number and let $k$ be a perfect field
of characteristic $p$. Let $\bk$ be an algebraic closure of $k$. Let
$W=W(k)$ be the ring of Witt vectors of $k$ and let $W_n=W/p^n$ be the
ring of Witt vectors of $k$ of length $n\geq 1$. Let $K$ be the
quotient field of $W$. Let $\sigma$ be the Frobenius morphism
$x\mapsto x^p$ of $k$ and let $\sigma:W\to W$ be its canonical lift to
$W$. We will also write $\sigma:K\to K$ for the extension of
$\sigma:W\to W$ to $K$.
\subsection{The Cartier-Dieudonne-Raynaud Algebra}
Following \cite[page~90]{illusie83b} we write $R$ for the Cartier-Dieudonne-Raynaud algebra or more simply the
Raynaud algebra of $k$. Recall that $R$ is a
$W$-algebra generated by symbols $F,V,d$ with the following relations:
\begin{eqnarray*}
FV&=&p\\
VF&=&p\\
{\rm and}\ \forall\,a\in W, \ Fa&=&\sigma(a)F\\
aV&=&V\sigma^{-1}(a)\\
d^2&=&0\\
FdV&=&d\\
da&=&ad.
\end{eqnarray*}
The Raynaud algebra is graded $R=R^0\oplus R^1$ where $R^0$ is the
$W$-subalgebra generated by symbols $F,V$ with relations above and
$R^1$ is generated as an $R^0$ bi-module by $d$ (see
\cite[page~90]{illusie83b}).
\subsection{Explicit description of $R$}
Every element of $R$ can be written uniquely as a sum
\begin{equation}
\sum_{n>0}a_{-n}V^n+
\sum_{n\geq0}a_nF^n+\sum_{n>0}b_{-n}dV^n+\sum_{n\geq0}b_nF^nd
\end{equation}
where $a_n,b_n\in W$ for all $n\in\Z$ (see \cite[page
90]{illusie83b}).

\subsection{Graded modules over $R$}\label{complexes}
        Any graded $R$-module $M$ can be thought of as a complex
$M=M^{\sbull}$ where $M^i$ for $i\in\Z$ are $R^0$ modules and the
differential $M^i\to M^{i+1}$ is given by $d$ with $FdV=d$ (see
\cite[page 90]{illusie83b}).
\emph{From now on we will assume that all  $R$-modules are   graded}.

\subsection{Canonical Filtration}\label{canonical-filtration}
On any $R$-module $M$ we define a filtration (see \cite[page
92]{illusie83b} by
\begin{eqnarray}
\Fil^nM&=&V^nM+dV^nM\\
\gr^nM&=&\Fil^nM/\Fil^{n+1}M
\end{eqnarray}
In particular we set $R_n=R/\Fil^nR$.
\subsection{Topology on $R$}\label{topology}
We topologize an $R$ module $M$ by the linear topology given by
$\Fil^nM$ (see \cite[page 92]{illusie83b}).
\subsection{Complete modules}\label{complete-modules}
We write $\hat{M}=\liminv_{n}M/\Fil^nM$ and call $\hat{M}$ the
completion of $M$, and say $M$ is complete if $\hat{M}=M$. Note that
$\hat{M}$ is complete and one has $(\hat{M})^i=\liminv_{n}
M^i/\Fil^nM^i$ \cite[section 1.3, page 90]{illusie83b}.
\subsection{Differential}\label{differential0}
Let $M$ be an $R$-module, for all $i\in\Z$ we let
\begin{eqnarray}
Z^iM&=&\ker(d:M^i\to M^{i+1})\\
B^iM&=&d(M^{i-1})
\end{eqnarray}
The $W$-module $Z^iM$ is stable by $F$ but not by $V$ in general and
we let
\begin{equation}
V^{-\infty}Z^i=\bigcap_{r\geq 0}V^{-r}Z^i
\end{equation}
where $V^{-r}Z^i=\{x\in M^i| V^r(x)\in Z^i\}$. Then $V^{-\infty}Z^i$
is the largest $R^0$ submodule of $Z^i$ and $M^i/V^{-\infty}Z^i$ has
no $V$-torsion (see \cite[page 93]{illusie83b}).

The $W$-module $B^i$ is stable by $V$ but not in general by $F$.
We let $$F^\infty B^i=\bigcup_{s\geq 0}F^sB^i.$$ Then $F^\infty
B^i$ is the smallest $R^0$-submodule of $M^i$ which contains $B^i$
(see \cite[page 93]{illusie83b}).

\subsection{Canonical Factorization}\label{differential1}
The differential $M^{i-1}\to M^i$ factors canonically as
\begin{equation}\label{domino-factorization}
M^{i-1}\to M^{i-1}/V^{-\infty}Z^{i-1}\to F^\infty B^i \to M^i
\end{equation}
(see \cite[page 93, 1.4.5]{illusie83b}) we write
$\tilde{H}^i(M)=V^{-\infty}Z^i/F^\infty B^i$, is called the \emph{heart}
of the differential $d:M^{i}\to M^{i+1}$ and we will say that the
differential is {\em heartless} if its heart is zero.

\subsection{Profinite modules}\label{profinite-modules}
A (graded) $R$-module $M$ is \emph{profinite} if $M$ is complete and for
all $n,i$ the $W$-module $M^i/\Fil^nM^i$ is  of finite length
\cite[Definition 2.1, page 97]{illusie83b}.

\subsection{Coherence}\label{coherence}
A (graded) $R$-module $M$ is \emph{coherent} if it is of bounded degree,
profinite and all the hearts $\tilde{H}^i(M)$ are of finite type over $W$
\cite[Theorem~3.8, 3.9, page 118]{illusie83b}.

\subsection{Dominoes} An $R$-module $M$ is a \textit{domino} if $M$ is graded, profinite, concentrated in
two degrees (say) $0,1$ and $V^{-\infty}Z^0=0$ and $F^\infty
B^1=M^1$. 

If $M$ is any graded, profinite $R$-module then the canonical
factorization  of $d:M^{i}\to M^{i+1}$, given in
\ref{domino-factorization}, gives a domino: $M^i/V^{-\infty}Z^i\to
F^\infty B^{i+1}$ (see \cite[2.16, page 110]{illusie83b}),
which we call the \emph{domino associated to the differential}
$d:M^i\to M^{i+1}$.

\subsection{Dimension of a Domino} Let $M$ be a domino, then we define $T(M)=\dim_k
M^0/VM^0$ and call it the \emph{dimension of the domino}. If $M$ is any
$R$-module we write $T^i(M)$ for the dimension of the domino
associated to the differential $M^i\to M^{i+1}$. It is standard
that $T^i(M)$ is finite \cite[Proposition~2.18, page
110]{illusie83b}.

\subsection{Filtration on dominoes}
Any domino $M$ comes equipped with a finite decreasing filtration by
$R$-submodules such that the graded pieces are certain standard one
dimensional dominos $U_j$ (see
\cite[Proposition~2.18, page 110]{illusie83b}). Note that the filtration may not be unique but the number of factors is the same for all such filtrations.

\subsection{One dimensional dominoes}
        The $U_j$, one for each $j\in\Z$, provide a complete list of
all the one dimensional dominos (see \cite[Proposition~2.19, page
111]{illusie83b}).

\subsection{Domino devissage lemmas} Further by
\cite[Lemma~4.2, page 12]{ekedahl-diagonal} one has
\begin{equation}
\Hom_R(U_i,U_j)=0
\end{equation}
 if $i>j$, and
\begin{equation}
\Hom_R(U_i,U_i)=k.
\end{equation}

\subsection{Geometrically connectedness assumption}
To avoid tedious repetition  we will assume throughout this paper 
that all schemes which appear in this paper are geometrically connected. \emph{We caution the reader that this assumption may not always appear in the statements of the theorems but is tacitly used in many of the proofs.} (We thank the referee for reminding us of this).

\subsection{Slope spectral sequence}

Let $X$ be a smooth, projective scheme over $k$, we will write $H^*_{\cris}(X/W)$ for
the crystalline cohomology of $X$. If $X$ is
smooth and proper, in \cite{illusie79b}, one finds the
construction of the de Rham-Witt complex $W\Omega^\mydot_X$. The
construction of this complex is functorial in $X$. This complex
computes $H^*_{cris}(X/W)$ and one has a spectral sequence (the
\emph{slope spectral sequence} of $X$)
$$E_1^{i,j}=H^j(X,W\Omega^i_X)\Rightarrow H^{i+j}_{cris}(X/W).$$
The construction slope spectral sequence is also functorial in $X$
and for each $j\geq 0$,  $H^j(X,W\Omega^\mydot_X)$ is a graded, profinite and coherent
$R$-module. \emph{Modulo torsion}, the slope spectral sequence always
degenerates at $E_1$. 



\subsection{Classical invariants of surfaces}\label{torsion}
Let $X/k$ be a smooth projective surface. Then recall the
following standard notation for numerical invariants of $X$. We
will write $b_i=\dim_{\Q_\ell} H^i_{et}(X,\Q_\ell)$,
$q=\dim\Alb(X)=\dim\Pic^0(X)_{\rm red}$; $2q=b_1$, $h^{ij}=\dim_k
H^j(X,\Omega_X^i)$; and $p_g(X)=h^{0,2}=h^{2,0}$. Then one has the
following form of the Noether's formula:
\begin{equation}\label{noether-formula1}
10+12p_g=c_1^2+b_2+8q+2(h^{0,1}-q),
\end{equation}
one has by definition $c_1^2=K_X^2$, the self intersection of the canonical bundle of $X$. The point is that all the terms are non-negative except
possibly $c_1^2$. Further by \cite[page 25]{bombieri76} we have
\begin{equation}
0\leq h^{0,1}-q\leq p_g.
\end{equation}
Formula \eqref{noether-formula1} is easily seen to be equivalent to
the usual form of Noether's formula
\begin{equation}\label{noether-formula2}
12\chi(\O_X)=c_1^2+c_2=c_1^2+\chi_{et}(X).
\end{equation}

\subsection{Hodge-Noether formula} In addition, when the ground field $k=\C$, we get yet
another form of Noether's formula which is a consequence of
\eqref{noether-formula2} and the Hodge decomposition:
\begin{equation}\label{noether-formula3}
\begin{aligned}
h^{1,1} & =10\chi(\O_X)-c_1^2+b_1,  \textit{ or equivalently, }\\
  h^{1,1} &=\frac{5c_2-c_1^2}{6}+b_1.
\end{aligned}
\end{equation}
We will call this the \emph{Hodge-Noether formula}.

\subsection{Hodge-Witt invariants and other invariants}
\label{role-of-hodge-witt} In the next few subsections we recall
results on Hodge-Witt numbers \cite[page 85]{ekedahl-diagonal}  of
surfaces and threefolds. We recall the definition of Hodge-Witt
numbers and their basic properties.

\subsection{Slope numbers}\label{slope-number-definition} Let $X$ be a smooth projective
variety over a perfect field $k$. The \emph{slope numbers} of $X$ are
defined by (see \cite[page 85]{ekedahl-diagonal}):
\begin{eqnarray*}
m^{i,j}&=&\sum_{\lambda\in [i-1,i)}(\lambda-i+1)\dim_K
H^{i+j}_{\cris}(X/W)_{[\lambda]} \\
& &\qquad+\sum_{\lambda\in
[i,i+1)}(i+1-\lambda)\dim_K H^{i+j}_{\cris}(X/W)_{[\lambda]}
\end{eqnarray*}
where  the summation is over all the slopes of Frobenius
$\lambda$ in the indicated intervals and $H^{i+j}_{\cris}(X/W)_{[\lambda]}$ denotes the slope $\lambda$ part of $H^{i+j}_{\cris}(X/W)\tensor_WK$. 
Let me note that  slope numbers of a smooth, projective variety are non-negative integers (see \cite{crew85}).

\subsection{Dominoes in the slope spectral sequence}
\label{domino-number-definition} Let $X$ be a smooth projective
variety. For all $i,j\geq 0$ the differential 
\be d:H^j(X,W\Omega^{i}_X)\to
H^j(X,W\Omega_X^{i+1}).
\ee
admits a canonical factorization 
$$\xymatrix{ H^j(X,W\Omega^{i}_X)\ar@{>>}[d] \ar[r]^d & H^j(X,W\Omega_X^{i+1})\\
	\frac{H^j(X,W\Omega^{i}_X)}{V^{-\infty}Z^{i}}\ar [r] & F^\infty B^{i+1}\ar@{^{(}->}[u].
}
$$
where the lower arrow is a domino, called \emph{the domino  associated with the differential}
\be d:H^j(X,W\Omega^{i}_X)\to
H^j(X,W\Omega_X^{i+1}).
\ee
and is denoted by $\dom^{i,j}(H^{\sbull}(X,W\Omega_X^{\sbull}))$.

The \emph{domino numbers} $T^{i,j}$ of $X$ are defined by
(see \cite[page 85]{ekedahl-diagonal}):
\begin{equation*}
T^{i,j}=\dim_k \dom^{i,j}(H^{\sbull}(X,W\Omega_X^{\sbull}))
\end{equation*}
        in other words, $T^{i,j}$ is the dimension of the domino
associated to the differential 
\be d:H^j(X,W\Omega^{i}_X)\to
H^j(X,W\Omega_X^{i+1}).
\ee
Note that these are non-negative integers.

\subsection{Hodge-Witt Numbers}
\label{hodge-witt-definition} Let $X$ be a smooth projective
variety over a perfect field $k$. The \emph{Hodge-Witt numbers} of $X$
are defined by the formula (see \cite[page 85]{ekedahl-diagonal}):
\begin{equation*}
h^{i,j}_W=m^{i,j}+T^{i,j}-2T^{i-1,j+1}+T^{i-2,j+2}.
\end{equation*}
In particular one sees from the definition that Hodge-Witt numbers are also integers.

\subsection{Formulaire}\label{formulaire}
The Hodge-Witt numbers, domino numbers and the slope numbers
satisfy the following properties which we will now list. See
\cite{ekedahl-diagonal} for details.

\subsubsection{Slope number symmetry}
        For all $i,j$ one has the symmetries (see \cite[Lemma 3.1,
page 112]{ekedahl-diagonal}):
\begin{equation*}\label{slope-symmetry}
        m^{i,j}=m^{j,i}=m^{n-i,n-j},
\end{equation*}
here the first is a consequence of Hard Lefschetz Theorem and the
second is a consequence of Poincar\'e duality.  Further these
numbers are obviously non-negative:
\begin{equation*}
m^{i,j}\geq0.
\end{equation*}
\subsubsection{Theorems of Ekedahl}\label{ekedahl-theorems}
The following formulae give relations to Betti numbers (see
\cite[Theorem 3.2, page 85]{ekedahl-diagonal}):
\begin{equation*}\label{slope-betti}
\sum_{i+j=n}m^{i,j}=\sum_{i+j=n}h^{i,j}_W=b_n
\end{equation*}
and one has Ekedahl's upper bound (see \cite[Theorem 3.2, page
86]{ekedahl-diagonal}):
\begin{equation*}\label{ekedahl-bound}
h^{i,j}_W\leq h^{i,j},
\end{equation*}
and  if $X$ is Mazur-Ogus (see
Subsection~\ref{mazur-ogus-varieties} for the definition of a
Mazur-Ogus variety) then we have
\begin{equation*}\label{ekedahl-hodge-witt-theorem}
h^{i,j}_W = h^{i,j}.
\end{equation*}

\subsubsection{Ekedahl's duality}
One has the following fundamental duality relation for dominos due to
Ekedahl (see \cite[Corollary~3.5.1, page 226]{ekedahl85}): for all
$i,j$, the domino $\dom^{i,j}$ is canonically dual to $\dom^{n-i-2,n-j+2}$
and in particular one has the equlity of the domino numbers
\begin{equation}\label{domino-duality}
T^{i,j}=T^{n-i-2,n-j+2}.
\end{equation}

\subsection{Crew's Formula} The Hodge and Hodge-Witt numbers of $X$ satisfy a
relation known as Crew's formula which we will use often in this
paper. The formula is the following
\begin{equation}\label{crew-formula}
\sum_j(-1)^jh^{i,j}_W=\chi(\Omega^i_X)=\sum_j(-1)^jh^{i,j}.
\end{equation}

\subsubsection{Hodge-Witt symmetry}\label{hodge-witt-symmetry}
For any smooth proper variety of dimension at most three we have for all $i,j$ (see \cite[Corollary 3.3(iii),page 113]{ekedahl-diagonal}):
\be 
	h^{i,j}_W=h^{j,i}_W.
\ee
Note this is proved in loc. cit. first for smooth projective varieties of dimension at most three (projectivity is  essential part as one makes  use of Deligne's Hard Lefschetz Theorem \cite[Th\'eor\`eme 4.1.1]{deligne80}) and then Ekedahl appeals to  resolution of singularities  (known for $p\geq 5$ for threefolds) to deduce the proper case. The argument of loc. cit. also works if we replace the use of resolution of singularities by de Jong's theorem on alterations.

\subsubsection{Hodge-Witt Duality}\label{hodge-witt-duality}
For any smooth projective variety of dimension $n$  we have
for all $i,j$ (see \cite[Corollary
3.2(i), page 113]{ekedahl-diagonal}):
\be 
	h^{i,j}_W=h^{n-i,n-j}_W.
\ee

\subsection{Crew's Formula for surfaces}
For surfaces, the formulas in \ref{crew-formula} take more
explicit forms (see \cite[page 85]{ekedahl-diagonal} and
\cite[page 64]{illusie83a}). We recall them now as they will play
a central role in our investigations. Let $X/k$ be a smooth
projective surface, $K_X$ be its canonical divisor, $T_X$ its
tangent bundle, and let $c_1^2,c_2$ be the usual Chern invariants
of $X$ (so $c_i=c_i(T_X)=(-1)^ic_i(\Omega^1_X)$). Then the
Hodge-Witt numbers of $X$ are related to the other numerical
invariants of $X$ by means of the following formulae \cite[page
114]{ekedahl-diagonal}:
\begin{eqnarray}\label{ekedahl-formula}
h^{0,1}_W&=&h^{1,0}_W\\
h^{0,1}_W&=&b_1/2\\
h^{0,2}_W&=&h^{2,0}_W\\
h^{0,2}_W&=&\chi(\O_X)-1+b_1/2\\
h^{1,1}_W&=&b_1+\frac{5}{6}c_2-\frac{1}{6}c_1^2
\end{eqnarray}
\subsection{Hodge-Witt-Noether Formula}
The formula for $h^{1,1}_W$ above and Noether's formula give the
following variant of the Hodge-Noether formula of
\eqref{noether-formula3}.  We will call this variant  the
\emph{Hodge-Witt-Noether} formula:
\begin{equation}\label{noether-formula4}
\begin{aligned}
h^{1,1}_W & =10\chi(\O_X)-c_1^2+b_1,  \textit{ or equivalently, }\\
h^{1,1}_W &=\frac{5c_2-c_1^2}{6}+b_1.
\end{aligned}
\end{equation}
This paper began with the realization that the above formula is the de Rham-Witt analogue of the  Hodge-Noether \eqref{noether-formula3} and will be central to our study of surfaces in this
paper.

\subsection{Hodge-Witt and Ordinary varieties}\label{la:hodge-witt-ordinary-def}
Let $X/k$ smooth, projective variety over a perfect field $k$ of characteristic $p>0$. 
Then
\benum
\item We say that $X$ is  \emph{Hodge-Witt} if  for each $i,j\geq 0$, $H^j(X,W\Omega^i_X)$ is a finite type module over $W$.
\item We say that $X$ is \emph{ordinary} if  $H^j(X,BW\Omega^i_X)=0$ for all $i,j\geq 0$. 
\eenum

Here are some examples (all of these assertions require proofs which are far from elementary). Any smooth, projective curve is Hodge-Witt.
An abelian variety is ordinary if and only if it has $p$-rank equal to its dimension. An abelian variety is Hodge-Witt if and only if its $p$-rank is at least as big as its dimension minus one. Any ordinary variety is a Hodge-Witt variety.  A K3 surface is Hodge-Witt if and only if it is of finite height and a K3 surface is ordinary if and only if its height is one.
In particular one sees that in all dimensions there exist Hodge-Witt varieties which are not ordinary.

\subsection{Mazur-Ogus and Deligne-Illusie
varieties}\label{mazur-ogus-varieties} In the next few subsections
we enumerate the properties of a class of varieties known as
Mazur-Ogus varieties. We will use this class of varieties at
several different points in this paper as well as its thematic
sequels so we elaborate some of the properties of this class of
varieties here.
\subsection{Mazur-Ogus varieties} A smooth, projective variety over a perfect field $k$
is said to be a Mazur-Ogus variety if it satisfies the following
conditions:
\begin{enumerate}
\item The Hodge de Rham spectral sequence of $X$ degenerates at
$E_1$, and
\item crystalline cohomology of $X$ is torsion free.
\end{enumerate}

\subsection{Deligne-Illusie varieties} A smooth, projective variety over a perfect field $k$
is said to be a Deligne-Illusie variety if it satisfies the
following conditions:
\begin{enumerate}
\item $X$ admits a flat lifting to $W_2(k)$ and
\item crystalline cohomology of $X$ is torsion free.
\end{enumerate}

\begin{remark}
\begin{enumerate}
\item The class of Mazur-Ogus varieties is quite reasonable for
many purposes and is rich enough to contain varieties with many de
Rham-Witt torsion phenomena. For instance any K3 surface is
Mazur-Ogus (in particular the supersingular K3 surface is
Mazur-Ogus). \item We caution the reader that our definition of
Deligne-Illusie varieties is  more restrictive than that conceived
by Deligne-Illusie. Nevertheless, we have the following
restatement of \cite{deligne87}.
\end{enumerate}
\end{remark}

\begin{theorem}\label{deligne-illusie-thm} If $p>\dim(X)$ then any Deligne-Illusie variety
$X$ is a Mazur-Ogus variety.
\end{theorem}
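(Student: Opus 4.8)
The plan is to note that both definitions require $H^\ast_{\cris}(X/W)$ to be torsion free, so the only thing left to establish is that a smooth projective $X/k$ admitting a flat lift to $W_2(k)$ with $\dim X<p$ has a Hodge--de Rham spectral sequence that degenerates at $E_1$. This is precisely the content of Deligne--Illusie \cite{deligne87}, and I would reproduce the structure of their argument. Write $d=\dim X$, let $X'=X\otimes_{k,\sigma}k$ be the Frobenius twist and $F\colon X\to X'$ the relative Frobenius, and recall the Cartier isomorphism $C^{-1}\colon\bigoplus_i\Omega^i_{X'/k}\xrightarrow{\ \sim\ }\bigoplus_i\mathcal H^i\bigl(F_\ast\Omega^\bullet_{X/k}\bigr)$.

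First I would use the chosen flat lift $\widetilde X/W_2(k)$ to produce, over each affine open $U\subseteq X$ that admits a lift $\widetilde U$ and a lift $\widetilde F_U$ of Frobenius, the standard map of $\O_{X'}$-modules $\Omega^1_{X'/k}\to F_\ast\Omega^1_{X/k}$ (built from $p^{-1}d\widetilde F_U^\ast$ applied to local lifts of functions) inducing $C^{-1}$ in degree $1$, and extend it multiplicatively, via antisymmetrised products, to a map $\bigoplus_{0\le i<p}\Omega^i_{X'/k}[-i]\to F_\ast\Omega^\bullet_{X/k}$. Next I would check that these local splittings glue in the derived category: the discrepancy between two choices over an overlap is a \v{C}ech $1$-cocycle valued in the relevant $\mathcal{H}om$ sheaves, and the combinatorial coefficients that appear (binomials $\binom{i}{\ast}$ with $i<p$) are units in $k$, so the cocycle is a coboundary. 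This yields a canonical isomorphism $\bigoplus_{i<p}\Omega^i_{X'/k}[-i]\xrightarrow{\ \sim\ }\tau_{<p}F_\ast\Omega^\bullet_{X/k}$ in $D(X')$.

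Because $d<p$ we have $\Omega^i_{X/k}=0$ for $i\ge p$, so $\tau_{<p}F_\ast\Omega^\bullet_{X/k}=F_\ast\Omega^\bullet_{X/k}$ and the above is an isomorphism $F_\ast\Omega^\bullet_{X/k}\simeq\bigoplus_i\Omega^i_{X'/k}[-i]$ in $D(X')$. Taking hypercohomology over the proper scheme $X'$ gives $H^n_{\dR}(X/k)=\bbH^n\bigl(X',F_\ast\Omega^\bullet_{X/k}\bigr)\cong\bigoplus_{i+j=n}H^j(X',\Omega^i_{X'/k})$, and since $\sigma$ is an automorphism of $k$ one has $h^{i,j}(X')=h^{i,j}(X)$; hence $\dim_k H^n_{\dR}(X/k)=\sum_{i+j=n}h^{i,j}(X)$ for all $n$. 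As the Hodge--de Rham spectral sequence always yields $\dim_k H^n_{\dR}(X/k)\le\sum_{i+j=n}h^{i,j}$, equality for every $n$ forces degeneration at $E_1$. Combined with torsion-freeness of crystalline cohomology, this is exactly the assertion that $X$ is Mazur--Ogus.

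The main obstacle is the gluing step in the second paragraph — turning the locally-defined maps built from Frobenius lifts into a single well-defined object of $D(X')$ — which is the genuine content of Deligne--Illusie and is where the constraint $i<p$ (and therefore, once $\Omega^i=0$ for $i>d$ is invoked, the hypothesis $d<p$) is indispensable. The rest is formal: the reduction to $E_1$-degeneration, the dimension count deducing degeneration from decomposability, and the bookkeeping with the Frobenius twist.
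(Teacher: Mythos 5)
Your proposal is correct and takes the same route as the paper: the paper gives no proof of this theorem beyond presenting it as a restatement of Deligne--Illusie \cite{deligne87}, and your argument is a faithful unpacking of that citation --- torsion-freeness of crystalline cohomology is common to both definitions, and $E_1$-degeneration of Hodge--de Rham follows from the derived-category decomposition of $F_*\Omega^\bullet_{X/k}$ built from the $W_2$-lift, which covers the whole complex because $\dim X<p$ makes the truncation $\tau_{<p}$ vacuous. Nothing further is needed.
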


\begin{remark} It seems reasonable to expect that the inclusion of the
class of Deligne-Illusie varieties in the class of Mazur-Ogus
varieties is strict. However we do not know of an example. The
class of Deligne-Illusie varieties is closed under products to
this extent: if $X,Y$ are Deligne-Illusie varieties and if
$p>\dim(X)+\dim(Y)$ then $X\times_kY$ is a Deligne-Illusie
variety. 
\end{remark}

\subsection{Hodge de Rham degeneration and the Cartier operator} Recall from \cite[Section 1.1, page 7]{ogus79}
that thanks to the Cartier operator the Hodge de Rham spectral
sequence
\begin{equation}
E_1=H^q(X,\Omega^p_{X/k})\implies H^{p+q}_{\dR}(X/k)
\end{equation}
 degenerates at
$E_1$ if and only if the conjugate spectral sequence
\begin{equation}
E_2^{p,q}=H^p(X,{\mathbb H}_{\dR}^q(\Omega^\sbull_{X/k}))\implies
H^{p+q}_{\dR}(X/k)
\end{equation}
degenerates at $E_2$. The first spectral sequence, in any case,
induces the Hodge filtration on the abutment while the second
induces the conjugate Hodge filtration (see \cite{katz72}). In
particular we see that if $X$ is Mazur-Ogus then both the  Hodge
de Rham and the conjugate spectral sequences degenerate at $E_1$
(and $E_2$ resp.). In any case $H^*_{dR}(X/k)$ comes equipped with
two filtrations: the Hodge and the conjugate Hodge filtration.

\subsection{Locally closed and locally exact forms} Let $X$ be a smooth, projective surface over
an algebraically field $k$ of characteristic $p>0$. Then we have
the exact sequence
$$0\to B_1\Omega^1_X\to Z_1\Omega^1_X \to \Omega^1_X\to 0,$$
where the arrow $Z_1\Omega^1_X\to \Omega^1_X$ is the inverse
Cartier operator. In particular we have the subspace
$H^0(X,Z_1\Omega^1_X)\subset H^0(X,\Omega^1_X)$ which consists of
closed global one forms on $X$. Using iterated Cartier operators
(or their inverses), we get (see \cite[Chapter 0, 2.2, page
519]{illusie79b}) a sequence of sheaves $B_n\Omega^1_X\subset
Z_n\Omega^1_X$ and the exact sequence
$$0 \to B_n\Omega^1_X\to Z_n\Omega^1_X\to \Omega^1_X\to 0,$$
and sequence of sheaves $Z_{n+1}\Omega^1_X\subset Z_n\Omega^1_X$.
We will write
$$Z_\infty\Omega^1_X=\cap_{n=0}^\infty
Z_n\Omega^1_X.$$ This is the sheaf of indefinitely closed one
forms. We will say that a global one form is indefinitely closed
if it lives in $H^0(X,Z_\infty\Omega^1_X)\subset
H^0(X,\Omega^1_X)$. In general the inclusions
$H^0(Z_\infty\Omega^1_X)\subset H^0(Z_1\Omega^1_X)\subset
H^0(\Omega^1_X)$ may all be strict.

\subsection{Mazur-Ogus explicated for surfaces} For a smooth, projective surface, the condition that
$X$ is Mazur-Ogus takes more tangible geometric  forms which are
often easier to check in practice. Part of our next result is
implicit in \cite{illusie79b}. We will use the class of Mazur-Ogus
surfaces in extensively in this paper as well as its sequel and in
particular the following result will be frequently used.
\begin{theorem}
Let $X$ be a smooth, projective surface over a
perfect field $k$ of characteristic $p>0$. Consider the following
assertions
\begin{enumerate}
\item $X$ is Mazur-Ogus
\item $H^2_{cris}(X/W)$ is torsion free and $h^{1,1}=h^{1,1}_W$,
\item $H^2_{cris}(X/W)$ is torsion free and every global $1$-form
on $X$ is closed,
\item $\Pic(X)$ is reduced and every global $1$-form on $X$ is
indefinitely closed,
\item the differentials $H^1(X,\O_X)\to H^1(X,\Omega^1_X)$ and
$H^0(X,\Omega^1_X)\to H^0(X,\Omega^2_X)$ are zero,
\item the Hodge de Rham spectral sequence
$$E_1^{p,q}=H^{q}(X,\Omega^p_{X/k})\Rightarrow H^{p+q}_{dR}(X/k)$$
degenerates at $E_1$,
\end{enumerate}
Then (1) $\Leftrightarrow$ (2) $\Leftrightarrow$ (3) $\Rightarrow$
(4) $\Rightarrow$ (5) $\Leftrightarrow$ (6).
\end{theorem}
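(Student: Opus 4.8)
The plan is to establish the cycle $(1)\Rightarrow(2)\Rightarrow(3)\Rightarrow(1)$, then $(3)\Rightarrow(4)$, then $(4)\Rightarrow(5)$, and finally $(5)\Leftrightarrow(6)$. Throughout I would use Crew's formula \eqref{crew-formula} and its surface form \eqref{ekedahl-formula}, Ekedahl's inequality $h^{i,j}_W\le h^{i,j}$ and his theorem that equality holds for Mazur-Ogus varieties, the identities $h^{1,0}_W=h^{0,1}_W=b_1/2$, and the self-duality of the Hodge-de Rham spectral sequence of the smooth proper surface $X$ coming from Poincar\'e duality. For a surface this self-duality interchanges the differential $H^0(X,\Omega^1_X)\to H^0(X,\Omega^2_X)$ with $H^2(X,\O_X)\to H^2(X,\Omega^1_X)$ and the differential $H^1(X,\O_X)\to H^1(X,\Omega^1_X)$ with $H^1(X,\Omega^1_X)\to H^1(X,\Omega^2_X)$; hence the vanishing of the two differentials named in (5) forces $E_2=E_1$, and the only higher differential that can then survive is $d_2\colon H^1(X,\O_X)\to H^0(X,\Omega^2_X)$, together with its Poincar\'e dual $H^2(X,\O_X)\to H^1(X,\Omega^2_X)$.

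For $(1)\Rightarrow(2)$: a Mazur-Ogus surface has $h^{1,1}_W=h^{1,1}$ by Ekedahl's theorem, and $H^2_{\cris}(X/W)$ torsion free by definition. For $(2)\Rightarrow(3)$: evaluating Crew's formula at $i=1$ and using Serre duality $h^{1,2}=h^{1,0}$, Hodge-Witt duality $h^{1,2}_W=h^{1,0}_W$ and $h^{1,0}_W=b_1/2$, one gets $h^{1,1}-h^{1,1}_W=2\bigl(h^{1,0}-b_1/2\bigr)$; so (2) gives $h^{1,0}=b_1/2=\dim\Alb(X)$, the injection $H^0(\Alb(X),\Omega^1)\hookrightarrow H^0(X,\Omega^1_X)$ is then an isomorphism, and every global $1$-form on $X$ is a pull-back of a translation-invariant $1$-form on $\Alb(X)$, hence closed. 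For $(3)\Rightarrow(1)$: I would use the fact, coming out of Illusie's de Rham-Witt theory, that $H^2_{\cris}(X/W)$ torsion free implies $\Pic(X)$ reduced, so the Albanese map $f\colon X\to\Alb(X)$ induces an isomorphism $H^1(\Alb(X),\O)\xrightarrow{\ \sim\ }H^1(X,\O_X)$. Since the Hodge-de Rham spectral sequence of an abelian variety degenerates at $E_1$ (Oda) and is functorial, functoriality of $d\colon\O\to\Omega^1$ along $f$ kills $H^1(X,\O_X)\to H^1(X,\Omega^1_X)$, and functoriality of the (vanishing) second differential $H^1(\Alb(X),\O)\to H^0(\Alb(X),\Omega^2)$ kills $d_2\colon H^1(X,\O_X)\to H^0(X,\Omega^2_X)$; combined with the vanishing of $H^0(X,\Omega^1_X)\to H^0(X,\Omega^2_X)$ and self-duality, every differential of the spectral sequence of $X$ vanishes, so it degenerates at $E_1$ and $X$ is Mazur-Ogus.

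For $(3)\Rightarrow(4)$: again $H^2_{\cris}(X/W)$ torsion free gives $\Pic(X)$ reduced, and as in $(2)\Rightarrow(3)$ every global $1$-form is an Albanese pull-back; translation-invariant forms on an abelian variety lie in $H^0(A,Z_\infty\Omega^1_A)$ (the Cartier operator carries invariant forms to invariant forms, so they are indefinitely closed), and since $f^*$ is compatible with the iterated Cartier operators they remain indefinitely closed on $X$. For $(4)\Rightarrow(5)$: indefinite closedness implies closedness, killing $H^0(X,\Omega^1_X)\to H^0(X,\Omega^2_X)$, and reducedness of $\Pic(X)$ kills $H^1(X,\O_X)\to H^1(X,\Omega^1_X)$ by the Albanese argument of $(3)\Rightarrow(1)$. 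Finally $(6)\Rightarrow(5)$ is immediate.

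The step I expect to be the main obstacle is $(5)\Rightarrow(6)$: by the first paragraph hypothesis (5) already gives $E_2=E_1$, so the only thing left is to show $d_2\colon H^1(X,\O_X)\to H^0(X,\Omega^2_X)$ vanishes under (5) alone. My plan is to pass to the conjugate spectral sequence, using that (as recalled just before the statement) the Hodge-de Rham spectral sequence degenerates at $E_1$ exactly when the conjugate spectral sequence degenerates at $E_2$; via the Cartier isomorphism $\mathcal H^q_{\dR}(\Omega^\sbull_{X/k})\cong\Omega^q_{X^{(p)}/k}$ its $E_2$-terms are Frobenius twists of the $H^q(X,\Omega^p_X)$, and one must show that (5) forces the conjugate $E_2$-differential $H^0(X^{(p)},\Omega^1_{X^{(p)}})\to H^2(X^{(p)},\O_{X^{(p)}})$ and its dual to vanish. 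Translating "every global $1$-form is closed'' and "$H^1(\O_X)\to H^1(\Omega^1_X)$ is zero'' through the Cartier isomorphism, and playing the Hodge filtration against the conjugate filtration on $H^1_{\dR}(X/k)$ and $H^2_{\dR}(X/k)$, is the heart of the matter; I would expect a dimension count of de Rham cohomology together with Crew's formula to be needed to conclude, and this is where the real work lies.
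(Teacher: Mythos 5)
Most of your proposal is correct, and for the chain $(1)\Leftrightarrow(2)\Leftrightarrow(3)\Rightarrow(4)\Rightarrow(5)$ it takes a genuinely different route from the paper. Where the paper repeatedly invokes Illusie's Proposition 5.16 (torsion-freeness of $H^2_{\cris}(X/W)$ gives $\Pic(X)$ reduced, $b_1=h^{0,1}+\dim H^0(Z_1\Omega^1_X)$, and $H^0(Z_1\Omega^1_X)=H^0(Z_\infty\Omega^1_X)$) and then runs the same Crew-formula computation you do, you instead extract $h^{1,0}=b_1/2=q$ from Crew's formula and conclude via the injectivity (Igusa) of $f^*\colon H^0(\Alb(X),\Omega^1)\to H^0(X,\Omega^1_X)$ that every global $1$-form is pulled back from an invariant form on the Albanese, hence indefinitely closed; this gives you $(2)\Rightarrow(3)$ and $(3)\Rightarrow(4)$ in one stroke and is a legitimate alternative (the paper gets $(3)\Rightarrow(4)$ by citing Illusie again). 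Your $(3)\Rightarrow(1)$ and $(4)\Rightarrow(5)$ via functoriality of the spectral sequence along the Albanese map are fine, and in $(3)\Rightarrow(1)$ you are in fact more careful than the paper, which only checks the $d_1$-differentials there.

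The one genuine gap is exactly where you put it: $(5)\Rightarrow(6)$ is not proved. You correctly reduce to the vanishing of $d_2\colon E_2^{0,1}\to E_2^{2,0}$ (and its Poincar\'e dual), but the passage to the conjugate spectral sequence is only a plan, with the decisive step ("playing the Hodge filtration against the conjugate filtration\dots a dimension count\dots is needed to conclude") left unexecuted. You should know that the paper's own proof of $(5)\Rightarrow(6)$ consists precisely of the $d_1$-check you have already performed -- each of the six $E_1$-differentials is trivially zero, zero by hypothesis, or Poincar\'e dual to one of these -- and says nothing whatsoever about $d_2$; it implicitly treats degeneration at $E_1$ as the vanishing of the $E_1$-differentials. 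So the ``real work'' you anticipate is not to be found in the paper. Note also that hypothesis (5) does not include reducedness of $\Pic(X)$, so the Albanese argument that rescues $d_2$ in your $(3)\Rightarrow(1)$ only yields $\dim E_\infty^{0,1}\geq q$ here, not $\dim E_\infty^{0,1}=h^{0,1}$; you have therefore isolated a point that neither your proposal nor the paper's proof actually settles, and to close your write-up you must either supply the conjugate-spectral-sequence argument you sketch or explicitly adopt the paper's convention that (6) means vanishing of the $d_1$-differentials.
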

\begin{proof}
It is clear from  Ekedahl's Theorems (see \ref{ekedahl-theorems})
that (1) $\Rightarrow$ (2).  
Consider the assertion (2) $\Rightarrow$
(3). By \cite[Proposition 5.16, Page 632]{illusie79b} the
assumption of (2) that $H^2_{cris}(X/W)$ is torsion-free implies
that $\Pic(X)$ is reduced, the equality $$\dim
H^0(Z_1\Omega^1_X)=\dim H^0(Z_\infty\Omega^1_X),$$ and also the
equality
$$b_1=h^1_{dR} = h^{0,1}+\dim H^0(Z_1\Omega^1_X).$$
Now our assertion will be proved using Crew's
formula~\ref{crew-formula}. We claim that the following equalities
hold
\begin{eqnarray}
h^{0,0}_W&=&h^{0,0}\\
h^{0,1}_W&=&h^{0,1}\\
h^{0,2}_W&=&h^{0,2}\\
h^{2,0}_W&=&h^{2,0}\\
h^{1,1}-h^{1,1}_W&=&2(h^{1,0}-h^{0,1})
\end{eqnarray}
The first of these is trivial as both the sides are equal to one
for trivial reasons. The second follows from the explicit form of
Crew's formula for surfaces (and the fact that $\Pic(X)$ is
reduced). The third formula follows from Crew's formula and the
first two computations as follows. Crew's formula
\ref{crew-formula} says that
$$h^{0,0}_W-h^{0,1}_W+h^{0,2}_W=h^{0,0}-h^{0,1}+h^{0,2}.$$
By the first two equalities we deduce that $h^{0,2}_W=h^{0,2}$. By
Hodge-Witt symmetry and Serre duality we deduce that
$$h^{0,2}_W=h^{2,0}_W=h^{0,2}=h^{2,0}.$$
Again Crew's formula also gives
$$h^{1,0}_W-h^{1,1}_W+h^{1,2}_W=h^{1,0}-h^{1,1}+h^{1,2}$$
So we get on rearranging that
$$h^{1,1}-h^{1,1}_W=h^{1,0}-h^{1,0}_W+h^{1,2}-h^{1,2}_W.$$
By Serre duality $h^{1,2}=h^{1,0}$ and on the other hand
$$h^{1,2}_W=h^{1,0}_W=h^{0,1}_W=h^{0,1}$$
by  Hodge-Witt symmetry (see \ref{hodge-witt-symmetry},
\ref{hodge-witt-duality}) and the last equality holds as $\Pic(X)$
is reduced. Thus we see that
$$h^{1,1}-h^{1,1}_W=2(h^{1,0}-h^{0,1}).$$
Thus the hypothesis of (2) implies that
$$h^{1,0}=h^{0,1},$$ and as
$$h^{1,0}=\dim H^0(Z_1\Omega^1_X)=\dim
H^0(Z_\infty\Omega^1_X).$$ So we have deduced that every global
one form on $X$ is closed and hence (2) $\Rightarrow$ (3) is
proved.

Now (3) $\Rightarrow$ (1) is proved as follows. The only condition
we need check is that the hypothesis of (3) imply that Hodge de
Rham spectral sequence degenerates. The only non-trivial part of this assertion is
that $H^1(\O_X)\to H^1(X,\Omega^1_X)$ is zero. By
\cite[Prop.~5.16, Page 632]{illusie79b} we know that if
$H^2_{cris}(X/W)$ is torsion free then the differential
$H^1(\O_X)\to H^1(X,\Omega^1_X)$ is zero. Further by hypothesis of
(3) we see that $H^0(X,\Omega^1_X)\to H^0(X,\Omega^2_X)$ also is
zero. The other differentials in the Hodge de Rham spectral
sequence are either zero for trivial reasons or are dual to one of
the above two differentials and hence Hodge de Rham degenerates.
So (3) $\Rightarrow$  (1).

Now let us prove that (3) $\Rightarrow$ (4). The first assertion
is trivial after \cite[Prop.~5.16, Page 632]{illusie79b}.
Indeed the fact that $H^2_{cris}(X/W)$ is torsion free implies
that $\Pic(X)$ is reduced and
$H^0(X,Z_\infty\Omega^1_X)=H^0(X,Z_1\Omega^1_X)$ and by the
hypothesis of (3) we have further that
$H^0(X,Z_1\Omega^1_X)=H^0(X,\Omega^1_X)$. Thus we have deduced (3)
$\Rightarrow$(4).

The remaining assertions are well-known and are implicit in
\cite[Prop.~5.16, Page 632]{illusie79b} but we give a proof for
completeness. Now assume (4) we want to prove (5). By the
hypothesis of (4) and \cite[Prop.~5.16, Page 632]{illusie79b} we
see that the differential $H^0(X,\Omega^1_X)\to H^0(X,\Omega^2_X)$
is zero. So we have to prove that the differential $H^1(\O_X)\to
H^1(\Omega^1_X)$ is zero. We use the method of proof of
\cite[Prop.~5.16, Page 632]{illusie79b} to do this. Let $f:X\to
\Alb(X)$ be the Albanese morphism of $X$. Then we have a
commutative diagram
\begin{equation}
\xymatrix{
  0  \ar[r]^{} & H^0(A,\Omega^1_A) \ar[d]_{} \ar[r]^{} & H^1_{dR}(A/k)
  \ar[d]_{}
  \ar[r]^{} & H^1(\O_A) \ar[d]_{} \ar[r]^{} & 0 \\
  0 \ar[r]^{} & H^0(Z_1\Omega^1_X) \ar[r]^{} & H^1_{dR}(X/k) \ar[r]^{} & H^1(\O_X) &
  }
\end{equation}
with exact rows and  the vertical arrows are injective and
cokernel of the middle arrow is ${}_pH^2_{cris}(X/W)_{Tor}$ (the
$p$-torsion of the torsion of $H^2_{cris}(X/W)$). Moreover the
image of $H^1_{dR}(X/k)\to H^1(\O_X)$ is the $E_\infty^{0,1}$ term
in the Hodge de Rham spectral sequence.  Thus the hypothesis of
(4) that $\Pic(X)$ is reduced  implies   that
$H^1(\O_A)=H^1(\O_X)$, so we have $H^1(\O_A)=H^1(\O_X)\subset
E^{0,1}_\infty=H^1(\O_X)$. Hence $E^{0,1}_\infty=H^1(X,\O_X)$, so
that the differential $H^1(\O_X)\to H^1(\Omega^1_X)$ is zero. This
proves (4) implies (5).

Now let us prove (5) $\Leftrightarrow$ (6). It is trivial that (6)
implies (5). So we only have to prove  (5) $\Rightarrow$ (6). This
is elementary, but we give a proof. The differential $H^0(\O_X)\to
H^0(\Omega^1_X)$ is trivially zero, so by duality
$H^2(\Omega^1_X)\to H^2(\Omega^2_X)$ is zero. The differential
$H^0(\Omega^1_X)\to H^0(\Omega^2_X)$ is zero by hypothesis of (5).
This is dual to $H^2(\O_X)\to H^2(\Omega^1_X)$ hence which is also
zero. The differential $H^1(\Omega^1_X)\to H^1(\Omega^2_X)$ is
dual to the differential $H^1(\O_X)\to H^1(\Omega^1_X)$ which is
zero by hypothesis of (5). Hence we have proved (5)
$\Leftrightarrow$ (6).
\end{proof}

\subsection{Domino numbers of Mazur-Ogus varieties}\label{domino-numbers}
Let $X$ be a smooth projective variety over a perfect field. The
purpose here is to prove the following.  This was proved for
abelian varieties in \cite{ekedahl-diagonal}.

\begin{theorem}\label{domino-numbers-for-mazur-ogus}
        Let $X$ be a smooth projective Mazur-Ogus variety over a perfect
field. Then for all $i,j\geq 0$ the domino numbers $T^{i,j}$ are
completely determined by the Hodge numbers of $X$ and the slope
numbers of $X$.
\end{theorem}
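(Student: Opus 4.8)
The plan is to use the \emph{Formulaire} relations from Subsection~\ref{formulaire} to show that in the Mazur-Ogus case the Hodge-Witt definition
\[
h^{i,j}_W = m^{i,j} + T^{i,j} - 2T^{i-1,j+1} + T^{i-2,j+2}
\]
becomes a linear system in the domino numbers $T^{i,j}$ with a unique solution once the $h^{i,j}=h^{i,j}_W$ and the $m^{i,j}$ are treated as known. The key input is Ekedahl's theorem (see \ref{ekedahl-theorems}) that Mazur-Ogus implies $h^{i,j}_W=h^{i,j}$, so the left-hand side is a Hodge number; thus for each fixed $i$, and letting $j$ run, we obtain a system
\[
T^{i,j} - 2T^{i-1,j+1} + T^{i-2,j+2} = h^{i,j} - m^{i,j}
\]
whose right-hand sides are determined by Hodge numbers and slope numbers. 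The task is to invert this system over all $i,j$, using the boundary conditions and duality among the dominoes.

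First I would record the finiteness/vanishing constraints: $T^{i,j}=0$ whenever $i<0$, whenever $j<0$, or whenever $i,j$ lie outside the range $0\le i,j\le n=\dim X$ (dominoes only arise from the differentials $d\colon H^j(X,W\Omega^i_X)\to H^j(X,W\Omega^{i+1}_X)$, and these groups vanish outside the Hodge diamond range). Also $T^{n,j}=0$ since there is no differential out of the top column. Next I would fix $i$ and view the collection $\{T^{i',j'} : i'\le i\}$ as being built up by induction on $i$. For $i=0$: the relation reads $T^{0,j} = h^{0,j}-m^{0,j}$, so the entire $i=0$ row of domino numbers is immediately expressed in Hodge and slope numbers. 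For $i=1$: the relation is $T^{1,j} - 2T^{0,j+1} = h^{1,j}-m^{1,j}$, and since the $T^{0,\bullet}$ are already known, $T^{1,j}$ is determined. Proceeding by induction on $i$, at stage $i$ the relation $T^{i,j} = (h^{i,j}-m^{i,j}) + 2T^{i-1,j+1} - T^{i-2,j+2}$ expresses $T^{i,j}$ in terms of Hodge numbers, slope numbers, and previously determined dominoes $T^{i-1,\bullet}$, $T^{i-2,\bullet}$. This induction terminates at $i=n$, and the resulting formula at $i=n$ must be checked for consistency (it should read $0 = \text{something}$, automatically satisfied because $\sum_{i+j=\text{const}} m^{i,j}=\sum h^{i,j}_W=b_n$ and the alternating domino sums telescope — but in any case we do not need consistency, only that each $T^{i,j}$ with $i<n$ is pinned down, which the induction already gives).

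The main obstacle I anticipate is purely bookkeeping: one must be careful that the induction really does determine \emph{every} $T^{i,j}$ and not merely a subset, i.e.\ that no domino number escapes the recursion. The recursion as written determines $T^{i,j}$ for $0\le i\le n-1$ directly; one should double-check, perhaps via Ekedahl's duality $T^{i,j}=T^{n-i-2,n-j+2}$ (see \eqref{domino-duality}), that this covers all nonzero dominoes, and that the top rows $i=n-1,n$ are handled correctly (here $T^{n-1,j}$ is still a legitimate domino number, coming from $d\colon H^j(W\Omega^{n-1})\to H^j(W\Omega^n)$, and is produced at stage $i=n-1$ of the induction). A secondary point worth a sentence is that $m^{i,j}$ is itself manifestly a function of the slopes of Frobenius on $H^{i+j}_{\cris}(X/W)$ by its definition in \ref{slope-number-definition}, so "determined by the Hodge numbers and the slope numbers" is the same as "determined by the Hodge numbers and the slopes of Frobenius," as claimed in the statement. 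No genuinely hard analytic or geometric input is needed beyond the already-cited theorem $h^{i,j}_W=h^{i,j}$ for Mazur-Ogus varieties; the content is entirely in observing that the Hodge-Witt defining formula, read as a triangular linear system, is invertible.
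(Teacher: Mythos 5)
Your proposal is correct and is essentially the paper's own argument: both use Ekedahl's theorem that $h^{i,j}_W=h^{i,j}$ for Mazur--Ogus varieties to turn the defining relation $h^{i,j}_W=m^{i,j}+T^{i,j}-2T^{i-1,j+1}+T^{i-2,j+2}$ into a triangular linear system for the $T^{i,j}$, solved recursively from the boundary cases where the dominoes vanish. The only (cosmetic) difference is that you induct upward on $i$ starting from $T^{0,j}=h^{0,j}-m^{0,j}$, while the paper inducts downward on $j$ starting from $j=n$; both traversals determine every $T^{i,j}$.
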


\begin{proof}
This proved by an inductive argument. The first step is to note
that by the hypothesis and \cite{ekedahl-diagonal} one has
\begin{equation}
h^{i,j}=h^{i,j}_W=m^{i,j}+T^{i,j}-2T^{i-1,j+1}+T^{i-2,j+2}
\end{equation}
and so we get for all $j\geq 0$
\begin{equation}
T^{0,j}=h^{0,j}-m^{0,j}
\end{equation}
so the assertion is true for $T^{0,j}$ for all $j\geq 0$. Next we
prove the assertion for $T^{i,n}$ for all $i$. From the above
equation we see that
$T^{i,n}=h^{i,n}-m^{i,n}+2T^{i-1,n+1}-T^{i-2,n+2}$ and the terms
involving $n+1,n+2$ are zero. Now do a downward induction on $j$
to prove the result for $T^{i,j}$: for each fixed $j$, the formula
for $T^{i,j}$ involves $T^{i-1,j+1}$, $T^{i-2,j+2}$ and by
induction hypothesis on $j$ (for each $i$) these two domino
numbers are completely determined by the Hodge and slope numbers.
Thus the result follows.
\end{proof}
In particular as complete intersections in projective space are
Mazur-Ogus, we have the following.
\begin{corollary}
Let $X$ be a smooth projective complete intersection in projective
space. Then $T^{i,j}$ are completely determined by the Hodge
numbers of $X$ and the slope numbers of $X$.
\end{corollary}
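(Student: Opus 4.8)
The plan is to reduce the Corollary to Theorem~\ref{domino-numbers-for-mazur-ogus} by showing that a smooth, projective complete intersection $X\subset\P^n_k$ is a Mazur-Ogus variety, i.e.\ that its Hodge--de Rham spectral sequence degenerates at $E_1$ and that $H^*_{\cris}(X/W)$ is torsion free. Both properties I would extract from a single lifting: choose a smooth complete intersection $\mathcal{X}\subset\P^n_W$ of the same multidegree specializing to $X$ (lift the defining polynomials; smoothness is an open condition on $\Spec W$), so that by the crystalline--de Rham comparison one has $H^*_{\cris}(X/W)\isom H^{*}_{\dR}(\mathcal{X}/W)$.

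First I would record the classical computation of the Hodge cohomology of a complete intersection: via the Koszul resolution of $\Omega^p_{\mathcal{X}/W}$ by twists of restrictions of the $\Omega^j_{\P^n}$, together with Bott vanishing on $\P^n$ (both valid over any base), the $W$-modules $H^q(\mathcal{X},\Omega^p_{\mathcal{X}/W})$ are free of ranks depending only on $p,q$ and the multidegree; in particular these ranks agree with the Hodge numbers of the complex fibre and the formation of $H^q(\mathcal{X},\Omega^p_{\mathcal{X}/W})$ commutes with base change along $W\to k$ and $W\to K$. Next, since the generic fibre $X_K$ lives in characteristic zero its Hodge--de Rham spectral sequence degenerates, so $\dim_K H^n_{\dR}(X_K/K)=\sum_{p+q=n}\rk_W H^q(\mathcal{X},\Omega^p_{\mathcal{X}/W})$; comparing this with the $E_1$-bound for $\mathcal{X}/W$ (whose terms $E_1^{p,q}=H^q(\mathcal{X},\Omega^p_{\mathcal{X}/W})$ are free) forces all differentials in the spectral sequence of $\mathcal{X}/W$ to vanish, whence $E_\infty=E_1$ is free and $H^n_{\dR}(\mathcal{X}/W)$ is a free $W$-module. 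Torsion freeness of $H^*_{\cris}(X/W)$ is then immediate, and reducing mod $p$ (using $W$-flatness of $H^n_{\dR}(\mathcal{X}/W)$ and base change for Hodge cohomology) gives $\dim_k H^n_{\dR}(X/k)=\sum_{p+q=n}h^{p,q}(X)$, i.e.\ the Hodge--de Rham spectral sequence of $X$ degenerates at $E_1$. Hence $X$ is Mazur-Ogus.

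With $X$ Mazur-Ogus, Theorem~\ref{domino-numbers-for-mazur-ogus} applies directly and shows that all $T^{i,j}$ are determined by the Hodge numbers and the slope numbers of $X$, which is exactly the assertion of the Corollary.

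I expect the only genuine work to be the freeness-and-base-change step for the Hodge cohomology of the lift $\mathcal{X}/W$ --- that is, making the ``thin Hodge diamond'' computation of complete intersections function over $W$ rather than over a field --- since everything else is a formal consequence of the lifting together with the already-proved Theorem~\ref{domino-numbers-for-mazur-ogus}. An alternative, if one wishes to avoid the explicit Koszul bookkeeping, is to observe that complete intersections are Deligne--Illusie varieties in the sense of \S\ref{mazur-ogus-varieties} and invoke Theorem~\ref{deligne-illusie-thm}; but that route only yields Mazur-Ogus for $p>\dim X$, so to cover all characteristics the direct lifting argument above is the one I would carry out.
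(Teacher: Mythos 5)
Your proposal follows exactly the paper's route: the Corollary is deduced from Theorem~\ref{domino-numbers-for-mazur-ogus} by the single observation that smooth complete intersections in projective space are Mazur--Ogus, which the paper simply asserts (with the underlying facts living in the cited work of Deligne on complete intersections). Your lifting-plus-Koszul argument correctly supplies the proof of that assertion, so the proposal is a fleshed-out version of the same approach rather than a different one.
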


\subsection{Hodge and Newton polygons of Hodge-Witt surfaces}
Let $X/k$ be a smooth, projective surface over a perfect field of characteristic $p>0$ with $H^*_{cris}(X/W)$  torsion-free. Let $\beta_2=\dim H^2_{dR}(X/k)=\dim_K H^2_{cris}(X/W)\tensor_W K$ be the second Betti number of $X$. Recall from \cite{katz79} that the Hodge polygon  of $H^2_{dR}(X/k)$ (resp. the Newton polygon of $H^2_{cris}(X/W)$) is the graph of a continuous, piecewise linear, $\R$-valued function denoted here by $\hod(X):[0,\beta_2]\to\R$ (resp. $\new(X):[0,\beta_2]\to\R$). We refer the reader to \cite{berthelot-ogus,katz79} for the definition of these polygons and functions. 

Readers may find the following result which is, as far as we are aware, the most explicit result of its kind, useful in understanding  the influence of Hodge-Witt condition  in the more familiar realm of Hodge and Newton polygons.
\bthm\label{th:hodge-newton-polygons} 
Let $X/k$ be a smooth, projective surface over a perfect field $k$ of characteristic $p>0$. Suppose $X$ is Hodge-Witt and $H^*_{cris}(X/W)$ is torsion-free. Then 
$$\hod(x)=\new(x) \text{ on the interval } \sum_{\lambda<1}m_\lambda \leq x\leq \sum_{\lambda<1}m_\lambda + m_1=\sum_{\lambda\leq1}m_\lambda.$$
In other words the Hodge and Newton polygons of $X$ touch over the closed interval $$\sum_{\lambda<1}m_\lambda \leq x\leq\sum_{\lambda<1}m_\lambda + m_1=\beta_2-\sum_{\lambda<1}m_\lambda.$$
\ethm

\begin{remark}
Here is an explicit example which illustrates this theorem. Let $X$ be Hodge-Witt  a K3 surface (equivalently a K3 surface of finite height). Then $H^2_{cris}$ has only one slope $\lambda=1-\frac{1}{n}<1$ of multiplicity $m_\lambda=n$ where $1\leq n\leq 10$, and one sees easily that $m_1=22-2n$. Hence the theorem asserts that Hodge and Newton functions of $X$ agree on the interval $[n,n+22-2n]=[n,22-n]$ an interval of length at least two as $n\geq 1$.
\end{remark}

\bp 
Note that projectivity hypothesis implies $m_1\geq 1$ and hence the interval in the assertion has length at least one. By the definition of the Hodge-function one has $\hod(x)=0$ on $[0,h^{0,2}]$ and $\hod(x)$ is linear of slope one on $[h^{0,2},h^{0,2}+h^{1,1}]$ and of slope two on the interval $[h^{0,2}+h^{1,1},h^{0,2}+h^{1,1}+h^{2,0}]=[h^{0,2}+h^{1,1},\beta_2]$. From this one has
$$\hod(x)=\begin{cases}
0 & \text{ if } 0\leq x\leq h^{0,2},\\
x-h^{0,2} & \text{ if } h^{0,2}\leq x\leq h^{0,2}+h^{1,1},\\
2x-\beta_2 & \text{ if }  h^{0,2}+h^{1,1}\leq x\leq h^{0,2}+h^{1,1}+h^{2,0}=\beta_2.\\
\end{cases}
$$
Now let us determine the Newton function of $H^2_{cris}(X/W)$. Let $(\lambda_1,n_1),\ldots,(\lambda_r,n_r)$ be all the distinct slopes of $(H^2_{cris}(X/W),F)$. So the slope sequence looks like this:
$$\overbrace{\lambda_1,\cdots,\lambda_1}^{n_1\text{ times }},\overbrace{\lambda_2,\cdots,\lambda_2}^{n_2\text{ times }},\ldots,\overbrace{\lambda_r,\cdots,\lambda_r}^{n_r\text{ times }},$$
and one of these slopes is equal to $1$ and its multiplicity we will denote by $m_1$. Now by definition of the Newton polygon (or the Hodge function) one has
$$\new(x)=\begin{cases}
\lambda_1x & \text{ for } 0\leq x\leq n_1,\\
\lambda_2(x-n_1)+\lambda_1n_1 & \text{ for } n_1\leq x\leq n_1+n_2,\\
\qquad\vdots & \qquad \vdots \\
\lambda_{i+1}\left(x-\sum_{j=1}^i n_j\right)+\sum_{j=1}^i\lambda_jn_j & \text{ for } \sum_{j=1}^in_j\leq x\leq \sum_{j=1}^{i+1}n_j,\\
\qquad\vdots & \qquad\vdots \\
\end{cases}
$$
Now suppose $\lambda_1<\lambda_2<\cdots \lambda_{i_0}<\lambda_{i_0+1}=1<\lambda_{i_0+2}<\cdots<\lambda_r$. Then on the closed interval
$$\sum_{j\leq i_0}n_j\leq x\leq\sum_{j\leq i_0}n_j+n_{i_0+1}=\sum_{j\leq i_0}n_j+m_1$$ one has
\beas
\new(x)&=&1\cdot \left(x-\sum_{j=1}^{i_0} n_j\right)+\sum_{j=1}^{i_0}\lambda_jn_j\\
&=&x-\sum_{j=1}^{i_0}(1-\lambda_j)n_j\\
&=& x-m^{0,2}.
\eeas
Now $X$ is Hodge-Witt one has $T^{0,2}=0$ and as $H^2_{cris}(X/W)$ is torsion-free one has from \cite[Corollary 5]{crew85} that
$$m^{0,2}=h^{0,2},$$
and hence $$\new(x)=x-m^{0,2}=x-h^{0,2}=\hod(x) $$ 
for $\sum_{\lambda<1}m_\lambda \leq x\leq\sum_{\lambda<1}m_\lambda + m_1$.
Further by Poincar\'e duality and our hypothesis, one has $\beta_2=2\sum_{\lambda<1}m_\lambda+m_1$. Hence
$$\beta_2-\sum_{\lambda<1}m_\lambda=\sum_{\lambda<1}m_\lambda+m_1.$$ This proves the assertion.
\ep

\subsection{Explicit formulae for domino numbers of hypersurfaces}\label{explicitformulas}
For $X\subset \P^{n+1}$ a smooth hypersurface we can make the
formulas quite explicit using \cite{lewis-survey,rapoport72,suwa93}.
We do this here for the reader's convenience.
We begin with the remark that we have the following explicit bound
for $T^{0,j}$:
\begin{equation}
T^{0,j}\leq h^{0,j},
\end{equation}
and if $\dim(X)=n=2$ then we have $m^{0,2}=0$ if and only if
$H^2_{cris}(X/W)\tensor_WK$ is pure slope one. In particular, if this is the case, we have
$T^{0,2}=h^{0,2}$ and so we have $T^{0,2}=p_g(X)$. If $\deg(X)=d$
then we have from \cite{lewis-survey} that $h^{0,2}=\dim
H^0(X,\Omega^2_X)=H^0(X,\O_X(d-4))$ because $\Omega^2_X=\O_X(d-4)$
by the adjunction formula.

If $\dim(X)=3$ one sees from \cite{suwa93} that there are at most two, possibly nonzero domino numbers, 
namely, $T^{0,2}$ and $T^{0,3}$; the remaining domino numbers are either zero or
equal to these two by Ekedahl's duality. This is seen as follows:  as $X$ is a hypersurface
$H^{i}(X,\O_X)=0$ for $0<i<\dim(X)$ and hence $T^{0,i}=0$ for all
$i$ except possibly $i=3$ and $T^{1,2}=T^{0,3}$ by Ekedahl's
duality. So we have $T^{0,3}=h^{0,3}-m^{0,3}$. If
$H^3_{cris}(X/W)$ has no slopes in $[0,1)$ then we see that
$m^{0,3}=0$ and so again $T^{0,3}=h^{0,3}=\dim H^3(X,\O_X)$.

If $\dim(X)=4$ then from \cite{suwa93} one sees that there are two possibly non-trivial domino numbers to
determine $T^{0,4}$ and $T^{1,3}$ and we may determine $T^{0,4}$ easily as above.
The formula for $h^{1,3}_W=m^{1,3}+T^{1,3}-2T^{0,4}$ gives
$T^{1,3}=h^{1,3}-m^{1,3}+2T^{0,4}$. If $H^4_{cris}(X/W)$ is pure
slope two, then $m^{1,3}=0$ and so we have
$T^{1,3}=h^{1,3}+2h^{0,4}.$

\begin{description}
  \item[$\dim(X)=2$] We have $T^{0,2}\leq h^{0,2}$ and equality
  holds  if and only if $H^2_{cris}(X/W)$ is pure slope one.
  \item[$\dim(X)=3$] We have $T^{0,3}\leq h^{0,3}$ with equality if
  and only if $H^3_{cris}(X/W)$ has no slopes in $[0,1)$ (equivalently by Poincar\'e duality
  $H^3_{cris}(X/W)$ has no slopes in $(2,3]$).
  \item[$\dim(X)=4$] We have $T^{0,4}\leq h^{0,4}$ and $T^{1,3}\leq
  h^{1,3}+2T^{0,4}$ and both are equal if and only if
  $H^4_{cris}(X/W)$ is of pure slope two.
\end{description}
Thus we have proved the following:

\begin{corollary}
Let $X\subset\P^{n+1}$ be a smooth, projective hypersurface over a
perfect field of characteristic $p>0$ with $2\leq\dim(X)=n\leq 4$.
Then we have $T^{0,i}=0$ unless $i=n$ and in that case
\begin{equation}\label{T0ibound}
	T^{0,n}\leq h^{0,n}.
\end{equation}
Further $T^{1,i}=0$ unless $i=4$ and if $i=4$ then
\be 
	T^{1,4}\leq h^{1,3}+2h^{0,4}.
\ee
Further equality holds if $H^i_{cris}(X/W)$ satisfies certain slope
conditions which are summarized in Table~\vref{tab:slopes-table}. The table
records the only, possibly non-trivial, domino numbers and the
crystalline condition which is necessary and sufficient for these
domino numbers to achieve their maximal value.
\begin{center}
\begin{table}[H]
\caption{Slope condition(s) for maximal domino numbers\label{tab:slopes-table}}
{\renewcommand{\arraystretch}{1.5}
\begin{tabular}{|c|c|c|c|}
  \hline
  $\dim(X)$ & $2$ & $3$ & $4$ \\
  \hline
  $T^{0,2}$ & $H^2_{cris}=H^2_{cris,[1]}$ & $0$ & $0$ \\
  \hline
  $T^{0,3}$ & $0$ & $H^3_{cris}=H^3_{cris,[1,2]}$ & $0$ \\
  \hline
  $T^{0,4}$ & $0$ & $0$ & $H^4_{cris}=H^4_{cris,[1,3]}$ \\
  \hline
  $T^{1,3}$ & $0$ & $0$ & $H^4_{cris}=H^4_{cris,[2]}$ \\
  \hline
\end{tabular}
} 
\end{table}
\end{center}
\end{corollary}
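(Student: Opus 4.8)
The plan is to combine the general determination of domino numbers for Mazur-Ogus varieties (Theorem~\ref{domino-numbers-for-mazur-ogus}) with the explicit Hodge-theoretic computations available for hypersurfaces. First I would note that a smooth hypersurface $X\subset\P^{n+1}$ is a complete intersection, hence Mazur-Ogus, so all its domino numbers are governed by the Hodge numbers and the slope numbers via the Hodge-Witt formula $h^{i,j}=h^{i,j}_W=m^{i,j}+T^{i,j}-2T^{i-1,j+1}+T^{i-2,j+2}$. Next, since $X$ is a hypersurface, $H^i(X,\O_X)=0$ for $0<i<n$, which forces $h^{0,i}=0$ for such $i$, and therefore (using $0\le T^{0,i}\le h^{0,i}$, which follows from $T^{0,i}=h^{0,i}-m^{0,i}$ and $m^{0,i}\ge 0$) one gets $T^{0,i}=0$ unless $i=n$. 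The single surviving number $T^{0,n}=h^{0,n}-m^{0,n}$ then satisfies $T^{0,n}\le h^{0,n}$, with equality precisely when $m^{0,n}=0$, i.e.\ when $H^n_{cris}(X/W)$ has no slopes in $[0,1)$.

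Then I would work dimension by dimension, using Ekedahl's duality $T^{i,j}=T^{n-i-2,n-j+2}$ to reduce to the ``lower'' domino numbers. For $\dim(X)=2$ and $\dim(X)=3$ the only potentially nonzero domino number is $T^{0,n}$ (for $n=3$, note $T^{1,2}=T^{0,3}$ by duality), and the slope condition for maximality is read off directly: $H^2_{cris}=H^2_{cris,[1]}$ when $n=2$, and $H^3_{cris}$ has all slopes in $[1,2]$ when $n=3$ (equivalently, by Poincar\'e duality, no slopes in $(2,3]$). For $\dim(X)=4$ there is one more number, $T^{1,3}$, and applying the Hodge-Witt formula in bidegree $(1,3)$ gives $h^{1,3}_W=m^{1,3}+T^{1,3}-2T^{0,4}$ (the $T^{-1,5}$ and $T^{2,2}$-type corrections vanish: $T^{-1,5}=0$ trivially and $T^{2,2}=T^{0,4}$ is already accounted), so $T^{1,3}=h^{1,3}-m^{1,3}+2T^{0,4}$; since $T^{0,4}\le h^{0,4}$ and $m^{1,3}\ge 0$ we obtain $T^{1,3}\le h^{1,3}+2h^{0,4}$, with equality iff both $m^{0,4}=0$ and $m^{1,3}=0$, and the latter is equivalent to $H^4_{cris}(X/W)$ being of pure slope two (which subsumes $m^{0,4}=0$). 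Assembling these three cases yields exactly Table~\ref{tab:slopes-table} and the stated bounds.

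The only real bookkeeping point—and the step most prone to error—is making sure the Hodge-Witt formula is applied with the correct vanishing of the ``off-diagonal'' domino corrections $T^{i-1,j+1}$ and $T^{i-2,j+2}$ in each bidegree, since a careless sign or an overlooked nonzero term would change the inequalities. This is handled by the same downward-induction scheme used in the proof of Theorem~\ref{domino-numbers-for-mazur-ogus}: for a hypersurface the cohomology of $\O_X$ is concentrated in degrees $0$ and $n$, so the only bidegrees where a domino can survive are those constrained by $H^j(X,\Omega^i_X)\ne 0$, and Ekedahl's duality collapses the list to the entries appearing in the table. Once the vanishing is pinned down, the rest is the elementary manipulation of $T=h_W=h$ and $m\ge 0$ already indicated, together with the identification (standard, via the Cartier operator and the Lefschetz structure of hypersurface cohomology, cf.\ \cite{lewis-survey,rapoport72}) of the slope hypotheses in Table~\ref{tab:slopes-table} with the conditions $m^{0,n}=0$ and (for $n=4$) $m^{1,3}=0$.
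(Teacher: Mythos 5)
Your argument is correct and follows essentially the same route as the paper: the paper's ``proof'' is exactly the computation preceding the corollary, namely $T^{0,j}=h^{0,j}-m^{0,j}$ (valid because hypersurfaces are Mazur--Ogus), the vanishing $H^i(X,\O_X)=0$ for $0<i<n$ to kill all but $T^{0,n}$, Ekedahl's duality $T^{1,2}=T^{0,3}$, and the bidegree-$(1,3)$ Hodge--Witt formula giving $T^{1,3}=h^{1,3}-m^{1,3}+2T^{0,4}$. The one small slip is your parenthetical that $m^{1,3}=0$ alone is equivalent to $H^4_{cris}(X/W)$ being pure slope two and so ``subsumes'' $m^{0,4}=0$: in fact $m^{1,3}=0$ only forces the slopes of $H^4_{cris}$ into $\{0,2,4\}$, so it does not by itself rule out a slope-zero part; but since your stated equality criterion is the conjunction $m^{0,4}=0$ and $m^{1,3}=0$, which is equivalent to pure slope two, the conclusion and the table entries are unaffected.
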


The following table records the standard formulae (see
\cite{deligne-complete,lewis-survey,rapoport72}) for the non-trivial numerical
invariants of a smooth hypersurface $X\subset \P^{n+1}$  with $2\leq
n\leq 4$ and of degree $d$. These can extracted as  the coefficients
of the power series expansion of the function
\begin{equation}
H_d(y,z)=\frac{(1+z)^{d-1}-(1+y)^{d-1}}{z(1+y)^d-y(1+z)^d}=\sum_{p,q\geq
0}h_0^{p,q}y^pz^q,
\end{equation}
where $h^{p,q}_0=h^{p,q}-\delta_{p,q}$ (where $\delta_{p,q}$ is the Kronecker delta symbol); we may also calculate the
Betti number of $X$ using the Hodge numbers of $X$ as Hodge de Rham spectral sequence degenerates for a hypersurface in $\P^{n+1}$.

For computational purposes, we can write the above generating function, following \cite{deligne-complete}, the sum as
\be 
H_d(y,z)=\frac{\sum_{i,j\geq0}\binom{d-1}{i+j+1}y^iz^j}{1-\sum_{i,j\geq1}\binom{d}{i+j}y^iz^j}
\ee
Now it is possible, after a bit of work (which we suppress here) to arrive at the formulae for Hodge numbers for low dimensions (such as the ones we need). Our results are summarized in the following table.

Assuming that the slope conditions for maximal domino numbers hold (see Table~\vref{tab:slopes-table}) we can use the Hodge number calculation to calculate domino numbers. Table~\vref{hodge-table} gives formulae for Hodge numbers $h^{i,j}$.
\begin{center}
\begin{table}[H]
\caption{Hodge and Betti Numbers\label{hodge-table}}
{\renewcommand{\arraystretch}{1.5}
\begin{tabular}{|c|c|c|c|}
  \hline
  $\dim(X)$ & $2$ & $3$ & $4$ \\
  \hline
  $h^{0,2}$ & $\frac{(d-1)(d-2)(d-3)}{6}$ &  &  \\
  \hline
   $h^{1,1}$ & $\frac{d(2d^2 - 6d + 7)}{3}$ &  &  \\
  \hline
  $h^{0,3}$ &  & $\frac{(d-1)(d-2)(d-3)(d-4)}{4!}$ &  \\
  \hline
  $h^{1,2}$ &  & $\frac{(d-1)(d-2)(11d^2-17d+12)}{4!}$ &  \\
  \hline
  $h^{0,4}$ &  &  & $\frac{(d-1)(d-2)(d-3)(d-4)(d-5)}{5!}$ \\
  \hline
  $h^{1,3}$ &  &  & $\frac{2(d - 1)(d - 2)(13d^3 - 51d^2 + 56d - 30)}{5!}$ \\
  \hline
  $h^{2,2}$ &  &  & $\frac{(d - 1)(d - 2)(3d^3 - 11d^2 + 11d - 5)
}{10}$ \\
  \hline
  $b_2$ & $d^3 - 4d^2 + 6d - 2$
  &  &  \\
  \hline
  $b_3$ &  & $(d - 1)(d - 2)(d^2 - 2d + 2)$  &  \\
  \hline
  $b_4$ & & & $\frac{(d - 1)(d - 2)(3d^3 - 12d^2 + 15d - 10)}{4}$ \\
  \hline
\end{tabular}
} 
\end{table}
\end{center}
We record for future use:
\be\label{h11and2pg}
h^{1,1}-2p_g=\frac{d^3 - 4d + 6}{3}
\ee
and hence
\be\label{b2and4pg} 
b_2-4p_g=\frac{d^3 - 4d + 6}{3}.
\ee

We summarize formulae for maximal values of $T^{0,i}$ which we can obtain using this method in the following.

\begin{proposition}
Assume that $X\subseteq \P^n$ is a smooth, projective hypersurface of degree $d$ and $\dim(X)\leq 4$. Suppose that the crystalline cohomology of $X$ satisfies the slope condition of for maximal domino numbers given in  (Table~\vref{tab:slopes-table}). Then the domino numbers $T^{0,i}$ for $i=2,3,4$ (resp. $\dim(X)=2,3,4$) are given by Table~\vref{domino-table}:
\begin{center}
\begin{table}[H]
\caption{Maximal domino numbers\label{domino-table}}
{\renewcommand{\arraystretch}{1.5}
\begin{tabular}{|c|c|}
\hline
$i$ & $T^{0,i}$ \\
\hline
2 &  $\frac{(d-1)(d-2)(d-3)}{6}$ \\
\hline
3 &  $\frac{(d-1)(d-2)(d-3)(d-4)}{4!}$ \\
\hline
4 &  $\frac{(d-1)(d-2)(d-3)(d-4)(d-5)}{5!}$ \\
\hline
$T^{1,3}$ & $\frac{(d - 1)(d - 2)(14d^3 - 63d^2 + 103d - 90)}{5!}$ \\
\hline
\end{tabular}}
\end{table}
\end{center}
\end{proposition}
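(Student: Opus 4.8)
The plan is to feed the two facts already assembled in this section into the Hodge--Witt number formula. First, a smooth hypersurface $X\subset\P^{n+1}$ is Mazur--Ogus (this is the observation preceding the Corollary to Theorem~\ref{domino-numbers-for-mazur-ogus}), so by Ekedahl's theorem (see \ref{ekedahl-theorems}) one has $h^{i,j}_W=h^{i,j}$ for all $i,j$. Second, the Hodge numbers of $X$ are the coefficients of the rational function $H_d(y,z)$ recorded above, and are listed explicitly in Table~\ref{hodge-table}. The mechanism is then: write out $h^{i,j}_W=m^{i,j}+T^{i,j}-2T^{i-1,j+1}+T^{i-2,j+2}$ in the relevant bidegrees, use $h^{i,j}_W=h^{i,j}$, and use the slope hypotheses of Table~\ref{tab:slopes-table} to kill the slope numbers.

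Concretely: for $(i,j)=(0,n)$ the formula reads $h^{0,n}_W=m^{0,n}+T^{0,n}$, since the terms $T^{-1,n+1}$ and $T^{-2,n+2}$ vanish, so $T^{0,n}=h^{0,n}-m^{0,n}$. From the definition in \ref{slope-number-definition} one has $m^{0,n}=\sum_{\lambda\in[0,1)}(1-\lambda)\dim_K H^n_{\cris}(X/W)_{[\lambda]}$ (the other half of the defining sum is empty, there being no negative slopes of Frobenius), so the condition ``$H^n_{\cris}(X/W)$ has no slope in $[0,1)$'' of Table~\ref{tab:slopes-table} forces $m^{0,n}=0$ and hence $T^{0,n}=h^{0,n}$. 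For $\dim(X)=4$ and $(i,j)=(1,3)$ the formula reads $h^{1,3}_W=m^{1,3}+T^{1,3}-2T^{0,4}$; purity of slope two makes both $m^{0,4}$ and $m^{1,3}$ vanish (again directly from \ref{slope-number-definition} together with the Poincar\'e duality symmetry of the slopes of $H^4_{\cris}$ about $2$), so $T^{0,4}=h^{0,4}$ and $T^{1,3}=h^{1,3}+2h^{0,4}$. That the cases listed are exhaustive is because $H^i(X,\O_X)=0$ for $0<i<n$ (from the ideal-sheaf sequence), so $T^{0,i}=0$ for $i\ne n$, and Ekedahl duality (\ref{domino-duality}) pairs the remaining dominoes among those appearing in the table.

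It remains to insert the Hodge numbers. By adjunction $\Omega^n_X=\omega_X=\O_X(d-n-2)$; twisting $0\to\O_{\P^{n+1}}(-d)\to\O_{\P^{n+1}}\to\O_X\to0$ by $\O(d-n-2)$ and using $H^0(\P^{n+1},\O(-n-2))=H^1(\P^{n+1},\O(-n-2))=0$ gives $h^{0,n}=\dim_k H^0(\P^{n+1},\O(d-n-2))=\binom{d-1}{n+1}$, which for $n=2,3,4$ is exactly $\tfrac{(d-1)(d-2)(d-3)}{6}$, $\tfrac{(d-1)(d-2)(d-3)(d-4)}{4!}$, $\tfrac{(d-1)(d-2)(d-3)(d-4)(d-5)}{5!}$, the first three entries of Table~\ref{domino-table}. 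For the last row one reads off $h^{1,3}$ and $h^{0,4}$ from Table~\ref{hodge-table}, substitutes into $T^{1,3}=h^{1,3}+2h^{0,4}$, factors out $(d-1)(d-2)$ and collects the resulting cubic; since $(d-3)(d-4)(d-5)=d^3-12d^2+47d-60$, the cubic factor becomes $14d^3-63d^2+103d-90$, which yields the entry of Table~\ref{domino-table}.

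The only genuine work is arithmetic rather than conceptual: extracting the closed polynomial for $h^{1,3}$ (and the auxiliary $h^{0,4}$, $h^{1,2}$, etc.) from the generating function $H_d(y,z)$ — the computation already summarized in Table~\ref{hodge-table} — and then checking the polynomial identity that collapses $h^{1,3}+2h^{0,4}$ to the stated form. I expect this bookkeeping, and not any geometric input, to be the main obstacle; everything about the slope numbers and about $h^{0,n}$ is immediate from the definitions and the standard cohomology of projective space.
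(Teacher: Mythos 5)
Your route is the paper's route: the proposition has no separate argument in the text, being a summary of the preceding corollary (which gives $T^{0,n}=h^{0,n}$ and $T^{1,3}=h^{1,3}+2h^{0,4}$ under the slope conditions of Table~\ref{tab:slopes-table}) combined with the Hodge numbers of Table~\ref{hodge-table}, and your derivation of those two identities from $h^{i,j}_W=h^{i,j}$ and the vanishing of the relevant slope numbers is exactly what the paper does. One correction, though: carrying out the substitution you describe for the last row gives
$$h^{1,3}+2h^{0,4}=\frac{2(d-1)(d-2)\bigl(14d^3-63d^2+103d-90\bigr)}{5!},$$
which is \emph{twice} the entry of Table~\ref{domino-table}, so your claim that the computation ``yields the entry of the table'' is not literally true. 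A sanity check at $d=3$ (cubic fourfold: $h^{1,3}=1$, $h^{0,4}=0$, so $T^{1,3}=1$, whereas the table entry evaluates to $1/2$, not even an integer) shows that your value is the correct one and the table's last row is missing a factor of $2$.
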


\section{Enriques Classification and negativity of $h^{1,1}_W$}
\label{hodge-witt-of-surfaces}
\subsection{Main result of this section}
The main theorem we want to prove is Theorem~\ref{negative-hw11}.
The proof of Theorem~\ref{negative-hw11} is divided in to several
parts and it uses the Enriques classification of surfaces. We do
not know how to prove the assertion without using Enriques
classification \cite{bombieri76}, \cite{bombieri77}.

\begin{theorem}\label{negative-hw11}
Let $X/k$ be a smooth, projective surface over a perfect field $k$
of characteristic $p>0$. Suppose $h^{1,1}_W<0$ then the following dichotomy holds:
\benum
\item $X$ is quasielliptic, or
\item $X$   is of general type and further
\benum
\item either $c_2\leq 0$, or  
\item $\Omega^1_X$ is Bogomolov unstable
\eenum
\eenum  
%
%
\end{theorem}

\begin{remark}
Recall that the quasielliptic surfaces exist if and only if $p=2$ or $p=3$ and such surfaces are of  Kodaira dimension one. In particular if $p\geq 5$ and $h^{1,1}_W<0$ then $X$ is of general type.	For $p=2,3$  using \cite[Corollary, page 480]{lang79} and the formula for $h^{1,1}_W$ it
is possible to write down examples of quasi-elliptic surfaces of
Kodaira dimension one where this invariant is negative.
\end{remark}

\subsection{Reduction to minimal model} Before proceeding further we record the following elementary lemma which
allows us to reduce the question of $h_W^{1,1}<0$ to minimal
surfaces (when such a model exists). 

\begin{lemma}\label{minimal-reduction}
	If $X$ is a smooth, projective surface over a perfect field with
	$h^{1,1}_W(X)<0$, and if $X\to X'$ is a proper birational morphism with $X'$ minimal, then
	$h^{1,1}_W(X')<0$.
\end{lemma}
\begin{proof}
	Since every proper birational morphism $X\to X'$ is a composition of a finite number of blowups at closed points, it is enough to prove the assertion for a blowup at a  closed point. Now the lemma follows from the easily established fact that under blowup at a closed point,  $h^{1,1}_W$ increases by the degree of the point (this fact can be easily established by using standard properties of Chern classes and Betti nunbers under blowups as applied to \eqref{ekedahl-formula}), so
	passing from $X$ to $X'$ involves a decrease in $h_W^{1,1}$ by this degree. Thus we
	see that  $h^{1,1}_W(X')<h^{1,1}_W(X)<0$.
\end{proof}

\subsection{Enriques classification}\label{enriques}
        We briefly recall Enriques classification of surfaces
(\cite{mumford69}, \cite{bombieri77}, \cite{bombieri76}). 
Let $X/k$ be smooth, projective surface over $k$. Then Enriques
classification is carried out by means of the Kodaira dimension
$\kappa(X)$. All surfaces with $\kappa(X)=-\infty$ are ruled
surfaces; the surfaces with $\kappa(X)=0$ comprise of K3 surfaces,
abelian surfaces, Enriques surfaces, non-classical Enriques
surfaces (in characteristic two), bielliptic surfaces and
non-classical hyperelliptic surfaces (in characteristic two and
three). The surfaces with $\kappa(X)=1$ are (properly) elliptic
surfaces and finally the surfaces with $\kappa(X)=2$ are  surfaces
of general type.

\subsection{Two proofs of non-negativity}  In this
subsection we give two proofs of the following:
\begin{proposition}\label{non-negative-hw-for-surfaces}
Let $X/k$ be a  geometrically connected, smooth projective surface.
Then for $(i,j)\neq (1,1)$ we have $h^{i,j}_W\geq 0$.
\end{proposition}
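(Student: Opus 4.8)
The plan is to bound each Hodge--Witt number $h^{i,j}_W$ for $(i,j)\neq(1,1)$ from below by zero using the formula $h^{i,j}_W = m^{i,j} + T^{i,j} - 2T^{i-1,j+1} + T^{i-2,j+2}$ (see \ref{hodge-witt-definition}) together with the explicit forms of Crew's formula for surfaces recorded in \eqref{ekedahl-formula}. Since $X$ is a surface, the only indices with $0\le i,j\le 2$ that we must handle are $(0,0),(0,1),(0,2),(1,0),(1,2),(2,0),(2,1),(2,2)$. By Hodge--Witt symmetry (\ref{hodge-witt-symmetry}) and Hodge--Witt duality (\ref{hodge-witt-duality}) it suffices to treat $(0,0),(0,1),(0,2)$.

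First I would dispose of the easy cases. For $(0,0)$: the formula reads $h^{0,0}_W = m^{0,0} = \dim_K H^0_{cris}(X/W)\otimes K = 1 \ge 0$, since all domino terms with negative upper index vanish. For $(0,1)$: the explicit Crew formula gives $h^{0,1}_W = b_1/2 \ge 0$. For $(0,2)$: we have $h^{0,2}_W = \chi(\O_X) - 1 + b_1/2$; this is non-negative because $\chi(\O_X) = 1 - h^{0,1} + h^{0,2} = 1 - q + (h^{0,1}-q) + p_g$ and the bound $h^{0,1}-q\ge 0$ from \cite{bombieri76} (recorded in Subsection~\ref{torsion}) combined with $b_1/2 = q$ yields $h^{0,2}_W = p_g + (h^{0,1}-q) \ge 0$.

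The only real content is therefore the symmetric pair $(1,0)$ and $(1,2)$ (equal by Hodge--Witt symmetry, and dual to each other as well). Here I would use $h^{1,0}_W = h^{0,1}_W = b_1/2 \ge 0$ directly from \eqref{ekedahl-formula}, which already closes the case; alternatively, unwinding $h^{1,0}_W = m^{1,0} + T^{1,0} - 2T^{0,1}$ and noting $T^{1,0}=0$ (no differential $W\Omega^1\to W\Omega^2$ in degree $0$ contributes a nontrivial domino for dimension reasons beyond the range, or more precisely $T^{1,0}$ is forced to vanish) reduces it to the same statement. I expect no genuine obstacle here: the point of the proposition is precisely that $h^{1,1}_W$ is the unique number governed by the full combination $b_1 + \tfrac{5}{6}c_2 - \tfrac{1}{6}c_1^2$ which can go negative, whereas every other $h^{i,j}_W$ is pinned to a manifestly non-negative geometric quantity ($1$, $b_1/2$, or $p_g + (h^{0,1}-q)$) by Crew's formula. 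The main care required is simply to verify that the relevant domino numbers $T^{i-1,j+1}$ entering these formulas either vanish or are absorbed, so that no negative contribution survives; this is immediate from the surface case of the formulaire in Subsection~\ref{formulaire}.
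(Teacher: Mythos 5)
Your overall strategy (reduce via Hodge--Witt symmetry and duality to the cases $(0,0)$, $(0,1)$, $(0,2)$ and read everything off the explicit surface form of Crew's formula) is essentially the paper's first proof, and the cases $(0,0)$ and $(0,1)$ are fine. But the one case that carries the actual content of the proposition, namely $(0,2)$, contains a sign error that invalidates the argument as written. From $h^{0,2}_W=\chi(\O_X)-1+b_1/2$ and $\chi(\O_X)=1-h^{0,1}+p_g$ one gets
$$h^{0,2}_W=p_g-(h^{0,1}-q),$$
not $p_g+(h^{0,1}-q)$ as you claim (your expansion $\chi(\O_X)=1-q+(h^{0,1}-q)+p_g$ should read $1-q-(h^{0,1}-q)+p_g$). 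Consequently the half of the inequality you invoke, $h^{0,1}-q\geq 0$, proves nothing here; what is needed is the other, non-trivial half, $h^{0,1}-q\leq p_g$, recorded in Subsection~\ref{torsion} and due to \cite{bombieri76}. That is exactly the inequality the paper uses, and it is a genuine theorem about the non-reducedness defect of the Picard scheme in positive characteristic. So your closing claim that every $h^{i,j}_W$ other than $h^{1,1}_W$ is ``pinned to a manifestly non-negative geometric quantity'' is misleading: $p_g-(h^{0,1}-q)$ is not manifestly non-negative, and its non-negativity is where all the work lies.

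The fix is immediate (use the upper bound instead of the lower bound), so the proof is salvageable, but as written the key step fails. Note also that the paper gives a second argument which you only gesture at and which avoids this issue entirely: for a surface the slope spectral sequence has at most one non-trivial differential, namely $H^2(X,W(\O_X))\to H^2(X,W\Omega^1_X)$, so the only possibly non-zero domino number is $T^{0,2}$; hence $T^{i-1,j+1}=0$ for every $(i,j)\neq(1,1)$, and $h^{i,j}_W=m^{i,j}+T^{i,j}+T^{i-2,j+2}$ is a sum of non-negative terms, with no case analysis and no appeal to the Bombieri--Mumford bound.
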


\begin{proof}[First proof]
The assertion is trivial for $h^{0,1}_W=h^{1,0}_W=b_1/2$. So we have
to check it for $h^{2,0}_W=h^{0,2}_W=\chi(\O_X)-1+b_1/2$. Writing out
this explicitly we have
\begin{equation}
h^{0,2}_W=h^{0,0}-h^{0,1}+h^{0,2}-1+b_1/2
\end{equation}
or as $h^{0,0}=1$ (as $X$ is geom. connected) we get
\begin{equation}
h^{0,2}_W=h^{0,2}-(h^{0,1}-q)
\end{equation}
where $q=b_1/2=\dim_k\Alb(X)$ is the dimension of the Albanese variety
of $X$. By \cite[page 25]{bombieri76} we know that $h^{0,1}-q\leq
p_g=h^{0,2}$ and so the non-negativity assertion follows.
\end{proof}

\begin{proof}[Second Proof]
We use
the definition of
\be 
	h^{i,j}_W=m^{i,j}+T^{i,j}-2T^{i-1,j+1}+T^{i-2,j+2}.
\ee To prove the
result it suffices to show that $T^{i-1,j+1}$ is zero for all
$(i,j)\neq (1,1)$. This follows from the fact that in the slope
spectral sequence of a smooth projective surface, there is at most one
non-trivial differential (see \cite{nygaard79b}, \cite[Corollary 3.14,
page 619]{illusie79b}) and this gives vanishing of the domino numbers
except possibly $T^{0,2}$, and if $(i-1,j+1)=(0,2)$ then
$(i,j)=(1,1)$.
\end{proof}

\subsection{The case $\kappa(X)=-\infty$} We begin by
stepping through the Enriques classification (see \ref{enriques}) and
verifying the non-negativity of the Hodge-Witt number in all the
cases.
\begin{proposition}\label{kappaminusinfinity}
Let $X/k$ be a smooth, projective surface. If $\kappa(X)=-\infty$
then $h^{1,1}_W\geq 0$.
\end{proposition}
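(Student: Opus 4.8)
The plan is to reduce to the case of a minimal surface by exploiting the behaviour of $h^{1,1}_W$ under blow-ups, and then to settle the proposition on the explicit minimal models provided by the Enriques classification.

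First I would record how $h^{1,1}_W$ transforms under a blow-up. If $\pi\colon X'\to X$ is the blow-up of a smooth projective surface at a closed point, then $\chi(\O_{X'})=\chi(\O_X)$, $b_1(X')=b_1(X)$, and $c_1^2(X')=c_1^2(X)-1$. Substituting into the Hodge-Witt-Noether formula \eqref{noether-formula4} gives $h^{1,1}_W(X')=h^{1,1}_W(X)+1$. Since $\kappa(X)=-\infty$, Castelnuovo's contractibility criterion shows that $X$ is obtained from some minimal model $X_0$ by a finite chain of blow-ups, so $h^{1,1}_W(X)\geq h^{1,1}_W(X_0)$; hence it is enough to treat minimal $X$.

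Next, by the Enriques classification (see \ref{enriques}) a minimal surface with $\kappa=-\infty$ is either $\P^2$ or a geometrically ruled surface $\pi\colon X=\P(\mathcal E)\to C$ over a smooth projective curve $C$ of genus $g$. For $\P^2$ one has $\chi(\O)=1$, $b_1=0$ and $c_1^2=9$, so $h^{1,1}_W=10\cdot 1-9+0=1$. For $X=\P(\mathcal E)\to C$, the Leray spectral sequence of $\pi$ (using $R^0\pi_*\O_X=\O_C$ and $R^1\pi_*\O_X=0$) gives $\chi(\O_X)=\chi(\O_C)=1-g$; together with $b_1=2g$ and $c_1^2=K_X^2=8(1-g)$ (equivalently $c_2=e(X)=2(2-2g)$, using Noether's formula \eqref{noether-formula2}) this yields $h^{1,1}_W=10(1-g)-8(1-g)+2g=2$. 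Thus $h^{1,1}_W\geq 1>0$ in every case, which proves the proposition.

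I do not expect a genuine obstacle here: the argument is essentially bookkeeping, and the only points deserving care are that the blow-up formulas and the numerical invariants of $\P^1$-bundles hold over an arbitrary perfect field of characteristic $p>0$ (they do, since both constructions are uniform in the characteristic) and that $\chi(\O_X)=1-g$ for a geometrically ruled surface, which is the Leray computation above. One could also avoid the reduction to minimal models: for a ruled surface, birational invariance forces $p_g=0$ and $h^{0,1}=q$, hence $\chi(\O_X)=1-q$ and $h^{1,1}_W=10-8q-c_1^2$; combining this with $c_1^2=K_X^2\leq 9$ when $q=0$ and $c_1^2=K_X^2\leq 8(1-q)$ when $q\geq 1$ again gives $h^{1,1}_W\geq 1$.
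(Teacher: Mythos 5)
Your proof is correct and follows essentially the same route as the paper: step through the $\kappa=-\infty$ classification ($\P^2$, rational ruled, irrational ruled) and compute $h^{1,1}_W$ explicitly from the Hodge-Witt--Noether formula, getting $h^{1,1}_W=1$ or $2$ on minimal models. Your explicit reduction to the minimal model via the blow-up formula $h^{1,1}_W(X')=h^{1,1}_W(X)+1$ is a welcome bit of extra care (the paper's proof of this proposition tacitly computes as if $X$ were minimal, deferring the blow-up bookkeeping to Lemma~\ref{minimal-reduction}), but it is not a different method.
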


\begin{proof}
Since every smooth, projective surface admits a birational morphism to a  smooth,  minimal surface, and every such morphism is composed of a finite number of blowups at closed points (which increase $h^{1,1}_W$), to prove non-negativity of $h^{1,1}_W$ we may assume that $X$ is minimal with $\kappa(X)=-\infty$.	
As $\kappa(X)=-\infty$, we know from \cite{bombieri77} that either $X$
is rational or it is ruled (irrational ruled). Assume $X$ is
irrational ruled. Then one has $c_1^2=8-8q$ and $\chi(\O_X)=1-q$. By
Noether's formula $\chi(\O_X)=\frac{1}{12}(c_1^2+c_2)$ we get
\begin{equation}
h^{1,1}_W=b_1+\frac{5}{6}4(1-q)-\frac{1}{6}8(1-q)=b_1+2(1-q)=2\geq0
\end{equation}
where we have used the fact that for a ruled surface $\Pic(X)$ is
reduced (which follows from $p_g=0$ for a ruled surface so
$H^2(X,W(\O_X))=0$), and the fact that $b_1=2q$.

If $X$ is rational, then either $X=\P^2$ or $X$ is ruled, rational. In
the first case $c_1^2=9$ and in the second case $c_1^2=8$. In both the
cases $\chi(\O_X)=1$ and we are done by an explicit calculation.
\end{proof}

\subsection{The case $\kappa(X)=0$}
\begin{proposition}\label{kappazero}
Let $X/k$ be a smooth projective, minimal surface with
$\kappa(X)=0$. Then $h^{1,1}_W\geq 0$.
\end{proposition}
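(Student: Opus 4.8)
The plan is to step through the Enriques classification of minimal surfaces with $\kappa(X)=0$ case by case, and in each case compute $h^{1,1}_W=b_1+\frac{5}{6}c_2-\frac{1}{6}c_1^2$ using \eqref{ekedahl-formula} together with the known numerical invariants of each class. Since $X$ is minimal of Kodaira dimension zero, its minimal model has $K_X$ numerically trivial, so $c_1^2=K_X^2=0$; hence $h^{1,1}_W=b_1+\frac{5}{6}c_2$. Because $c_2=\chi_{et}(X)=b_0-b_1+b_2-b_3+b_4=2-2b_1+b_2$ (using Poincar\'e duality $b_1=b_3$, $b_0=b_4=1$) and $b_2\geq 0$, the quantity $h^{1,1}_W = b_1+\frac{5}{6}(2-2b_1+b_2)$ could a priori be negative only if $b_1$ is large relative to $b_2$; so the whole point is that for each class in the classification the pair $(b_1,b_2)$ is explicitly known and one checks non-negativity directly.

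Concretely, I would organize the cases as: (i) abelian surfaces, where $b_1=4$, $b_2=6$, giving $h^{1,1}_W = 4 + \frac{5}{6}(2-8+6)=4\geq 0$; (ii) K3 surfaces, where $b_1=0$, $b_2=22$, giving $h^{1,1}_W=\frac{5}{6}\cdot 24=20\geq 0$ (alternatively $c_2=24$, $c_1^2=0$ directly); (iii) Enriques surfaces (classical and non-classical), where $b_1=0$ and $b_2$ is small but still one has $\chi(\O_X)=1$, so $c_2=12$ and $h^{1,1}_W=10\geq 0$ by the Hodge-Witt-Noether formula \eqref{noether-formula4}; (iv) bielliptic (hyperelliptic) surfaces, where $b_1=2$ and $\chi(\O_X)=0$, so $c_2=0$ and $h^{1,1}_W=b_1=2\geq 0$; and (v) the non-classical hyperelliptic surfaces in characteristics $2$ and $3$, which must be treated the same way using the tables of their invariants (again $\chi(\O_X)=0$, $c_1^2=c_2=0$, $b_1=2$, so $h^{1,1}_W=2$). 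In each case I would invoke the fact that for minimal surfaces with $\kappa(X)=0$ the classification (\cite{bombieri77}, \cite{bombieri76}) pins down $\chi(\O_X)\in\{0,1,2\}$ and $b_1$, and then either Noether's formula \eqref{noether-formula2} or the Hodge-Witt-Noether formula \eqref{noether-formula4} finishes it: indeed \eqref{noether-formula4} gives $h^{1,1}_W = 10\chi(\O_X) - c_1^2 + b_1 = 10\chi(\O_X)+b_1\geq 0$ immediately once $c_1^2=0$.

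In fact the cleanest route is to observe at the outset that a minimal surface with $\kappa(X)=0$ has $c_1^2=K_X^2=0$ (this is part of the structure theory: $4K_X\equiv 0$ or $K_X\equiv 0$ numerically in all sub-cases), so by \eqref{noether-formula4}
\begin{equation}
h^{1,1}_W = 10\chi(\O_X) + b_1,
\end{equation}
and since $\chi(\O_X)\geq 0$ for such surfaces (again from the classification) and $b_1\geq 0$ trivially, we get $h^{1,1}_W\geq 0$. The only genuine work is justifying $c_1^2=0$ and $\chi(\O_X)\geq 0$, which one reads off the Enriques--Kodaira classification class by class; the potentially delicate cases are the non-classical Enriques and non-classical hyperelliptic surfaces in characteristics $2$ and $3$, where one must be careful that $\Pic(X)$ may be non-reduced and that the formula for $h^{0,1}_W=b_1/2$ still holds — but this is already built into \eqref{ekedahl-formula}, so no separate argument is needed.

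The main obstacle, such as it is, will be making sure the list of cases in characteristics $2$ and $3$ is complete and that the numerical invariants (especially $b_1$ and $\chi(\O_X)$, hence $c_2$) are correctly quoted for the non-classical surfaces; once those are in hand the inequality is immediate from \eqref{noether-formula4}. I would present the proof as: reduce to $c_1^2=0$ via minimality and the classification, then apply the Hodge-Witt-Noether formula, then list the cases only to confirm $\chi(\O_X)\geq 0$.
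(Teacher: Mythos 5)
Your ``cleanest route'' is exactly the paper's proof: for a minimal surface with $\kappa(X)=0$ one has $c_1^2=0$, so by the Hodge--Witt--Noether formula \eqref{noether-formula4} the claim reduces to $\chi(\O_X)\geq 0$, which the paper reads off the table in \cite[page 25]{bombieri76}. Your preliminary case-by-case computation of $(b_1,b_2,\chi(\O_X))$ is correct but redundant once that observation is made.
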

\begin{proof}
This is easy: when $\kappa(X)=0$, we know that $c_1^2=0$ and so it
suffices to show that $\chi(\O_X)\geq 0$. This follows from the table
in \cite[page 25]{bombieri76}.
\end{proof}

\subsection{The case $\kappa(X)=1$}
The following proposition shows that $h^{1,1}_W\geq 0$ holds for
surfaces of $\kappa(X)=1$ unless the surface is quasi-elliptic.
There are example of quasi-elliptic surfaces for which the result
fails.
\begin{proposition}\label{kappaone}
Assume $X$ is a smooth projective, minimal surface over a perfect
field of characteristic $p>0$ with $\kappa(X)=1$. If $p=2,3$,
assume that $X$ is not quasi-elliptic. Then $h^{1,1}_W\geq 0$.
\end{proposition}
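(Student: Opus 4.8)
The plan is to use the formula $h^{1,1}_W = b_1 + \frac{5}{6}c_2 - \frac{1}{6}c_1^2$ from \eqref{ekedahl-formula} together with the fact that a minimal surface with $\kappa(X)=1$ has $c_1^2 = K_X^2 = 0$; thus $h^{1,1}_W = b_1 + \frac{5}{6}c_2$, and since $b_1 \geq 0$ it suffices to prove $c_2 = \chi_{et}(X) \geq 0$. By Noether's formula \eqref{noether-formula2} this is equivalent to $\chi(\mathcal{O}_X) \geq 0$ (because $c_1^2 = 0$ forces $c_2 = 12\chi(\mathcal{O}_X)$), so the whole statement reduces to the inequality $\chi(\mathcal{O}_X)\geq 0$ for a minimal properly elliptic surface that is not quasi-elliptic (in characteristics $2,3$).

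First I would invoke the theory of elliptic fibrations: a minimal surface with $\kappa(X)=1$ admits an elliptic (or quasi-elliptic, which we exclude) fibration $f: X \to C$ over a smooth curve $C$, and one has the canonical bundle formula $K_X = f^*(K_C \otimes L) \otimes \mathcal{O}_X(\sum (a_i) F_i)$ describing $K_X$ in terms of the base, the line bundle $L$ with $\deg L = \chi(\mathcal{O}_X)$, and the multiple fibres. The key input is that for a genuine elliptic fibration (not quasi-elliptic), $\deg L = \chi(\mathcal{O}_X) \geq 0$; this is a standard fact about elliptic surfaces (see Bombieri--Mumford, \cite{bombieri77}), following from the analysis of $R^1f_*\mathcal{O}_X$ and the fact that the fibres are genuinely elliptic curves so that the local invariants are non-negative. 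Once $\chi(\mathcal{O}_X) \geq 0$ is in hand, $c_2 = 12\chi(\mathcal{O}_X) \geq 0$ and hence $h^{1,1}_W = b_1 + \frac{5}{6}c_2 \geq 0$.

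Alternatively, and perhaps more in the spirit of the surrounding arguments, I would simply quote the classification table in \cite[page 25]{bombieri76}, which lists $\chi(\mathcal{O}_X)$ (equivalently the possible values of the numerical invariants) for surfaces of each Kodaira dimension; for properly elliptic surfaces this table shows $\chi(\mathcal{O}_X) \geq 0$ directly (the cases with $\chi(\mathcal{O}_X) < 0$ that occur among elliptic surfaces of $\kappa = 1$ are exactly the quasi-elliptic ones, which is precisely why they must be excluded in characteristics $2$ and $3$). This mirrors the proof of Proposition~\ref{kappazero}, which disposed of the $\kappa(X)=0$ case the same way.

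\textbf{Main obstacle.} The real content is entirely in the inequality $\chi(\mathcal{O}_X) \geq 0$ and in understanding precisely why quasi-elliptic surfaces are the exceptions: the difference between elliptic and quasi-elliptic fibrations shows up in the generic fibre (an elliptic curve versus a cuspidal rational curve), which affects $R^1 f_* \mathcal{O}_X$ and can make $\deg L = \chi(\mathcal{O}_X)$ negative in the quasi-elliptic case. So the step I expect to require the most care is correctly citing (or reproving) the non-negativity of $\chi(\mathcal{O}_X)$ for non-quasi-elliptic properly elliptic surfaces; everything else is a short computation with \eqref{ekedahl-formula} and Noether's formula. I would keep the exposition short by leaning on \cite{bombieri76,bombieri77} exactly as the $\kappa(X)=0$ case does.
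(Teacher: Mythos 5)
Your proposal is correct and follows essentially the same route as the paper: since $X$ is minimal with $\kappa(X)=1$ (and not quasi-elliptic) it is properly elliptic with $c_1^2=0$, so $h^{1,1}_W=b_1+\tfrac{5}{6}c_2$ and everything reduces to $c_2\geq 0$ (equivalently $\chi(\O_X)\geq 0$ via Noether), which the paper likewise settles by citation — to the Euler-number formula for elliptic fibrations in Cossec--Dolgachev and to \cite{bombieri76} — rather than by the canonical bundle formula. One small correction to your sketch: in the Bombieri--Mumford canonical bundle formula one has $\deg L=\chi(\O_X)+\length(T)$ with $T$ the torsion of $R^1f_*\O_X$, not $\deg L=\chi(\O_X)$, but this does not affect the argument since the fact actually needed, $\chi(\O_X)\geq 0$ for genuinely elliptic fibrations, is exactly the standard result you cite.
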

\begin{proof}
Under our hypothesis,  $X$ is a properly elliptic surface (i.e.,
the generic fibre is smooth curve of genus $1$  and $c_1^2=0$.
Hence it suffices to verify that $c_2\geq 0$. As
$c_2=\chi_{et}(X)$, the required inequality is equivalent to
proving $\chi_{et}(X)\geq 0$. This inequality is implicit  in
\cite{bombieri76}; it can also be proved directly using the Euler
characteristic formula (see \cite{cossec-dolgachev}[page 290,
Proposition~5.1.6] and the paragraph preceding it).
\end{proof}
\subsection{Proof of Theorem~\ref{negative-hw11}}
Now we can assemble various components of the proof.  Assume that
$X$ has $h^{1,1}_W<0$. By Proposition~\ref{kappaminusinfinity} we
have $h^{1,1}_W\geq 0$ for $\kappa(X)=-\infty$, so we may assume
that $\kappa(X)\geq 0$. Then by Lemma~\ref{minimal-reduction} we
may assume that $X$ is already minimal. Now by
Proposition~\ref{kappazero} and Proposition~\ref{kappaone} we have
$h^{1,1}_W\geq 0$ if $0\leq \kappa(X)\leq 1$ and $X$ is not quasi-elliptic. So if $h^{1,1}_W<0$ and $X$ is not quasi-elliptic then one has $\kappa(X)>1$ and so $X$ is of general type.

So suppose $X$ is of general type with $h^{1,1}_W<0$. Observe that one has the tautology $c_2\leq 0$ or $c_2>0$. If the first of these holds then there is nothing to prove. So suppose that $X$ is of general type with $h^{1,1}_W<0$ and $c_2>0$ then we claim that $\Omega^1_X$ is Bogomolov unstable (see \cite{shepherd-barron91b} for the definition of Bogomolov stability). This follows from
	\cite[Corollary 15]{shepherd-barron91b}. To see that the
	conditions of that corollary are valid it suffices to verify that
	$c_1^2>\frac{16p^2}{(4p^2-1)}c_2$. By \eqref{ekedahl-formula}, one has $h^{1,1}_W<0\implies c_1^2>5c_2+6b_1>5c_2$. On the other hand by \cite[Corollary 15]{shepherd-barron91b} if $c_2>0$ and $c_1^2>\frac{16p^2}{4p^2-1}c_2$, then $\Omega_X^1$ is Bogomolov unstable. As
	\begin{equation}
	4< \frac{16x^2}{(4x^2-1)}\leq 4.26 <5\cdots
	\end{equation}
	for $x\geq 2$ our claim of Bogomolov unstability of $\Omega^1_X$ follows from Shepherd-Barron's
	result.

\begin{remark}
Surfaces of general type with $c_2\leq 0$ are studied in   \cite{shepherd-barron91b} where it is shown, for instance that if $c_2<0$ then $X$ is also uniruled (a result conjectured by Michel Raynaud).
\end{remark}

\section{Chern class inequalities}\label{chern-inequalities}
\subsection{Elementary observations}
In this section we study the Chern class inequality $c_1^2\leq
5c_2$ and a weaker variant $c_1^2\leq 5c_2+6b_1$. These were
studied in characteristic zero in \cite{deven76a}. It is, of course, well-known that
$c_1^2\leq 5c_2$ fails for some surfaces in positive characteristic. The first observation we
have, albeit an elementary one, is that the obstructions to
proving $c_1^2\leq 5c_2$ are of de Rham-Witt (i.e. involving
torsion in the slope spectral sequence) and crystalline (i.e.
involving slopes of Frobenius on $H^2_{cris}(X/W)$) in nature. This has not
been noticed before.

Let us begin by recording some trivial but important consequences
of the remarkable formula for $h^{1,1}_W$ (see
\ref{ekedahl-formula}). The main reason for writing them out
explicitly is to illustrate the fact that obstructions to Chern
class inequalities for surfaces are of crystalline (involving
slope of Frobenius) and de Rham-Witt (involving the domino number
$T^{0,2}$).

In what follows we will write
\beas
c_i&=&c_i(T_X)\\
b_1&=&\dim_K
H^1_{cris}(X/W)\tensor K,\\
T^{0,2}&=&\dim_k
\dom^{0,2}(H^2(X,W(\O_X))\to H^2(X,W\Omega^1_X)).
\eeas
\begin{proposition}\label{chern-inequalities2} Let $X$ be a smooth, projective surface over a
perfect field of characteristic $p>0$.
\begin{enumerate}
\item Then the following conditions are equivalent:
\begin{enumerate}
\item the inequality  $c^2_1\leq 5c_2$ holds,
\item the inequality $ h^{1,1}_W\geq b_1 $ holds (if $X$ is Mazur-Ogus this is equivalent to $h^{1,1}\geq b_1$),
\item the inequality $2T^{0,2}+b_1 \leq m^{1,1}$ holds.
\end{enumerate}
\item If $X$ is a Hodge-Witt surface. Then the
following assertions are equivalent
\begin{enumerate}
\item the inequality $c_1^2\leq 5c_2$ holds,
\item the inequality $m^{1,1}\geq 2m^{0,1}$ holds.
\end{enumerate}
\end{enumerate}
\end{proposition}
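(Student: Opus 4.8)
The plan is to derive everything from the Hodge-Witt-Noether formula \eqref{noether-formula4}, namely $h^{1,1}_W = 10\chi(\O_X) - c_1^2 + b_1$, together with Noether's formula \eqref{noether-formula2}, $12\chi(\O_X) = c_1^2 + c_2$. Eliminating $\chi(\O_X)$ gives $h^{1,1}_W = b_1 + \frac{5}{6}c_2 - \frac{1}{6}c_1^2$, so $6(h^{1,1}_W - b_1) = 5c_2 - c_1^2$. Hence the inequality $c_1^2 \leq 5c_2$ is \emph{exactly} equivalent to $h^{1,1}_W \geq b_1$; this is purely formal and establishes (a) $\Leftrightarrow$ (b) in part (1). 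For part (1)(b) $\Leftrightarrow$ (c), I would substitute the definition of the Hodge-Witt number (Subsection~\ref{hodge-witt-definition}) in bidegree $(1,1)$ for a surface: $h^{1,1}_W = m^{1,1} + T^{1,1} - 2T^{0,2} + T^{-1,3}$. Since a surface has $T^{i,j}=0$ outside $(i,j)=(0,2)$ (as noted in the second proof of Proposition~\ref{non-negative-hw-for-surfaces}: in the slope spectral sequence of a surface there is at most one nontrivial differential), we get $h^{1,1}_W = m^{1,1} - 2T^{0,2}$, and therefore $h^{1,1}_W \geq b_1 \Leftrightarrow m^{1,1} - 2T^{0,2} \geq b_1 \Leftrightarrow 2T^{0,2} + b_1 \leq m^{1,1}$. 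That closes the cycle in (1).

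For part (2), the extra input is that $X$ is Mazur-Ogus. By Ekedahl's theorem (Subsection~\ref{ekedahl-theorems}, the Mazur-Ogus case), $h^{i,j}_W = h^{i,j}$ for all $i,j$; in particular $h^{1,1}_W = h^{1,1}$. Substituting into the equivalence of part (1) immediately replaces $h^{1,1}_W \geq b_1$ by $h^{1,1} \geq b_1$, giving (a) $\Leftrightarrow$ (b). The equivalence with (c) is inherited verbatim from part (1) since that step used only that $X$ is a surface, not any Mazur-Ogus hypothesis. (One could equivalently remark that for Mazur-Ogus surfaces $T^{0,2} = h^{0,2} - m^{0,2} = p_g - m^{0,2}$ by Theorem~\ref{domino-numbers-for-mazur-ogus} and rewrite (c) accordingly, but that refinement is not needed for the stated equivalence.)

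For part (3), the hypothesis is that $X$ is Hodge-Witt, i.e.\ the slope spectral sequence degenerates at $E_1$, which forces all domino numbers to vanish; in particular $T^{0,2} = 0$. Then condition (1)(c), $2T^{0,2} + b_1 \leq m^{1,1}$, becomes simply $b_1 \leq m^{1,1}$. Now I use the slope-number relation to Betti numbers (Subsection~\ref{ekedahl-theorems}): $b_1 = \sum_{i+j=1} m^{i,j} = m^{1,0} + m^{0,1} = 2m^{0,1}$, the last equality by slope-number symmetry $m^{i,j} = m^{j,i}$. Hence $b_1 \leq m^{1,1}$ is exactly $m^{1,1} \geq 2m^{0,1}$, and combining with part (1) gives (a) $\Leftrightarrow$ (b).

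None of the three parts presents a serious obstacle: the whole proposition is a sequence of substitutions into the Hodge-Witt-Noether formula, the definition of Hodge-Witt numbers, and the vanishing of domino numbers for surfaces. The only point requiring any care — and thus the "main obstacle," such as it is — is making sure that in the Hodge-Witt number expansion $h^{1,1}_W = m^{1,1} + T^{1,1} - 2T^{0,2} + T^{-1,3}$ every domino term other than $T^{0,2}$ genuinely vanishes for a smooth projective surface; this rests on the cited fact (Nygaard, Illusie) that the only potentially nonzero differential in the slope spectral sequence of a surface is $H^2(X,W\O_X) \to H^2(X,W\Omega^1_X)$. Once that is granted, everything else is bookkeeping with Noether's formula and the standard Ekedahl formulaire.
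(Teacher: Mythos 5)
Your proposal is correct and follows exactly the route the paper intends: the paper's proof simply lists the identities $h^{1,1}_W=m^{1,1}-2T^{0,2}$, $h^{1,1}_W=\frac{5c_2-c_1^2}{6}+b_1$, $b_1=m^{0,1}+m^{1,0}$, $m^{0,1}=m^{1,0}$ (plus $T^{0,2}=0$ in the Hodge-Witt case and $h^{1,1}_W=h^{1,1}$ in the Mazur-Ogus case) and leaves the substitutions to the reader, which is precisely the bookkeeping you carry out. Your only addition is the explicit verification that the domino terms other than $T^{0,2}$ vanish in the expansion of $h^{1,1}_W$, which is the correct justification and is the same fact the paper cites in the second proof of Proposition~\ref{non-negative-hw-for-surfaces}.
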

\begin{proof}
All the assertions are trivial consequences of the following formulae, and the fact that if $X$ is Hodge-Witt then $T^{0,2}=0$.
\begin{eqnarray}
h^{1,1}_W &=&m^{1,1}-2T^{0,2}\\
h^{1,1}_W &=&\frac{5c_2-c_1^2}{6}+b_1\\
b_1&=&m^{0,1}+m^{1,0}\\
m^{0,1}&=&m^{1,0},
\end{eqnarray}
and are left to the reader.
\end{proof}
\subsection{Consequences of $h^{1,1}_W\geq 0$}
We also record the main reason for our interest in $h^{1,1}_W\geq
0$.
\begin{proposition} Let $X/k$ be a smooth projective surface over a
perfect field $k$. Then
$$h^{1,1}_W\geq 0$$
holds if and only if the
inequality:
\begin{equation}\label{weakBMY}
c_1^2 \leq 5c_2+6b_1.
\end{equation}
holds. On the other hand if $h_W^{1,1}<0$, then
\begin{equation}\label{hodge-witt-fault-line}
c_1^2\geq 5c_2.
\end{equation}
\end{proposition}

\begin{proof}
The asserted inequality  follow easily from Ekedahl's formula
\eqref{ekedahl-formula} for $h^{1,1}_W$:
\begin{equation}
h^{1,1}_W=b_1+\frac{5}{6}c_2-\frac{1}{6}c_1^2
\end{equation}
Hence we see that $h^{1,1}_W\geq 0$ gives $h^{1,1}_W>0$ gives
$5c_2-c_1^2>-6b_1$ or $c_1^2< 5c_2+6b_1$.  Further we see that
$h^{1,1}_W<0$ implies that
\begin{equation}
b_1+\frac{5}{6}c_2-\frac{1}{6}c_1^2<0
\end{equation}
As $b_1\geq 0$ the term on the left is not less than
$\frac{5}{6}c_2-\frac{1}{6}c_1^2$ and so
\begin{equation}
\frac{5}{6}c_2-\frac{1}{6}c_1^2\leq h^{1,1}_W <0,
\end{equation}
and the result follows.
\end{proof}

\begin{remark} Let $X$ be a smooth projective surface of general
type. Clearly when $h^{1,1}_W<0$  the Bogomolov-Miyaoka-Yau
inequality also fails. On the other hand if $X$ satisfies $c_1^2\leq
3c_2$ then $h^{1,1}_W\geq 0$. Thus the point of view which seems
to emerge from the results of this section is that surfaces with
$h^{1,1}_W<0$ are somewhat more exotic than the ones for which
$h^{1,1}_W\geq 0$. Indeed as was pointed out in
\cite{ekedahl-diagonal}, $h^{1,1}_W$ is a deformation invariant so
surfaces with $h^{1,1}_W<0$ do not even admit deformations which
lift to characteristic zero.
\end{remark}

\begin{corollary} If $X$ is a smooth projective surface for which
\eqref{weakBMY} fails to hold, then the slope spectral sequence of
$X$ has infinite torsion and does not degenerate at $E_1$.
\end{corollary}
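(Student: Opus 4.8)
The plan is to convert the failure of \eqref{weakBMY} into the statement $h^{1,1}_W<0$, then into positivity of the single domino number $T^{0,2}$, and finally to read the conclusion off the structure theory of the slope spectral sequence.

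First I would dispose of the second half of the chain \eqref{weakBMY}: since $b_1=2q\le 2h^{0,1}$ (the inequality $0\le h^{0,1}-q$ is recorded in Subsection~\ref{torsion}), one always has $5c_2+6b_1\le 5c_2+12h^{0,1}$. Hence \eqref{weakBMY} fails precisely when $c_1^2>5c_2+6b_1$, i.e. when $b_1+\tfrac56c_2-\tfrac16c_1^2<0$, which by Ekedahl's formula \eqref{ekedahl-formula} is exactly $h^{1,1}_W<0$ (this is the content of the Proposition immediately preceding the corollary).

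Next I would feed this into the identity $h^{1,1}_W=m^{1,1}-2T^{0,2}$ obtained in the proof of Proposition~\ref{chern-inequalities2}. Since slope numbers are non-negative, $m^{1,1}\ge 0$, so $2T^{0,2}=m^{1,1}-h^{1,1}_W\ge -h^{1,1}_W>0$; in particular $T^{0,2}>0$. (On a surface $T^{0,2}$ is in any case the only domino number which can fail to vanish, by the second proof of Proposition~\ref{non-negative-hw-for-surfaces}, but we do not need that here.)

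Finally, $T^{0,2}$ is by definition the dimension of the domino attached to the differential $d\colon H^2(X,W(\O_X))\to H^2(X,W\Omega^1_X)$ in the slope spectral sequence. If $T^{0,2}>0$ this differential is non-zero — a vanishing differential has a trivial associated domino — so the slope spectral sequence does not degenerate at $E_1$. Moreover a non-trivial domino is, by its very definition, an $R$-module that is not of finite type over $W$ (it is filtered by copies of the standard one-dimensional dominoes $U_j$, none of which is of finite type over $W$); hence $H^2(X,W(\O_X))$ and $H^2(X,W\Omega^1_X)$ are not finitely generated $W$-modules, which is exactly what it means for the slope spectral sequence of $X$ to carry infinite torsion. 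The whole argument is a short chain of implications; the only non-formal ingredient, the identity $h^{1,1}_W=m^{1,1}-2T^{0,2}$, is already established in the excerpt, so the one point demanding care is the bookkeeping remark that the failure of \eqref{weakBMY} corresponds to the \emph{strict} inequality $h^{1,1}_W<0$ rather than to $h^{1,1}_W\le 0$.
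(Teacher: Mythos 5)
Your argument is correct and is essentially the paper's own proof: the paper likewise reduces the failure of \eqref{weakBMY} to $h^{1,1}_W<0$ via Ekedahl's formula and then reads $T^{0,2}\geq 1$ off the identity $h^{1,1}_W=m^{1,1}-2T^{0,2}$ together with $m^{1,1}\geq 0$. Your additional unwinding of why $T^{0,2}>0$ forces both non-degeneration at $E_1$ and infinite torsion (via the filtration of a non-trivial domino by the $U_j$) is a correct elaboration of a step the paper leaves implicit.
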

\begin{proof}
Indeed, this follows from the formula
\begin{equation}
h^{1,1}_W=m^{1,1}-2T^{0,2},
\end{equation}
which is just the definition of $h^{1,1}_W$. The claim now follows as
$m^{1,1}\geq 0$ and hence $h^{1,1}_W<0$ implies that $T^{0,2}\geq
1$.
\end{proof}

\begin{remark}
        Thus we see that the  counter examples to Bogomolov-Miyaoka-Yau inequality given
in \cite{szpiro79} are not Hodge-Witt.
\end{remark}

\subsection{The Surfaces for which $c_1^2\leq 5c_2+6b_1$ holds or equivalently $h^{1,1}_W\geq 0$ holds}
Our next result provides a large class of surfaces for which
$h^{1,1}_W\geq 0$ does hold.
\begin{theorem}\label{surfaces-with-non-negative-h11} Let $X/k$ be
a smooth, projective surface over a perfect field of
characteristic $p>0$. Assume $X$ satisfies any one of the
following hypothesis:
\begin{enumerate}
\item the surface  $X$ is Hodge-Witt (in the sense of (\ref{la:hodge-witt-ordinary-def})),
\item or $X$ is ordinary (in the sense of (\ref{la:hodge-witt-ordinary-def})),
\item or $X$ is a Mazur-Ogus surface,
\item or assume $p\geq 3$ and $X$ is a Deligne-Illusie surface,
\item or assume $p=2$, and $X$ lifts to $W_2$.
\end{enumerate}
Then $X$ satisfies \eqref{weakBMY}:
\bes 
c_1^2\leq 5c_2+6b_1
\ees
\end{theorem}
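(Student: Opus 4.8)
The plan is to reduce the theorem to the single inequality $h^{1,1}_W\geq 0$, which by the Proposition preceding the statement (specifically the equivalence $h^{1,1}_W\geq 0\iff c_1^2\leq 5c_2+6b_1$) is exactly what \eqref{weakBMY} asserts. So it suffices, case by case, to establish $h^{1,1}_W\geq 0$ under each of the five hypotheses. I would organize the five cases into two groups, exploiting the implications already available in the excerpt: the Deligne-Illusie cases reduce to the Mazur-Ogus case, and the Mazur-Ogus and ordinary cases reduce to the Hodge-Witt case, so that ultimately everything rests on two genuinely distinct arguments — one for Hodge-Witt surfaces, one for surfaces that lift to $W_2$ in characteristic $2$.

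First I would handle the Hodge-Witt case (hypothesis (1)). By definition, if $X$ is Hodge-Witt then the slope spectral sequence degenerates at $E_1$, hence all domino numbers vanish; in particular $T^{0,2}=0$. Then from the formula $h^{1,1}_W=m^{1,1}-2T^{0,2}$ (the definition of $h^{1,1}_W$ for surfaces, where the only possibly nonzero domino is $T^{0,2}$) we get $h^{1,1}_W=m^{1,1}\geq 0$ since slope numbers are non-negative. This disposes of (1). Next, hypothesis (2): an ordinary surface (in the sense of Bloch-Kato) is Hodge-Witt — this is standard (ordinary $\Rightarrow$ Hodge-Witt, e.g. by Illusie-Raynaud), so (2) follows from (1). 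For hypothesis (3), $X$ Mazur-Ogus: here $T^{0,2}$ need not vanish, so I would instead use Ekedahl's theorem (quoted in the Formulaire as $h^{i,j}_W=h^{i,j}$ for Mazur-Ogus varieties), which gives $h^{1,1}_W=h^{1,1}\geq 0$ immediately since Hodge numbers are non-negative. For hypothesis (4), $p\geq 3$ and $X$ a Deligne-Illusie surface: by Theorem~\ref{deligne-illusie-thm} (valid since $p>2=\dim X$), $X$ is Mazur-Ogus, so (4) reduces to (3).

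The remaining and most delicate case is hypothesis (5): $p=2$ and $X$ lifts to $W_2$. Here Theorem~\ref{deligne-illusie-thm} does not apply (it needs $p>\dim X=2$), and crystalline cohomology is not assumed torsion-free, so neither the Hodge-Witt nor the Mazur-Ogus argument is directly available. The approach I would take is to show that liftability to $W_2$ alone forces enough control on the Hodge-de Rham spectral sequence and on $T^{0,2}$ to keep $h^{1,1}_W\geq 0$. Concretely, I expect one argues that a $W_2$-lifting gives a splitting (à la Deligne-Illusie) in degrees below $p=2$ of the de Rham complex, which controls the differential $H^1(X,\O_X)\to H^1(X,\Omega^1_X)$ and the relevant degeneration in low degree; combined with the crystalline–de Rham-Witt dictionary this should bound $2T^{0,2}$ by $m^{1,1}$, i.e. give $h^{1,1}_W\geq 0$ directly, or alternatively one shows $h^{1,1}_W$ can only drop below the Mazur-Ogus value by a controlled torsion contribution that a $W_2$-lift annihilates. \textbf{This last case is the main obstacle}: the characteristic-two, possibly-torsion situation is exactly where the slope spectral sequence machinery is most subtle, and making the Deligne-Illusie splitting interact correctly with de Rham-Witt torsion (rather than just with Hodge cohomology) is the technical heart of the proof. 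I would expect the cleanest route to be an appeal to the known structure of $H^2_{cris}$ and the slope spectral sequence for surfaces lifting to $W_2$, reducing (5) to a statement about $T^{0,2}$ vanishing or being dominated by $m^{1,1}$ under the $W_2$-lifting hypothesis.
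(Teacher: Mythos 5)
Your reduction to $h^{1,1}_W\geq 0$ and your treatment of cases (1)--(4) are correct and essentially identical to the paper's: (1) via $T^{0,2}=0$ and $h^{1,1}_W=m^{1,1}\geq 0$, (2) via ordinary $\Rightarrow$ Hodge-Witt, (3) via Ekedahl's equality $h^{1,1}_W=h^{1,1}$ (the paper also supplies an elementary alternative, deriving $5c_2-c_1^2+6b_1=6h^{1,1}$ from Noether's formula together with $b_2=2h^{0,2}+h^{1,1}$ and $b_1=2h^{0,1}$, but invoking Ekedahl is legitimate), and (4) by reduction to (3) through Theorem~\ref{deligne-illusie-thm}.

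The genuine gap is case (5), which you correctly identify as the hard point but then do not prove: what you offer is a speculative program (make the Deligne--Illusie splitting in degrees $<p=2$ interact with the de Rham--Witt torsion so as to bound $2T^{0,2}$ by $m^{1,1}$), and this is not an argument — in characteristic $2$ the Deligne--Illusie decomposition is only available in degrees $<2$, which says nothing directly about the differential out of $H^2(X,W(\O_X))$ or about $T^{0,2}$, and no mechanism is given by which a $W_2$-lift would annihilate the relevant torsion contribution. The paper's actual route is entirely different and avoids the de Rham--Witt complex in this case: it splits into two subcases according to whether $X$ is ruled. If $X$ is ruled one is in Kodaira dimension $-\infty$ and $h^{1,1}_W\geq 0$ has already been checked directly (Proposition~\ref{kappaminusinfinity}); if $X$ is not ruled, the hypothesis that $X$ lifts to $W_2$ forces $c_1^2\leq 3c_2$ by the geometric (Bogomolov-instability) results of \cite{shepherd-barron91b}, which is stronger than \eqref{weakBMY}. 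So the missing ingredient in your write-up is precisely this appeal to Shepherd-Barron's theorem plus the Enriques-classification step for ruled surfaces; without it, case (5) remains open in your proposal.
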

\begin{proof}
        The assertion that \eqref{weakBMY} holds is equivalent to
$h^{1,1}_W\geq 0$ where:
\bes
h^{1,1}_W=m^{1,1}-2T^{0,2}=\frac{5c_2-c_1^2}{6}+b_1.
\ees
 Thus it suffices to prove that $h^{1,1}_W\geq
0$ under the any of the assumptions (1)--(5). If (1) holds then the asserted inequality  follows from the fact that $T^{0,2}=0$ as $X$ is
Hodge-Witt and $m^{1,1}\geq 0$ by definition. If (2) holds then $X$ is ordinary and in particular $X$ is Hodge-Witt and so the result follows from the implication (1) $\implies$ \eqref{weakBMY}.   If (3) holds we can simply invoke \cite[Corollary 3.3.1, Page
86]{ekedahl-diagonal} which gives us  $h^{1,1}_W=h^{1,1}$. However
we give an elementary proof in the spirit of this paper. We will
use the formulas $\chi(\O_X)=1-h^{0,1}+h^{0,2}$ and
$c_2=\chi_{et}(X)=1-b_1+b_2-b_3+b_4=2-2b_1+b_2$. By
(\ref{noether-formula1}) we have
\begin{equation}
12\chi(\O_X)=c_1^2+c_2,
\end{equation}
or equivalently $c_1^2=12\chi(\O_X)-c_2$. Now the assertion will
follow if we prove that $c_1^2\leq 5c_2+6b_1$. But
\begin{eqnarray}
5c_2-c_1^2+6b_1& = &5c_2-(12\chi(\O_X)-c_2)+b_1\\
                &=&6c_2-12\chi(\O_X)+6b_1\\
                &=&6(2-2b_1+b_2)-12\chi(\O_X)+6b_1\\
                &=&12-12b_1+6b_2-12(1-h^{0,1}+h^{0,2})+6b_1\\
                &=&6b_1-12h^{0,1}+6b_2-12h^{0,2}
\end{eqnarray}
Thus we see that
$5c_2-c_1^2+6b_1=6(b_1-2h^{0,1})+6(b_2-2h^{0,2})$. By
\cite{deligne87} and the hypothesis that the crystalline
cohomology of $X$ is torsion free we have
$b_2=h^{0,2}+h^{1,1}+h^{2,0}$. Or equivalently by Serre duality we
get $b_2=2h^{0,2}+h^{1,1}$ and again by the hypothesis that the
crystalline cohomology of $X$ is torsion free we see that
$\Pic(X)$ is reduced and so $b_1=2h^{0,1}$. Thus
$5c_2-c_1^2+6b_1=6h^{1,1}$ and so is non-negative and in
particular we have deduced that $h^{1,1}_W=h^{1,1}$. 
Now the implication (4) $\implies$ \eqref{weakBMY} follows from
the implication  (3) $\implies$ \eqref{weakBMY} (via \cite{deligne87})--see Theorem~\ref{deligne-illusie-thm}.
The fifth 
assertion (5) $\implies$ \eqref{weakBMY} falls into two cases: assume $X$ is not ruled,
then  this follows from \cite{shepherd-barron91b} as the
hypothesis  (5) implies that $c_1^2\leq 3c_2$ by \cite{shepherd-barron91b}. If $X$ is ruled one
deduces this from our earlier result on surfaces with Kodaira
dimension $-\infty$.
\end{proof}

The following corollary is immediate:
\begin{corollary}\label{surfaces-with-non-negative-h11-cor}
Under the hypothesis of Theorem~\ref{surfaces-with-non-negative-h11} one has
$$ c_1^2\leq \max(5c_2+6b_1,6c_2).$$
\end{corollary}

\begin{remark}
For this remark assume that the characteristic $p\geq 3$. In the
absence of crystalline torsion, $h^{1,1}_W$ detects obstruction to
lifting to $W_2$. More precisely, if $X$ has torsion free
$H^2_{cris}(X/W)$, and $h^{1,1}_W<0$, then $X$ does not lift to
$W_2$.
\end{remark}

\subsection{Examples of Szpiro, Ekedahl}\label{examples-of-szpiro}
As was pointed out in \cite{ekedahl-diagonal} the counter examples
constructed by Szpiro in \cite{szpiro79} also provide examples of
surfaces which are beyond the \eqref{weakBMY} faultline. We
briefly recall these examples. In \cite{szpiro79} Szpiro
constructed examples of smooth projective surfaces $S$ together
with a smooth, projective and non-isotrivial fibration $f:S\to C$
where the fibres has genus $g\geq 2$ and $C$ has genus $q\geq 2$.
Let $f_n: S_n\to C$ be the fibre product of $f$ with the
$n^{th}$-iterate of Frobenius $F_{C/k}: C\to C$. Then
\begin{eqnarray}
c_2(S_n)&=&4(g-1)(q-1)\\
c_1^2(S_n)&=&p^nd+8(g-1)(q-1)
\end{eqnarray}
where $d=deg(f_*(\Omega^1_{X/C}))$ is a positive integer. Thus in
this case, as was pointed in \cite{ekedahl-diagonal},
$h^{1,1}_W\to -\infty$ as $n\to \infty$. Further observe, as $d\geq 1$, that 
\be 
c_1^2>pc_2
\ee
for $n$ large; and also that for any given integer $m\geq 1$, there exists a smooth, projective, minimal surface of general type such that $c_1^2>p^m c_2$.

\subsection{Weak Bogomolov-Miyaoka-Yau inequality holds in characteristic zero}
Assume for this remark that $k=\C$, and that
$X$ is a smooth, projective surface. Then using the Hodge
decomposition for $X$, Noether's formula can be written as
\begin{equation}
h^{1,1}=10\chi(\O_X)-c^2_1+b_1,
\end{equation}
and as the left hand-side of this formula is always non-negative
we deduce that 
\be c_1^2\leq 10\chi(\O_X)+b_1.\ee 
This is easily seen
to be equivalent to 
\be 
c_1^2\leq 5c_2+6b_1.
\ee

\subsection{Lower bounds on $h^{1,1}_W$}\label{lower-bounds}
In this subsection we are interested in lower bounds for
$h^{1,1}_W$. It turns out that unless we are in characteristic
$p\leq 7$, the situation is not too bad thanks to a conjecture of
Raynaud (which is a theorem of Shepherd-Barron).
\begin{proposition}\label{raynaud-lower-bound} Let $X$
be a smooth projective surface of general type. Then
\begin{enumerate}
\item except  when $p\leq 7$ and $X$ is fibred over a curve of
genus at least two and the generic fibre is a singular rational
curve of arithmetic genus at most four we have
\begin{equation}
-c_1^2\leq h^{1,1}_W \leq h^{1,1}.
\end{equation}
\item If $c_2>0$ then $h^{1,1}_W>-\frac{1}{6}c_1^2$.
\item If  $X$ is not uniruled then
\begin{equation}
-\frac{1}{6}c_1^2\leq h^{1,1}_W\leq h^{1,1}.
\end{equation}
\item If  $h^{1,1}_W<-\frac{1}{6}c_1^2$ then there exists
a morphism $X\to C$ with connected fibres and $C$ has genus at
least one.
\end{enumerate}
\end{proposition}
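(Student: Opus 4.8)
The plan is to read everything off Ekedahl's formula $h^{1,1}_W = b_1 + \tfrac{5}{6}c_2 - \tfrac{1}{6}c_1^2$ (from \eqref{ekedahl-formula}) together with the theorem of Shepherd-Barron (conjectured by Raynaud) giving a lower bound on $c_2$ for surfaces of general type. First I would recall the precise form of that result: for a smooth projective surface $X$ of general type, $c_2(X) \geq 0$, with equality forcing a rather restricted structure, and more precisely that the only way to have $c_2 < 0$ is in characteristic $p \leq 7$ with $X$ fibred over a curve of genus $\geq 2$ whose generic fibre is a singular rational curve of arithmetic genus $\leq 4$. This is exactly the exceptional locus named in part (1).

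For part (1): outside the stated exceptional case we have $c_2 \geq 0$, hence $\tfrac{5}{6}c_2 \geq 0$, so $h^{1,1}_W = b_1 + \tfrac{5}{6}c_2 - \tfrac{1}{6}c_1^2 \geq b_1 - \tfrac{1}{6}c_1^2 \geq -\tfrac{1}{6}c_1^2 \geq -c_1^2$, where the last inequality uses $c_1^2 > 0$ for a minimal surface of general type (and for non-minimal $X$ one reduces to the minimal model as in Lemma~\ref{minimal-reduction}, or simply notes $c_1^2 \geq 0$ suffices since $-\tfrac16 c_1^2 \ge -c_1^2$ when $c_1^2 \ge 0$ — care is needed here about the sign conventions, but for general type $c_1^2 > 0$ always holds on the minimal model and blowups only increase $h^{1,1}_W$). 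The upper bound $h^{1,1}_W \leq h^{1,1}$ is just Ekedahl's inequality from \ref{ekedahl-theorems}. For part (2): if $c_2 > 0$ then strictly $\tfrac{5}{6}c_2 > 0$ and $b_1 \geq 0$, so $h^{1,1}_W > -\tfrac{1}{6}c_1^2$ immediately. For part (3): if $X$ is not uniruled, then by Shepherd-Barron's results (or the work cited around \cite{shepherd-barron91b}) one has $c_2 \geq 0$ again — in fact the non-uniruled hypothesis rules out exactly the pathological fibrations by rational curves — giving $h^{1,1}_W \geq b_1 - \tfrac16 c_1^2 \geq -\tfrac{1}{6}c_1^2$.

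For part (4): this is the contrapositive packaging of (2) and (3). If $h^{1,1}_W < -\tfrac{1}{6}c_1^2$, then from $h^{1,1}_W = b_1 + \tfrac56 c_2 - \tfrac16 c_1^2$ we get $b_1 + \tfrac56 c_2 < 0$, forcing $c_2 < 0$ (since $b_1 \geq 0$). By Shepherd-Barron's theorem, $c_2 < 0$ for a surface of general type forces $X$ to be fibred over a curve $C$ of genus $\geq 2$ (in particular genus $\geq 1$) with connected fibres; that fibration is the desired morphism $X \to C$.

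\emph{Main obstacle.} The genuine content here is entirely imported: it is the Shepherd-Barron theorem (Raynaud's conjecture) controlling $c_2$, and in particular the precise description of when $c_2$ can be negative and what fibration structure that entails. The arithmetic of $h^{1,1}_W$ contributes nothing beyond substituting into one formula. So the "hard part" is really just citing the correct statement with the correct exceptional list ($p \leq 7$, base genus $\geq 2$, fibre a singular rational curve of arithmetic genus $\leq 4$) and making sure the non-uniruled hypothesis in (3) and (4) is strong enough to kill the exceptional case — which it is, since a surface fibred in rational curves over a curve is uniruled. A minor bookkeeping point is the reduction to minimal models for the $-c_1^2 \leq h^{1,1}_W$ bound in (1), handled by Lemma~\ref{minimal-reduction} and the fact that blowing up increases $h^{1,1}_W$ by one while changing $c_1^2$ by $-1$.
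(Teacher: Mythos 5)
Your parts (2) and (3) are essentially the paper's argument: $c_2>0$ gives the strict inequality directly from $h^{1,1}_W=b_1+\frac{5}{6}c_2-\frac{1}{6}c_1^2$, and for (3) the contrapositive of Shepherd-Barron's theorem ($c_2<0$ implies uniruled) gives $c_2\geq 0$. But in parts (1) and (4) you have conflated two distinct theorems of Shepherd-Barron, and in (1) this is a genuine gap. The exceptional structure named in (1) --- $p\leq 7$, $X$ fibred over a curve of genus at least two with generic fibre a singular rational curve of arithmetic genus at most four --- is the conclusion of his theorem on surfaces of general type with $\chi(\O_X)<0$ (Theorem 8 of \cite{shepherd-barron91b}), not on surfaces with $c_2<0$. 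The theorem governing $c_2<0$ (Theorem 7 of loc.\ cit.) concludes only that $X$ is uniruled and that $X\to\Alb(X)$ has one-dimensional image. So your claimed dichotomy ``outside the exceptional case, $c_2\geq 0$'' is not what the cited results say: for instance a minimal surface of general type with $\chi(\O_X)=0$ (not excluded for small $p$) has $c_2=-c_1^2<0$ by Noether's formula, yet nothing forces it into the exceptional class of (1). Your chain of inequalities for (1) therefore breaks at its first step. The repair is the paper's route: use the Hodge-Witt--Noether formula $h^{1,1}_W=10\chi(\O_X)-c_1^2+b_1$ of \eqref{noether-formula4}; if $h^{1,1}_W<-c_1^2$ then $b_1+10\chi(\O_X)<0$, forcing $\chi(\O_X)<0$, and Theorem 8 then places $X$ in the exceptional class. (On this route no reduction to the minimal model is needed, though your observation that both sides of $-c_1^2\leq h^{1,1}_W$ shift by the same amount under blow-up is correct and harmless.)

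The same conflation appears in (4), where it is a misattribution rather than a gap: $c_2<0$ does not yield a fibration over a curve of genus at least two. What the $c_2<0$ theorem gives is that the image of $X\to\Alb(X)$ is a curve, and the resulting fibration $X\to C$ with connected fibres has base of genus at least one, which is exactly what (4) asserts and is how the paper concludes. With these two citations straightened out, your argument coincides with the paper's.
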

\begin{proof}
We prove(1). Assume if possible that $h^{1,1}_W<-c_1^2$. Then by
using the formula $h^{1,1}_W=b_1+10\chi(\O_X)-c_1^2$ we get
$b_1+10\chi(\O_X)<0$. As $b_1\geq 0$ this implies that
$\chi(\O_X)<0$.  By \cite[Theorem 8]{shepherd-barron91b} we know
that any surface of general type with negative $\chi(\O_X)$ we
have $p\leq 7$; and whenever $\chi(\O_X)<0$ the surface $X$ is
fibred over a curve of genus at least two and the generic fibre is
singular rational curve of genus at most four. Next we prove (2)
and (3) which are really consequence of Raynaud's conjecture which
was proved in \cite{shepherd-barron91b}, using the formula for
$h^{1,1}_W$ in terms of $c_1^2,c_2,b_1$. So suppose that $X$ is
not uniruled and assume, if possible, that
\begin{equation}
h^{1,1}_W<-\frac{1}{6}c_1^2
\end{equation}
Then writing out Ekedahl's formula for $h^{1,1}_W$ we get
\begin{equation}
h^{1,1}_W=b_1+\frac{5}{6}c_2-\frac{1}{6}c_1^2<-\frac{1}{6}c_1^2,
\end{equation}
and so this forces:
\begin{equation}
b_1+\frac{5}{6}c_2<0
\end{equation}
and as $b_1\geq 0$ we see that $c_2$ is negative. Now by
\cite[Theorem~7, page 263]{shepherd-barron91b} we see that $X$ is
uniruled which contradicts our hypothesis. Now we prove (4). This
is a part of the proof  of Raynaud's conjecture in
\cite{shepherd-barron91b}. It is clear that the hypothesis implies
that $c_2<0$.  So by loc. cit. we know that the map $X\to \Alb(X)$
has one dimensional image, and this finishes the proof.
\end{proof}

\begin{remark}
\begin{enumerate}
\item By a result of \cite{lang79}, exceptions in
Theorem~\ref{raynaud-lower-bound}(1) do occur.
\item Thus the examples of surfaces given in
Subsection~\ref{examples-of-szpiro} satisfy the inequality in
Proposition~\ref{raynaud-lower-bound}.
\end{enumerate}
\end{remark}
The following is rather optimistic expectation (because of the paucity of examples) and it would not surprise us if it turns out to be false. 
\begin{conj}\label{negative-h11-fibration}
        If $X$ is a smooth projective surface with 
$-\frac{1}{6}c_1^2\leq h^{1,1}_W<0$ and  $b_1\neq 0$ then the
image of the Albanese map $X\to \Alb(X)$ is one dimensional.
\end{conj}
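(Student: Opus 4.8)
The plan is to split the argument according to the sign of $c_2$, using that by Theorem~\ref{negative-hw11} the hypothesis $h^{1,1}_W<0$ already forces $X$ to be of general type, while by \eqref{ekedahl-formula} the condition $h^{1,1}_W<0$ is precisely the failure of the weak inequality $c_1^2\leq 5c_2+6b_1$. First we reduce to $X$ minimal: the dimension of the image of the Albanese map is a birational invariant of smooth projective surfaces, and by Lemma~\ref{minimal-reduction} passing to a minimal model only decreases $h^{1,1}_W$, so it stays negative (the auxiliary bound $-\tfrac16 c_1^2\leq h^{1,1}_W$ need not survive the reduction, but below it is used only in a range where it is automatic). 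In characteristic $2$ or $3$ one must also dispose of the quasi-elliptic case allowed by Theorem~\ref{negative-hw11}: there $X$ carries a quasi-elliptic fibration $X\to B$ whose fibres are rational curves, so every global $1$-form is pulled back from $B$, whence $\Alb(X)=\mathrm{Jac}(B)$ and the Albanese image is the image of $B$ in $\mathrm{Jac}(B)$; since $b_1\neq 0$ forces $g(B)\geq 1$, this image is a curve and we are done. So assume henceforth that $X$ is minimal of general type with $b_1=2q\geq 2$.

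\emph{The case $c_2<0$.} Here the auxiliary hypothesis is unnecessary. By \cite[Theorem~7, page~263]{shepherd-barron91b} a minimal surface of general type with $c_2<0$ is uniruled, and the structure theory of uniruled surfaces of general type in \cite{shepherd-barron91b} --- the mechanism behind the proof of Raynaud's conjecture, and exactly what is invoked in the proof of Proposition~\ref{raynaud-lower-bound}(4) --- shows that $X\to\Alb(X)$ has one-dimensional image; since $b_1\neq 0$ this image is a curve of genus $\geq 1$. This settles the conjecture whenever $c_2<0$, so the remaining content is the case $c_2\geq 0$ (where, note, $-\tfrac16 c_1^2\leq h^{1,1}_W$ holds automatically, reading $b_1+\tfrac56 c_2\geq 0$).

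\emph{The case $c_2\geq 0$.} Since $h^{1,1}_W<0$, the weak inequality \eqref{weakBMY} fails for the surface of general type $X$, so by the Remark preceding Conjecture~\ref{negative-h11-fibration} (that is, \cite[Corollary 15]{shepherd-barron91b}) the cotangent bundle $\Omega^1_X$ is Bogomolov unstable: there is a rank-one subsheaf $L\hookrightarrow\Omega^1_X$ with $2L-K_X$ in the positive cone, in particular $L$ big. The plan is to suppose, for contradiction, that the Albanese map $a\colon X\to A=\Alb(X)$ has two-dimensional image $S\subseteq A$, so that $a$ is generically finite onto the surface $S$, and to run the characteristic-$p$ analogue of the Bogomolov / Castelnuovo--de Franchis argument on $L$: a big rank-one subsheaf of $\Omega^1_X$ ought to force a fibration $f\colon X\to C$ with $L$ arising, up to twist and torsion, from $f^*\Omega^1_C$, and then the Albanese would factor through $\mathrm{Jac}(C)$, contradicting $\dim S=2$. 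A complementary, more crystalline route, in the spirit of this paper: generic finiteness of $a$ makes $a^*$ embed $H^1_{\cris}(A/W)\hookrightarrow H^1_{\cris}(X/W)$ and $\wedge^2 a^*$ embed $\wedge^2 H^1_{\cris}(A/W)\hookrightarrow H^2_{\cris}(X/W)$ Frobenius-equivariantly, which pins a lower bound on the slope number $m^{1,1}(X)$ coming from the part of $\wedge^2 H^1_{\cris}(A/W)$ of slope near $1$; one would then like this bound already to exceed $2T^{0,2}\leq 2p_g$, contradicting $h^{1,1}_W=m^{1,1}-2T^{0,2}<0$.

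The main obstacle is the gap between these heuristics and an actual proof, and it lies entirely in the case $c_2\geq 0$. In characteristic $p$ a big rank-one subsheaf of $\Omega^1_X$ need not produce a fibration to a curve --- this failure (inseparable pencils, non-closed $1$-forms, breakdown of Castelnuovo--de Franchis) is the very mechanism responsible for the failure of Bogomolov--Miyaoka--Yau, and untangling it seems to require the full structure theory of Shepherd-Barron and Ekedahl; even granting a fibration, it is unclear why its base should have positive genus rather than being $\P^1$ without genuinely combining $b_1\neq 0$ with sharp numerical estimates. On the crystalline side, the lower bound on $m^{1,1}$ extracted from $\wedge^2 H^1_{\cris}(A/W)$ is in general far too weak to beat $2p_g$ --- it collapses when $A$ has $p$-rank zero, which is exactly the situation arising in the standard examples of infinite torsion in the slope spectral sequence --- so one would need extra input bounding $T^{0,2}$ away from $p_g$, or extra slope-$1$ classes forced by a two-dimensional Albanese image. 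For these reasons we regard the case $c_2\geq 0$ as genuinely open, consistent with the cautionary remark preceding the conjecture.
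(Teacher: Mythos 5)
The statement you are trying to prove is not a theorem of the paper at all: it is stated as Conjecture~\ref{negative-h11-fibration}, and the author explicitly flags it as a ``rather optimistic expectation'' that ``would not surprise us if it turns out to be false.'' There is therefore no proof in the paper to compare yours against, and your proposal --- by your own admission --- is not a proof either: the entire substance of the conjecture lives in the case you leave open. Your peripheral reductions are reasonable and consistent with what the paper actually establishes. The reduction to a minimal model via Lemma~\ref{minimal-reduction} and the birational invariance of the Albanese image is fine, as is your observation that the auxiliary bound $-\tfrac16 c_1^2\leq h^{1,1}_W$ is equivalent to $b_1+\tfrac56 c_2\geq 0$ and hence automatic once $c_2\geq 0$. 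The quasi-elliptic case is handled correctly in spirit (all fibres of a quasi-elliptic fibration are rational, so the Albanese morphism contracts them and factors through the base, which has positive genus since $b_1\neq 0$). The case $c_2<0$ is exactly the mechanism the paper itself uses in the proof of Proposition~\ref{raynaud-lower-bound}(4), via Shepherd-Barron's theorem that $c_2<0$ forces uniruledness and a one-dimensional Albanese image --- though note that the range $b_1+\tfrac56 c_2<0$ is precisely what the conjecture's auxiliary hypothesis excludes, so this sub-case is largely outside the conjecture's scope anyway.

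The genuine gap is the one you name: for a minimal surface of general type with $c_2\geq 0$, $b_1\neq 0$ and $h^{1,1}_W<0$, neither the Bogomolov-instability of $\Omega^1_X$ (via \cite[Corollary 15]{shepherd-barron91b}) nor the slope-theoretic lower bound on $m^{1,1}$ coming from $\wedge^2 H^1_{\cris}(\Alb(X)/W)$ is known to produce a fibration over a curve of positive genus, and the Castelnuovo--de~Franchis mechanism genuinely breaks in characteristic $p$. Your crystalline heuristic also fails for the reason you identify: when $\Alb(X)$ has $p$-rank zero the slope contribution from $\wedge^2 H^1_{\cris}$ to $m^{1,1}$ can vanish, so it cannot dominate $2T^{0,2}$. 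In short, your write-up is an honest partial analysis of a statement the paper itself does not claim to prove, and the open case you isolate is exactly the content of the conjecture.
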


\subsection{A class of surfaces general type surfaces for which $c_1^2\leq 5c_2$}
Let us begin with the following proposition.
\begin{proposition}\label{m11andpg} Let $X$ be a smooth, projective, minimal surface
of general type
such that
\begin{enumerate}
\item $X$ is
Hodge-Witt,
\item $c_2>0$,
\item $m^{1,1}\geq 2p_g$,
\end{enumerate}
Then $$c_1^2\leq 5c_2$$ holds for $X$.
\end{proposition}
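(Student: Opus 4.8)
The plan is to reduce the desired inequality $c_1^2 \leq 5c_2$ to the already-established equivalence in Proposition~\ref{chern-inequalities2}. Since $X$ is Hodge-Witt, part (3) of that proposition tells us that $c_1^2 \leq 5c_2$ is equivalent to $m^{1,1} \geq 2m^{0,1}$, and since $T^{0,2} = 0$ for Hodge-Witt surfaces we also have $h^{1,1}_W = m^{1,1}$. So the entire task becomes: deduce $m^{1,1} \geq 2m^{0,1}$ (equivalently $m^{1,1}\geq b_1$) from the hypothesis $m^{1,1} \geq 2p_g$. First I would recall that $m^{0,1} = m^{1,0} = h^{0,1}_W = b_1/2$, so the target inequality is exactly $m^{1,1} \geq b_1 = 2q$.

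Thus it suffices to show $p_g \geq q$ for a minimal surface of general type (then $m^{1,1} \geq 2p_g \geq 2q = b_1$ gives the result via Proposition~\ref{chern-inequalities2}(3)). This is where the hypotheses $c_2 > 0$ and ``general type'' enter: I would invoke the Noether-type and slope inequalities available for minimal surfaces of general type. Concretely, combining Noether's formula \eqref{noether-formula2}, $12\chi(\O_X) = c_1^2 + c_2$, with $c_1^2 > 0$ (minimal of general type, and one should check $c_1^2>0$ or handle $c_1^2=0$ separately — but minimal general type forces $c_1^2 \geq 1$) and $c_2 > 0$ gives $\chi(\O_X) > 0$, i.e. $1 - h^{0,1} + p_g > 0$, so $p_g \geq h^{0,1}$. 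Since $q \leq h^{0,1}$ always holds (indeed $0 \leq h^{0,1} - q \leq p_g$ from Subsection~\ref{torsion}), we get $p_g \geq h^{0,1} \geq q$. Hence $m^{1,1} \geq 2p_g \geq 2q = b_1 = 2m^{0,1}$, and Proposition~\ref{chern-inequalities2}(3) yields $c_1^2 \leq 5c_2$.

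The main obstacle I anticipate is the edge case $c_1^2 = 0$ or $\chi(\O_X) = 0$: a minimal surface of general type has $c_1^2 \geq 1$ by definition, so with $c_2 > 0$ Noether gives $12\chi(\O_X) = c_1^2 + c_2 \geq 2 > 0$, forcing $\chi(\O_X) \geq 1$; this is clean. A subtler point is whether one genuinely needs $c_2 > 0$ as a separate hypothesis or whether it is automatic — for surfaces of general type $c_2 > 0$ is classical (it follows, e.g., from the Bogomolov–Miyaoka bound $c_1^2 \leq 3c_2$ in characteristic zero, but in positive characteristic this can fail, which is precisely why it is listed as a hypothesis here). So the argument is essentially a bookkeeping exercise chaining together Crew's formula consequences, Noether's formula, and the Bombieri–Mumford bound $h^{0,1} - q \leq p_g$; no new input beyond Proposition~\ref{chern-inequalities2} is required. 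One should double-check the direction of the inequality $m^{1,1}\geq 2m^{0,1}$ versus $m^{1,1}\geq b_1$: since $b_1 = m^{0,1}+m^{1,0} = 2m^{0,1}$, these coincide, so there is no sign trap.
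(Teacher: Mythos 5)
Your proposal is correct and follows essentially the same route as the paper: reduce to $m^{1,1}\geq b_1$ (you do this via Proposition~\ref{chern-inequalities2}(3), the paper directly via the formula $6(m^{1,1}-b_1)=5c_2-c_1^2$ for Hodge-Witt surfaces, which is the same computation), then chain $m^{1,1}\geq 2p_g\geq 2h^{0,1}\geq 2q=b_1$ using $\chi(\O_X)\geq 1$ from Noether's formula together with $c_1^2>0$ and $c_2>0$, and the Bombieri--Mumford bound $q\leq h^{0,1}$. No gaps; your handling of the edge case $\chi(\O_X)=0$ matches the paper's implicit use of integrality of $\chi(\O_X)$.
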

\begin{proof}
Since $X$ is minimal of general type so $c_1^2>0$.
By the formula for $h^{1,1}_W$ we have
$$6h^{1,1}_W=6(m^{1,1}-2T^{0,2})=5c_2-c_1^2+6b_1.$$
As $X$ is Hodge-Witt we see that $T^{0,2}=0$ and so
$$6(m^{1,1}-b_1)=5c_2-c_1^2.$$
Hence the asserted inequality holds if $m^{1,1}-b_1\geq 0$.
Writing $$m^{1,1}-b_1=(m^{1,1}-2p_g)+(2p_g-2q),$$ where we have
used $b_1=2q$. Thus to prove the proposition it will suffice to
prove that each of the two terms in the parenthesis are
non-negative. The first holds by the hypothesis of the proposition
and for the second, we see, as $2q\leq 2h^{0,1}$, that $$2p_g-2q\geq
2p_g-2h^{0,1}=2(\chi(\O_X)-1).$$
Thus to prove the proposition, it suffices to show that 
we have $\chi(\O_X)\geq 1$. This is immediate, from Noether's formula~\ref{noether-formula2} and  our hypothesis that $c_2>0$.
\end{proof} 

\begin{remark} Let us remark that in characteristic $p>0$, $\chi(\O_X)$ may be non-positive and likewise $c_2$ can be non-postive.  However it has been shown in \cite{shepherd-barron91a} that if
$p\geq 7$, then $\chi(\O_X)\geq 0$ for any smooth, projective
minimal surface of general type. Moreover if $\chi(\O_X)=0$, then $c_2<0$
by Noether's formula \ref{noether-formula1}. It was shown in 
\cite{shepherd-barron91b}, if $c_2<0$ then $X$ is uniruled (and in any case if
$c_2<0$, then the inequality $c_1^2\leq 5c_2$ is false). In contrast if $k=\C$, a well-known result of Castelnouvo says $c_2\geq 0$ and $\chi(\O_X)>0$ ($X$ minimal of general type).
\end{remark}

The following proposition is a variant of Proposition~\ref{m11andpg} and is valid for the larger class of Mazur-Ogus surfaces. This proposition gives a sufficient condition (in terms of slopes of Frobenius and the geometric genus of the surface) for $c_1^2\leq 5c_2$ to hold.

\begin{proposition}\label{m11andpg2} Let $X$ be a smooth, projective, minimal surface
of general type
such that
\begin{enumerate}
\item $X$ is Mazur-Ogus,
\item $c_2>0$,
\item $m^{1,1}\geq 4p_g$,
\end{enumerate}
Then $$c_1^2\leq 5c_2$$ holds for $X$.
\end{proposition}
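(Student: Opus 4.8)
The plan is to reduce $c_1^2\le 5c_2$ to an inequality among slope and domino numbers, and then to bound the two ``obstruction'' terms $T^{0,2}$ and $b_1$ separately by the geometric genus $p_g$. First I would invoke part~(1) of Proposition~\ref{chern-inequalities2}, which holds for an arbitrary smooth projective surface and says that $c_1^2\le 5c_2$ is equivalent to
\bes
2T^{0,2}+b_1\le m^{1,1}.
\ees
Given hypothesis~(3), namely $m^{1,1}\ge 4p_g$, it then suffices to prove the two bounds $T^{0,2}\le p_g$ and $b_1\le 2p_g$, since these force $2T^{0,2}+b_1\le 2p_g+2p_g=4p_g\le m^{1,1}$.

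For the bound on $T^{0,2}$ I would use the hypothesis that $X$ is Mazur-Ogus: the computation in the proof of Theorem~\ref{domino-numbers-for-mazur-ogus} gives $T^{0,j}=h^{0,j}-m^{0,j}$ for all $j\ge 0$, so in particular $T^{0,2}=p_g-m^{0,2}\le p_g$. For the bound on $b_1$ I would first show $\chi(\O_X)\ge 1$: since $X$ is minimal of general type we have $c_1^2>0$, and $c_2>0$ by hypothesis~(2), so Noether's formula~\eqref{noether-formula2}, $12\chi(\O_X)=c_1^2+c_2$, gives $\chi(\O_X)\ge 1$. Then from $\chi(\O_X)=1-h^{0,1}+h^{0,2}=1-h^{0,1}+p_g\ge 1$ I get $h^{0,1}\le p_g$, and since $b_1=2q\le 2h^{0,1}$ always, this yields $b_1\le 2p_g$. (Alternatively, because $X$ is Mazur-Ogus one has $h^{0,1}=h^{0,1}_W=b_1/2=q$, so $b_1=2q$ directly.) Combining the two bounds completes the argument.

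There is no genuinely hard step; the one point requiring care is the bound $b_1\le 2p_g$, and this is precisely where hypothesis~(2) enters — without $c_2>0$ one could have $\chi(\O_X)=0$, hence $q=p_g+1$ and $b_1=2p_g+2$, so one would have to strengthen hypothesis~(3) to $m^{1,1}\ge 4p_g+2$. I would also point out that this Proposition is the Mazur-Ogus counterpart of Proposition~\ref{m11andpg}: passing from ``Hodge-Witt'' to ``Mazur-Ogus'' costs exactly the replacement of $m^{1,1}\ge 2p_g$ by $m^{1,1}\ge 4p_g$, the extra $2p_g$ absorbing the term $2T^{0,2}$, which vanishes in the Hodge-Witt case but can be as large as $2p_g$ for a Mazur-Ogus surface.
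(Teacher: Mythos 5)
Your proposal is correct and follows essentially the same route as the paper: the paper also reduces to showing $m^{1,1}-2T^{0,2}-b_1\geq 0$ and splits this as $(m^{1,1}-4p_g)+(2p_g-2T^{0,2})+(2p_g-2q)$, bounding $T^{0,2}\leq p_g$ via the Mazur-Ogus hypothesis and $q\leq h^{0,1}\leq p_g$ via $\chi(\O_X)\geq 1$ from Noether's formula. Your derivation $T^{0,2}=h^{0,2}-m^{0,2}\leq p_g$ is just a slightly more direct packaging of the paper's computation $2(p_g-T^{0,2})=b_2-m^{1,1}=m^{0,2}+m^{2,0}\geq 0$, so there is no substantive difference.
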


\begin{proof}
We argue as in the proof of Proposition~\ref{m11andpg}. Since $X$ is minimal of general type so $c_1^2>0$. By the formula for $h^{1,1}_W$ we have
$$6h^{1,1}_W=6(m^{1,1}-2T^{0,2})=5c_2-c_1^2+6b_1.$$
As $X$ is Mazur-Ogus we see that $h^{1,1}_W=h^{1,1}$
and so we have
$$6(m^{1,1}-2T^{0,2}-b_1)=5c_2-c_1^2.$$
Hence the asserted inequality holds if $m^{1,1}-2T^{0,2}-b_1\geq 0$.
Writing $$m^{1,1}-2T^{0,2}-b_1=(m^{1,1}-4p_g)+(2p_g-2T^{0,2})+(2p_g-2q),$$
where we have
used $b_1=2q$. Thus to prove the proposition it will suffice to
prove that each of the three terms in the parentheses are
non-negative. The first holds by the hypothesis of the proposition
and for the second we argue as follows:
as $X$ is Mazur-Ogus, so
$$b_2=h^{0,2}+h^{1,1}+h^{2,0}=h^{1,1}+2p_g$$
by degeneration of Hodge-de Rham at $E_1$.
Further
$$b_2-2p_g=h^{1,1}=h^{1,1}_W=m^{1,1}-2T^{0,2}.$$
Hence
$$b_2-2p_g=m^{1,1}-2T^{0,2}.$$
So we get
$$b_2-m^{1,1}=2(p_g-T^{0,2}).$$
Now
$$b_2=m^{0,2}+m^{1,1}+m^{2,0},$$
which shows that $b_2-m^{1,1}\geq 0$ and so $p_g-T^{0,2}\geq 0$. Hence this term is non-negative. For the third term we see, as $2q\leq 2h^{0,1}$, that $$2p_g-2q\geq
2p_g-2h^{0,1}=2(\chi(\O_X)-1).$$ Thus the proposition  follows as
as we have $\chi(\O_X)\geq 1$  from our hypothesis that $c_2>0$ and Noether's formula~\ref{noether-formula2}.
\end{proof}

We do not know how often the inequality $m^{1,1}\geq 4p_g$ holds. But the following Proposition shows that for surfaces of large degree in $\P^3$ the inequality $m^{1,1}\geq 4p_g$ holds.

\begin{proposition}\label{chern-example}
Let $X\subset \P^3$ be a smooth hypersurface of degree $d$. Then if $d\geq 5$, $X$ satisfies all the hypothesis of Proposition~\ref{m11andpg2}. Hence the class of surfaces to which Proposition~\ref{m11andpg2} applies is non-empty.
\end{proposition}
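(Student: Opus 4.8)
The plan is to verify, for a smooth hypersurface $X\subseteq\P^3$ of degree $d\geq 5$, both the standing requirement of Proposition~\ref{m11andpg2} (that $X$ be a minimal surface of general type) and its three numbered hypotheses. For the standing requirement: by adjunction $K_X=\O_X(d-4)$, which is ample once $d\geq 5$, and a surface with ample canonical bundle is automatically minimal and of general type.

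For hypothesis (1): a smooth hypersurface is a complete intersection in projective space, and complete intersections in $\P^n$ are Mazur-Ogus (the Hodge--de Rham spectral sequence degenerates at $E_1$ and the crystalline cohomology is torsion free), as recalled in Subsection~\ref{domino-numbers}. For hypothesis (2): since $X\subseteq\P^3$ is a surface, the weak Lefschetz theorem gives $b_1=b_3=0$, so $c_2=\chi_{et}(X)=2+b_2$; together with the value $b_2=d^3-4d^2+6d-2$ of Table~\ref{hodge-table} this gives $c_2=d^3-4d^2+6d=d\bigl((d-2)^2+2\bigr)>0$.

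Hypothesis (3), $m^{1,1}\geq 4p_g$, is the only one needing a short computation. Because $X$ is Mazur-Ogus, Ekedahl's theorem gives $h^{1,1}_W=h^{1,1}$, and since $h^{1,1}_W=m^{1,1}-2T^{0,2}$ with $T^{0,2}\geq 0$ we obtain $m^{1,1}\geq h^{1,1}$; so it suffices to prove $h^{1,1}\geq 4p_g$. By the identity \eqref{h11and2pg}, $h^{1,1}-2p_g=\tfrac{1}{3}(d^3-4d+6)$, so this is equivalent to $\tfrac{1}{3}(d^3-4d+6)\geq 2p_g$. Substituting $p_g=h^{0,2}=\binom{d-1}{3}=\tfrac{1}{6}(d-1)(d-2)(d-3)$ from Table~\ref{hodge-table} and clearing denominators, the inequality becomes $d^3-4d+6\geq(d-1)(d-2)(d-3)=d^3-6d^2+11d-6$, i.e. $6d^2-15d+12\geq 0$, which holds for every $d$ since its discriminant $225-288$ is negative.

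With all three hypotheses checked, Proposition~\ref{m11andpg2} applies and yields $c_1^2\leq 5c_2$ for such $X$ (for instance, for any smooth quintic surface in $\P^3$), so the class of surfaces covered by that proposition is non-empty. I do not foresee a genuine obstacle; the only point that requires care is the \emph{direction} of the comparison in step (3) --- one must use $m^{1,1}\geq h^{1,1}$, which follows from $T^{0,2}\geq 0$ and the Mazur-Ogus equality $h^{1,1}_W=h^{1,1}$, and \emph{not} Ekedahl's bound $h^{1,1}_W\leq h^{1,1}$, which points the other way.
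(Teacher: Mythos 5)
Your proof is correct and follows essentially the same route as the paper's: Mazur--Ogus via the complete-intersection property, $c_2=2+b_2>0$ via $b_1=0$, and $m^{1,1}\geq h^{1,1}\geq 4p_g$ using $h^{1,1}=h^{1,1}_W=m^{1,1}-2T^{0,2}$ together with the Hodge-number formulas (the paper computes $h^{1,1}-4p_g=2d^2-5d+4$ directly, while you route through \eqref{h11and2pg}; these are the same calculation). Your explicit verification that $K_X=\O_X(d-4)$ is ample, so that $X$ is minimal of general type, is a small point the paper leaves implicit, and your remark about using $m^{1,1}\geq h^{1,1}$ rather than Ekedahl's bound is exactly the right caution.
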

\begin{proof}
Let us assume $X$ is a smooth, projective surface of degree $d$ in $\P^3$. From the formulae for Hodge numbers in \cite{rapoport72,lewis-survey} it is clear for smooth, projective surfaces in $\P^3$, the numbers $b_2,h^{1,1},h^{0,2}=h^{2,0}=p_g$ depend only on $d$ and are constant in the family of smooth surfaces. Further Hodge-de Rham spectral sequence for $X$ degenerates at $E_1$ and crystalline cohomology of $X$ is torsion-free. Since $b_1=0$ we see that $c_2>0$. Thus all the hypothesis of Proposition~\ref{m11andpg2} are satisfied except possibly the hypothesis that $m^{1,1}\geq 4p_g$.

From the formula for Hodge-Witt numbers we have
$$h^{1,1}=h_W^{1,1}=m^{1,1}-2T^{0,2},$$
and as $h^{1,1}$ is constant in this family (as it depends only on the degree), while $T^{0,2}$ can only increases under specialization (this is a result of Richard Crew~\cite{crew85}), so we see that $m^{1,1}$ must also increase under specialization. At any rate we have $h^{1,1}\leq m^{1,1}$. So it suffices to prove that $h^{1,1}\geq 4p_g$.  Now by Table~\ref{hodge-table}, we see that $h^{1,1}=b_2-2p_g$
and simple calculation shows
\be 
	h^{1,1}-4p_g = 2d^2 - 5d + 4
\ee
which is positive for $d\geq 5$. Hence for $d\geq 5$, 
\be 
	m^{1,1}\geq h^{1,1}>4p_g>2p_g.
\ee
\end{proof}

\begin{remark}
It seems reasonable to expect that for large $c_1^2,c_2$, in the moduli of smooth, projective  surfaces of general type, there is a Zariski open set of an irreducible component(s) consisting of Mazur-Ogus surfaces  where $m^{1,1}\geq 4p_g$ holds.
\end{remark}

The following result, while not best possible, shows how we can use preceding ideas to obtain a Chern class inequality under reasonable geometric hypothesis. We do not know a result of comparable strength which can be proved by purely geometric means.

\begin{theorem}\label{strange-thm}
Let $X$ be a smooth, projective, minimal surface of general type. Assume
\begin{enumerate}
\item $c_2>0$,
\item $p_g>0$,
\item $X$ is Hodge-Witt,
\item $\Pic(X)$ is reduced or $H^2_{cris}(X/W)$ is torsion free,
\item and $H^2_{cris}(X/W)$ has no slope $<\frac{1}{2}$.
\end{enumerate}
Then
\bes
c_1^2\leq 5c_2.
\ees
\end{theorem}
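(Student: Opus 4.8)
The plan is to reduce the statement to Proposition~\ref{m11andpg}. That proposition already supplies three of the needed inputs: $X$ is minimal of general type (given), $X$ is Hodge-Witt (hypothesis (3)), and $c_2>0$ (hypothesis (1)); so the whole problem comes down to verifying the remaining hypothesis of that proposition, namely
\[
m^{1,1}\geq 2p_g,
\]
and this is where hypotheses (4) and (5) will enter.

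First I would exploit the slope hypothesis (5). By the Poincar\'e duality symmetry of the slopes of Frobenius on $H^2_\cris(X/W)\tensor K$ (they are stable under $\lambda\mapsto 2-\lambda$), the assumption ``no slope $<\tfrac12$'' is the same as ``every slope lies in $[\tfrac12,\tfrac32]$''. Plugging this into the definition of the slope number $m^{1,1}$ (see \ref{slope-number-definition}): a slope $\lambda\in[\tfrac12,1)$ occurs there with coefficient $\lambda\geq\tfrac12$, and a slope $\lambda\in[1,\tfrac32]$ with coefficient $2-\lambda\geq\tfrac12$; since the slopes of $H^2_\cris(X/W)\tensor K$ have total multiplicity $b_2$, this gives $m^{1,1}\geq\tfrac12 b_2$. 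Using the slope-number symmetry $m^{0,2}=m^{2,0}$ together with $b_2=m^{0,2}+m^{1,1}+m^{2,0}$, this rearranges to the cleaner inequality $m^{1,1}\geq 2m^{0,2}$.

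Next I would identify $m^{0,2}$ with $p_g$. Since $X$ is Hodge-Witt we have $T^{0,2}=0$, so by the definition of the Hodge-Witt numbers $h^{0,2}_W=m^{0,2}$. On the other hand Crew's formula for surfaces (\ref{ekedahl-formula}) gives $h^{0,2}_W=\chi(\O_X)-1+b_1/2=p_g-(h^{0,1}-q)$. By hypothesis (4) either $\Pic(X)$ is reduced or $H^2_\cris(X/W)$ is torsion free, and the second implies the first (via \cite{illusie79b}, as recalled in the discussion of Mazur--Ogus surfaces above); reducedness of $\Pic(X)$ is exactly the statement $h^{0,1}=q$. Hence $h^{0,1}-q=0$, so $m^{0,2}=h^{0,2}_W=p_g$, and combining with the previous step $m^{1,1}\geq 2m^{0,2}=2p_g$. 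Proposition~\ref{m11andpg} now applies and yields $c_1^2\leq 5c_2$.

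The computations here are light; the only real care is needed in the first step --- correctly reading off the coefficients in the definition of $m^{1,1}$ and using Poincar\'e duality to turn the one-sided slope bound into a two-sided one. I should also note that hypothesis (2) ($p_g>0$) is not actually used: if $p_g=0$ then $c_2>0$ together with Noether's formula forces $\chi(\O_X)=1$, hence $h^{0,1}=q=0$ and $b_1=0$, and the inequality follows at once from Proposition~\ref{chern-inequalities2}(3). In fact one can bypass Proposition~\ref{m11andpg} entirely: the slope estimate $m^{1,1}\geq 2m^{0,2}=2h^{0,2}_W=2\bigl(\chi(\O_X)-1+b_1/2\bigr)$ together with $\chi(\O_X)\geq 1$ (which holds since $c_1^2>0$ for a minimal surface of general type and $c_2>0$) already gives $m^{1,1}\geq b_1$, and by Proposition~\ref{chern-inequalities2}(3) this is equivalent to $c_1^2\leq 5c_2$ for a Hodge-Witt surface.
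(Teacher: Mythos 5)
Your proposal is correct, and while it follows the paper's overall skeleton --- reduce to Proposition~\ref{m11andpg} by establishing $m^{1,1}\geq 2p_g$, using Poincar\'e duality on slopes together with hypothesis (5) --- it executes the key step by a genuinely different and more elementary route. The paper proves $\sum_{\lambda\in\,]0,1[}\lambda m_\lambda\geq p_g$ by first showing that $H^2(X,W(\O_X))$ is a free finite-type $W$-module (via the Cartier-module argument of \cite[Prop.~2.5]{illusie83b}, using Hodge-Witt plus reducedness of $\Pic(X)$ or torsion-freeness of $H^2_{\cris}$), then invoking Crew's Lemma~\ref{crew-lemma} to identify $\sum(1-\lambda)m_\lambda$ with $\dim H^2(W(\O_X))/VH^2(W(\O_X))=p_g$, and finally using $\lambda\geq 1-\lambda$ for $\lambda\geq\tfrac{1}{2}$. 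You instead stay entirely at the level of slope numbers: the two-sided bound $\lambda\in[\tfrac{1}{2},\tfrac{3}{2}]$ gives $m^{1,1}\geq\tfrac{1}{2}b_2$, hence $m^{1,1}\geq 2m^{0,2}$, and $m^{0,2}=h^{0,2}_W=\chi(\O_X)-1+b_1/2=p_g$ follows from the Hodge-Witt hypothesis ($T^{0,2}=0$), formula \eqref{ekedahl-formula}, and $h^{0,1}=q$. This buys you a proof with no de Rham--Witt module theory beyond the vanishing of $T^{0,2}$, and it makes transparent two observations the paper's argument obscures: hypothesis (2) ($p_g>0$) is never needed, and your final variant --- $m^{1,1}\geq 2m^{0,2}=2(\chi(\O_X)-1)+b_1\geq b_1$ combined with Proposition~\ref{chern-inequalities2}(3) --- even dispenses with hypothesis (4), since the identity $m^{0,2}=\chi(\O_X)-1+b_1/2$ for a Hodge-Witt surface is unconditional. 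What the paper's route buys in exchange is structural information (the explicit decomposition $m^{1,1}=m_1+2\sum\lambda m_\lambda$ and the freeness of $H^2(W(\O_X))$), which yields the slightly sharper conclusion $m^{1,1}\geq m_1+2p_g>2p_g$. All the individual steps you use (the symmetry $m^{0,2}=m^{2,0}$, the relation $b_2=m^{0,2}+m^{1,1}+m^{2,0}$, and the equivalence of reducedness of $\Pic(X)$ with $h^{0,1}=q$) are correctly sourced and correctly applied.
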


\begin{remark}
Before proving the theorem let us remark that in a smooth family of surfaces the Hodge-Witt locus is open (this is a result of Crew, see \cite{crew85}); the slope condition in our hypothesis is a closed condition in Newton strata (Newton polygon of $X$ lies on or above a finite list of polygons). So these two conditions provide a locally closed subset of base space. If we consider moduli of surfaces with fixed $c_1^2, c_2$, then by \cite{liedtke09} if $p$ is  larger than a constant depending only on $c_1^2$, then $\Pic(X)$ is reduced. So our hypothesis while not generic in the moduli are relatively harmless.
\end{remark}

\begin{proof}[Proof of Theorem~\ref{strange-thm}]
Before proceeding let us make two remarks. Firstly the assumption that the slopes of Frobenius on $H^2_{cris}(X/W)$ are $\geq \frac{1}{2}$, together with the assumption that $p_g>0$ says that $X$ is Hodge-Witt but not ordinary (by Mazur's proof of Katz's conjecture); secondly we see that $\chi(\O_X)\geq 1$. This follows from Noether's formula~\ref{noether-formula1}, and the fact that for $X$  minimal of general type,  $c_1^2\geq 1$. Observe that $c_2>0$ is necessary for $c_1^2\leq 5c_2$ to hold (as $c_1^2\geq 1$). Thus this hypothesis (that $c_2>0$) is at any rate required if we wish to consider Chern class inequality of Bogomolov Miyaoka type to hold. Moreover our hypothesis that $H^2_{\rm cris}(X/W)$ is torsion-free implies $\Pic(X)$ is reduced (see \cite[Remark 6.4, page 641]{illusie79b}).

Proposition~\ref{m11andpg} shows that we have to prove that
$m^{1,1}\geq 2p_g$ (under our hypothesis). We now prove this inequality under our
hypothesis $\Pic(X)$ is reduced and $X$ is Hodge-Witt and $H^2_{cris}(X/W)$ satisfies the  stated slope condition.

We begin by recalling the formula for $m^{1,1}$ (see \ref{slope-number-definition}).
\begin{equation}
m^{1,1}=\sum_{\lambda\in[0,1[}\lambda
m_\lambda+\sum_{\lambda\in[1,2[}(2-\lambda)m_\lambda,
\end{equation}
which, on writing $m_0=\dim H^2_{cris}(X/W)_{[0]}$, $m_1=\dim
H^2_{cris}(X/W)_{[1]}$, and noting that $m_0$ does not contribute
to $m^{1,1}$, can be written as
\begin{equation}
m^{1,1}=\sum_{\lambda\in]0,1[}\lambda
m_\lambda+m_1+\sum_{\lambda\in]1,2[}(2-\lambda)m_\lambda.
\end{equation}

Poincar\'e duality says that if $\lambda$ is a slope of $H^2_{\rm cris}(X/W)$ then $\lambda'=2-\lambda$ is also a slope of $H^2_{\rm cris}(X/W)$. Further under Poincar\'e duality we get $m_\lambda=m_{\lambda'}$. Hence for any $\nu\in]1,2[$ we  have
\be
(2-\nu)m_{\nu}=(2-\nu)m_{\nu'}=\nu' m_{\nu'},
\ee
and for any $\nu\in]1,2[$, $\nu'\in]0,1[$. Thus we see that
\be\sum_{\lambda \in]0,1[}\lambda m_{\lambda}+\sum_{\lambda\in]1,2[}(2-\lambda)m_{\lambda}=2\sum_{\lambda}\lambda m_{\lambda}
\ee
Thus we get
\bes
m^{1,1}=m_1+2\sum_{\lambda\in]0,1[}\lambda m_\lambda.
\ees
Under out hypothesis we claim that the  sum on the right in the above equation is $\geq 2p_g$.
This
will involve the assumption that $\Pic(X)$ is reduced and the assumption that $X$ is Hodge-Witt. The assumption that $X$ is Hodge-Witt says that $H^2(W(\O_X))$ is a finite type $W$-module (see \cite{illusie79b}).   Our hypothesis that $\Pic(X)$ is reduced means $V$ is injective on $H^2(W(\O_X))$. We claim that $H^2(W(\O_X))$ is a free, finite type $W$-module. If $H^2_{cris}(X/W)$ is torsion-free, then this is an immediate consequence of the existence of the Hodge-Witt decomposition of $H^2_{cris}(X/W)$ (see \cite[Theorem 4.5, page 202]{illusie83b}):
\be
H^2_{cris}(X/W)=H^2(X,W(\O_X))\oplus H^1(X,W\Omega^1_X)\oplus H^0(X,W\Omega_X^2),
\ee
which implies that $H^2(W(\O_X))$ is torsion-free.

Now suppose instead that $\Pic(X)$ is reduced. We want to prove that $H^2(X,W(\O_X))$ is torsion-free. We note that $X$ is Hodge-Witt so we see that $H^2(X,W(\O_X))$ is finite type as a $W$-module and profinite (as a $W[F,V]$-module). Now $\Pic(X)$ is reduced, by \cite[Remark 6.4,page 641]{illusie79b}, gives us the injectivity of $V$ on $H^2(X,W(\O_X))$. Hence $H^2(X,W(\O_X))$ is a Cartier module (see \cite[Def. 2.4]{illusie83b}). So by \cite[Proposition 2.5(d), page 99]{illusie83b}  $H^2(X,W(\O_X))$ is free of finite type over $W$.  In particular we have an exact sequence of
\be
\xymatrix{
0\ar[r]& H^2(W(\O_X))\ar[r]^V &H^2(W(\O_X))\ar[r]& H^2(\O_X)\ar[r]& 0.}
\ee
which shows that the
$W$-rank of the former is at least $p_g>0$.

Let us remind the reader that the degeneration of the slope spectral sequence modulo torsion (see \cite{illusie79b}) or the existence of the Hodge-Witt decomposition of $H^2_{cris}(X/W)$ (as above) shows that the slopes $0\leq \lambda<1$ of $H^2_{cris}(X/W)$ live in $H^2(W(\O_X))$.

Next note that for $\lambda$ satisfying
\be
\frac{1}{2}\leq\lambda< 1
\ee
we have $$\lambda\geq 1-\lambda>0.$$ Thus we have
\be
\sum_{\lambda\in]0,1[}\lambda m_\lambda \geq \sum_{\lambda\in]0,1[} (1-\lambda) m_\lambda.
\ee
We claim now that
\be 
\sum_{\lambda\in]0,1[} (1-\lambda) m_\lambda = \dim H^2(W(\O_X))/VH^2(W(\O_X)).
\ee This is standard and is a consequence of the proof of \cite[Lemma 3]{crew85}--we state Crew's  result explicitly as  Lemma~\ref{crew-lemma} (below) for convenient reference. Finally the above exact sequence shows that
\be H^2(W(\O_X))/VH^2(W(\O_X))\isom H^2(\O_X),
\ee and hence that the sum is $\geq p_g>0$, and therefore
\be
m^{1,1}\geq m_1+2p_g>2p_g,
\ee
as $m_1\geq 1$ (by projectivity of $X$) so we are done by Proposition~\ref{m11andpg}.
\end{proof}

\begin{lemma}\cite[Lemma 3]{crew85}\label{crew-lemma}
Let $M$ be a $R^0=W[F,V]$-module (with $FV=p$) and such that $M$ is finitely generated and free as a $W$-module. Assume that the slopes of Frobenius on $M$ satisfy $0\leq\lambda<1$. Then
\bes 
\textrm{length}(M/VM)=\sum_{\lambda}(1-\lambda)m_\lambda.
\ees
\end{lemma}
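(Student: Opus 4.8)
The plan is to convert this length identity into a computation of $p$-adic valuations of determinants of the $\sigma$- and $\sigma^{-1}$-linear operators $F$ and $V$ on $M$. First note that $V$ is injective on $M$: if $Vx=0$ then $px=FVx=0$, and $M$ is torsion-free over $W$, so $x=0$. Next, fix a $W$-basis of $M$, set $n=\dim_K(M\otimes_W K)$ (the $W$-rank of $M$), and let $A\in M_n(W)$ be the matrix of $V$, so that $V$ sends a coordinate vector $x$ to $A\,\sigma^{-1}(x)$; let $B\in M_n(W)$ similarly be the matrix of $F$, sending $x$ to $B\,\sigma(x)$. Since $\sigma^{-1}$ acts bijectively on $W^n$ (as $k$ is perfect), the submodule $VM$ coincides with $AW^n$, so elementary-divisor theory over the discrete valuation ring $W$ gives
\bes
\length_W(M/VM)=\length_W(W^n/AW^n)=\ord_p(\det A).
\ees
A change of $W$-basis replaces $A$ by $P^{-1}A\,\sigma^{-1}(P)$ with $P\in GL_n(W)$, which leaves $\ord_p(\det A)$ unchanged; write $\ord_p(\det V):=\ord_p(\det A)$, and likewise $\ord_p(\det F):=\ord_p(\det B)$.

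Now I will exploit the relation $FV=p$, which in matrix form reads $B\,\sigma(A)=p\,I_n$, hence $\det B\cdot\sigma(\det A)=p^n$ and therefore
\bes
\ord_p(\det V)+\ord_p(\det F)=n=\sum_\lambda m_\lambda,
\ees
the last equality because the slope decomposition is a direct-sum decomposition of the $K$-vector space $M\otimes_W K$ with $m_\lambda=\dim_K(M\otimes_W K)_{[\lambda]}$. It then remains to identify $\ord_p(\det F)$. After the faithfully flat base change $W\to W(\bk)$ — which preserves ranks, lengths, slopes, and $\ord_p$ of determinants — the Dieudonn\'e--Manin classification yields the slope decomposition, and the Newton polygon of $(M\otimes K,F)$ is by definition the polygon whose segment of slope $\lambda$ has horizontal length $m_\lambda$; its right-hand endpoint $\bigl(\sum_\lambda m_\lambda,\ \sum_\lambda\lambda m_\lambda\bigr)$ has height $\ord_p(\det F)$, so $\ord_p(\det F)=\sum_\lambda\lambda m_\lambda$. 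Substituting,
\bes
\length_W(M/VM)=\sum_\lambda m_\lambda-\sum_\lambda\lambda m_\lambda=\sum_\lambda(1-\lambda)m_\lambda,
\ees
as claimed. The hypothesis $0\le\lambda<1$ is exactly what makes each term, and hence the right side, non-negative — consistent with the left side being a length — and also what guarantees that $F$ and $V$ are simultaneously integral on $M$.

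The two inputs that are not purely formal are: (i) $\length_W(M/VM)=\ord_p(\det V)$ for the semilinear operator $V$, which is the semilinear version of the elementary-divisor theorem sketched above and is routine once one is careful about $\sigma^{-1}$-linearity; and (ii) the identification $\ord_p(\det F)=\sum_\lambda\lambda m_\lambda$ of $\ord_p(\det F)$ with the right endpoint of the Newton polygon, which rests on the Dieudonn\'e--Manin theorem over $W(\bk)$. I expect (ii) to be where the real content of a full write-up sits. One can avoid an explicit appeal to the Newton polygon by a d\'evissage instead: both sides of the asserted formula are additive along short exact sequences of $W[F,V]$-modules that are free over $W$ (use the snake lemma and injectivity of $V$), so over $W(\bk)$ one reduces first to isoclinic $M$ and then to a single standard isoclinic lattice, where $\length_W(M/VM)=n(1-\lambda)$ follows from a direct matrix computation — but this route merely repackages the same Dieudonn\'e--Manin input.
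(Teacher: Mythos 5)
The paper does not actually prove this lemma --- it is quoted verbatim from Crew \cite[Lemma 3]{crew85} ``for convenient reference'' --- so there is no internal proof to compare against. Your argument is correct and self-contained. The reduction $\length_W(M/VM)=\ord_p(\det A)$ via elementary divisors over the DVR $W$ is right (and $\det A\neq 0$ because $V$ is injective, as you note); the matrix identity $B\,\sigma(A)=pI_n$ correctly translates $FV=p$ and gives $\ord_p(\det V)+\ord_p(\det F)=n$; and the identification $\ord_p(\det F)=\sum_\lambda\lambda m_\lambda$ is the standard ``endpoint of the Newton polygon'' fact, legitimately reduced to Dieudonn\'e--Manin over $W(\bk)$ since $\ord_p(\det F)$ is insensitive both to the choice of lattice and to the faithfully flat base change $W(k)\to W(\bk)$. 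A pleasant feature of your route is that it never needs to decompose the lattice $M$ itself into isoclinic pieces (which is in general impossible over $W$), only the isocrystal $M\otimes K$; this sidesteps the usual d\'evissage. You are also right that the hypothesis $0\leq\lambda<1$ is not load-bearing in your argument beyond guaranteeing that an integral $V$ with $FV=p$ exists at all; the identity you prove holds for any $W[F,V]$-module free of finite rank over $W$.
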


\begin{remark}
Let us give examples of surfaces of general type which satisfy the hypothesis of our theorem. In general construction of Hodge-Witt but non-ordinary surfaces is difficult. Suppose $C,C'$ are smooth, proper curves over $k$ (perfect) with genus $\geq 2$, and that $C$ is ordinary and $C'$ has slopes of Frobenius $\geq 1/2$ in $H^1_{cris}(C'/W)$. Such curves exist--for example there exist curves of genus $\geq 2$ whose Jacobian is a supersingular abelian variety. Now let $X=C\times_k C'$. Then by a well-known theorem of Katz and Ekedahl \cite[Proposition 2.1(iii)]{ekedahl85}, $X$ is Hodge-Witt and by Kunneth formula, the slopes of Frobenius on $H^2_{cris}(X/W)$ are $\geq 1/2$. The other hypothesis of Theorem~\ref{strange-thm} are clearly satisfied.
\end{remark}

\subsection{On $c_1^2\leq 5c_2$ for supersingular surfaces}
In this subsection, we will say that $X$ is a \textit{supersingular surface} if $H^2(X,W(\O_X))\tensor_W K=0$ (here $K$ is the quotient field of $W$). This means that $H^2_{cris}(X/W)$ has no slopes in $[0,1)$ and hence by Poincar\'e duality, it has no slopes in $(1,2]$. Thus $H^2_{cris}(X/W)$ is pure slope one. Under reasonable assumptions the following dichotomy holds:
\begin{proposition}\label{chern-supersingular}
Let $X$ be a smooth, projective surface over a perfect field of characteristic $p>0$. Assume
\begin{enumerate}
\item $X$ is a minimal surface of general type,
\item $p_g>0$
\item $c_2>0$,
\item $X$ is Mazur-Ogus,
\item $X$ is supersingular.
\end{enumerate}
Then either
$$c_1^2\leq 5c_2,$$
or
$$c_2<2\chi(\O_X),$$ and in the second case no smooth deformation of $X$ admits any flat lifting to characteristic zero.
\end{proposition}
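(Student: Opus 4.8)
The plan is first to dispose of the dichotomy itself and isolate the real content. By Noether's formula \eqref{noether-formula2}, $12\chi(\O_X)=c_1^2+c_2$, so $c_1^2\leq 5c_2$ is equivalent to $12\chi(\O_X)\leq 6c_2$, i.e. to $c_2\geq 2\chi(\O_X)$; thus the two alternatives are complementary and exhaustive, and everything reduces to proving: \emph{if some smooth deformation $X_s$ of $X$ admits a flat projective lifting to characteristic zero, then $c_1^2\leq 5c_2$} (equivalently, the second branch forbids such a lift). I would also record at the outset, for context, that a supersingular Mazur--Ogus surface has $H^2_{\cris}(X/W)$ pure of slope $1$, hence $m^{1,1}=b_2$; together with $h^{1,1}=h^{1,1}_W=m^{1,1}-2T^{0,2}$ and $b_2=h^{1,1}+2p_g$ (Mazur--Ogus) this forces $T^{0,2}=p_g>0$, so $X$ is \emph{not} Hodge--Witt and Theorem~\ref{strange-thm} is unavailable for it — which is exactly why only the weaker dichotomy is on offer here.

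Next I would record which numerical invariants survive deformation and lifting. Let $\mathcal Y\to\Spec R$ be a flat projective lift of $X_s$, with $R$ a complete discrete valuation ring with residue field $k$ and fraction field $K$ of characteristic zero. Since $\mathcal Y$ is flat over $R$ with smooth closed fibre over a local base, $\mathcal Y$ is smooth over $R$, so the generic fibre $Y_K$ is a smooth projective surface over $K$; fixing an embedding $\overline{K}\hookrightarrow\C$ and base changing gives a smooth projective surface $Y_\C$ over $\C$. Now $\chi(\O)$ and $c_2=\chi_{et}$ are locally constant in proper flat families (and in the given deformation), $\Spec R$ being connected, and $c_1^2=12\chi(\O)-c_2$; hence $c_1^2(Y_\C)=c_1^2(X)$ and $c_2(Y_\C)=c_2(X)$. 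In particular $c_1^2(Y_\C)=c_1^2(X)\geq 1$ (as $X$ is minimal of general type) and $c_2(Y_\C)=c_2>0$.

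The core is then a classical statement over $\C$: any smooth projective surface $Y$ with $c_1^2(Y)>0$ and $c_2(Y)>0$ satisfies $c_1^2(Y)\leq 5c_2(Y)$. I would prove this from the Enriques--Kodaira classification. Passing to a minimal model $Z$ only increases $c_1^2$ and only decreases $c_2$; among minimal surfaces every one with Kodaira dimension $\neq 2$ that is not $\P^2$ or a Hirzebruch surface has $c_1^2\leq 0$, so $c_1^2(Y)>0$ forces $Y$ to be rational or of general type. If $Y$ is of general type, Bogomolov--Miyaoka--Yau on $Z$ gives $c_1^2(Y)\leq c_1^2(Z)\leq 3c_2(Z)\leq 3c_2(Y)\leq 5c_2(Y)$. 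If $Y$ is rational, then $c_1^2(Y)\leq 9$ while $c_2(Y)=2+b_2\geq 3$, so $c_1^2(Y)\leq 9<15\leq 5c_2(Y)$. Applying this to $Y_\C$ gives $c_1^2(X)=c_1^2(Y_\C)\leq 5c_2(Y_\C)=5c_2(X)$, contradicting $c_2<2\chi(\O_X)$, i.e. $c_1^2>5c_2$. This proves the last clause.

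I do not expect a serious mathematical obstacle: the classification step is standard and the $p$-adic hypotheses (Mazur--Ogus, supersingularity, $p_g>0$) are in fact used only to place the statement among the surfaces left untreated by the Hodge--Witt methods — the lifting obstruction itself needs only that $X$ be minimal of general type with $c_2>0$. The one point requiring care is keeping the lifting hypothesis honest: insisting that the lift be proper (so that $\mathcal Y$ is smooth over $R$ and $Y_\C$ is a genuine smooth projective surface) and checking that Chern numbers really are constant across both the deformation and the lift; once this bookkeeping is in place the argument is immediate. I would end by remarking that, since $T^{0,2}=p_g>0$ throughout this subsection, in the second branch the slope spectral sequence of $X$ carries infinite torsion, consistent with the theme of the paper.
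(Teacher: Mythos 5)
Your proof is correct, and its overall strategy -- reduce the second branch to an impossibility over $\C$ via deformation-invariance of $\chi(\O)$ and $c_2$, then invoke Bogomolov--Miyaoka--Yau -- is the same as the paper's. But you reach it by a genuinely cleaner route in two places, and both refinements are worth recording. First, you observe that the dichotomy itself is a tautology: by Noether's formula $12\chi(\O_X)=c_1^2+c_2$, the inequality $c_1^2\leq 5c_2$ is \emph{equivalent} to $c_2\geq 2\chi(\O_X)$ for any smooth projective surface, so hypotheses (4) and (5) play no role in the dichotomy. The paper instead derives the same split through the supersingular slope computation ($m^{1,1}=b_2$, $T^{0,2}=p_g$, $h^{1,1}_W=h^{1,1}=b_2-2p_g$), arriving at the criterion $b_2-b_1-2p_g\geq 0$; that computation is what situates the proposition in the paper's narrative (it shows $X$ is not Hodge--Witt, hence outside the reach of Theorem~\ref{strange-thm}), which you correctly flag as context rather than logical content. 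Second, your characteristic-zero step is more careful than the paper's: the paper concludes by citing $c_2\geq 3\chi(\O_X)$ for smooth \emph{minimal surfaces of general type} over $\C$, which tacitly assumes the complex lift of the deformation is again minimal of general type; your classification argument needs only $c_1^2>0$ and $c_2>0$, both of which genuinely transfer across the deformation and the lift, so it closes a small gap in the paper's one-line justification. The only bookkeeping to insist on (which you do) is that the lift be proper and flat so that $\chi(\O)$ and $\chi_{et}$ are constant in both families.
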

\begin{proof}
Since $X$ is Mazur-Ogus (i.e. Hodge-de Rham spectral sequence of $X$ degenerates at $E_1$, and crystalline cohomology of $X$ is torsion-free), and  supersingular,  we see that
\bea
b_2&=&h^{2,0}+h^{1,1}+h^{0,2},\\
m^{1,1}&=&b_2.
\eea
Thus \ref{crew-formula} gives
\be
h^{1,1}_W=h^{1,1}=b_2-2p_g=m^{1,1}-2T^{0,2}=b_2-2T^{0,2},
\ee
which gives $T^{0,2}=p_g>0$, so $X$ is not Hodge-Witt. Further we have
\be
h^{1,1}_W=h^{1,1}=b_2-2p_g=\frac{5c_2-c_1^2}{6}+b_1.
\ee
This gives
\be
6(b_2-b_1-2p_g)=5c_2-c_1^2.
\ee
Thus we $$c_1^2\leq 5c_2$$ if and only if $$b_2-b_1-2p_g\geq 0.$$
If $b_2-b_1-2p_g\geq 0$ then $c_1^2\leq 5c_2$ and the first assertion holds and we are done.
Now suppose $b_2-b_1-2p_g<0$. So we get
$$
	b_2<b_1+2p_g.
$$
Now we get from the fact that
\be
c_2=b_2-2b_1+2,
\ee
so that
\be
c_2 = b_2-2b_1+2< b_1+2p_g-2b_1+2=2p_g-b_1+2=2\chi(\O_X).
\ee
Hence
\be
c_2<2\chi(\O_X).
\ee
On the other hand note that if $k=\C$ and $X/\C$ is smooth, minimal of general type then $c_2\geq 3\chi(\O_X)$ (by the Bogomolov-Miyaoka-Yau inequality and Noether's formula) so $c_2<2\chi(\O_X)$ never happens over $\C$. Hence no smooth deformation of a surface with $c_2<2\chi(\O_X)$ can be liftable to characteristic zero (as $c_2,\chi(\O_X)$ are deformation invariants).
\end{proof}

\begin{remark}
We do not know if the condition (on   minimal surfaces of general type) that $c_2<2\chi(\O_X)$ is relatively rare or even bounded.
\end{remark}

\subsection{A lower bound on slopes of Frobenius}
In this section we prove a lower bound on slopes Frobenius on $H^2_{cris}(X/W)$ for  smooth, projective surfaces. This theorem shows that the assumption on the smallest slope of Frobenius in Theorem~\ref{strange-thm} is perhaps not too unreasonable. The theorem is the following:
\bthm\label{th:lower-bound-for-slopes}
Let $X/k$ be a smooth, projective surface over a perfect field $k$ of characteristic $p>0$. Let 
$$\lambda^2_{\min}=\min\{\lambda: \lambda \text{ is a slope of Frobenius on } H^2_{cris}(X/W)\}.$$  Assume that  $p_g\neq 0$.
Then exactly one of the following holds:
\benum
\item either $\lambda^2_{\min}=0$, or
\item $\lambda^2_{\min}\geq \frac{1}{p_g+1}$.
\eenum
\ethm

\begin{remark}
In \cite{mazur73} it was shown that if $X$ is a smooth, projective surface with $p_g=1$ and $X$ has torsion free crystalline cohomology then $$\lambda^2_{\min}=\begin{cases}
0 & \textrm{or},\\
1-\frac{1}{n}& \textrm{with } n\geq 2.
\end{cases}$$  and so if $\lambda^2_{\min}\neq0$ one has $\lambda^2_{\min}\geq \frac{1}{2}$.
\end{remark}

\begin{remark}
While Theorem~\ref{th:lower-bound-for-slopes} asserts that $\lambda_{\min}\geq \frac{1}{p_g+1}$,  in Theorem~\ref{strange-thm} we had assumed that $\lambda_{\min}\geq \frac{1}{2}$. It is tempting to hope that perhaps surfaces of general type with $\frac{1}{p_g+1}\leq\lambda_{\min}<\frac{1}{2}$ do not occur or occur in bounded families (for each fixed characteristic). But again we have no evidence for this if $p_g>1$.
\end{remark}

\bp 
If $\lambda^2_{\min}=0$ then there is nothing to prove. So assume $\lambda^2_{\min}>0$. 

Recall that $m^{0,2}=\sum_{\lambda\in [0,1[}(1-\lambda)m_\lambda$, and 
\be\label{th:lower-bound-eq-1} 
0\leq m^{0,2}\leq m^{0,2}+T^{0,2}=h^{0,2}_W\leq h^{0,2}=p_g\neq 0.
\ee
For notational convenience let $\nu=\lambda^2_{\min}$ and let $N=p_g+1$. Suppose if possible that $0<\nu<\frac{1}{N}$.

Then $\frac{1}{\nu}>N$ and $-\nu > -\frac{1}{N}$ so $1-\nu > 1-\frac{1}{N}$. Thus one has
$$m_\nu(1-\nu) > m_\nu\left(1-\frac{1}{N}\right).$$
Now $\nu m_\nu\geq 1 $ (as $0<\nu m_\nu\in\Z$ by Dieudonne Theory) so 
$$m_\nu \geq \frac{1}{\nu}>N.$$
Hence one sees that 
$$m_\nu(1-\nu)>m_\nu\left(1-\frac{1}{N}\right)>\left(1-\frac{1}{N}\right)N=N-1=p_g,$$
in particular 
\be\label{th:lower-bound-eq-2}
 m_\nu(1-\nu) >p_g.
\ee
Thus one has combining \eqref{th:lower-bound-eq-1} and \eqref{th:lower-bound-eq-2} that
$$p_g< (1-\nu)m_\nu \leq m^{0,2} \leq p_g$$
which is a contradiction. Thus $\nu\geq \frac{1}{N}=\frac{1}{p_g+1}$.
\ep

\subsection{Recurring Fantasy for ordinary surfaces}\label{recurringfantasy}
In this subsection we sketch a very optimistic conjectural  program (in fact we are still somewhat reluctant to call it a conjecture--perhaps, following Spencer Bloch, it would be better to call it a \textit{recurring fantasy}) to prove the analog of van de Ven's inequality \cite{deven76a} for ordinary surfaces of general type and which satisfy Assumptions~\ref{assumptions}. Unfortunately we do not know how to prove our conjecture (see Conjecture~\ref{ordinaryconj} below). We will make the following assumptions on $X$ smooth, projective over an algebraically closed field $k$ of characteristic $p>0$.
\subsubsection{Assumptions}\label{assumptions}
For the entire Subsection~\ref{recurringfantasy} we make the following assumptions on a smooth, projective surface $X$:
\begin{enumerate}
 \item $X$ is minimal of general type,
 \item $X$ is not fibred  over a smooth, projective curve of genus $g>1$,
 \item $X$ is an ordinary surface,
 \item and $X$ has torsion-free crystalline cohomology.
\end{enumerate}
The last two assumptions imply (see \cite{illusie83b}) that
\begin{enumerate}
 \item Hodge and Newton polygons of $X$ coincide and,
 \item We have a Newton-Hodge decomposition:
	$$H^n_{cris}(X/W)=\oplus_{i+j=n}H^i(X,W\Omega^j_X),$$
\item the Hodge-de Rham spectral sequence of $X$ degenerates at $E_1$.
\end{enumerate}

In particular we have
\be
H^1_{cris}(X/W)=H^0(X,W\Omega^1_X)\oplus H^1(X,W\O_X),
\ee
and
\be{\rm rk}_W H^0(X,W\Omega^1_X)={\rm rk}_W H^1(X,W\O_X).
\ee

By the usual generalities (see \cite{illusie79b}) we have a cup product pairing of $W[F,V]$-modules (here $FV=p$):
\begin{equation}\label{cupproduct}
<\ ,\ >:H^1(X,W\O_X)\tensor H^0(X,W\Omega^1_X) \to H^1(X,W\Omega^1_X).
\end{equation}

\begin{conj}\label{ordinaryconj}
 For any surface $X$ satisfying assumptions of (1)--(4) of section~\ref{assumptions}), the cup product paring of \eqref{cupproduct} satisfies the following properties:
\begin{enumerate}
 \item for each fixed $0\neq v\in H^1(X,W\O_X)$, the map $<v,->$ is injective;
 \item for each fixed $0\neq v'\in H^0(X,W\Omega^1_X)$, the mapping $<-,v'>$ is injective.
\end{enumerate}
\end{conj}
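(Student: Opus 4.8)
\begin{remark}[Toward Conjecture~\ref{ordinaryconj}]
Here is the line of attack I have in mind. Since $X$ is ordinary with torsion-free crystalline cohomology, the slope spectral sequence degenerates at $E_1$, each $H^j(X,W\Omega^i_X)$ is free of finite rank over $W$, one has the Newton--Hodge decomposition $H^n_{\cris}(X/W)=\bigoplus_{i+j=n}H^j(X,W\Omega^i_X)$, and --- by the structural results of Illusie--Raynaud (see \cite{illusie79b,illusie83b}) --- reduction modulo $p$ identifies $H^j(X,W\Omega^i_X)\tensor_W k$ with $H^j(X,\Omega^i_X)$, compatibly with cup products. Hence the pairing \eqref{cupproduct} reduces, modulo $p$, to the cup product
\bes
H^1(X,\O_X)\tensor_k H^0(X,\Omega^1_X)\longrightarrow H^1(X,\Omega^1_X).
\ees
A routine torsion-freeness argument then carries out the descent: given $0\neq v\in H^1(X,W\O_X)$, after dividing by the largest power of $p$ dividing it we may assume $v\notin pH^1(X,W\O_X)$, and then $\langle v,-\rangle$ is injective as soon as its reduction $\bar v\cup(-)$ is, because $H^1(X,W\Omega^1_X)$ is torsion free; likewise for $\langle-,\omega\rangle$. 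So Conjecture~\ref{ordinaryconj} would follow once one knows that the bilinear map $H^1(X,\O_X)\tensor_k H^0(X,\Omega^1_X)\to H^1(X,\Omega^1_X)$ is \emph{injective} --- a single, purely coherent statement which implies injectivity in each variable separately.

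To establish that, I would pass to the Albanese morphism $f\colon X\to A=\Alb(X)$. Under our hypotheses (both $X$ and $A$ have torsion-free crystalline cohomology and degenerate Hodge--de Rham spectral sequences) $f^*$ is an isomorphism on $H^1_{\dR}$ and strictly compatible with the Hodge filtrations, so $f^*\colon H^1(A,\O_A)\isom H^1(X,\O_X)$ and $f^*\colon H^0(A,\Omega^1_A)\isom H^0(X,\Omega^1_X)$, and the cup product on $X$ factors as $v\cup\omega=f^*(v_A\cup\omega_A)$. On the abelian variety $A$ the de Rham cohomology is the exterior algebra on $H^1_{\dR}(A)$, whence $H^1(A,\Omega^1_A)\isom H^0(A,\Omega^1_A)\tensor_k H^1(A,\O_A)$ and the cup product $H^1(A,\O_A)\tensor_k H^0(A,\Omega^1_A)\to H^1(A,\Omega^1_A)$ is, up to sign, the tautological flip isomorphism. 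Therefore everything reduces to showing that $f^*\colon H^1(A,\Omega^1_A)\to H^1(X,\Omega^1_X)$ is injective. This is the point at which hypothesis (2) enters. If $q=b_1/2=1$, the Albanese image is an elliptic curve, $H^1(A,\Omega^1_A)$ is one dimensional, spanned by the fundamental class, and $f^*$ sends it to the class of a fibre of $X\to A$, which is nonzero since it meets an ample divisor positively; thus the case $q=1$ is unconditional, even in characteristic $p$. If $q\geq 2$, hypothesis (2) forces $f$ to be generically finite onto a surface in $A$ --- if $f(X)$ were a curve, its normalisation would have genus $\geq q\geq 2$ and, after Stein factorisation, $X$ would be fibred over a curve of genus $\geq 2$ --- so $X$ is of maximal Albanese dimension; over $\C$ the injectivity of $f^*$ on $H^{1,1}$ then follows from Hodge theory, via the Castelnuovo--de Franchis theorem together with generic vanishing (Green--Lazarsfeld and its successors) and the structure of the cohomology support loci of $X$.

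The principal obstacle is exactly the passage of this last step to positive characteristic. The complex-analytic and $\ell$-adic arguments rest on the transfer identity $f_*f^*=\deg(f)\cdot\mathrm{id}$, which is unavailable for coherent --- and hence crystalline and de Rham--Witt --- cohomology when the Albanese map is inseparable, or when its degree onto its image is divisible by $p$; and no positive-characteristic form of generic vanishing strong enough to control \emph{every} global $1$-form (and not merely a generic one), and to handle inseparable pencils, is presently at hand. That hypothesis (2) is genuinely necessary can already be seen over $\C$: for $X=C\times C'$ with $C,C'$ ordinary and $g(C)\geq 2$ --- a minimal surface of general type, ordinary, with torsion-free crystalline cohomology, but fibred over $C$ --- and $0\neq v\in H^1(\O_C)\subset H^1(\O_X)$, the restriction of $\langle v,-\rangle$ to $H^0(W\Omega^1_C)\subset H^0(W\Omega^1_X)$ is, under $H^1(C,W\Omega^1_C)\isom W$, the nonzero functional $\omega\mapsto v\cup\omega$ given by the crystalline Poincar\'e pairing of $C$, and its kernel has rank $g(C)-1>0$. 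Hence any proof must use hypothesis (2) decisively --- presumably through Castelnuovo--de Franchis together with a characteristic-$p$ substitute for generic vanishing that excludes inseparable pencils and governs the inseparability of the Albanese map. Producing that substitute is precisely what we cannot do, and this is why the statement remains, for now, a recurring fantasy.
\end{remark}
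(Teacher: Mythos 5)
You were asked to prove Conjecture~\ref{ordinaryconj}, which the paper itself does not prove: it is explicitly presented as an open ``recurring fantasy'' (Subsection~\ref{recurringfantasy}), so there is no argument of the author's to compare yours against. Your note is candid that it is only a strategy, and your closing diagnosis (that a characteristic-$p$ substitute for generic vanishing is missing) is reasonable as far as it goes; but the decisive obstruction occurs \emph{earlier} in your reduction, at a step that is simply false.

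The problem is your passage from the conjecture to injectivity of the full tensor-product map $H^1(X,\O_X)\tensor_k H^0(X,\Omega^1_X)\to H^1(X,\Omega^1_X)$, equivalently (after your Albanese reduction) to injectivity of $f^*$ on the $q^2$-dimensional space $H^1(A,\Omega^1_A)$. The conjecture only asserts that no nonzero \emph{decomposable} tensor $v\tensor\omega$ lies in the kernel of the pairing; injectivity on the whole tensor product is strictly stronger and forces $h^{1,1}\geq q^2$, whereas the conjecture is calibrated --- via the Hopf-type argument behind van de Ven's proof, which is exactly how the paper uses it in the ensuing Corollary --- to yield only $h^{1,1}\geq 2q-1$. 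The stronger statement fails already over $\C$ for surfaces satisfying the analogues of hypotheses (1)--(2) of \ref{assumptions}: a Schoen surface is minimal of general type, of maximal Albanese dimension, has no irrational pencil, and has $q=4$, $p_g=5$, $K_X^2=16$, hence $\chi(\O_X)=2$, $c_2=8$, $b_2=22$ and $h^{1,1}=b_2-2p_g=12<16=q^2$; so your intermediate map cannot be injective for dimension reasons, while the conjectural conclusion $h^{1,1}\geq 2q-1=7$ is perfectly consistent with $h^{1,1}=12$. Reductions of such a surface at ordinary primes with torsion-free crystalline cohomology would satisfy all of \ref{assumptions}, so no amount of progress on generic vanishing in characteristic $p$ can rescue this route; any proof must work with one variable fixed at a time. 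Two secondary cautions: the identification of $H^j(X,W\Omega^i_X)\tensor_W k$ with $H^j(X,\Omega^i_X)$ is not literally reduction modulo $p$ (the canonical projection is modulo $\Fil^1=VM+dVM$, which contains $pM=FVM$ properly in general), and the compatibility of the de Rham--Witt product of \eqref{cupproduct} with the coherent cup product under that projection has to be stated with care, although for ordinary $X$ with torsion-free cohomology these points are likely repairable. The genuinely useful part of your note is the observation that products of curves violate both conclusions, confirming that hypothesis (2) of \ref{assumptions} must enter any eventual proof in an essential way.
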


\begin{corollary}
Assume Conjecture~\ref{ordinaryconj} and let
\begin{eqnarray}
      h^{0,1}&=&{\rm rk}_W H^1(X,W\O_X),\\
      h^{1,0}&=&{\rm rk}_W H^0(X,W\Omega_X^1),\\
      h^{1,1}&=&{\rm rk}_W H^1(X,W\Omega_X^1),
\end{eqnarray}
Then
\bes
	h^{1,1}\geq 2h^{1,0}-1=b_1-1.
\ees
\end{corollary}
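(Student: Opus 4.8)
The plan is to convert the strong non-degeneracy predicted by Conjecture~\ref{ordinaryconj} for the cup product \eqref{cupproduct} into the rank estimate by an elementary argument with two subspaces of $H^1(X,W\Omega^1_X)$ of maximal possible dimension.

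First I would set up the linear algebra. Under the assumptions of Subsection~\ref{recurringfantasy} the surface $X$ is ordinary, hence Hodge-Witt, and $H^2_{cris}(X/W)$ is torsion free, so $\Pic(X)$ is reduced; consequently $H^1(X,W\O_X)$, $H^0(X,W\Omega^1_X)$ and $H^1(X,W\Omega^1_X)$ are all \emph{free} $W$-modules of finite rank, say $g$, $g$ and $h^{1,1}$, where the first two ranks coincide (this is recorded above) and $b_1=2g$ by the Newton-Hodge decomposition of $H^1_{cris}(X/W)$. Because these modules are free, an injective $W$-linear map between two of them remains injective after $\otimes_W K$; hence Conjecture~\ref{ordinaryconj} yields the same non-degeneracy statements for the induced $K$-bilinear pairing $\langle\ ,\ \rangle_K\colon V\times W'\to U$, where I write $V=H^1(X,W\O_X)\otimes K$, $W'=H^0(X,W\Omega^1_X)\otimes K$, $U=H^1(X,W\Omega^1_X)\otimes K$, and $\dim_K V=\dim_K W'=g$, $\dim_K U=h^{1,1}$. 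It then suffices to prove $\dim_K U\ge 2g-1$.

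Next I would carry out the main construction. Fix $0\ne v_0\in V$ and $0\ne w_0\in W'$ and put $A=\langle v_0,W'\rangle_K\subseteq U$ and $B=\langle V,w_0\rangle_K\subseteq U$. By Conjecture~\ref{ordinaryconj}(1) the map $w\mapsto\langle v_0,w\rangle_K$ is injective, so $\dim_K A=g$, and likewise $\dim_K B=g$. Moreover $\langle v_0,w_0\rangle_K\ne 0$ (injectivity again) and lies in $A\cap B$, so $\dim_K(A\cap B)\ge 1$, and
\[
\dim_K U\ \ge\ \dim_K(A+B)\ =\ \dim_K A+\dim_K B-\dim_K(A\cap B)\ =\ 2g-\dim_K(A\cap B).
\]
Thus the whole corollary reduces to the single assertion $\dim_K(A\cap B)\le 1$, i.e. $A\cap B=K\cdot\langle v_0,w_0\rangle_K$.

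The step I expect to be the real obstacle is precisely $\dim_K(A\cap B)\le 1$. Equivalently, identifying $A\cap B$ with $S=\{(v,w)\in V\times W':\langle v_0,w\rangle_K=\langle v,w_0\rangle_K\}$ (the two coordinate projections of $S$ are injective by Conjecture~\ref{ordinaryconj}, as is the map $S\to U$ sending $(v,w)$ to the common value), one must show $\dim_K S\le 1$. Conjecture~\ref{ordinaryconj} as stated only gives $\dim_K S\le g$, and the bare ``no zero divisors'' property cannot give more: the multiplication of a ramified quadratic extension of $K$ is a counterexample with $g=2$ and $\dim_K U=2<3$. So one is forced to use the finer structure genuinely available here, which is not conjectural: the pairing \eqref{cupproduct} is a morphism of Dieudonn\'e modules, $V$ is the unit-root part of $H^1_{cris}$ (so $F$ is bijective on $V$), while $W'$ and $U$ are pure of slope one (so $V$ is bijective on them), and moreover the polarization of $\Alb(X)$ identifies $W'$ with the $K$-dual of $V$ so that $\langle\ ,\ \rangle_K$ becomes essentially an evaluation pairing $V\otimes V^\vee\to U$. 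One would then choose $v_0,w_0$ inside the respective Frobenius-fixed $\Q_p$-structures, observe that $S$ is stable under a natural $\sigma$-semilinear operator, and try to force $S$ down to the line $K(v_0,w_0)$; alternatively, from a hypothetical second element of $A\cap B$ one would attempt to bootstrap a chain of $2g-1$ linearly independent vectors $\langle v_i,w_j\rangle_K$ in $U$. Making either route rigorous for all $g$ is where the real content lies, and it is conceivable that one in fact needs a strengthening of Conjecture~\ref{ordinaryconj} (controlling the pairing on rank-two tensors, not just rank-one) or direct geometric input from $X$.

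Granting $\dim_K(A\cap B)=1$, the displayed inequality gives $\dim_K U\ge 2g-1$, that is $h^{1,1}\ge 2h^{1,0}-1$; and $2h^{1,0}=h^{1,0}+h^{0,1}=b_1$ in our situation, so $h^{1,1}\ge b_1-1$. Combined with $h^{1,1}_W=h^{1,1}$ (the surface is Mazur-Ogus, being ordinary with torsion-free crystalline cohomology) and the Hodge-Witt-Noether formula \eqref{noether-formula4}, this is exactly what would yield $c_1^2\le 5c_2+6$ for the surfaces under consideration.
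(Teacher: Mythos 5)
Your reduction to the purely linear-algebraic statement ``$\dim_K U\ge 2g-1$ for a $K$-bilinear pairing $V\times W'\to U$ with no zero divisors and $\dim_K V=\dim_K W'=g$'' is exactly the reduction the paper intends: the text offers no proof of this corollary beyond the remark that it ``can be proved in a manner similar to van de Ven's proof,'' and van de Ven's argument is precisely this lemma (essentially Hopf's lemma), established not by your sum--intersection count but by projective geometry: the zero locus of the pairing in $\P(V)\times\P(W')$ is cut out by $\dim_K U$ ample divisors of bidegree $(1,1)$, so if $\dim_K U\le\dim\P(V)+\dim\P(W')=2g-2$ that locus is nonempty, contradicting the absence of zero divisors. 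So your $A+B$ versus $A\cap B$ computation is not the paper's route, and you are right that it cannot be completed as it stands: $\dim_K(A\cap B)\le 1$ is simply not a consequence of the two injectivity hypotheses of Conjecture~\ref{ordinaryconj}.

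The more important point is the one you raise at the end, and it is a genuine gap --- one the corollary inherits from the paper rather than one you have introduced. The Hopf/van de Ven lemma requires the coefficient field to be algebraically closed (the nonempty intersection of at most $2g-2$ ample divisors is only guaranteed to have a point over $\overline{K}$), whereas the pairing here lives over $K=\mathrm{Frac}(W(k))$, which is never algebraically closed even when $k$ is. Your ramified quadratic extension $L/K$ with its multiplication map is a correct counterexample to the abstract lemma over $K$ (there $g=2$ and $\dim_K U=2<3$), so Conjecture~\ref{ordinaryconj} as literally stated --- injectivity of $\langle v,-\rangle$ and $\langle -,v'\rangle$ over $W$, equivalently over $K$ --- does not formally imply the corollary. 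To repair the deduction one must either strengthen the conjecture to assert the no-zero-divisor property after extension of scalars to $\overline{K}$ (after which van de Ven's projective argument goes through verbatim), or exploit the extra structure you list --- the compatibility of the cup product with $F$ and $V$, the $\Z_p$-forms of the unit-root and slope-one parts, duality with the Albanese --- to exclude ``division-algebra-like'' pairings. Neither step is carried out in the paper, and your proposal honestly stops at the same place; the final paragraph deducing $c_1^2\le 5c_2+6$ from $h^{1,1}\ge b_1-1$ is correct and agrees with Theorem~\ref{ordconjthm}.
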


It is clear that, assuming Conjecture~\ref{ordinaryconj}, this can be proved in a manner similar to van de Ven's proof of the above inequality (see \cite{deven76a}). The conjecture and the above inequality has the following immediate consequence.
\begin{theorem}\label{ordconjthm}
Under the assumptions of \ref{assumptions} and conjecture~\ref{ordinaryconj} on $X$, the Chern classes of $X$ satisfy
\bes c_1^2\leq 5c_2+6.
\ees
\end{theorem}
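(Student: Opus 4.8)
The plan is to deduce the Chern-number inequality formally from the inequality $h^{1,1}\ge 2h^{1,0}-1=b_1-1$ of the preceding Corollary, together with Noether's formula, once one has matched up the various incarnations of $h^{1,1}$. First I would note that under Assumptions~\ref{assumptions} the surface $X$ is ordinary, hence Hodge-Witt, so the slope spectral sequence degenerates and $T^{0,2}=0$; since for a surface the only possibly non-trivial domino is the one attached to $H^2(X,W\O_X)\to H^2(X,W\Omega^1_X)$, the definition of the Hodge-Witt number collapses to $h^{1,1}_W=m^{1,1}$. Moreover the Newton--Hodge decomposition furnished by the assumptions exhibits $H^2_{cris}(X/W)$ as a direct sum whose slope-$1$ summand is $H^1(X,W\Omega^1_X)$, a $W$-module of rank exactly $m^{1,1}$; hence $h^{1,1}_W=m^{1,1}={\rm rk}_W H^1(X,W\Omega^1_X)$, which is the quantity denoted $h^{1,1}$ in the Corollary. (Equivalently, one can observe that $X$ is Mazur-Ogus---Hodge--de Rham degenerates and crystalline cohomology is torsion free---and invoke Ekedahl's equality $h^{1,1}_W=h^{1,1}$ of \ref{ekedahl-theorems}.) This identification is the only place where any care is needed.

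Granting Conjecture~\ref{ordinaryconj}, the Corollary then gives
\[
h^{1,1}_W \;=\; h^{1,1}\;\ge\; b_1-1 .
\]
Combining this with the Hodge-Witt-Noether formula \eqref{noether-formula4}, namely $h^{1,1}_W=10\,\chi(\O_X)-c_1^2+b_1$, the $b_1$'s cancel and we obtain
\[
c_1^2 \;\le\; 10\,\chi(\O_X)+1 .
\]

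Finally I would convert this bound on $\chi(\O_X)$ into the asserted Chern inequality using Noether's formula \eqref{noether-formula2}, $12\,\chi(\O_X)=c_1^2+c_2$: substituting $10\,\chi(\O_X)=\tfrac56(c_1^2+c_2)$ into the previous display and clearing denominators gives $6c_1^2\le 5c_1^2+5c_2+6$, that is $c_1^2\le 5c_2+6$, as desired. The only substantive input beyond these elementary formulae is the Corollary, and hence Conjecture~\ref{ordinaryconj} itself---the injectivity of the cup-product pairings \eqref{cupproduct}. That, and not the short deduction above, is where all the difficulty resides; the passage to $c_1^2\le 5c_2+6$ mirrors van de Ven's argument over $\C$ almost verbatim, with $H^0(X,W\Omega^1_X)$, $H^1(X,W\O_X)$, $H^1(X,W\Omega^1_X)$ playing the roles of the corresponding Dolbeault groups.
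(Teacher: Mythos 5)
Your proposal is correct and follows essentially the same route as the paper: ordinarity kills $T^{0,2}$, so $h^{1,1}_W=m^{1,1}={\rm rk}_W H^1(X,W\Omega^1_X)$, and feeding the Corollary's bound $h^{1,1}\geq b_1-1$ into Ekedahl's formula for $h^{1,1}_W$ yields $5c_2-c_1^2\geq -6$. Your detour through the Hodge-Witt-Noether form $h^{1,1}_W=10\chi(\O_X)-c_1^2+b_1$ and then Noether's formula is just an equivalent rearrangement of the same computation.
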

\begin{proof}
 Recall the formula of Crew and Ekedahl \ref{crew-formula}
\begin{equation}
 6h^{1,1}_W=6(m^{1,1}-T^{0,2})=5c_2-c_1^2+6b_1,
\end{equation}
where $h^{1,1}_W$ is the Hodge-Witt number of $X$ and $m^{1,1}$ is the slope number of $X$.
If $X$ is ordinary, we see that $T^{0,2}=0$ and so $m^{1,1}=h^{1,1}$ is the dimension of the slope one part of $H^2_{cris}(X/W)$.

Hence $$5c_2-c_1^2+6b_1=6m^{1,1}=6h^{1,1},$$ so that
$$5c_2-c_1^2=6m^{1,1}-6b_1=6h^{1,1}-6b_1.$$ Now by the corollary we have
$$h^{1,1}\geq b_1-1$$
so that
\be
	h^{1,1}-b_1\geq -1
\ee
and so
\be
	5c_2-c_1^2\geq -6,
\ee
or equivalently,
\be
	c_1^2\leq 5c_2+6.
\ee
This completes the proof.
\end{proof}

\begin{theorem}
Assume Conjecture~\ref{ordinaryconj}. Then except for a bounded family of surfaces satisfying \ref{assumptions}, we have
$$c_1^2\leq 6c_2.$$
\end{theorem}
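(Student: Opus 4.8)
The plan is to bootstrap from Theorem~\ref{ordconjthm}, which under Conjecture~\ref{ordinaryconj} already yields $c_1^2\le 5c_2+6$ for every $X$ satisfying Assumptions~\ref{assumptions}. The elementary observation is that $5c_2+6\le 6c_2$ exactly when $c_2\ge 6$; hence $c_1^2\le 6c_2$ holds automatically for all such $X$ with $c_2\ge 6$, and the desired inequality can fail only when $c_2\le 5$. So it remains to show that the surfaces satisfying Assumptions~\ref{assumptions} with $c_2\le 5$ form a bounded family, and the theorem follows since the exceptional surfaces are a subfamily of this one.

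For such a surface, Theorem~\ref{ordconjthm} gives $K_X^2=c_1^2\le 5\cdot 5+6=31$, so $K_X^2$ is bounded. Moreover, by Noether's formula \eqref{noether-formula2} one has $c_2=12\chi(\O_X)-c_1^2$; and since Assumptions~\ref{assumptions} forbid $X$ from being fibred over a curve of genus $>1$, the results of Shepherd-Barron (\cite{shepherd-barron91a,shepherd-barron91b}), which identify the minimal surfaces of general type with $\chi(\O_X)<0$ as precisely those fibred over a curve of genus $\ge 2$ with rational fibres of small genus, give $\chi(\O_X)\ge 0$. Combined with $c_1^2>0$ (minimality and general type) this yields $1\le c_1^2\le 31$ and $-31\le c_2\le 5$, so only finitely many Chern pairs $(c_1^2,c_2)$ can occur among the exceptional surfaces.

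To conclude I would invoke boundedness of surfaces of general type with bounded Chern numbers: for fixed $(c_1^2,c_2)$ (equivalently, fixed Hilbert polynomial of the canonical model) the minimal surfaces of general type form a bounded family. In characteristic zero this is classical (effective bounds on pluricanonical maps after Bombieri; Gieseker's construction of the moduli scheme). In positive characteristic one passes to the canonical model---a surface with at worst rational double points---and uses the known bounds on its pluricanonical maps to embed it by a uniformly bounded power of $K_X$, hence bounds its Hilbert polynomial, hence gets boundedness. Putting the two cases together: every $X$ satisfying Assumptions~\ref{assumptions} and Conjecture~\ref{ordinaryconj} either has $c_2\ge 6$, in which case $c_1^2\le 5c_2+6\le 6c_2$, or lies in the finite union of bounded families attached to the finitely many Chern pairs with $c_2\le 5$.

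The main obstacle I expect is the boundedness input in positive characteristic: one must make sure that ``$K_X^2$ bounded implies bounded family'' genuinely holds over $\overline{\F}_p$ (and, ideally, uniformly in $p$), since the classical arguments rely on vanishing theorems that can fail in characteristic $p$. However, effective pluricanonical bounds for surfaces of general type are available in all characteristics, and these are exactly what is needed; so I anticipate the argument goes through, with this being the only step that is not purely formal manipulation of the inequality $c_1^2\le 5c_2+6$.
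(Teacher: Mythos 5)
Your proof is correct and follows essentially the same route as the paper: both split into the cases $c_2\geq 6$ (where $c_1^2\leq 5c_2+6\leq 6c_2$ is immediate from Theorem~\ref{ordconjthm}) and $c_2\leq 5$ (where the same theorem bounds $c_1^2$, and boundedness of minimal surfaces of general type with bounded Chern invariants is invoked). The paper simply asserts this last boundedness without comment, so your extra care about the positive-characteristic input (effective pluricanonical bounds via the canonical model) is a reasonable elaboration rather than a divergence.
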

\begin{proof}
By Theorem~\ref{ordconjthm}, $c_1^2\leq 5c_2+6$ holds for all surfaces satisfying \ref{assumptions}. Now consider surfaces for which \ref{assumptions} hold and $c_2< 6$. Then, for these surfaces $c_1^2\leq 5c_2+6<36$. Thus surfaces which satisfy $c_2<6$ also satisfy $c_1^2<36$ (under \ref{assumptions}). Now surfaces of general type which satisfy $c_1^2<36$ and  $c_2<6$ form a bounded family. For surfaces which do not belong to this family $c_2\geq 6$. Hence
\be
c_1^2\leq 5c_2+6<5c_2+c_2=6c_2.
\ee
\end{proof}

\section{Enriques classification and Torsion in crystalline
cohomology}\label{geography-of-crystalline-torsion} The main aim
of this section is to explore geographical aspects of torsion in
crystalline cohomology. It is well-known that if $X/\C$ is a
smooth, projective surface then the torsion in $H^2(X,\Z)$ is
invariant under blowups. We will see a refined version  of this
result holds in positive characteristic (see
Theorem~\ref{torsion-species-and-blowups}). One of the main results of this section provides 
 a new birational invariant of smooth surfaces in characteristic $p>0$.

\subsection{Gros' Blowup Formula} In the next few subsections we will use the formulas
which describe the behavior of cohomology of the de Rham-Witt
complex under blowups. We recall these from \cite{gros85}. Let $X$
be a smooth projective variety and let $Y\subset X$ be a closed
subscheme, pure of codimension $d$. Let $X'$ denote the blowup of
$X$ along $Y$, and let $f:X'\to X$ be the blowing up morphism.
Then one has an isomorphism:
\begin{equation}\label{blowup-formula}
\xymatrix{
{H^j(X,W\Omega^i_X)\displaystyle{\bigoplus_{0<n<d}} H^{j-n}(Y,W\Omega_Y^{i-n})} 
\ar[r]^<<{\sim} & H^j({X'},W\Omega^i_{X'}).}
\end{equation}

\subsection{Birational invariance of the domino of a surface}
The de Rham-Witt cohomology of a smooth, projective surface has only one, possibly
non-trivial, domino. This is the domino associated to the
differential $H^2(X,W(\O_X))\to H^2(X,W\Omega^1_X)$. In this
section we prove the following.
\begin{theorem}\label{birational-domino}
Let $X,X'$ be two smooth, projective surfaces over an
algebraically closed field $k$ of characteristic $p>0$ and let
$X'\to X$ be a birational morphism. Then the dominos $\dom^{0,2}(X)$ (resp. $\dom^{0,2}(X')$) associated to
the differentials $H^2(X,W(\O_X))\to H^2(X,W\Omega^1_X)$  (resp. $H^2(X,W(\O_X))\to H^2(X',W\Omega^1_{X'})$ are
naturally isomorphic.
\end{theorem}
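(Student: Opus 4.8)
The plan is to reduce to a single blow-up at a closed point and then to observe that, in the two cohomological degrees that actually enter the domino, Gros' blow-up formula \eqref{blowup-formula} collapses to the pullback $f^{*}$, which automatically respects the whole de Rham--Witt structure.

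First I would use the fact --- already invoked in Proposition~\ref{birational-invariance-of-domino} --- that a birational morphism of smooth projective surfaces factors as a finite sequence of blow-ups at closed points; by induction on the length of this sequence it suffices to treat a single blow-up $f\colon X'=\mathrm{Bl}_{x}(X)\to X$ at a closed point $x$. Since $k$ is algebraically closed, $Y=\{x\}$ has residue field $k$ and is of codimension $d=2$ in $X$, so in \eqref{blowup-formula} the correction terms are $H^{j-1}(Y,W\Omega^{i-1}_{Y})$, all of which vanish except for $H^{0}(Y,W(\O_{Y}))=W$ at bidegree $(i,j)=(1,1)$ --- this lone surviving summand being exactly the exceptional class responsible for the relation $m^{1,1}(X')=m^{1,1}(X)+1$ used above. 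In particular, at bidegrees $(0,2)$ and $(1,2)$ the correction terms are zero, so the restriction of Gros' isomorphism to its first summand --- which is the de Rham--Witt pullback $f^{*}$ --- gives isomorphisms of $W$-modules
\[
f^{*}\colon H^{2}(X,W(\O_{X}))\xrightarrow{\ \sim\ }H^{2}(X',W(\O_{X'})),\qquad
f^{*}\colon H^{2}(X,W\Omega^{1}_{X})\xrightarrow{\ \sim\ }H^{2}(X',W\Omega^{1}_{X'}).
\]

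Because $W\Omega^{\sbull}$ is functorial, $f^{*}$ respects the operators $F$, $V$ and --- the essential point --- the differential $d$, so on cohomology these two isomorphisms fit into a commuting square with the two differentials $H^{2}(W(\O))\to H^{2}(W\Omega^{1})$. Hence $f^{*}$ induces an isomorphism between the sub-$R$-modules of $H^{2}(X,W\Omega^{\sbull}_{X})$ and of $H^{2}(X',W\Omega^{\sbull}_{X'})$ supported in degrees $0$ and $1$ (and since in the slope spectral sequence of a surface this is the only place a non-trivial differential can live, nothing else intervenes). Finally, the domino associated to a differential $M^{0}\to M^{1}$ (see \ref{domino-factorization} and \ref{domino-number-definition}) depends functorially on that pair: it is assembled out of $\ker d$, its largest $R^{0}$-submodule, and $F^{\infty}$ of the image, each of which is transported by any $R$-module isomorphism. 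Applying this to the isomorphism just obtained yields the desired isomorphism between the dominos of $H^{2}(X,W(\O_{X}))\to H^{2}(X,W\Omega^{1}_{X})$ and of $H^{2}(X',W(\O_{X'}))\to H^{2}(X',W\Omega^{1}_{X'})$; in fact it is the canonical one induced by $f^{*}$.

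The one genuine obstacle is the input borrowed from \cite{gros85}: one must know that the first-summand component of \eqref{blowup-formula} is literally the de Rham--Witt pullback $f^{*}$ --- equivalently, that $Rf_{*}W(\O_{X'})=W(\O_{X})$ and that $Rf_{*}W\Omega^{1}_{X'}$ differs from $W\Omega^{1}_{X}$ only by a term supported on the exceptional fibre that does not reach $H^{2}(-,W\Omega^{1})$. Granting this (which is how \cite{gros85} sets up the formula), everything else is formal, and one even gets a canonical identification of the two dominos rather than merely an abstract isomorphism.
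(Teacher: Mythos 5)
Your proposal is correct and follows essentially the same route as the paper: reduce to a single blow-up, observe that Gros' formula \eqref{blowup-formula} has vanishing correction terms in the bidegrees feeding the domino (so one gets isomorphisms on $H^2(W(\O))$ and $H^2(W\Omega^1)$ compatible with the de Rham--Witt structure), and then transport the canonical factorization. The only difference is organizational: where you invoke once and for all that $V^{-\infty}Z$ and $F^{\infty}B$ are canonical in the $R$-module structure and hence carried by any $R$-module isomorphism, the paper verifies the same thing piecewise (first $\ker d$, then $\ker(dV^n)$ for all $n$, then $V^{-\infty}Z$, the quotient, and finally $F^{\infty}B$), which amounts to the same argument.
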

\subsection{Proof of Theorem~\ref{birational-domino}}
As any birational morphism $X'\to X$ as above factors as a finite
sequence of blowups at closed points, we may assume that $X'\to X$
is the blowup of $X$ at a single point. In what follows, to
simplify notation, we will denote objects on $X'$ by simply
writing them as \textit{primed quantities} and the \textit{unprimed quantities} will
denote objects on $X$. We will use the notation of
Subsection~\ref{differential0}.

The construction of the de Rham-Witt complex $W\Omega^\mydot_X$ is
functorial in $X$. The properties of the de Rham-Witt complex (in
the derived category of complexes of sheaves of modules over the
Cartier-Dieudonne-Raynaud algebra) under blowing up have been
studied extensively in \cite{gros85}, and using \cite[Chapter 7,
Theorem 1.1.9]{gros85}, and the usual formalism of de Rham-Witt
cohomology, we also have a morphism of slope spectral sequences.
The blowup isomorphisms described in the blowup formula fit into
the following diagram
$$\xymatrix{
  H^2(W(\O_{X'})) \ar[d]_{} \ar[r]^{d'} & H^2(W\Omega^1_{X'}) \ar[d]_{} \ar[r]^{d'} & H^2(W\Omega^2_{X'}) \ar[d]^{} \\
  H^2(W(\O_{X})) \ar[r]^{d} & H^2(W\Omega^1_{X}) \ar[r]^{d} &  H^2(W\Omega^2_{X}).  }
$$
By the Gros' blowup formula \ref{blowup-formula} all the vertical arrows
are isomorphisms. This induces an isomorphism
$Z'=\ker(d')\to\ker(d)=Z$.

Now  the formula for blowup for cohomology of the de Rham-Witt
complex also shows that we have isomorphisms for $i=1,2$,
$$\xymatrix{ H^i(X',W(\O_{X'}))\ar[r]^\simeq & H^i(X',W(\O_{X}))}$$
and these fit into the following commutative diagram.
$$\xymatrix{\small
  H^1(W(\O_{X'})) \ar[d]_{} \ar[r]^{} & H^1(X',\O_{X'}) \ar[d]_{} \ar[r]^{} &
  H^2(W(\O_{X'})) \ar[d]_{}
  \ar[r]^{} & H^2(W(\O_{X'})) \ar[d]_{} \ar[r]^{} & H^2(\O_{X'}) \ar[d]^{} \\
  H^1(W(\O_{X}))  \ar[r]^{} & H^1(X,\O_{X}) \ar[r]^{} &
  H^2(W(\O_{X}))\ar[r]^{} & H^2(W(\O_{X}))  \ar[r]^{} & H^2(\O_{X})
  }$$
In the above commutative diagram we claim that all the vertical
arrows are isomorphisms. Indeed \cite[Proposition~3.4,
V.5]{hartshorne-algebraic} shows that $H^i(X',\O_{X'})\to
H^i(X,\O_{X})$ are isomorphisms for $i\geq0$. The other vertical
arrows are isomorphisms by \cite{gros85}.  Thus from the diagram
we deduce an induced isomorphism
$$\xymatrix{\ker(H^2(W(\O_{X'}))\ar[r]^{V} & H^2(W(\O_{X'})))
\ar[r]^\simeq & \ker(H^2(W(\O_{X}))\ar[r]^{V} & H^2(W(\O_{X})))}.
$$
Thus the $V$-torsion in $H^2(W(\O_X))$ of $X$ and $X'$ in
$H^2(W(\O_{X'}))$ are isomorphic (we will use this in the proof of
our next theorem as well).

Now these two arguments combined also give the corresponding
assertions for the composite maps $dV^n$ (resp. $d'{V'}^n)$. Thus
we also have from a similar commutative diagram (with $dV^n$ etc.)
from which we deduce that we have isomorphisms
$\ker(d'{V'}^n)={V'}^{-n}Z'\to {V}^{-n}Z=\ker(d{V}^n)$. Thus we
have an isomorphism of the intersection of
$${V'}^{-\infty}Z'=\cap_n
{V'}^{-n}Z' \to {V'}^{-\infty}Z'=\cap_n {V}^{-n}Z.$$ Thus we have
in particular, isomorphisms
$$
\frac{H^2(X',W(\O_{X'}))}{{V'}^{-\infty}Z'} \simeq
\frac{H^2(X',W(\O_X))}{{V}^{-\infty}Z}.
$$
Now in the canonical factorization of $d$ (resp. $d'$) in terms of
their dominos we have a commutative diagram
$$\xymatrix{
  H^2(W(\O_{X'}))\ar[d]_{} \ar[r]^{} & \frac{H^2(W(\O_{X'}))}{{V'}^{-\infty}Z'} \ar[d]_{} \ar[r]^{} & {F'}^\infty B' \ar[d]_{}
  \ar[r]^{} & H^2(W\Omega^1_{X'}) \ar[d]^{} \\
  H^2(W(\O_{X})) \ar[r]^{} & \frac{H^2(W(\O_{X}))}{V^{-\infty}Z} \ar[r]^{} & {F}^\infty {B} \ar[r]^{} & H^2(W\Omega^1_{X}).}
$$
The first two vertical arrows and the last are isomorphisms. Hence
so is the remaining arrow. This completes  the proof of the
theorem.

\subsection{$T^{0,2}$ is a birational invariant}
The following corollary is immediate from the above theorem, but we also provide a simple and direct proof of this fact using properties of numerical invariants.

\begin{corollary}\label{birational-invariance-of-domino}
	Let $X,X'$ be smooth, projective surfaces over a perfect field and suppose that $X'\to
	X$ is a birational morphism. Then $T^{0,2}(X)=T^{0,2}(X')$.
\end{corollary}
\begin{proof}[Another proof]
It is enough to assume that the ground field is algebraically closed.	
Using the fact that any birational morphism $X'\to X$ of surfaces
	factors as finite sequence of blowups at closed points, we reduce
	to proving this assertion for the case when $X'\to X$ is the
	blowup at one closed point.
	
	As $c_2$ increases by $1$ and $c_1^2$ decreases by $1$ under
	blowups, the formula for $h^{1,1}_W$ shows that
	$h^{1,1}_W(X')=h^{1,1}_W(X)+1$  while using the formula for
	blowups for crystalline cohomology and a slope computation shows
	that the slope numbers of $X'$ and $X$ satisfy
	$$m^{1,1}(X')=m^{1,1}(X)+1, $$
	here the ``1'' is the contribution coming from the cohomology in
	degree two of the exceptional divisor which is one dimensional, so
	the result follows as
	$$h^{1,1}_W=m^{1,1}-2T^{0,2}.$$
\end{proof}

\subsection{Crystalline Torsion}
We begin by quickly recalling Illusie's results about crystalline
torsion. By crystalline torsion we will mean torsion in the
$W$-module $H^2_{cris}(X/W)$, which we will denote by
$H^2_{cris}(X/W)_{\rm Tor}$. Let $X/k$ be a smooth projective
variety. According to  \cite{illusie79b}, torsion in
$H^{cris}(X/W)$ arises from several different sources (see
\cite[Section 6]{illusie79b}). Torsion in the Neron-Severi group
of $X$, denoted $NS(X/k)_{\rm Tor}$ in this paper, injects into
$H^2_{\cris}(X/W)$ via the crystalline cycle class map (see
\cite[Proposition 6.8, page 643]{illusie79b}). The next species of
torsion one finds in the crystalline cohomology of a surface is
the $V$-torsion, denoted by $H^2_{cris}(X/W)_{v}$. It is the
inverse image of $V$-torsion in $H^2(X,W(\O_X))$, denoted here by
$H^2(X,W(\O_X))_{\rm V-tors}$, under the map $H^2(X/W)\to
H^2(X,W(\O_X))$. It is disjoint from the Neron-Severi torsion (see
\cite[Proposition 6.6, page 642]{illusie79b}). Torsion of these
two species is collectively called the {\em divisorial torsion} in
\cite[page 643]{illusie79b} and denoted by $H^2_{\cris}(X/W)_d$.
The quotient
$$H^2_{\cris}(X/W)_e=H^2_{\cris}(X/W)_{\rm
Tor}/H^2_{\cris}(X/W)_d$$ is called the {\em exotic torsion} of
$H^2_{\cris}(X/W)$, or if $X$ is a surface then simply by the
exotic torsion of $X$.

\subsection{Torsion of all types is invariant under blowups} Our next result concerns the torsion in the
second crystalline cohomology of a surface.
\begin{theorem}\label{torsion-species-and-blowups} Let $X'\to X$
be a birational morphism of smooth projective surfaces. Then
\begin{enumerate}
\item we have an isomorphism
$$H^2_{\cris}(X/W)_{Tor}\to H^2_{cris}(X'/W)_{Tor},$$
\item and this isomorphism induces an isomorphism on the Neron-Severi, the $V$-torsion, and the exotic torsion.
\end{enumerate}
\end{theorem}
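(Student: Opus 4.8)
The plan is to reduce to a single blowup at a point and then combine the crystalline blowup formula with the de Rham--Witt computations already carried out in the proof of Theorem~\ref{birational-domino}. Since any birational morphism of smooth projective surfaces factors as a finite composite of blowups at closed points, and the torsion subgroups in question are functorial, it suffices to treat the case where $f\colon X'\to X$ is the blowup at a single closed point, with exceptional divisor $E\isom\P^1_k$.

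For (1), the blowup formula for crystalline cohomology --- obtained from Gros' de Rham--Witt blowup formula \eqref{blowup-formula} by passing through the slope spectral sequence --- gives a canonical decomposition $H^2_{\cris}(X'/W)\isom H^2_{\cris}(X/W)\oplus M$, where $M$ is the free $W$-module of rank one generated by the crystalline class of $E$. As $M$ is torsion free, restricting to torsion subgroups yields the desired isomorphism $H^2_{\cris}(X/W)_{\rm Tor}\isom H^2_{\cris}(X'/W)_{\rm Tor}$. By \cite[Ch.~7]{gros85} this decomposition underlies a morphism of slope spectral sequences induced by $f$, and this naturality is what lets us match up the three species.

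For (2), consider first the N\'eron--Severi torsion: blowing up a smooth point changes $\Pic(X)$, hence $NS(X)$, only by adjoining the free summand $\Z\cdot[E]$, so $f^*$ identifies $NS(X)_{\rm Tor}$ with $NS(X')_{\rm Tor}$, and since the crystalline cycle class map commutes with $f^*$ the isomorphism of (1) carries the one onto the other. For the $V$-torsion, recall it is the preimage of $H^2(X,W(\O_X))_{\rm V\text{-}tors}$ under the edge map $H^2_{\cris}(X/W)\to H^2(X,W(\O_X))$; in the proof of Theorem~\ref{birational-domino} it was already shown that the Gros blowup isomorphism identifies the $V$-torsion of $H^2(X',W(\O_{X'}))$ with that of $H^2(X,W(\O_X))$, and since the edge maps for $X$ and $X'$ sit in a commutative square with this isomorphism and the decomposition above, the isomorphism of (1) restricts to $H^2_{\cris}(X/W)_v\isom H^2_{\cris}(X'/W)_v$. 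Finally, the divisorial torsion is the internal direct sum $H^2_{\cris}(X/W)_d=NS(X)_{\rm Tor}\oplus H^2_{\cris}(X/W)_v$, the two species being disjoint by \cite[Proposition~6.6]{illusie79b}; hence $H^2_{\cris}(X/W)_d$ is carried isomorphically onto $H^2_{\cris}(X'/W)_d$, and passing to the quotient gives the asserted isomorphism on the exotic torsion $H^2_{\cris}(X/W)_e$.

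The real content is the compatibility of the \emph{single} isomorphism of (1) with both the cycle class map and the edge map to $H^2(W(\O))$ at once --- equivalently, that the extra summand $M$ is exactly $W\cdot[E]$, that $[E]$ accounts for the N\'eron--Severi contribution, and that $[E]$ maps to $0$ in $H^2(X',W(\O_{X'}))$ because the de Rham--Witt class of a divisor lies in degree $\geq 1$. All of this follows from the naturality of Gros' constructions \cite{gros85} --- the blowup isomorphism is realized at the level of complexes of modules over the Cartier--Dieudonn\'e--Raynaud algebra and is compatible with cycle classes --- but pinning down that naturality is the point requiring care; the rest is bookkeeping with the three-step filtration $H^2_{\cris}(X/W)_v\subset H^2_{\cris}(X/W)_d\subset H^2_{\cris}(X/W)_{\rm Tor}$.
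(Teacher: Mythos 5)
Your proposal is correct and follows essentially the same route as the paper: reduce to a single blowup, obtain (1) from the crystalline blowup formula (the extra summand being torsion free), identify the $V$-torsion via the de Rham--Witt computation already done for Theorem~\ref{birational-domino}, note that the N\'eron--Severi group acquires no new torsion, and pass to the quotient by the divisorial torsion to handle the exotic torsion. The only cosmetic difference is that you phrase the last step as a quotient of an internal direct sum while the paper uses a commutative diagram with exact rows; these are the same argument.
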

\begin{proof}
As every $X'\to X$ as in the hypothesis factors as a finite
sequence of blowups at closed points, it suffices to prove the
assertion for the blowup at one closed point. So let $X'\to X$ be
the blowup of $X$ at one closed point $x\in X$. The formula for
blowup for crystalline cohomology induces an isomorphism
$$\xymatrix{
H^2_{cris}(X/W)_{Tor}\ar[r]^{\simeq} & H^2(X'/W)_{\rm Tor}.}
$$
This proves assertion (1). As remarked earlier,  the proof of
Theorem~\ref{birational-domino}, also shows that the $V$-torsion
of $H^2(W(\O_X))$ and $H^2(W(\O_{X'}))$ are isomorphic. Then by
\cite[Proposition 6.6, Page 642]{illusie79b} we see that the
$V$-torsion of $X$ and $X'$ are isomorphic. Thus we have an
isomorphism on the $V$-torsion $H^2_{cris}(X/W)_{v}\isom
H^2_{cris}(X'/W)_v$. Further it is standard that the
N\'eron-Severi group of $X$ does not acquire any torsion under
blowup $X'\to X$. So we have an isomorphism
$$H^2_{cris}(X/W)_{d}\to H^2_{cris}(X'/W)_{d},$$
of the divisorial torsion of $X$ and $X'$. Therefore in the
commutative diagram
$$
\xymatrix{
  0 \ar[r]^{} & H^2_{cris}(X/W)_d \ar[d]^{\simeq} \ar[r]  & H^2_{cris}(X/W)_{Tor}
  \ar[d]_{\simeq} \ar[r]^{} & H^2_{cris}(X/W)_e \ar[d]_{}
  \ar[r]^{} & 0 \\
  0  \ar[r]^{} & H^2_{cris}(X'/W)_d \ar[r] & H^2_{cris}(X'/W)_{Tor}
   \ar[r]^{} & H^2_{cris}(X'/W)_e \ar[r]^{} & 0
   }
$$
the first two columns are isomorphisms and the rows are exact so
that the last arrow is an isomorphism.
\end{proof}

\begin{remark} It is clear from
Proposition~\ref{torsion-species-and-blowups} that while studying
torsion in the crystalline cohomology of a surface that we can
replace $X$ by its smooth minimal model (when it exits).
\end{remark}
\subsection{$\kappa\leq0$ means no exotic torsion} The next result we want to prove is probably
well-known to the experts. But we  will prove a more precise form
of this result in Theorem~\ref{precise-torsion} and
Proposition~\ref{classification-of-torsion-for-kappa0}. We begin
by stating the result in its coarsest form.
\begin{theorem} Let $X/k$ be a smooth projective surface over a
perfect field. If $\kappa(X)\leq 0$ then $H^2_{\cris}(X/W)$ does
not have exotic torsion.
\end{theorem}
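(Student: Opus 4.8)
The plan is to reduce to a minimal surface and then run through the Enriques--Kodaira classification, the point being that ``no exotic torsion'' is a birational invariant. Concretely, by Theorem~\ref{torsion-species-and-blowups} each species of torsion in $H^2_{\cris}(X/W)$ --- Neron--Severi, $V$-torsion, and exotic --- is preserved under a blow-up of a smooth surface, so I may assume $X$ is minimal (this is exactly the reduction noted in the remark following that theorem). Since $\kappa(X)\le 0$ and $X$ is minimal, the classification recalled in Subsection~\ref{enriques} says that $X$ is one of: rational, (irrational) ruled, a K3 surface, an abelian surface, an Enriques surface (classical, or in characteristic $2$ one of the non-classical $\mu_2$- and $\alpha_2$-types), or a bielliptic surface, together with the quasi-hyperelliptic surfaces in characteristics $2$ and $3$. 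It then suffices to treat these finitely many classes.

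The first four classes I would dispose of immediately, because there $H^2_{\cris}(X/W)$ has no torsion at all. If $X$ is minimal rational it is $\P^2$ or a Hirzebruch surface, and if $X$ is minimal irrational ruled it is a $\P^1$-bundle over a smooth proper curve $C$; in either case the projective bundle formula gives $H^2_{\cris}(X/W)\cong H^2_{\cris}(C/W)\oplus W$, a free $W$-module ($C$ being a point or a smooth proper curve, whose crystalline cohomology is torsion free). If $X$ is a K3 surface it is Mazur--Ogus (as recalled after Theorem~\ref{deligne-illusie-thm}), so $H^2_{\cris}(X/W)$ is torsion free; if $X$ is an abelian surface then $H^2_{\cris}(X/W)\cong\bigwedge^2 H^1_{\cris}(X/W)$ is torsion free as well. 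In all of these there is a fortiori no exotic torsion.

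All the real content therefore sits in the remaining list: Enriques surfaces and bielliptic / quasi-hyperelliptic surfaces, for which $H^2_{\cris}(X/W)$ genuinely carries torsion. Here the soft input is that every such surface is supersingular in the strong sense $\rho(X)=b_2(X)$ (namely $10$ for Enriques surfaces, $2$ in the bielliptic and quasi-hyperelliptic cases), so $H^2_{\cris}(X/W)\otimes K$ is spanned by cycle classes and hence pure of slope one; in particular $H^2(X,W(\O_X))\otimes K=0$, so $H^2(X,W(\O_X))$ is a torsion $W$-module and all of $H^2_{\cris}(X/W)$ comes from the algebraic (slope one) part. From this I would argue, type by type, that the torsion subgroup of $H^2_{\cris}(X/W)$ is exhausted by divisorial torsion, i.e. $H^2_{\cris}(X/W)_{\rm Tor}=H^2_{\cris}(X/W)_d$: the Neron--Severi part is the image of $\Pic(X)_{\rm tors}$ under the crystalline cycle class map, and any residual torsion is the $V$-torsion pulled back from $H^2(X,W(\O_X))$, which is nonzero precisely when $\Pic(X)$ is non-reduced. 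For the classical Enriques surfaces (any $p$) and for bielliptic surfaces $\Pic(X)$ is reduced, so there the torsion is the Neron--Severi torsion alone and we are done. The hard part will be the non-classical $\mu_2$- and $\alpha_2$-Enriques surfaces in characteristic $2$ and the quasi-hyperelliptic surfaces in characteristics $2$ and $3$: there $\Pic^0(X)$ is non-reduced, genuine $V$-torsion appears, and one must verify that Neron--Severi torsion together with this $V$-torsion accounts for \emph{all} of $H^2_{\cris}(X/W)_{\rm Tor}$, with nothing exotic left over. This I expect to require Illusie's explicit description of $H^2_{\cris}(X/W)$ and of the Hodge--Witt cohomology of these exceptional surfaces rather than the slope/Picard-number argument above, and it is carried out in the sharper statements Theorem~\ref{precise-torsion} and Proposition~\ref{classification-of-torsion-for-kappa0}, from which the coarse assertion here follows.
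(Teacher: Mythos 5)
Your reduction to minimal models via birational invariance of each torsion species, and your disposal of the rational, ruled, K3 and abelian cases by outright torsion-freeness of $H^2_{\cris}(X/W)$, match the paper. But in the cases you yourself identify as carrying the real content, there are two genuine gaps. First, for classical Enriques and bielliptic surfaces you argue: $\Pic(X)$ is reduced, hence the torsion is N\'eron--Severi torsion alone, ``and we are done.'' That inference is not valid: reducedness of $\Pic(X)$ rules out $V$-torsion, but exotic torsion is by definition the quotient $H^2_{\cris}(X/W)_{\rm Tor}/H^2_{\cris}(X/W)_d$, and nothing in your argument shows this quotient vanishes. Likewise your ``soft input'' that $H^2(X,W(\O_X))\otimes K=0$ only shows $H^2(X,W(\O_X))$ is a torsion $W$-module; exotic torsion lives precisely in $p$-torsion of $H^2(X,W(\O_X))$ that is not $V$-torsion, so supersingularity alone proves nothing here. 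The paper's actual mechanism for all $p_g=0$ cases is Lemma~\ref{pgzero-hodge-witt} together with Lemma~\ref{pgzero-exotic}: $H^2(X,\O_X)=0$ forces $H^2(X,W(\O_X))=0$ by d\'evissage along $0\to W_{n-1}(\O_X)\to W_n(\O_X)\to\O_X\to 0$, and since exotic torsion is a quotient of (part of) the $p$-torsion of $H^2(X,W(\O_X))$, it vanishes. This handles classical Enriques and bielliptic surfaces in one line, with no appeal to $\Pic$ at all.

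Second, for the non-classical Enriques surfaces in characteristic $2$ and the non-classical hyperelliptic surfaces (the $p_g=1$ cases), you explicitly defer to ``Theorem~\ref{precise-torsion} and Proposition~\ref{classification-of-torsion-for-kappa0}'' --- but those are the paper's own sharper statements through which the present theorem is proved, so citing them is circular in a blind proof, and you supply no substitute argument. The missing ingredient is Suwa's criterion \cite{suwa83}: for these surfaces one has $q=-p_a$, which forces $H^2(X,W(\O_X))$ to consist entirely of $V$-torsion; since the exotic torsion is the quotient of the image of $H^2_{\cris}(X/W)_{\rm Tor}$ in $H^2(X,W(\O_X))$ by the $V$-torsion, it is therefore zero. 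Without either this criterion or Illusie's explicit computation of $H^2(X,W(\O_X))$ for these exceptional surfaces, the proof does not close.
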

\subsection{The case $\kappa(X)=-\infty$}
The case $\kappa(X)=-\infty$ is the easiest of all. If
$\kappa(X)=-\infty$, then $X$ is rational or ruled. If $X$ is
rational, by the birational invariance of torsion we reduce to the
case $X=\P^2$ or $X$ is a $\P^1$-bundle over $\P^1$ and in these
case on deduces the result following result by inspection. Thus
one has to deal with the case that $X$ is ruled.
\begin{proposition} Let $X$ be a smooth ruled surface over $k$. Then
$H^2_{\cris}(X/W)$ is torsion free and $X$ is Hodge-Witt.
\end{proposition}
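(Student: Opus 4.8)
The plan is to reduce both assertions to standard facts about curves, using that a ruled surface is, up to blow-ups, a $\P^1$-bundle over a smooth projective curve. First I would reduce to the minimal model: by Theorem~\ref{torsion-species-and-blowups} the torsion of $H^2_{\cris}(\,\cdot\,/W)$ is unchanged under a birational morphism of smooth projective surfaces, by Proposition~\ref{birational-invariance-of-domino} the domino number $T^{0,2}$ is a birational invariant, and since for a surface the only potentially non-zero differential in the slope spectral sequence is $d\colon H^2(X,W(\O_X))\to H^2(X,W\Omega^1_X)$ (see the second proof of Proposition~\ref{non-negative-hw-for-surfaces}), the Hodge-Witt property is insensitive to blow-ups as well. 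Thus we may assume $X=\P(\mathcal{E})\xrightarrow{\pi}C$ is a geometrically ruled surface over a smooth projective curve $C$ of genus $q$ (the rational case having been settled already).

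For torsion-freeness I would invoke the projective bundle formula for crystalline cohomology: it gives an isomorphism of $W$-modules $H^n_{\cris}(X/W)\cong H^n_{\cris}(C/W)\oplus H^{n-2}_{\cris}(C/W)$, the second summand being generated by the crystalline first Chern class of $\O_{\P(\mathcal{E})}(1)$. As $C$ is a smooth projective curve, $H^*_{\cris}(C/W)$ is torsion free ($H^0=W$, $H^1$ free of rank $2q$, $H^2=W$), hence so is $H^*_{\cris}(X/W)$; in particular $H^2_{\cris}(X/W)\cong W\oplus W$ and $b_2=2$.

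For the Hodge-Witt property I would use that a ruled surface has Kodaira dimension $-\infty$, so $p_g=h^0(X,\Omega^2_X)=0$ and therefore $H^2(X,\O_X)=0$ by Serre duality. Feeding this into the exact sequences $0\to W_n\O_X\xrightarrow{V}W_{n+1}\O_X\to\O_X\to 0$ and using that cohomology vanishes in degrees $>2$ on the surface $X$, a trivial induction on $n$ yields $H^2(X,W_n\O_X)=0$ for every $n\ge1$, and hence $H^2(X,W(\O_X))=\varprojlim_n H^2(X,W_n\O_X)=0$. Then the differential $d\colon H^2(X,W(\O_X))\to H^2(X,W\Omega^1_X)$ has zero source, so $T^{0,2}=0$, and by the remark in the reduction step this forces the slope spectral sequence of $X$ to degenerate at $E_1$, i.e.\ $X$ is Hodge-Witt.

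Alternatively one could bypass the reduction step entirely and use the projective bundle and blow-up formulas for the de Rham--Witt complex of \cite{gros85}, which express each $H^j(X,W\Omega^i_X)$ in terms of the groups $H^*(C,W\Omega^*_C)$: these are finitely generated $W$-modules because curves are Hodge-Witt, which gives the Hodge-Witt property directly, and torsion-freeness then follows from the resulting degeneration of the slope spectral sequence together with the torsion-freeness of the de Rham--Witt cohomology of $C$. In either approach nothing is genuinely hard; the only steps requiring care are the reduction to the geometrically ruled model and citing the precise form of the relevant K\"unneth/projective-bundle formula.
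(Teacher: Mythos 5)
Your proposal is correct and follows essentially the same route as the paper: torsion-freeness via the projective bundle formula for crystalline cohomology of a $\P^1$-bundle over a curve, and the Hodge--Witt property via $p_g=0$, the exact sequences $0\to W_{n-1}(\O_X)\to W_n(\O_X)\to\O_X\to 0$ giving $H^2(X,W(\O_X))=0$, hence vanishing of the only possibly non-trivial differential (this is exactly the paper's Lemma~\ref{pgzero-hodge-witt}). The explicit reduction to the geometrically ruled model via birational invariance is a harmless addition the paper leaves implicit.
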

\begin{proof}
The first assertion follows from the formula for crystalline
cohomology of a projective bundle over a smooth projective scheme. The
second assertion follows from the following lemma which is of
independent interest and will be of frequent use to us.
\end{proof}

\begin{lemma} \label{pgzero-hodge-witt}
Let $X$ be a smooth, projective variety over a perfect field $k$.
\begin{enumerate}
\item If $H^i(X,\O_X)=0$ then $H^i(X,W(\O_X))=0$.
\item If $X/k$ is a surface with $p_g(X)=0$ then
$X$ is Hodge-Witt.
\end{enumerate}
\end{lemma}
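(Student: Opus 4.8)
The plan is to prove (1) by dévissage on the length of the Witt vectors, and then to deduce (2) from the fact, already exploited above, that the slope spectral sequence of a surface carries at most one non-zero differential.

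For (1), I would use the exact sequences of sheaves of abelian groups on $X$
$$0\to W_{n-1}\O_X\xrightarrow{\ V\ } W_n\O_X\to \O_X\to 0,\qquad n\geq 1,$$
with $W_1\O_X=\O_X$. Taking cohomology and inducting on $n$: the case $n=1$ is the hypothesis, and in the inductive step the exact segment $H^i(X,W_{n-1}\O_X)\to H^i(X,W_n\O_X)\to H^i(X,\O_X)$ has both outer groups zero, whence $H^i(X,W_n\O_X)=0$ for every $n\geq 1$. Since $W(\O_X)=\liminv_n W_n\O_X$ with surjective transition maps, one has $R\Gamma(X,W(\O_X))\simeq R\liminv_n R\Gamma(X,W_n\O_X)$, and the associated Milnor sequence reads
$$0\to \liminv^1_n H^{i-1}(X,W_n\O_X)\to H^i(X,W(\O_X))\to \liminv_n H^i(X,W_n\O_X)\to 0.$$
The right-hand term is $0$ by the previous step; the left-hand term is $0$ because each $H^{i-1}(X,W_n\O_X)=H^{i-1}(W_n(X),\O_{W_n(X)})$ is a finitely generated module over the artinian ring $W_n$ (as $W_n(X)$ is proper over $\Spec W_n$), hence of finite length, so the tower $\{H^{i-1}(X,W_n\O_X)\}_n$ automatically satisfies Mittag--Leffler. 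Thus $H^i(X,W(\O_X))=0$. This is in essence the argument of \cite{illusie79b}.

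For (2), if $X$ is a surface with $p_g(X)=0$ then $H^2(X,\O_X)=0$, since $p_g=h^{0,2}=\dim_k H^2(X,\O_X)$, so part (1) gives $H^2(X,W(\O_X))=0$. In the slope spectral sequence $E_1^{i,j}=H^j(X,W\Omega^i_X)\Rightarrow H^{i+j}_{\cris}(X/W)$ of a smooth projective surface, the only possibly non-zero differential is $d_1\colon E_1^{0,2}=H^2(X,W(\O_X))\to E_1^{1,2}=H^2(X,W\Omega^1_X)$ — this is exactly the fact recalled in the second proof of Proposition~\ref{non-negative-hw-for-surfaces}, citing \cite{nygaard79b} and \cite[Corollary 3.14, page 619]{illusie79b}. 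As its source vanishes, this differential is zero as well, so the slope spectral sequence degenerates at $E_1$, i.e.\ $X$ is Hodge-Witt.

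The argument is essentially routine; the only point that genuinely requires care is the passage to the inverse limit in (1) — namely that $R\Gamma(X,-)$ commutes with the relevant $R\liminv$ and that the $\liminv^1$-obstruction vanishes — and both are secured by properness of $X$, which makes the finite-level groups finite-length $W$-modules and hence a Mittag--Leffler system. Statement (2), applied to ruled surfaces (which all have $p_g=0$), supplies precisely the input invoked in the preceding proposition.
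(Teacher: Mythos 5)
Your proof is correct and follows essentially the same route as the paper: the paper also reduces (2) to (1) and proves (1) by induction on $n$ using the exact sequence $0\to W_{n-1}(\O_X)\to W_n(\O_X)\to \O_X\to 0$, merely leaving the passage to the inverse limit (your Mittag--Leffler step) and the identification of the unique possibly non-zero differential implicit. No gaps.
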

\begin{proof} This is well-known and was also was noted in
\cite{joshi00a}. We include it here for completeness. Clearly, it
is sufficient to prove the first assertion. We have the exact
sequence
\begin{equation}
0 \to W_{n-1}(\O_X)\to W_n(\O_X)\to \O_X \to 0
\end{equation}
The result now follows by induction on $n$ and the fact that
$H^i(X,\O_X)=0$.
\end{proof}

\begin{lemma}\label{pgzero-exotic}
Let $X$ be a smooth projective variety over a perfect field. If
$H^2(X,\O_X)=0$ then there is no exotic or $V$-torsion in
$H^2_{\cris}(X/W)$.
\end{lemma}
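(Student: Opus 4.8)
The plan is to deduce the vanishing of exotic and $V$-torsion directly from the hypothesis $H^2(X,\O_X)=0$ by tracing through Illusie's description of the torsion species. The crucial observation is that the $V$-torsion in $H^2_{\cris}(X/W)$ is, by definition, the inverse image under the projection $H^2_{\cris}(X/W)\to H^2(X,W(\O_X))$ of the $V$-torsion submodule $H^2(X,W(\O_X))_{V\text{-tors}}$. So the first step is to invoke Lemma~\ref{pgzero-hodge-witt}(1): since $H^2(X,\O_X)=0$ we get $H^2(X,W(\O_X))=0$, and hence there is no $V$-torsion in $H^2(X,W(\O_X))$, so none in $H^2_{\cris}(X/W)$.

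For the exotic torsion I would argue as follows. Recall that the exotic torsion is the quotient $H^2_{\cris}(X/W)_e = H^2_{\cris}(X/W)_{\rm Tor}/H^2_{\cris}(X/W)_d$ of the full torsion by the divisorial torsion. The point is that the exotic torsion is detected inside $H^2(X,W(\O_X))$: more precisely, in Illusie's analysis (\cite[Section~6]{illusie79b}) the exotic torsion maps injectively, or is built from, the torsion living in $H^2(X,W(\O_X))$ beyond the $V$-torsion part, and the slope spectral sequence differential $d:H^2(X,W(\O_X))\to H^2(X,W\Omega^1_X)$ controls it. Since $H^2(X,W(\O_X))=0$, every torsion class in $H^2_{\cris}(X/W)$ maps to zero in $H^2(X,W(\O_X))$, so it is divisorial; thus $H^2_{\cris}(X/W)_{\rm Tor}=H^2_{\cris}(X/W)_d$ and the exotic quotient vanishes.

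The step I expect to require the most care is the precise bookkeeping linking the exotic torsion to $H^2(X,W(\O_X))$ — one must be careful about which map ``sees'' the exotic torsion and in which degree, since the relevant exact sequences in \cite{illusie79b} involve the universal coefficient sequence relating $H^*_{\cris}(X/W_n)$, $H^*_{\cris}(X/W)$, and the de Rham--Witt cohomology groups $H^j(X,W\Omega^i_X)$. The cleanest route is probably to use the short exact sequence $0\to H^2_{\cris}(X/W)/p \to H^2_{\dR}(X/k)\to {}_pH^3_{\cris}(X/W)\to 0$ together with the fact that $H^2(X,W(\O_X))=0$ forces the slope $<1$ part of $H^2_{\cris}(X/W)$ to vanish, and then to quote \cite[Prop.~6.6 and the discussion on page~643]{illusie79b} which identifies both $V$-torsion and exotic torsion as phenomena supported on $H^2(X,W(\O_X))$. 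Once that identification is in hand the conclusion is immediate.
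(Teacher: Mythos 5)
Your proposal is correct and follows essentially the same route as the paper: both reduce to $H^2(X,W(\O_X))=0$ via Lemma~\ref{pgzero-hodge-witt} and then invoke Illusie's identification of the $V$-torsion and the exotic torsion as phenomena detected in $H^2(X,W(\O_X))$. The extra remarks about universal coefficient sequences and slopes are not needed; the paper's proof stops once $H^2(X,W(\O_X))$ is known to vanish.
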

\begin{proof}
By Illusie's description of exotic torsion (see \cite{illusie79b})
one knows that it is the quotient of a part of $p$-torsion in
$H^2(X,W(\O_X))$, but this group is zero by the
Lemma~\ref{pgzero-hodge-witt}, so its quotient by the $V$-torsion
is zero as well.
\end{proof}

\subsection{Surfaces with $\kappa(X)=0$}
        Let $X$ be a smooth projective surface with $\kappa(X)=0$.
We can describe the  crystalline torsion of such surfaces
completely. The description  of surfaces with $\kappa(X)=0$ breaks
down in to the following cases based on the value of $b_2$ of the
surface $X$ (see \cite{bombieri77}).
\begin{proposition}\label{classification-of-torsion-for-kappa0}
Let $X/k$ be a smooth projective surface of Kodaira dimension
zero. Then one has the following:
\begin{enumerate}
\item if $b_1(X)=4$, then $X$ is an abelian surface and
$H^2_{\cris}(X/W)$ is torsion free so all species of torsion are
zero; moreover $X$ is Hodge-Witt if and only if $X$ has $p$-rank one.
\item if $b_2(X)=22$, then $X$ is a $K3$-surface and
$H^2_{\cris}(X/W)$ is torsion free and $X$ is Hodge-Witt if and only
if the formal Brauer group of $X$ is of finite height.
\item Assume $b_2=2$. Then $b_1=2$ and there are two subcases given by
the value of $p_g$:
\begin{enumerate}
\item if $p_g=0$, then $H^2_{\cris}(X/W)$ has no torsion and $X$ is
Hodge-Witt;
\item if $p_g=1$ then $H^2_{\cris}(X/W)$ has $V$-torsion and $\Pic(X)$
is not reduced.
\end{enumerate}
\item if $b_2=10$, then $p_g=0$ and unless $\text{char}(k)=2$ and in
the latter case $p_g=1$; in the former case $X$ is Hodge-Witt and $X$
has no $V$-torsion; if $p_g=1$ then $H^2_{\cris}(X/W)$ has $V$-torsion.
\end{enumerate}
\end{proposition}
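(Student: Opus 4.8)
The plan is to reduce to the minimal model and then run through the Enriques classification of surfaces with $\kappa(X)=0$, treating the four families separately. First observe that every invariant occurring in the statement is insensitive to blowing up: the Betti numbers and $p_g=h^2(X,\O_X)$ are birational invariants, whether $\Pic(X)$ is reduced depends only on $h^1(X,\O_X)$ and $b_1$, all three torsion species in $H^2_{\cris}(X/W)$ are preserved by Theorem~\ref{torsion-species-and-blowups}, and the Hodge--Witt property of a surface depends only on $T^{0,2}$, which by Proposition~\ref{birational-invariance-of-domino} is a birational invariant. So we may assume $X$ is minimal, and then by \cite{bombieri76,bombieri77} $X$ is either an abelian surface ($b_1=4$), a $K3$ surface ($b_1=0$, $b_2=22$), a classical or non-classical Enriques surface ($b_1=0$, $b_2=10$), or a classical or non-classical bielliptic/hyperelliptic surface ($b_1=2$, $b_2=2$); these are exactly the four cases of the statement.

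Next I would set up the two mechanisms that do all the work. For a surface the slope spectral sequence has at most one possibly non-zero differential (second proof of Proposition~\ref{non-negative-hw-for-surfaces}), so $X$ is Hodge--Witt if and only if $T^{0,2}=0$; and Crew's formula~\ref{crew-formula} together with the explicit formula~\ref{ekedahl-formula} gives
\begin{equation*}
h^{0,2}_W=\chi(\O_X)-1+\tfrac12 b_1=m^{0,2}+T^{0,2},
\end{equation*}
where $m^{0,2}=0$ precisely when $H^2_{\cris}(X/W)\otimes K$ has no slope in $[0,1)$. Since moreover $T^{0,2}\le h^{0,2}=p_g$, this reduces ``Hodge--Witt'' to a slope statement about $H^2_{\cris}$ in each case. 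The second mechanism is torsion-theoretic: by \cite[Remark~6.4, page~641]{illusie79b}, $\Pic(X)$ is reduced iff $V$ is injective on $H^2(X,W(\O_X))$, and by Illusie's description the $V$-torsion of $H^2_{\cris}(X/W)$ is the inverse image of the $V$-torsion of $H^2(X,W(\O_X))$; hence $\Pic(X)$ non-reduced forces non-zero $V$-torsion in $H^2_{\cris}(X/W)$. In the opposite direction, when $H^2(X,\O_X)=0$, Lemma~\ref{pgzero-hodge-witt} gives $H^2(X,W(\O_X))=0$ (so $X$ is Hodge--Witt) and Lemma~\ref{pgzero-exotic} rules out exotic and $V$-torsion.

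Now the four cases. For an \emph{abelian surface}, $H^*_{\cris}(X/W)$ is the exterior algebra on the free $W$-module $H^1_{\cris}(X/W)$, hence torsion-free, so all torsion species vanish; since $\chi(\O_X)=0$ we get $h^{0,2}_W=1$, so $T^{0,2}=0$ iff $m^{0,2}=1$ iff $H^1_{\cris}\otimes K$ is not pure of slope $\tfrac12$ iff the $p$-rank of $X$ is $\ge1$ (equivalently $X$ is not supersingular); alternatively one may just quote Illusie's Hodge--Witt criterion for abelian varieties. For a \emph{$K3$ surface}, $H^2_{\cris}(X/W)$ is torsion-free ($K3$'s are Mazur--Ogus), so all torsion species vanish, and again $h^{0,2}_W=1$, so $T^{0,2}=0$ iff $m^{0,2}=1$ iff $H^2_{\cris}\otimes K$ is not pure of slope one iff $X$ is not supersingular iff $\widehat{\mathrm{Br}}(X)$ has finite height. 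For \emph{Enriques surfaces}, $b_1=0$ and $\chi(\O_X)=1$ force $p_g=h^{0,1}$: for classical Enriques $p_g=0$, so Lemmas~\ref{pgzero-hodge-witt} and~\ref{pgzero-exotic} give Hodge--Witt and no $V$-torsion; for non-classical Enriques (only $p=2$) $p_g=1$, so $h^1(X,\O_X)=1>0=\tfrac12 b_1$, whence $\Pic(X)$ is non-reduced and $H^2_{\cris}(X/W)$ carries $V$-torsion. For \emph{bielliptic/hyperelliptic surfaces}, $b_1=2$ and $\chi(\O_X)=0$ force $p_g=h^{0,1}-1$: if $p_g=1$ (only $p=2,3$) then $h^1(X,\O_X)=2>1=\tfrac12 b_1$, so $\Pic(X)$ is non-reduced and $H^2_{\cris}(X/W)$ has $V$-torsion; if $p_g=0$ then $h^1(X,\O_X)=1=\tfrac12 b_1$, so $\Pic(X)$ is reduced, Lemmas~\ref{pgzero-hodge-witt} and~\ref{pgzero-exotic} give Hodge--Witt and no exotic or $V$-torsion, and it remains only to kill the N\'eron--Severi torsion.

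The step I expect to be the real obstacle --- the rest being assembly of standard facts --- is precisely this last point: Lemmas~\ref{pgzero-hodge-witt} and~\ref{pgzero-exotic} only remove the $V$- and exotic torsion, so the assertion that $H^2_{\cris}(X/W)$ is \emph{entirely} torsion-free in the bielliptic $p_g=0$ case requires the fine Bombieri--Mumford classification of (hyper)elliptic surfaces in characteristic $p$ and a check that the order of the structure group $G$ is prime to $p$ exactly when $p_g=0$, so that no $p$-torsion enters $\mathrm{NS}(X)$. One must also take care to quote the abelian and $K3$ Hodge--Witt criteria in the correct ``not supersingular / finite height'' form, and to read the ``$p$-rank one'' in case~(1) as ``$p$-rank $\ge 1$''.
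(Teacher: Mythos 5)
Your proof follows essentially the same route as the paper --- reduce to the minimal model, step through the Enriques classification, and use Lemmas~\ref{pgzero-hodge-witt} and~\ref{pgzero-exotic} for the $p_g=0$ cases --- but it is considerably more complete: the paper's own proof is three sentences long, citing only those two lemmas plus ``well-known'' facts, and in particular it never spells out the $p_g=1$ subcases (your argument via non-reducedness of $\Pic(X)$ forcing $V$-torsion is exactly the intended mechanism) nor the N\'eron--Severi torsion in the bielliptic $p_g=0$ case. Your two caveats are both correct and identify the only genuine issues: ``$p$-rank one'' in assertion (1) must indeed be read as ``$p$-rank $\geq 1$'' (an ordinary abelian surface is Hodge--Witt), and the claim that $H^2_{\cris}(X/W)$ is \emph{entirely} torsion-free when $b_2=2$, $p_g=0$ does require checking $\mathrm{NS}(X)_{\mathrm{Tor}}$ has no $p$-torsion via the Bombieri--Mumford classification --- a step the cited lemmas do not supply and which the paper's proof also omits.
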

\begin{proof}
The assertion (1) is well-known. The assertion (2) is due to
\cite{nygaard79b}. The cases when $X$ has $p_g=0$ can be easily
dealt with by using Lemma~\ref{pgzero-hodge-witt} and
Lemma~\ref{pgzero-exotic}.
\end{proof}

\begin{corollary} Let $X$ be a smooth projective surface over a
perfect field. Assume $\kappa(X)=0$ then $H^2_{\cris}(X/W)$ has no
exotic torsion.
\end{corollary}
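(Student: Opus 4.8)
The plan is to reduce to a minimal surface and then read off the conclusion from the structural classification in Proposition~\ref{classification-of-torsion-for-kappa0}. First I would invoke Theorem~\ref{torsion-species-and-blowups}: any birational morphism of smooth projective surfaces factors into blow-ups at closed points, and exotic torsion is preserved under such blow-ups, so $H^2_{\cris}(X/W)_e$ is a birational invariant. Since a surface of Kodaira dimension $0$ admits a smooth, projective minimal model, again of Kodaira dimension $0$, we may assume $X$ is itself minimal. Then $X$ is one of the surfaces enumerated in Proposition~\ref{classification-of-torsion-for-kappa0}, and it suffices to verify the absence of exotic torsion in each case.

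Second, I would dispatch the cases in which $H^2(X,W(\O_X))$ is essentially trivial. When $b_1(X)=4$ (so $X$ is an abelian surface) or $b_2(X)=22$ (so $X$ is a $K3$ surface), $H^2_{\cris}(X/W)$ is torsion-free, and there is nothing to prove. When $p_g(X)=0$ --- the sub-case $b_2=2$, $p_g=0$, and the sub-case $b_2=10$ in characteristic different from $2$ --- we have $H^2(X,\O_X)=0$, hence $H^2(X,W(\O_X))=0$ by Lemma~\ref{pgzero-hodge-witt}, and then Lemma~\ref{pgzero-exotic} shows $H^2_{\cris}(X/W)$ carries neither exotic nor $V$-torsion (it may still carry N\'eron--Severi torsion, e.g.\ for a classical Enriques surface, but that is divisorial and hence harmless here).

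Third, the only genuinely non-trivial cases are those with $p_g(X)=1$: a bielliptic or non-classical hyperelliptic surface with $b_2=2$, and a non-classical Enriques surface with $b_2=10$ in characteristic $2$. Here $H^2(X,W(\O_X))\neq 0$ and $\Pic(X)$ is non-reduced, so $V$-torsion is genuinely present. The key input, supplied by Proposition~\ref{classification-of-torsion-for-kappa0}, is that in these cases the crystalline torsion is \emph{exhausted} by $V$-torsion; since $V$-torsion is by definition part of the divisorial torsion $H^2_{\cris}(X/W)_d$, the exotic quotient $H^2_{\cris}(X/W)_e=H^2_{\cris}(X/W)_{\rm Tor}/H^2_{\cris}(X/W)_d$ vanishes, completing the proof. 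I expect this last point to be the real obstacle: it is precisely here that one needs the finer analysis behind Proposition~\ref{classification-of-torsion-for-kappa0}, namely that for these surfaces the smallness of $b_2$ (and the resulting control on the domino $T^{0,2}$ and on the $W[F,V]$-module structure of $H^2(X,W(\O_X))$) leaves no room for the kind of non-uniform $p$-torsion in $H^2(X,W(\O_X))$ that produces a non-zero exotic quotient; everything else is bookkeeping over the Enriques--Kodaira list.
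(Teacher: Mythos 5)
Your reduction to a minimal model and your treatment of the easy cases (abelian and K3 surfaces via torsion-freeness; all $p_g=0$ cases via Lemma~\ref{pgzero-hodge-witt} and Lemma~\ref{pgzero-exotic}) match the paper and are fine. The problem is the step you yourself flag as ``the real obstacle,'' namely the $p_g=1$ surfaces (bielliptic or non-classical hyperelliptic with $b_2=2$, and non-classical Enriques with $b_2=10$ in characteristic $2$). You claim that Proposition~\ref{classification-of-torsion-for-kappa0} says the crystalline torsion of these surfaces is \emph{exhausted} by $V$-torsion. It does not: it asserts only that $H^2_{\cris}(X/W)$ \emph{has} $V$-torsion and that $\Pic(X)$ is non-reduced, which is perfectly compatible a priori with a nonzero exotic quotient. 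Your substitute argument --- that the smallness of $b_2$ and control of $T^{0,2}$ ``leave no room'' for non-uniform $p$-torsion in $H^2(X,W(\O_X))$ --- is a heuristic, not a proof, and nothing in the paper's statement of the Proposition supplies it. So the proposal is circular at exactly the point where content is needed.

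The paper closes this gap with a specific external input: Suwa's criterion \cite{suwa83}. For the surfaces in question one has $q=-p_a$ (equivalently $h^{0,1}-q=p_g$, the extreme case of the Bombieri--Mumford inequality $0\leq h^{0,1}-q\leq p_g$), and Suwa's theorem then shows that $H^2(X,W(\O_X))$ consists entirely of $V$-torsion. Since by Illusie's description the exotic torsion is a quotient of (the image of) $H^2_{\cris}(X/W)_{\rm Tor}$ in $H^2(X,W(\O_X))$ by the $V$-torsion, it vanishes. To repair your proof you should replace the appeal to Proposition~\ref{classification-of-torsion-for-kappa0} in the hard cases by this application of Suwa's criterion (or supply an equivalent argument that every torsion class in $H^2(X,W(\O_X))$ is killed by $V$).
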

\begin{proof}
The cases when $p_g=0$ are treated by means of
Lemma~\ref{pgzero-exotic}. The remaining cases follow from Suwa's
criterion (see \cite{suwa83}) as in all these case one has by
\cite{bombieri77} that $q=-p_a$ so Suwa's criterion applies and in
this situation $H^2(X,W(\O_X))$ is $V$-torsion, and therefore there is
no exotic torsion
\end{proof}

\begin{corollary}\label{precise-torsion}
Let $X$ be a smooth, projective surface over an algebraically
closed field $k$ of characteristic $p>0$.
\begin{enumerate}
\item If $X$ has exotic torsion the $\kappa(X)\geq 1$.
\item If $X$ has $V$-torsion then
\begin{enumerate}
\item  $\kappa(X)\geq 1$ or,
\item  $\kappa(X)=0$ and $X$ has $b_2=2, p_g=1$ or $p=2$, $b_2=10,
p_g=1$.
\end{enumerate}
\end{enumerate}
\end{corollary}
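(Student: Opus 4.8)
The plan is to reduce both statements, via birational invariance of torsion, to the cases $\kappa(X)\le 0$ that have already been settled above (the Proposition on ruled surfaces, Proposition~\ref{classification-of-torsion-for-kappa0}, and the Corollary asserting that $\kappa(X)=0$ implies no exotic torsion). Throughout I use that the Kodaira dimension and each species of torsion in $H^2_{\cris}(X/W)$ are birational invariants of smooth projective surfaces (Theorem~\ref{torsion-species-and-blowups}), so that I may freely replace $X$ by any smooth surface birational to it, in particular by a minimal model when $\kappa(X)\ge 0$.

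First I would prove (1). Assume $X$ has exotic torsion. If $\kappa(X)=-\infty$, then $X$ is rational or ruled, and after contracting $(-1)$-curves I reach $\P^2$ or a $\P^1$-bundle over a smooth curve; in each of these cases $H^2_{\cris}(X/W)$ is torsion free by the formula for crystalline cohomology of a projective bundle (equivalently, by the Proposition on ruled surfaces recorded above), so there is no exotic torsion, a contradiction. If $\kappa(X)=0$, I pass to the minimal model and invoke the Corollary just established ($\kappa(X)=0\Rightarrow$ no exotic torsion), again a contradiction. Hence $\kappa(X)\ge 1$.

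Next, (2). Assume $X$ has $V$-torsion. Since $V$-torsion is birationally invariant, I may again pass to a minimal model. The case $\kappa(X)=-\infty$ is excluded exactly as above, because $H^2_{\cris}(X/W)$ is then torsion free. If $\kappa(X)=0$, I run through the four cases of Proposition~\ref{classification-of-torsion-for-kappa0}: for abelian surfaces ($b_1=4$) and $K3$ surfaces ($b_2=22$) the group $H^2_{\cris}(X/W)$ is torsion free, hence has no $V$-torsion; for $b_2=2$ the presence of $V$-torsion forces $p_g=1$; and for $b_2=10$ it forces $p=2$ and $p_g=1$. Collecting these alternatives, either $\kappa(X)\ge 1$, or $\kappa(X)=0$ with $(b_2,p_g)=(2,1)$, or $p=2$ with $(b_2,p_g)=(10,1)$ — which is exactly (2a)–(2b).

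The result is in essence a bookkeeping consequence of the earlier classification, so I do not expect a serious obstacle; the only point requiring a little care is the $\kappa(X)=-\infty$ case, where there is no unique minimal model. There one must argue through the explicit structure theory (rational or ruled) together with the projective-bundle formula, rather than by blowing down to a canonical model, and one uses that blowing up does not create torsion of any species — which is Theorem~\ref{torsion-species-and-blowups} read in the reverse direction.
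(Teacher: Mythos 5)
Your proposal is correct and follows exactly the route the paper intends: the corollary is stated there without proof, as an immediate consequence of the birational invariance of each torsion species (Theorem~\ref{torsion-species-and-blowups}), the torsion-freeness of $H^2_{\cris}$ for rational and ruled surfaces, and Proposition~\ref{classification-of-torsion-for-kappa0} together with the corollary that $\kappa(X)=0$ excludes exotic torsion. Your write-up simply makes that bookkeeping explicit, including the correct observation that for $\kappa(X)=-\infty$ one argues via the structure theory and the projective-bundle formula rather than via a unique minimal model.
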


\subsection{A criterion for non-existence of exotic torsion}
Apart from \cite{joshi00b} and \cite{suwa83} we do not know any
useful general criteria for ruling out existence of exotic
torsion. The following trivial result is often useful in dealing
with exotic torsion in surfaces of general type.
\begin{proposition} Let $X/k$
be a smooth, projective surface over a perfect field. Assume
$\Pic(X)$ is reduced and $H^2(X,W(\O_X))$ is of finite type. Then
$H^2_{\cris}(X/W(k))$ does not contain exotic torsion.
\end{proposition}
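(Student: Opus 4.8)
The plan is to reduce the vanishing of the exotic torsion to the single assertion that $H^2(X,W(\O_X))$ is a torsion-free $W$-module, exactly as in the proof of Lemma~\ref{pgzero-exotic}, but now under the weaker hypotheses at hand. Recall from Illusie's analysis of the three species of torsion in \cite[Section 6]{illusie79b} that the exotic torsion $H^2_{\cris}(X/W)_e$ is a subquotient of the torsion submodule of $H^2(X,W(\O_X))$ — concretely, it is the quotient of a submodule of the $p$-torsion of $H^2(X,W(\O_X))$, the $V$-torsion having already been absorbed into the divisorial part. Hence, once we know that $H^2(X,W(\O_X))$ has no $p$-torsion, we get $H^2_{\cris}(X/W)_e=0$, which is the assertion of the proposition.

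So the crux is to prove that, under the hypotheses that $\Pic(X)$ is reduced and $M:=H^2(X,W(\O_X))$ is of finite type over $W$, the module $M$ is torsion-free over $W$. First I would invoke the standard finiteness property that $M$ is a profinite $R^0=W[F,V]$-module for any smooth projective $X$ (this underlies the whole domino formalism recalled above; see \cite{illusie83b}); since $M$ occupies degree $0$ in the graded $R$-module $H^2(X,W\Omega^\bullet_X)$, its canonical filtration reduces to $\Fil^nM=V^nM$, so profiniteness says exactly that $M$ is $V$-adically complete and separated. Next, the hypothesis that $\Pic(X)$ is reduced gives, via \cite[Remark 6.4, page 641]{illusie79b}, that $V$ acts injectively on $M$, i.e. $M$ is $V$-torsion-free. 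Finally, since $pM=VF\,M\subseteq VM$, the space $M/VM$ is a quotient of $M/pM$, which is finite-dimensional over $k$ because $M$ is of finite type over $W$. Therefore $M$ is a Cartier module in the sense of \cite[Definition 2.4]{illusie83b}, and, being moreover of finite type over $W$, it is free of finite type over $W$ by \cite[Proposition 2.5(d), page 99]{illusie83b}; in particular it is torsion-free and has no $p$-torsion. Combined with the first paragraph, this completes the proof.

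The step I expect to require the most care is the first one: extracting from Illusie's description in \cite[Section 6]{illusie79b} the precise statement that torsion-freeness of $H^2(X,W(\O_X))$ forces the \emph{exotic} quotient of $H^2_{\cris}(X/W)_{\mathrm{Tor}}$ to vanish, and not merely the $V$-torsion part. Everything else is a routine application of the structure theory of Cartier modules; note that the finite-type hypothesis is genuinely needed at the last step, since for instance for a supersingular $K3$ surface $H^2(X,W(\O_X))$ is a Cartier module that fails to be of finite type over $W$. It is also worth remarking that the same argument shows that $H^2_{\cris}(X/W)$ has no $V$-torsion under these hypotheses, so only N\'eron-Severi torsion can survive.
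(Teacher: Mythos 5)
Your argument is correct and follows essentially the same route as the paper's own proof: the hypotheses make $H^2(X,W(\O_X))$ a finite-type Cartier module, hence free over $W$ by \cite[Proposition 2.5, page 99]{illusie83b}, and the exotic torsion, being a quotient of (the image of) torsion in $H^2(X,W(\O_X))$ by the $V$-torsion, must then vanish. The extra care you take in verifying the Cartier-module conditions (profiniteness, $V$-injectivity from reducedness of $\Pic(X)$, and finiteness of $M/VM$ via $pM\subseteq VM$) is a sound elaboration of what the paper leaves implicit.
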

\begin{proof}
        Recall from \cite{nygaard79b} that a smooth projective
surface is Hodge-Witt if and only if $H^2(X,W(\O_X))$ is of finite
type. Then as $\Pic(X)$ is reduced, we see that $V$ is injective
on $H^2(X,W(\O_X))$. Thus $H^2(X,W(\O_X))$ is a Cartier module of
finite type. By \cite[Proposition 2.5, page 99]{illusie83b} we
know that any $R^0$-module which is a finite type $W(k)$-module is
a Cartier module if and only if it is a free $W(k)$-module. Thus
$H^2(X,W(\O_X))$ is a free $W(k)$-module of finite type.  By
\cite[Section 6.7, page 643]{illusie79b} we see that the exotic
torsion of $H^2_{\cris}(X/W(k))$ is zero as it is a quotient of
the image of torsion in $H^2_{\cris}(X/W(k))$ (under the canonical
projection $H^2_{\cris}(X/W(k))\to H^2(X,W(\O_X))$) by the
$V$-torsion of $H^2(X,W(\O_X))$. But as $H^2(X,W(\O_X))$ is
torsion free, we see that the exotic torsion is zero.
\end{proof}

\section{Mehta's question for surfaces}\label{mehta-question}
\subsection{Is torsion uniformizable?} In this section we answer the following question of
Mehta (see \cite{joshi00b}):
\begin{question} Let $X/k$ be a
smooth, projective, Frobenius split variety over a perfect field
$k$. Then does there exists a Galois \'etale cover $X'\to X$ such that
$H^2_{\cris}(X/W)$ is torsion free.
\end{question}
\subsection{Absence of exotic torsion}
In \cite{joshi00b} it was shown that the second crystalline cohomology
of smooth, projective, Frobenius split surface does not have exotic torsion in the
second crystalline cohomology. In \cite{joshi00a} it was shown that any
smooth, projective Frobenius split surface is ordinary.

\subsection{The case $\kappa(X)=0$}
We will prove now that the answer to the above question is
affirmative and in fact the assertion is true more generally for
$X$ with $\kappa(X)\leq 0$. The main theorems of this section are
\begin{theorem}\label{questofmehta1} Let $X$ be a smooth, projective surface of Kodaira
dimension at most zero, then there exists a Galois \'etale cover
$X'\to X$ such that $H^2_{\cris}(X/W)$ is torsion free.
\end{theorem}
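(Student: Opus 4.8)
The plan is to reduce to the minimal model and then run through the Enriques--Kodaira classification, feeding in the description of crystalline torsion obtained earlier in this section.

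First I would note that the problem is unchanged under blow-ups. Torsion in $H^2_{\cris}$ is a birational invariant of smooth projective surfaces (Theorem~\ref{torsion-species-and-blowups}), and a Galois \'etale cover pulls back through a birational morphism: if $\pi\colon X\to X_0$ is birational and $Y_0\to X_0$ is Galois \'etale, then $Y=X\times_{X_0}Y_0\to X$ is Galois \'etale with the same group, $Y$ is a smooth projective connected surface, and $Y\to Y_0$ is again birational, so $H^2_{\cris}(Y/W)$ is torsion free if and only if $H^2_{\cris}(Y_0/W)$ is. Since every surface with $\kappa\le 0$ carries a birational morphism onto a surface with no $(-1)$-curves, I may assume $X$ is minimal. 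If $\kappa(X)=-\infty$, then $X$ is $\P^2$ or a $\P^1$-bundle over a smooth curve $C$, and the projective bundle formula for crystalline cohomology, together with torsion-freeness of $H^*_{\cris}(C/W)$, shows that $H^2_{\cris}(X/W)$ is already torsion free, so $X'=X$ (this recovers the statement for ruled surfaces given above).

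It remains to treat $\kappa(X)=0$. By Corollary~\ref{precise-torsion} there is no exotic torsion, so by Proposition~\ref{classification-of-torsion-for-kappa0} the only surfaces that can carry torsion in $H^2_{\cris}$ are certain Enriques and (quasi-)bielliptic surfaces. Abelian and $K3$ surfaces have torsion-free $H^2_{\cris}$, so $X'=X$. For an Enriques surface with $p\ne 2$, or for a singular (ordinary) Enriques surface in characteristic $2$, the canonical double cover $\widetilde X\to X$ is Galois \'etale with $\widetilde X$ a $K3$ surface, so I take $X'=\widetilde X$. For a bielliptic surface I would write $X\isom A/G$ with $A$ an abelian surface and $G$ a finite group scheme acting freely; when $G$ is \'etale --- automatic if $p\ge 5$ --- the quotient $A\to X$ is a Galois \'etale cover of a torsion-free surface, so $X'=A$ works.

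The hard part will be the non-classical surfaces in characteristics $2$ and $3$: quasi-bielliptic (non-classical hyperelliptic) surfaces, and classical and supersingular Enriques surfaces in characteristic $2$, where the natural uniformizing map is a torsor under a non-reduced group scheme and hence inseparable rather than \'etale. For the quasi-bielliptic case I would replace $A\to X$ by the intermediate cover $X'=A/G^{\circ}\to X$, where $G^{\circ}$ is the (normal, infinitesimal) identity component of $G$; this is Galois \'etale with group $G/G^{\circ}$, and one checks, via Proposition~\ref{classification-of-torsion-for-kappa0} or by iterating the construction until one reaches $A$, that $H^2_{\cris}(X'/W)$ is torsion free. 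For the remaining characteristic-$2$ Enriques surfaces there is no \'etale double cover coming from $\Pic^{\tau}$, and one must argue directly: Proposition~\ref{classification-of-torsion-for-kappa0}(4) together with Lemmas~\ref{pgzero-hodge-witt} and~\ref{pgzero-exotic} pin down the N\'eron--Severi and $V$-torsion, and the required Galois \'etale cover (if one is needed) must be extracted from the geometry of $X$ rather than from its Picard scheme. I expect this last family of characteristic-$2$ surfaces to be the genuine difficulty, precisely because the cover one would naively use is purely inseparable.
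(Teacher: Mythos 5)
Your overall route---reduce to the minimal model via the birational invariance of torsion (Theorem~\ref{torsion-species-and-blowups}), dispose of $\kappa=-\infty$ by the projective bundle formula, then walk through the $\kappa=0$ classes using Proposition~\ref{classification-of-torsion-for-kappa0}---is exactly the paper's, and in the cases you actually complete (abelian, $K3$, Enriques with $p\neq 2$, singular Enriques in characteristic $2$, bielliptic with \'etale $G$) your argument is correct and rather more careful than the text. But as a proof the proposal is not finished, and the unfinished part is a genuine gap rather than a routine verification. For the quasi-bielliptic and non-classical bielliptic surfaces you propose $X'=A/G^{\circ}$ and ask the reader to check torsion-freeness ``by iterating until one reaches $A$''; this cannot work as stated, because $A\to A/G^{\circ}$ is purely inseparable whenever $G^{\circ}\neq 1$, and Proposition~\ref{classification-of-torsion-for-kappa0}(3)(b) says precisely that a $\kappa=0$ surface with $b_2=2$, $p_g=1$ \emph{does} carry $V$-torsion --- so if $A/G^{\circ}$ is again such a surface you are back where you started. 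The classical and supersingular Enriques surfaces in characteristic $2$ you leave open outright: for the classical ones $NS(X)_{\rm Tor}=\Z/2$ is $p$-torsion and injects into $H^2_{\cris}(X/W)$, while their canonical covers are $\mu_2$- or $\alpha_2$-torsors and hence inseparable, so no cover ``coming from $\Pic^{\tau}$'' is available.

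For comparison, the paper's own proof is much blunter: it asserts that $K3$, Enriques and abelian surfaces all have torsion-free $H^2_{\cris}$ (so $X'=X$), and that for $b_2=2$ the product of elliptic curves serves as the Galois \'etale cover. Your suspicion about characteristics $2$ and $3$ is therefore well founded: a non-classical Enriques surface in characteristic $2$ has $p_g=1$ and hence $V$-torsion by the paper's own Proposition~\ref{classification-of-torsion-for-kappa0}(4), and the uniformizing cover of a quasi-bielliptic or non-classical bielliptic surface is inseparable, so neither assertion of the paper applies verbatim there. In short, you have correctly isolated the cases where the real difficulty lies, but you have not resolved them; until those characteristic $2$ and $3$ families are treated (or the statement is restricted, say to $p\geq 5$), the proposal does not constitute a proof of the theorem.
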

\begin{theorem}\label{questofmehta2} Let $X$ be a smooth, projective
surface over a perfect field. Assume $X$ is Frobenius split. Then
there exists a Galois \'etale cover $X'\to X$ such that
$H^2_{\cris}(X'/W)$ is torsion free.
\end{theorem}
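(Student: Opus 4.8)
The plan is to deduce Theorem~\ref{questofmehta2} from Theorem~\ref{questofmehta1}: the only extra input needed is that a smooth projective Frobenius split surface $X$ over a perfect field $k$ necessarily has Kodaira dimension $\kappa(X)\leq 0$, after which Theorem~\ref{questofmehta1} supplies the desired Galois \'etale cover.

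First I would make explicit what a Frobenius splitting buys us. The Frobenius $F\colon X\to X$ is finite and flat (as $X$ is smooth over the perfect field $k$), and its relative dualizing sheaf is $\omega_X\otimes F^*\omega_X^{-1}\cong\omega_X^{\otimes(1-p)}$ since $F^*$ of a line bundle is its $p$-th tensor power. Duality for the finite flat morphism $F$ therefore gives a canonical identification
\bes
\Hom_{\O_X}(F_*\O_X,\O_X)\cong H^0(X,\omega_X^{\otimes(1-p)}).
\ees
A Frobenius splitting is in particular a nonzero element of the left hand side, so $H^0(X,\omega_X^{\otimes(1-p)})\neq 0$.

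Next, fix a nonzero section $s\in H^0(X,\omega_X^{\otimes(1-p)})$. Since $X$ is integral and $s$ is a nonzero global section, for every $n\geq 0$ multiplication by $s$ is an injection $H^0(X,\omega_X^{\otimes n})\hookrightarrow H^0(X,\omega_X^{\otimes(n-(p-1))})$. Iterating, every plurigenus $h^0(X,\omega_X^{\otimes n})$ is bounded above by $\max_{0\leq r<p-1}h^0(X,\omega_X^{\otimes r})$, a constant independent of $n$; since a surface with bounded plurigenera has $\kappa(X)\leq 0$, we conclude $\kappa(X)\leq 0$. Now Theorem~\ref{questofmehta1} applies and yields a Galois \'etale cover $X'\to X$ with $H^2_{\cris}(X'/W)$ torsion free, which is exactly Theorem~\ref{questofmehta2}.

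I do not expect a serious obstacle here: once the reduction to Theorem~\ref{questofmehta1} is in place, the argument is the standard fact that Frobenius split varieties have non-positive Kodaira dimension, and the only step that deserves a little care is the duality computation identifying splittings with nonzero sections of $\omega_X^{\otimes(1-p)}$. One could instead try to argue directly along the lines of the earlier sections: by \cite{joshi00a} a Frobenius split surface is ordinary, hence $\Pic(X)$ is reduced and (being Hodge-Witt) $H^2(X,W(\O_X))$ is free, so there is no $V$-torsion, while there is no exotic torsion by \cite{joshi00b}; one would then be left only with the N\'eron--Severi torsion, to be killed by passing to an \'etale cover. This route is longer and needs extra care with the $p$-primary part of $NS(X)_{\tors}$, so I would prefer the reduction to Theorem~\ref{questofmehta1}.
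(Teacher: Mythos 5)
Your proposal is correct and follows essentially the same route as the paper: reduce to Theorem~\ref{questofmehta1} by showing a Frobenius split surface has $\kappa(X)\leq 0$, the key input in both cases being that a splitting yields (via duality for the finite flat Frobenius) a nonzero section of $\omega_X^{\otimes(1-p)}$. The only cosmetic difference is the last step --- you bound the plurigenera by multiplying with that section, whereas the paper notes that for $\kappa\geq 1$ both $K_X^n$ and $K_X^{-n}$ would have sections, forcing $K_X^n\cong\O_X$ --- but these are the same observation.
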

\begin{proof}{[of Theorem~\ref{questofmehta1}]}
We now note that Mehta's question is trivially true for ruled
surfaces as these have torsion-free crystalline cohomology. So we
may assume that $\kappa(X)=0$. In this case we have a finite
number of classes of surfaces for which the assertion has to be
verified. These classes are classified by $b_2$. When $X$ is a
$K3$ or an Enriques surface or an abelian surface then we can
take $X'=X$ as such surfaces have torsion free crystalline
cohomology. When $b_2=2$ the surface is bielliptic and by explicit
classification of these we know that we may take the Galois cover
to be the product of elliptic curves and so we are done in these
cases as well.
\end{proof}

\begin{proof}{[of Theorem~\ref{questofmehta2}]}
After Theorem~\ref{questofmehta1} it suffices to prove that the
Kodaira dimension of a Frobenius split surface is at most zero.
This follows from Proposition~\ref{classification-fsplit} below
(and is, in any case, well-known to experts).
\end{proof}

\begin{proposition}\label{classification-fsplit} Let $X$ be a smooth
projective surface. If $X$ is a Frobenius split then, $X$ has
Kodaira dimension at most zero and is in the following list:
\begin{enumerate}
\item $X$ is either rational or ruled over an ordinary curve,
\item $X$ is a either an ordinary $K3$, or an ordinary abelian surface
or $X$ is bielliptic with an ordinary elliptic curve as its Albanese
variety, or $X$ is an ordinary Enriques surface.
\end{enumerate}
\end{proposition}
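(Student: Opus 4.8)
We may assume $k=\bk$, since Frobenius splitness is preserved under base change to the algebraic closure of a perfect field. The plan is to first bound the Kodaira dimension directly from the splitting, and then to run through the Enriques--Kodaira classification (Subsection~\ref{enriques}), disposing of each class by an elementary observation on the canonical bundle together with the cited fact from \cite{joshi00a} that an $F$-split surface is ordinary. A Frobenius splitting is in particular a nonzero element of $\Hom_{\O_X}(F_*\O_X,\O_X)$, which by Grothendieck duality for the finite flat Frobenius is $H^0(X,\omega_X^{\otimes(1-p)})$; hence $-(p-1)K_X$ is linearly equivalent to an effective divisor $E\geq 0$. If $\kappa(X)\geq 1$ then $K_X$ is not torsion and $nK_X$ is linearly equivalent to a nonzero effective divisor $D$ for a suitable $n>0$; then $nE+(p-1)D$ is an effective divisor linearly equivalent to $0$, hence equal to $0$, which forces $D=0$ and $nK_X\sim 0$, a contradiction. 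So $\kappa(X)\leq 0$. The same computation, applied to $X=\mathrm{Bl}_x(X')$ and pulled back, shows that an $F$-split surface with $\kappa(X)=0$ (so that $K_X$ is torsion) must be minimal, the obstruction coming from the exceptional curve.

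Next I would record that Frobenius splitness descends along any morphism $g\colon X\to Y$ with $g_*\O_X=\O_Y$: since the absolute Frobenius is a natural transformation one has $F_Y\circ g=g\circ F_X$, and applying $g_*$ to a splitting of $\O_X\to F_{X,*}\O_X$ yields a splitting of $\O_Y\to F_{Y,*}\O_Y$. For $\kappa(X)=-\infty$ the surface is rational or irrationally ruled; in the ruled case the composite of the birational morphism to a $\P^1$-bundle with its projection is such a morphism $X\to C$, so $C$ is Frobenius split, hence an ordinary curve. This gives case (1).

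For $\kappa(X)=0$ the surface is minimal and, by \cite{joshi00a}, ordinary; I would go through the classes. For $K3$ and abelian surfaces ordinarity is exactly height one of the formal Brauer group, resp.\ $p$-rank $2$, which are the first two entries of (2). For bielliptic surfaces $q=1$ and the Albanese morphism $X\to E=\Alb(X)$ has $f_*\O_X=\O_E$, so $E$ is Frobenius split, hence an ordinary elliptic curve. For Enriques surfaces: in characteristic $\neq 2$ every Enriques surface is classical, and being ordinary it is an ordinary Enriques surface; in characteristic $2$ the classical Enriques surfaces have $\omega_X$ of exact order $2$, so $\omega_X^{\otimes(1-p)}=\omega_X$ has no nonzero global section and they are not $F$-split, while ordinarity rules out the supersingular non-classical ones, leaving precisely the ``singular'' (i.e.\ ordinary) Enriques surfaces. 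Finally, the quasi-hyperelliptic surfaces that occur in characteristics $2$ and $3$ are excluded: their canonical bundle is torsion of order $m$ with $m\in\{2,4\}$ for $p=2$ and $m=3$ for $p=3$, so $m$ does not divide $p-1$; then $\omega_X^{\otimes(1-p)}$ is a nontrivial torsion line bundle, $H^0(X,\omega_X^{\otimes(1-p)})=0$, and $X$ is not $F$-split.

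The hard part will be the small-characteristic bookkeeping in the $\kappa(X)=0$ case: extracting from the Bombieri--Mumford classification the precise orders of $K_X$ for the bielliptic and quasi-hyperelliptic surfaces, and the ``singular''/``supersingular'' dichotomy for non-classical Enriques surfaces, so that the final list matches the one asserted. The Kodaira dimension bound, the descent of $F$-splitness along fibrations with $g_*\O=\O$, and the $K3$, abelian and elliptic-curve cases are routine.
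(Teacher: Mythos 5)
Your proposal is correct and takes essentially the same route as the paper: the key step in both is that a splitting forces $H^0(X,\omega_X^{\otimes(1-p)})\neq 0$ (the paper extracts this from \cite{mehta85b} plus Serre duality rather than quoting Grothendieck duality for Frobenius directly), and the incompatibility of sections of $K_X^{n}$ and $K_X^{-n}$ then gives $\kappa(X)\leq 0$. The remainder of the paper's proof is just the sentence ``the result follows from the classification of surfaces with $\kappa(X)\leq 0$'' (with ordinarity supplied by \cite{joshi00a}), so your descent of the splitting along morphisms with $g_*\O_X=\O_Y$ and your case-by-case exclusion of the non-classical and quasi-hyperelliptic surfaces in characteristics $2$ and $3$ actually supply detail the paper omits.
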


\begin{proof}
We first control the Kodaira dimension of a Frobenius split
surface. By \cite{mehta85b} we know that if $X$ is Frobenius
split, then $H^2(X,K_X)\to H^2(X,K_X^p)$ is injective, or by
duality, $H^0(X,K_X^{1-p})$ has a non-zero section and hence in
particular, $H^0(X,K_X^{-n})$ has sections for large $n$. Hence,
if $\kappa(X)\geq 1$, then as the pluricanonical system $P_n$ is
also non-zero for large $n$, so we can choose an $n$ large enough
such both that $K_X^n$ and $K_X^{-n}$ have sections and so
$K_X^n=\O_X$ for some integer $n$. But this contradicts the fact
that $\kappa(X)=1$, for in that case $K_X$ is non-torsion, so we
deduce that $X$ has $\kappa(X)\leq 0$. Now the result follows from
the classification of surfaces with $\kappa(x)\leq 0$.
\end{proof}

\section{Hodge-Witt numbers of threefolds}\label{hodge-witt-numbers-of-threefolds}
In this section we compute Hodge-Witt numbers of smooth proper
threefolds. In Theorem~\ref{non-liftable-b3} we characterize
Calabi-Yau threefolds with negative Hodge-Witt numbers and in
Proposition~\ref{hirokado-example} Calabi-Yau threefolds constructed by \cite{hirokado99} and \cite{schroer04} appear as
examples of Calabi-Yau threefolds with negative Hodge-Witt numbers.
\subsection{Non-negative Hodge-Witt numbers of threefolds} We begin by listing all the Hodge-Witt numbers of a
smooth, proper threefolds which are always non-negative.
\begin{proposition}\label{non-negative-hw}
        Let $X/k$ be a smooth, proper threefold over a perfect
field of characteristic $p>0$.
\begin{enumerate}
\item Then $h^{i,j}_W\geq 0$ except possibly when $(i,j)\in\left\{
(1,1),(2,1),(1,2),(2,2)\right\}$.
\item All the Hodge-Witt numbers except
$h_W^{1,1}=h^{2,2}_W,h_W^{1,2}=h_W^{2,1}$ coincide with the
corresponding slope numbers.
\item For the exceptional cases we
have the following formulas.
\begin{eqnarray}
h^{1,2}_W&=&m^{1,2}-T^{0,3}\\
h^{1,1}_W&=&m^{1,1}-2T^{0,2}
\end{eqnarray}
\end{enumerate}
\end{proposition}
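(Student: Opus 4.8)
The plan is to reduce the entire proposition to the defining formula $h^{i,j}_W=m^{i,j}+T^{i,j}-2T^{i-1,j+1}+T^{i-2,j+2}$ of \ref{hodge-witt-definition}, once one has pinned down which domino numbers of a smooth projective threefold $X$ can be nonzero. So the first step is: $T^{i,j}$ is the dimension of the domino attached to $d\colon H^j(X,W\Omega^i_X)\to H^j(X,W\Omega^{i+1}_X)$, and by the canonical factorization of \ref{differential1} this domino vanishes whenever its source or its target vanishes; since $\dim X=3$ that already gives $T^{i,j}=0$ for $i\geq 3$ (target $W\Omega^{i+1}_X=0$) and for $j\geq 4$ (source $H^j=0$). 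I would then feed this into Ekedahl's domino duality \eqref{domino-duality}, which for $n=3$ reads $T^{i,j}=T^{1-i,\,5-j}$: applying the same vanishing to the right-hand index pair kills $T^{i,j}$ also when $1-i<0$ (i.e. $i\geq 2$) or $5-j\geq 4$ (i.e. $j\leq 1$). Combining, $T^{i,j}=0$ unless $0\leq i\leq 1$ and $2\leq j\leq 3$, so the only possibly nonzero domino numbers of $X$ are $T^{0,2},T^{0,3},T^{1,2},T^{1,3}$, and \eqref{domino-duality} identifies them in pairs: $T^{0,2}=T^{1,3}$ and $T^{0,3}=T^{1,2}$.

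The second step is pure bookkeeping: substitute this into the formula for $h^{i,j}_W$ over all sixteen pairs $(i,j)$ with $0\leq i,j\leq 3$. For the four pairs $(1,1),(2,2),(1,2),(2,1)$ the middle term $-2T^{i-1,j+1}$ picks up one of the surviving dominos, and one reads off $h^{1,1}_W=m^{1,1}-2T^{0,2}$, $h^{2,2}_W=m^{2,2}-2T^{1,3}=m^{2,2}-2T^{0,2}$, $h^{1,2}_W=m^{1,2}+T^{1,2}-2T^{0,3}=m^{1,2}-T^{0,3}$, and $h^{2,1}_W=m^{2,1}-2T^{1,2}+T^{0,3}=m^{2,1}-T^{0,3}$ --- this is assertion (3), and together with the slope symmetries $m^{1,1}=m^{2,2}$ and $m^{1,2}=m^{2,1}$ (\ref{formulaire}) it produces the equalities $h^{1,1}_W=h^{2,2}_W$ and $h^{1,2}_W=h^{2,1}_W$ of (2) (which are also instances of Hodge--Witt symmetry and duality, \ref{hodge-witt-symmetry}, \ref{hodge-witt-duality}). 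For every other pair $(i,j)$ the surviving terms are $m^{i,j}$ together with at most one of $T^{0,2},T^{0,3}$ entering with a $+$ sign --- for instance $h^{0,0}_W=m^{0,0}$, $h^{2,3}_W=m^{2,3}$, whereas $h^{0,2}_W=m^{0,2}+T^{0,2}$, $h^{0,3}_W=m^{0,3}+T^{0,3}$, $h^{1,3}_W=m^{1,3}+T^{1,3}$, $h^{2,0}_W=m^{2,0}+T^{0,2}$, $h^{3,0}_W=m^{3,0}+T^{1,2}$, $h^{3,1}_W=m^{3,1}+T^{1,3}$ --- so, since $m^{i,j}\geq 0$ and $T^{i,j}\geq 0$ always (\ref{formulaire}), each such $h^{i,j}_W$ is $\geq m^{i,j}\geq 0$; this gives (1), and such a Hodge--Witt number equals the corresponding slope number exactly when the relevant dominos vanish, in particular whenever $X$ is Hodge-Witt, which is the content of (2).

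The one genuinely nontrivial ingredient --- and the step I expect to be the main obstacle, at least conceptually --- is Ekedahl's domino duality \eqref{domino-duality}: it is what disposes of $T^{2,j}$ and of $T^{i,0},T^{i,1}$, the cases a naive source/target-vanishing argument does not reach, and it is also what collapses four a priori independent dominos down to two. Granting that result (which the excerpt allows), everything else is a finite substitution. The only point worth a sentence of care is checking that the domino attached to a differential with vanishing target, such as $d\colon H^j(W\Omega^3_X)\to H^j(W\Omega^4_X)=0$, is genuinely zero: this is immediate from \ref{differential1}, since then $B^{i+1}=0$, hence $F^\infty B^{i+1}=0$, and $V^{-\infty}Z^i=M^i$, so the associated domino has zero source.
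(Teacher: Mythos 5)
Your argument for parts (1) and (3) is correct and reaches the same intermediate fact as the paper --- that the only possibly nonzero domino numbers of a threefold are $T^{0,2},T^{0,3},T^{1,2},T^{1,3}$, paired by $T^{0,2}=T^{1,3}$ and $T^{0,3}=T^{1,2}$ --- but by a different route. The paper obtains this list by invoking the degeneration criterion of \cite{joshi00b} (where the printed list $\{(0,3),(0,2),(1,2),(3,1)\}$ should surely read $(1,3)$ in place of $(3,1)$: the domino $T^{3,1}$ is trivially zero because $W\Omega^4_X=0$, whereas $T^{1,3}=T^{0,2}$ is the one that can survive). You instead combine the trivial source/target vanishing with Ekedahl's duality \eqref{domino-duality}; granted \eqref{domino-duality}, this is self-contained and arguably cleaner, since duality is needed anyway to turn $T^{1,2}$ into $T^{0,3}$ in the first formula of (3). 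Once the list is in hand, your substitution into the defining formula of \ref{hodge-witt-definition} is the same finite check the paper performs; it correctly isolates $(1,1),(2,2),(1,2),(2,1)$ as the only pairs where a domino enters with a negative sign, which gives (1) and the two formulas of (3).

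The one place your write-up does not deliver what is claimed is part (2), and the difficulty there lies with the statement rather than with your computation. You correctly find $h^{0,2}_W=m^{0,2}+T^{0,2}$, $h^{0,3}_W=m^{0,3}+T^{0,3}$, $h^{2,0}_W=m^{2,0}+T^{0,2}$, $h^{3,0}_W=m^{3,0}+T^{0,3}$, $h^{1,3}_W=m^{1,3}+T^{0,2}$, $h^{3,1}_W=m^{3,1}+T^{0,2}$, and these equal the corresponding slope numbers only when the relevant dominoes vanish --- which is not among the hypotheses. Writing that this ``is the content of (2)'' silently converts an unconditional assertion into a conditional one. The same defect is present in the paper's own proof (the claim that ``the domino numbers which appear in the definition of $h^{i,j}_W$ are zero'' fails already for $(i,j)=(0,3)$), and the paper later contradicts (2): the proof of Proposition~\ref{calabi-yau-formulaire} uses $h^{0,3}_W=m^{0,3}+T^{0,3}$, and Theorem~\ref{non-liftable-b3} together with Proposition~\ref{hirokado-example} produces threefolds with $T^{0,3}=1$ and $m^{0,3}=0$, hence $h^{0,3}_W\neq m^{0,3}$. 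The honest conclusion of your calculation is a corrected form of (2): every non-exceptional $h^{i,j}_W$ equals $m^{i,j}$ plus a non-negative domino term (so the six pairs $(0,2),(2,0),(0,3),(3,0),(1,3),(3,1)$ must also be excepted, or one must additionally assume $X$ Hodge-Witt). You should state that corrected version explicitly rather than presenting your argument as a proof of (2) as written.
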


\begin{proof}
Let us prove (1). This  uses the criterion for degeneration of the
slope spectral sequence given in \cite{joshi00b}. The criterion
shows that $T^{i,j}=0$ unless $(i,j)\in
\left\{(0,3),(0,2),(1,2),(3,1)\right\}$. By Definition~\ref{hodge-witt-definition} of $h^{i,j}_W$ it suffices to verify that
$T^{i-1,j+1}=0$ except possibly in the four cases listed in the
proposition. This completes the proof of (1). To prove (2), we
begin by observing that Hodge-Witt symmetry
\ref{hodge-witt-symmetry} gives $h^{2,1}_W=h^{1,2}_W$ and we also
have $h^{1,1}_W=h^{3-1,3-1}_W=h^{2,2}_W$. So this proves the first
part of (2). Next the criterion for degeneration of the slope
spectral sequence shows that in all the cases except the listed
ones, the domino numbers which appear in the definition of
$h^{i,j}_W$ are zero. This proves (2). The second formula of (3)
now follows again from the definition of $h^{i,j}_W$  (see
\ref{hodge-witt-definition} and the criterion for the degeneration
of the slope spectral sequence). The first formula of (3) follows
from the definition of $h^{1,2}_W=m^{1,2}+T^{1,2}-2T^{0,3}$, and
by duality for domino numbers \ref{domino-duality} we have
$T^{1,2}=T^{0,3}$.
\end{proof}

\subsection{Hodge-Witt Formulaire for Calabi-Yau threefolds}
The formulas for Hodge-Witt numbers can be  made even more
explicit in the case of Calabi-Yau varieties.
\begin{proposition}\label{calabi-yau-formulaire}
Let $X$ be a smooth, proper Calabi-Yau threefold. Then the
Hodge-Witt numbers of $X$ are given by:
\begin{eqnarray}
h^{0,0}_W&=&1\\
h^{0,1}_W&=&0\\
h^{0,2}_W&=&0\\
h^{0,3}_W&=&1\\
h^{1,1}_W&=&b_2\\
h^{1,2}_W&=&b_2-\frac{1}{2}c_3(X)\\
h^{1,3}_W&=&0
\end{eqnarray}
The remaining numbers are computed from these by using Hodge-Witt
symmetry and the symmetry $h^{i,j}_W=h^{3-i,3-j}_W$.
\end{proposition}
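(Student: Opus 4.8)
The plan is to make this a purely formal consequence of the Formulaire of Subsection~\ref{formulaire} (the Ekedahl relations $\sum_{i+j=n}h^{i,j}_W=b_n$ and $h^{i,j}_W\le h^{i,j}$, and the symmetries $h^{i,j}_W=h^{j,i}_W=h^{3-i,3-j}_W$), Crew's formula \eqref{crew-formula}, and Proposition~\ref{non-negative-hw}, together with three standard facts about a Calabi--Yau threefold $X$: that $\omega_X\cong\O_X$ and $H^1(X,\O_X)=H^2(X,\O_X)=0$ (so $h^{0,0}=h^{0,3}=1$, $h^{0,1}=h^{0,2}=0$ and $\chi(\O_X)=0$); that Poincar\'e duality gives $b_6=b_0=1$, $b_5=b_1=0$, $b_4=b_2$; and that $c_3(X)=\chi_{et}(X)=\sum_i(-1)^ib_i$.

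First I would pin down the low weights. Ekedahl's bound gives $0\le h^{0,1}_W\le h^{0,1}=0$, hence $h^{0,1}_W=0$, and then $b_1=h^{0,1}_W+h^{1,0}_W=0$; similarly $h^{0,2}_W\le h^{0,2}=0$ forces $h^{0,2}_W=0$, and $h^{2,0}_W=h^{0,2}_W=0$ by symmetry. Crew's formula \eqref{crew-formula} for $\Omega^0_X=\O_X$ then reads $h^{0,0}_W-h^{0,1}_W+h^{0,2}_W-h^{0,3}_W=\chi(\O_X)=0$, and with $h^{0,0}_W=1$ this gives $h^{0,3}_W=1$ (and $h^{3,0}_W=1$). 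Next, the weight-two relation $\sum_{i+j=2}h^{i,j}_W=b_2$ with $h^{0,2}_W=h^{2,0}_W=0$ gives $h^{1,1}_W=b_2$, and duality gives $h^{2,2}_W=h^{1,1}_W=b_2$.

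For $h^{1,3}_W$ I would use the weight-four relation $\sum_{i+j=4}h^{i,j}_W=b_4$: with $b_4=b_2$, $h^{2,2}_W=b_2$ and $h^{3,1}_W=h^{1,3}_W$ this reads $b_2=2h^{1,3}_W+b_2$, so $h^{1,3}_W=0$ (in line with the remark that $h^{1,2}_W$ is the only possibly negative Hodge--Witt number of a Calabi--Yau threefold). The one computation with a little content is $h^{1,2}_W$: from $\sum_{i+j=3}h^{i,j}_W=b_3$ with $h^{0,3}_W=h^{3,0}_W=1$ and $h^{2,1}_W=h^{1,2}_W$ one gets $b_3=2+2h^{1,2}_W$, i.e. $h^{1,2}_W=\tfrac{b_3}{2}-1$; substituting $b_0=b_6=1$, $b_1=b_5=0$, $b_4=b_2$ into $c_3(X)=\sum_i(-1)^ib_i$ yields $c_3(X)=2+2b_2-b_3$, hence $\tfrac{b_3}{2}=1+b_2-\tfrac12c_3(X)$ and $h^{1,2}_W=b_2-\tfrac12c_3(X)$. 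The remaining entries of the table follow from the two symmetries $h^{i,j}_W=h^{j,i}_W$ and $h^{i,j}_W=h^{3-i,3-j}_W$.

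I do not expect a serious obstacle: once Proposition~\ref{non-negative-hw} and the Formulaire are in hand the argument is bookkeeping, and the only place to be careful is the $b_3\leftrightarrow c_3$ passage (and the companion $b_4=b_2$), i.e. that the Euler-characteristic count uses Poincar\'e duality in the correct ranges. If one prefers a more intrinsic derivation of $h^{1,2}_W$ that does not invoke $c_3$ until the last line, one can instead use $h^{1,2}_W=m^{1,2}-T^{0,3}$ from Proposition~\ref{non-negative-hw}(3), the slope-number symmetries $m^{i,j}=m^{j,i}=m^{3-i,3-j}$ (which give $b_3=2(m^{0,3}+m^{1,2})$), and the identity $m^{0,3}+T^{0,3}=h^{0,3}_W=1$; then the a priori unknown split of the one-dimensional group $H^3(X,\O_X)$ into its unit-root part and its domino cancels, and one lands on $h^{1,2}_W=\tfrac{b_3}{2}-1$ as before.
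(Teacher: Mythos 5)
Your proof is correct, but for the two entries with actual content it follows a genuinely different route from the paper. You and the paper agree on the first four lines: Ekedahl's bound $h^{i,j}_W\le h^{i,j}$ plus the non-negativity of Proposition~\ref{non-negative-hw} kill $h^{0,1}_W$ and $h^{0,2}_W$, and Crew's formula for $\O_X$ gives $h^{0,3}_W=1$. For $h^{1,1}_W$, however, the paper works through the definition $h^{1,1}_W=m^{1,1}+T^{1,1}-2T^{0,2}$, using $T^{0,2}=0$ (from $h^{0,2}=0$) and the Illusie--Raynaud vanishing $T^{1,1}=0$, and then $m^{0,2}=m^{2,0}=0$ to get $m^{1,1}=b_2$; you instead read $h^{1,1}_W=b_2$ straight off the Ekedahl relation $\sum_{i+j=2}h^{i,j}_W=b_2$, which is shorter and avoids the citation for $T^{1,1}=0$. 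For $h^{1,2}_W$ the paper applies Crew's formula $\chi(\Omega^1_X)=h^{1,0}_W-h^{1,1}_W+h^{1,2}_W-h^{1,3}_W$ together with the Grothendieck--Hirzebruch--Riemann--Roch computation $\chi(\Omega^1_X)=-\tfrac{23}{24}c_1c_2-\tfrac12 c_3=-\tfrac12 c_3$ (as $c_1=0$); you instead use $\sum_{i+j=3}h^{i,j}_W=b_3$ to get $h^{1,2}_W=\tfrac{b_3}{2}-1$ and then convert via $c_3=\chi_{et}(X)=2+2b_2-b_3$. Both are valid; the trade-off is that the paper needs the Riemann--Roch lemma while you need the \'etale Gauss--Bonnet identity $c_3=\sum_i(-1)^ib_i$ for threefolds, which the paper only records in the surface case ($c_2=\chi_{et}$) but which is equally standard, together with Poincar\'e duality $b_4=b_2$, $b_5=b_1=0$. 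Your intermediate form $h^{1,2}_W=\tfrac{b_3}{2}-1$ is in fact the identity the paper itself exploits later (in Theorem~\ref{non-liftable-b3} and the liftability proposition, where $b_2-\tfrac12 c_3=\tfrac{b_3}{2}-1$ reappears), and your closing slope-theoretic cross-check via $h^{1,2}_W=m^{1,2}-T^{0,3}$ and $m^{0,3}+T^{0,3}=h^{0,3}_W=1$ is correct and mirrors the mechanism of that theorem. Your derivation of $h^{1,3}_W=0$ from the weight-four relation is also fine, though the symmetry $h^{1,3}_W=h^{3-1,3-3}_W=h^{2,0}_W=0$ gets it in one step.
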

\begin{proof}
We first note that $h^{0,0}_W=h^{0,0}=1$ is trivial. The
Hodge-Witt numbers in the first four equations are also
non-negative by the previous proposition as $T^{0,2}=0$. Moreover,
by \cite{ekedahl-diagonal} it suffices to note that $h^{i,j}_W\leq
h^{i,j}$ and in the second and the third formulas we have by
non-negativity of $h^{i,j}_W$ that $0\leq h^{0,1}_W\leq h^{0,1}=0$
(by the definition of Calabi-Yau threefolds) and similarly for the
third formula. The fourth formula is a consequence of Crew's
formula and first three equations:
\begin{equation}
0=\chi(O_X)=h^{0,0}_W-h^{0,1}_W+h^{0,2}_W-h^{0,3}_W
\end{equation}
In particular we deduce  from the fourth formula and
$$0\leq h^{0,3}_W=m^{0,3}+T^{0,3}\leq 1$$
that if  $T^{0,3}=0$ so that $X$ is Hodge-Witt then the definition
of $m^{0,3}$ shows that
$$m^{0,3}=\sum_{\lambda}(1-\lambda)\dim
H^3_{cris}(X/W)_{[\lambda]}.$$ So that the inequality shows that
$H^3_{cris}(X/W)$ contains at most one slope $0\leq \lambda<1$
with $\lambda=\frac{h-1}{h}$ (with $h$ allowed to be $1$, to
include $\lambda=0$), and so if $T^{0,3}=0$ then $m^{0,3}=1$. Thus
it remains to prove the formulas for $h^{1,1}_W$ and $h^{2,1}_W$.
We first note that by definition:
\begin{equation}
h^{1,1}_W=m^{1,1}+T^{1,1}-2T^{0,2}.
\end{equation}
Now as $h^{0,2}=0$ we get $T^{0,2}=0$, and $T^{1,1}=0$ by
\cite[Corollaire 3.11, page 136]{illusie83b}. Thus we get
$h^{1,1}_W=m^{1,1}$. Next
$$m^{0,2}+m^{1,1}+m^{2,0}=b_2$$
and as $m^{0,2}=0=m^{2,0}$ we see that $h^{1,1}_W=m^{1,1}=b_2$.
The remaining formula is also a straight forward application of
Crew's formula
\begin{equation}
\chi(\Omega^1_X)=h^{1,0}_W-h^{1,1}_W+h^{1,2}_W-h^{1,3}_W
\end{equation}
and the Grothendieck-Hirzebruch-Riemann-Roch for $\Omega^1_X$,
which we recall in the following lemma.
\end{proof}

\begin{lemma}
Let $X$ be a smooth proper threefold over a perfect
field. Then
\begin{equation}
\chi(\Omega^1_X)= 
-\frac{23}{24}c_1\cdot
c_2-\frac{1}{2}c_3.
\end{equation}
\end{lemma}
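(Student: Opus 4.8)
The plan is to derive the identity as a direct application of the Grothendieck--Hirzebruch--Riemann--Roch theorem to the locally free sheaf $\Omega^1_X=T_X^{\vee}$ on the smooth projective threefold $X$. Since $X$ is smooth and projective over a field, HRR for the structure morphism $X\to\Spec k$ is available over an arbitrary base field, so in particular there is no restriction on the characteristic (alternatively one deduces the Hirzebruch--Riemann--Roch identity in this generality from the theory of Chern classes, the splitting principle, and a check on products of projective spaces). It gives
\[
\chi(\Omega^1_X)=\deg\Bigl(\mathrm{ch}(\Omega^1_X)\cdot\mathrm{td}(T_X)\Bigr)_{3},
\]
where the subscript $3$ denotes the codimension-$3$ component and $\deg$ is evaluation against the fundamental class of $X$.

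The rest is a bookkeeping computation in three steps. First, record the Todd class of $T_X$ through degree three, $\mathrm{td}(T_X)=1+\tfrac12 c_1+\tfrac1{12}(c_1^2+c_2)+\tfrac1{24}c_1c_2+\cdots$, with $c_i=c_i(T_X)$. Second, since the Chern roots of $\Omega^1_X$ are the negatives of those of $T_X$, one has $c_1(\Omega^1_X)=-c_1$, $c_2(\Omega^1_X)=c_2$, $c_3(\Omega^1_X)=-c_3$, and hence $\mathrm{ch}(\Omega^1_X)=3-c_1+\tfrac12(c_1^2-2c_2)+\tfrac16\bigl(-c_1^3+3c_1c_2-3c_3\bigr)+\cdots$. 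Third, multiply the two truncated expressions and extract the codimension-$3$ part by summing $\mathrm{ch}_p\cdot\mathrm{td}_{3-p}$ for $p=0,1,2,3$; the terms proportional to $c_1^3$ cancel, so the surviving contribution is a linear combination of $c_1\cdot c_2$ and $c_3$, and collecting the coefficients yields the asserted formula.

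I do not expect any genuine obstacle: the entire content is the cancellation of the $c_1^3$ term and the resulting expression in $c_1\cdot c_2$ and $c_3$, which is routine once GRR is invoked. A useful consistency check, and the only case actually needed downstream, is the Calabi--Yau situation $c_1=0$, in which the formula collapses to $\chi(\Omega^1_X)=-\tfrac12 c_3$; this is precisely what is combined with Crew's formula \eqref{crew-formula} and $h^{1,1}_W=b_2$ to obtain $h^{1,2}_W=b_2-\tfrac12 c_3$ in Proposition~\ref{calabi-yau-formulaire}.
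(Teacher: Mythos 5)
Your route is exactly the paper's: apply Grothendieck--Hirzebruch--Riemann--Roch to $\Omega^1_X$, expand $\mathrm{ch}(\Omega^1_X)\cdot\mathrm{td}(T_X)$, and extract the degree-three component. Your intermediate data are correct, and in fact \emph{more} correct than the paper's: since $c_3(\Omega^1_X)=-c_3$ and $c_1(\Omega^1_X)c_2(\Omega^1_X)=-c_1c_2$, one has $\mathrm{ch}_3(\Omega^1_X)=\tfrac16\bigl(-c_1^3+3c_1c_2-3c_3\bigr)$ as you write, whereas the paper's displayed computation uses $\tfrac16\bigl(-c_1^3-3c_1c_2-3c_3\bigr)$, a sign error in the $c_1c_2$ term.

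The gap is in your last sentence: you assert that ``collecting the coefficients yields the asserted formula'' without doing the collection, and it does not. Summing $\mathrm{ch}_p\cdot\mathrm{td}_{3-p}$ with your (correct) Chern character, the $c_1^3$ terms cancel as you say, but the $c_1c_2$ coefficient is $\tfrac18-\tfrac1{12}-\tfrac12+\tfrac12=\tfrac1{24}$, so the correct identity is
\begin{equation*}
\chi(\Omega^1_X)=\frac{1}{24}\,c_1\cdot c_2-\frac{1}{2}\,c_3,
\end{equation*}
not $-\tfrac{23}{24}c_1\cdot c_2-\tfrac12 c_3$. A sanity check on $\P^3$ (where $c_1\cdot c_2=24$, $c_3=4$, and $\chi(\Omega^1_{\P^3})=-h^{1,1}=-1$) confirms this: $\tfrac1{24}(24)-\tfrac12(4)=-1$, while the lemma's formula gives $-25$. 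The paper's stated coefficient $-\tfrac{23}{24}=\tfrac1{24}-1$ is exactly what one gets by propagating its sign error in $\mathrm{ch}_3$. So the lemma as printed is false, and your proof, carried to completion, would have detected this rather than confirmed it. As you correctly observe, the only use made of the lemma is in the Calabi--Yau case $c_1=0$, where both versions collapse to $\chi(\Omega^1_X)=-\tfrac12 c_3$, so Proposition~\ref{calabi-yau-formulaire} and everything downstream are unaffected; but your write-up should either carry out the final bookkeeping and state the corrected coefficient, or at minimum not claim agreement with the formula as asserted.
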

\begin{proof}
This is trivial from the Grothendieck-Hirzebruch-Riemann-Roch
theorem. We give a proof here for completeness. We have
\begin{eqnarray*}
\chi(\Omega^1_X)&=&\left[3-c_1+\frac{1}{2}(c_1^2-2c_2)+
\frac{1}{6}\left(-c_1^3-3c_1\cdot c_2-3c_3\right)\right]\\
&&\quad\times
\left[1+\frac{1}{2}c_1+\frac{1}{12}(c_1^2+c_2)+\frac{1}{24}c_1\cdot
c_2\right]_3
\end{eqnarray*}
This simplifies to the claimed equation.
\end{proof}

\subsection{Calabi-Yau threefolds with negative $h^{1,2}_W$}
\label{calabi-yau-negative} 
In
this section we investigate Calabi-Yau threefolds with negative
Hodge-Witt numbers. From the formulas
\eqref{calabi-yau-formulaire} it is clear that the only possible
Hodge-Witt number which might be negative is $h^{1,2}_W$. We begin
by characterizing such surfaces (see Theorem~\ref{non-liftable-b3}
below). Then we verify (in Proposition~\ref{hirokado-example})
that the in characteristic $p=2,3$, there do exist Calabi-Yau
threefolds with negative Hodge-Witt numbers. These are the
Hirokado and Schr\"oer Calabi-Yau threefolds (which do not lift to
characteristic zero).

\begin{theorem}
\label{non-liftable-b3}
Let $X$ be a smooth, proper Calabi-Yau threefold over a
perfect field of characteristic $p>0$. Then the following
conditions are equivalent
\begin{enumerate}
\item the Hodge-Witt number $h^{1,2}_W=-1$,
\item the Hodge-Witt number  $h^{1,2}_W<0$,
\item the $W$-module $H^3_{cris}(X/W)$ is torsion,
\item the Betti number $b_3=0$.
\item the threefold $X$ is not Hodge-Witt and the slope number
$m^{1,2}=0$.
\end{enumerate}
\end{theorem}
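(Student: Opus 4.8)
The plan is to obtain all five equivalences from the Hodge--Witt formulaire for Calabi--Yau threefolds (Proposition~\ref{calabi-yau-formulaire}), Ekedahl's identity $\sum_{i+j=n}h^{i,j}_W=b_n$, slope symmetry, and Crew's formula; no new geometric input is required, only bookkeeping with slope and domino numbers.

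First I would isolate the single numerical identity that drives the theorem. By Proposition~\ref{calabi-yau-formulaire} one has $h^{0,3}_W=h^{3,0}_W=1$, and Hodge--Witt symmetry gives $h^{1,2}_W=h^{2,1}_W$; hence Ekedahl's identity in degree three reads
\[
b_3=h^{0,3}_W+h^{1,2}_W+h^{2,1}_W+h^{3,0}_W=2+2h^{1,2}_W ,
\]
so $h^{1,2}_W=(b_3-2)/2$. On the other hand slope symmetry ($m^{i,j}=m^{j,i}=m^{3-i,3-j}$) gives $b_3=\sum_{i+j=3}m^{i,j}=2m^{0,3}+2m^{1,2}$, which is manifestly $\ge 0$ and even. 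Therefore $h^{1,2}_W\ge -1$, with $h^{1,2}_W=-1$ exactly when $b_3=0$, and $h^{1,2}_W<0$ if and only if $b_3<2$, i.e. $b_3=0$; this settles $(1)\Leftrightarrow(2)\Leftrightarrow(4)$. The equivalence $(3)\Leftrightarrow(4)$ is then immediate, since $b_3=\dim_K\bigl(H^3_{\cris}(X/W)\tensor_W K\bigr)$, so the finitely generated $W$-module $H^3_{\cris}(X/W)$ is torsion precisely when its rank $b_3$ vanishes.

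Next I would bring in condition $(5)$ via the two formulas of Proposition~\ref{non-negative-hw}(3): $h^{1,2}_W=m^{1,2}-T^{0,3}$ and (reading off the definition of $h^{0,3}_W$, the domino numbers with negative upper index being zero) $1=h^{0,3}_W=m^{0,3}+T^{0,3}$. The crucial auxiliary observation is that for a Calabi--Yau threefold $T^{0,3}$ (equivalently, by Ekedahl duality, $T^{1,2}$) is the only domino number that can be nonzero: $T^{0,2}=0$ because $h^{0,2}_W=m^{0,2}+T^{0,2}=0$, $T^{1,1}=0$ by \cite{illusie83b}, and all the rest vanish for every smooth projective threefold by Proposition~\ref{non-negative-hw}. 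Consequently $X$ is Hodge--Witt $\iff T^{0,3}=0$, and since $m^{0,3},T^{0,3}\ge 0$ with $m^{0,3}+T^{0,3}=1$, the condition ``$X$ not Hodge--Witt'' is equivalent to $T^{0,3}=1$ (and then $m^{0,3}=0$). Now if $b_3=0$ then $m^{0,3}=m^{1,2}=0$, so $T^{0,3}=1$, i.e. $X$ is not Hodge--Witt and $m^{1,2}=0$: this is $(4)\Rightarrow(5)$. Conversely, if $X$ is not Hodge--Witt then $m^{0,3}=0$, and together with $m^{1,2}=0$ this yields $b_3=2m^{0,3}+2m^{1,2}=0$: this is $(5)\Rightarrow(4)$. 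One may also read $(5)\Leftrightarrow(1)$ off directly, since $h^{1,2}_W=m^{1,2}-T^{0,3}=-1$ together with $m^{1,2}\ge 0$ and $T^{0,3}\le 1$ forces $m^{1,2}=0$ and $T^{0,3}=1$.

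I do not expect a genuine obstacle: the statement is an assembly of the formulaire. The one point that needs care is precisely the domino-number bookkeeping — one must make sure that, for Calabi--Yau threefolds, ``$X$ not Hodge--Witt'' is equivalent to the single numerical equality $T^{0,3}=1$, which is what lets the global invariant $b_3$ govern the a priori finer property of Hodge--Wittness. This is already contained in the computations behind Propositions~\ref{non-negative-hw} and~\ref{calabi-yau-formulaire}, so once those are in hand the proof is a short chain of substitutions.
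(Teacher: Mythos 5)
Your proof is correct, and it takes a genuinely different route from the paper's in the two steps that matter, so a comparison is worthwhile. For $(1)\Leftrightarrow(2)\Leftrightarrow(4)$ you sum Ekedahl's identity $\sum_{i+j=3}h^{i,j}_W=b_3$ against the formulaire values $h^{0,3}_W=h^{3,0}_W=1$ to get the closed formula $h^{1,2}_W=(b_3-2)/2$, whereas the paper works directly with $h^{1,2}_W=m^{1,2}-T^{0,3}$ and $T^{0,3}\leq 1$; your route additionally needs $b_3$ to be even in order to pass from $b_3<2$ to $b_3=0$, and the word ``manifestly'' deserves one more sentence there (either cite the integrality of slope numbers, which follows from Dieudonn\'e--Manin since the multiplicity of a slope $r/s$ is divisible by $s$, or simply the perfect alternating cup-product pairing on $H^3_{et}(X,\Q_\ell)$). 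The more substantive divergence is at $(4)\Rightarrow(5)$, which is the only non-formal step in the paper's argument: there the author assumes $X$ is Hodge--Witt, observes that $V$ is injective on $H^3(X,W(\O_X))$ because $H^2(X,\O_X)=0$, invokes the theorem that a finite-type Cartier module is free over $W$, and derives a contradiction with $H^3(X,\O_X)\neq 0$. You bypass this entirely via the identity $m^{0,3}+T^{0,3}=h^{0,3}_W=1$, so that $m^{0,3}=0$ forces $T^{0,3}=1$. This is legitimate --- $h^{0,3}_W=1$ is established in Proposition~\ref{calabi-yau-formulaire} from Crew's formula, before the theorem --- and it effectively makes the whole theorem a corollary of the formulaire, with the structural input absorbed into Crew's formula rather than redone by hand. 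Both arguments are sound; yours is shorter given the formulaire, the paper's keeps the Hodge--Witt obstruction visible as a concrete statement about the Cartier module $H^3(X,W(\O_X))$.
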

\begin{proof}
It is clear that (1) implies (2), and similarly it is clear that
(3) $\Leftrightarrow$ (4). So the only assertions which need to be
proved are the assertions (2) implies (3), (4) implies (5) and the assertion (4)
implies (1). So let us prove (2) implies (3). By the proof of
\ref{calabi-yau-formulaire}  we see that
$h^{1,2}_W=m^{1,2}-T^{0,3}$ and as $T^{0,3}\leq 1$, we see that if
$h^{1,2}_W<0$ then we must have $h^{1,2}_W=-1, T^{0,2}=1,
m^{1,2}=0$ (the first of these equalities of course shows that (2)
implies (1)). So the hypothesis of (2) implies in particular that
$T^{0,3}=1$ in other words, $X$ is non-Hodge-Witt and so
$H^3(X,W(\O_X))$ is $p$-torsion. Hence  the number $m^{0,3}=0$.
Now by the symmetry \eqref{slope-symmetry} we see that
$m^{0,3}=m^{3,0}=m^{1,2}=m^{1,2}=0$. From this and the formula
\eqref{slope-betti} we see that
$$b_3=m^{0,3}+m^{1,2}+m^{2,1}+m^{3,0}=0.$$
This completes the proof of (2) implies (3). Let us prove that (4)
implies (5). The hypothesis of (4) and preceding equation shows
that $m^{0,3}=m^{1,2}=0$. So we have to verify that $X$ is not
Hodge-Witt. Assume that this is not the case. The vanishing
$m^{0,3}=0$ says that $H^3(X,W(\O_X))\tensor_W K=0$ and as
$H^2(X,\O_X)=0$ we see that $V$ is injective on $H^3(X,W(\O_X))$.
If  $X$ is Hodge-Witt, then  so this  $W$-module is a finite type
$W$-module with $V$-injective. Therefore it is a Cartier module of
finite type. By \cite{illusie83b} such a Cartier module is a free
$W$-module. Hence $H^3(X,W(\O_X))$ is free  and torsion so we
deduce that $H^3(X,W(\O_X))$ is zero. But as
$H^3(X,W(\O_X))/VH^3(X,W(\O_X))=H^3(X,\O_X)\neq 0$. This is a
contradiction. So we see that (4) implies (5). So now let us prove
that (5) implies (1).  The first hypothesis of (5) implies that
$X$ is a non Hodge-Witt Calabi-Yau threefold so that $T^{0,3}=1$
and hence we see that $h^{1,2}_W=m^{1,2}-T^{0,3}=-1<0$. This
completes the proof of the theorem.
\end{proof}

Let us record the following trivial but important corollary of this Theorem~\ref{non-liftable-b3}.
\begin{corollary}\label{cor:calabi-yau-h12-lower-bound}
Let $X$ be any smooth, proper, Calabi-Yau threefold. Then one has
$$ h^{1,2}_W\geq -1$$
equivalently
$$b_2\geq\frac{1}{2}c_3-1.$$
\end{corollary}

The following assertion shows that there do exist Calabi-Yau threefolds with $h^{1,2}_W<0$:
\begin{proposition}\label{hirokado-example}
Let $k$ be an algebraically closed field of characteristic
$p=2,3$. Then there exists  smooth, proper Calabi-Yau
threefold $X$ such that $h^{1,2}_W<0$.
\end{proposition}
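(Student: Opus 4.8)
The plan is to exhibit explicit examples rather than prove an abstract existence statement, so the argument is a matter of invoking the known constructions of non-liftable Calabi--Yau threefolds in small characteristic and checking that they fall under the characterization of Theorem~\ref{non-liftable-b3}. First I would recall the constructions: in characteristic $p=3$ there is Hirokado's smooth, projective Calabi-Yau threefold (a fibration-type construction using quasi-elliptic fibrations, which are available precisely because $p=2,3$), and in characteristic $p=2$ there are the Schr\"oer Calabi-Yau threefolds; both are known in the literature to be rigid and not liftable to characteristic zero. The key numerical input I would extract from these papers is the Betti number computation: for these threefolds one has $b_3=0$ (equivalently $H^3_{cris}(X/W)$ is torsion, equivalently $H^3_{\dR}(X/k)$ has the smallest possible dimension compatible with $h^{0,3}=1$, which forces the de Rham cohomology to be concentrated off degree $3$ in the Betti-number count).

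The next step is to feed $b_3=0$ into Theorem~\ref{non-liftable-b3}: condition (4) there is exactly $b_3=0$, and it is equivalent to condition (2), namely $h^{1,2}_W<0$ (in fact $h^{1,2}_W=-1$). Since $X$ is by construction a smooth, projective Calabi-Yau threefold over an algebraically closed field of characteristic $p=2$ or $p=3$, all the hypotheses of that theorem are met, and we conclude $h^{1,2}_W<0$ for these $X$. This immediately gives the proposition.

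I would therefore organize the written proof as: (i) cite Hirokado's threefold for $p=3$ and Schr\"oer's for $p=2$, noting each is a smooth, projective Calabi-Yau threefold; (ii) quote from those references that $b_3(X)=0$; (iii) apply the equivalence (4) $\Leftrightarrow$ (2) of Theorem~\ref{non-liftable-b3} to conclude $h^{1,2}_W<0$.

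The main obstacle is not mathematical depth but bookkeeping: one must be sure that the cited examples are genuinely smooth and projective (as opposed to merely proper, or mildly singular), that they satisfy the paper's definition of Calabi-Yau threefold (trivial canonical bundle, $h^{0,1}=h^{0,2}=0$), and that the Betti number $b_3=0$ is correctly recorded in the source — it is this last vanishing, which is the hallmark of these pathological examples, that does all the work once Theorem~\ref{non-liftable-b3} is in hand. If one wished to be self-contained one could instead verify directly that $m^{1,2}=0$ and that $X$ is not Hodge-Witt (condition (5)), using the explicit slope data of $H^3_{cris}$ for these threefolds, but invoking $b_3=0$ is the cleanest route.
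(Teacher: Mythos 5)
Your proposal is correct and follows essentially the same route as the paper: cite the Hirokado and Schr\"oer constructions, record that $b_3=0$ for these threefolds, and invoke Theorem~\ref{non-liftable-b3} to conclude $h^{1,2}_W<0$. The paper's proof is exactly this three-line argument, so no further comparison is needed.
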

\begin{proof}
 In \cite{hirokado99} (resp. \cite{schroer04}) M.~Hirokado and (resp. Stefan Schr\"oer) have constructed an
examples of Calabi-Yau (and in fact, families of such threefolds
in the latter case) threefold in characteristic $p=2,3$ which are
not liftable to characteristic zero.   We claim that these
Calabi-Yau threefolds are the examples we seek. It was verified in
loc. cit. that these threefolds have $b_3=0$. So we are done by
Theorem~\ref{non-liftable-b3}.
\end{proof}

\begin{corollary}
The Hirokado and Schr\"oer threefolds are not Hodge-Witt.
\end{corollary}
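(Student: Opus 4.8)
The plan is to read this off directly from Proposition~\ref{hirokado-example} together with Theorem~\ref{non-liftable-b3}. First I would invoke Proposition~\ref{hirokado-example}: the Hirokado threefold \cite{hirokado99} and the Schröer threefolds \cite{schroer04} are smooth, projective Calabi-Yau threefolds (in characteristic $p=2,3$) with $h^{1,2}_W<0$; the key geometric input, established in loc.~cit.\ and already used in the proof of that proposition, is that these threefolds satisfy $b_3=0$.

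Given this, I would argue in one of two equivalent ways. The quickest: condition (4) of Theorem~\ref{non-liftable-b3} (namely $b_3=0$) holds for these threefolds, and by the chain of equivalences in that theorem (4) is equivalent to (5), which asserts in particular that $X$ is not Hodge-Witt. Alternatively, and more self-containedly, I would argue directly from $h^{1,2}_W<0$: by the computation in the proof of Proposition~\ref{calabi-yau-formulaire} one has $h^{1,2}_W=m^{1,2}-T^{0,3}$ for any Calabi-Yau threefold, and $m^{1,2}\geq 0$; if $X$ were Hodge-Witt its slope spectral sequence would degenerate at $E_1$, forcing $T^{0,3}=0$ and hence $h^{1,2}_W=m^{1,2}\geq 0$, contradicting $h^{1,2}_W<0$. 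Either way, $X$ is not Hodge-Witt.

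There is no real obstacle here: the statement is a formal consequence of results already proved, and the only external fact needed is the vanishing $b_3=0$ for the Hirokado and Schröer examples, which is the content of their non-liftability constructions.
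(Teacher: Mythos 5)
Your proposal is correct and matches the paper's intended argument: the corollary is an immediate consequence of Proposition~\ref{hirokado-example} (these threefolds have $h^{1,2}_W<0$, via $b_3=0$) combined with the equivalences of Theorem~\ref{non-liftable-b3}, whose condition (5) explicitly includes that $X$ is not Hodge-Witt. Your alternative direct argument via $h^{1,2}_W=m^{1,2}-T^{0,3}$ and $m^{1,2}\geq 0$ is the same computation the paper itself uses inside the proof of that theorem, so there is nothing substantively different here.
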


\begin{remark}
The Hirokado and Schr\"oer threefolds have been investigated in
detail by Ekedahl (see \cite{ekedahl04a}) who has proved their
arithmetical rigidity. One should note that the right hand side of
\eqref{calabi-yau-formulaire} is non-negative if $X$ lifts to
characteristic zero without any additional assumptions on torsion
of $H^*_{cris}(X/W)$ as the following proposition shows.
\end{remark}

\begin{proposition}
Let $X$ be a smooth, proper
Calabi-Yau threefold. If $X$ lifts to characteristic zero then
%
$$c_3\leq 2b_2.$$
\end{proposition}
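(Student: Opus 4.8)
The plan is to translate the assertion into a statement about Betti numbers and then apply Theorem~\ref{non-liftable-b3}. By Proposition~\ref{calabi-yau-formulaire} we have $h^{1,2}_W = b_2 - \tfrac{1}{2}c_3$, so the inequality $c_3 \le 2b_2$ is \emph{equivalent} to $h^{1,2}_W \ge 0$. By Theorem~\ref{non-liftable-b3}, $h^{1,2}_W < 0$ happens exactly when $b_3 = 0$; hence it suffices to prove that a Calabi--Yau threefold $X$ which lifts to characteristic zero has $b_3 \ne 0$.

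So suppose $f\colon \mathcal{X}\to \Spec R$ is a smooth proper morphism with closed fibre $\mathcal{X}_0 \isom X$, where $R$ is a discrete valuation ring (we may assume $R$ complete) with residue field $k$ and fraction field $K$ of characteristic zero; write $\bar K$ for an algebraic closure of $K$. First I would check that the generic fibre is again a Calabi--Yau threefold. The relative dualizing sheaf $\omega_{\mathcal{X}/R}$ is a line bundle whose restriction to $X$ is $\omega_X \isom \O_X$. Since $H^1(X,\O_X)=0$ (part of the Calabi--Yau hypothesis), the theorem on formal functions shows that $H^0(\mathcal{X},\omega_{\mathcal{X}/R}) \to H^0(X,\omega_X)=H^0(X,\O_X)$ is surjective; lifting the unit section to a section $s$ of $\omega_{\mathcal{X}/R}$, and noting that $\mathcal{X}$ is the only open neighbourhood of the closed fibre (as $f$ is proper and $R$ is local), Nakayama's lemma shows that $s$ trivializes $\omega_{\mathcal{X}/R}$. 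In particular $\omega_{\mathcal{X}_{\bar K}}\isom \O_{\mathcal{X}_{\bar K}}$, so $h^{3,0}(\mathcal{X}_{\bar K}) = \dim_{\bar K} H^0(\mathcal{X}_{\bar K},\omega_{\mathcal{X}_{\bar K}}) = 1$.

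Next I would invoke smooth and proper base change for $\ell$-adic cohomology ($\ell\neq p$) to get $b_3(X)=\dim_{\Q_\ell}H^3_{et}(\mathcal{X}_{\bar K},\Q_\ell)$, and then the comparison with algebraic de Rham cohomology in characteristic zero, which gives $b_3(X)=\dim_{\bar K}H^3_{\dR}(\mathcal{X}_{\bar K}/\bar K) \ge h^{3,0}(\mathcal{X}_{\bar K}) = 1$, since $H^0(\mathcal{X}_{\bar K},\Omega^3_{\mathcal{X}_{\bar K}})$ is a subspace of $H^3_{\dR}(\mathcal{X}_{\bar K}/\bar K)$ via the Hodge filtration. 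Hence $b_3(X)\neq 0$, and by the reduction in the first paragraph $h^{1,2}_W\ge 0$, that is $c_3\le 2b_2$.

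The only step that needs genuine care is the triviality of $\omega_{\mathcal{X}/R}$ — equivalently, that a lift of a Calabi--Yau threefold remains Calabi--Yau; the formal-functions-plus-Nakayama argument above handles it precisely because the Calabi--Yau hypothesis supplies $H^1(X,\O_X)=0$. Everything else is a formal consequence of Theorem~\ref{non-liftable-b3}, Proposition~\ref{calabi-yau-formulaire}, and the standard comparison and base-change theorems.
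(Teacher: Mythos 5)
Your proof is correct, but it takes a different path from the paper's. The paper argues directly: since $b_1=b_5=0$, the Euler characteristic formula gives $c_3=2+2b_2-b_3$, hence $b_2-\tfrac{1}{2}c_3=\tfrac{b_3}{2}-1$, and the Hodge decomposition of the characteristic-zero lift forces $b_3\geq 2$ (even), so the quantity is non-negative. You instead route the reduction through Theorem~\ref{non-liftable-b3}, using the equivalence $h^{1,2}_W\geq 0\Leftrightarrow b_3\neq 0$, so you only need $b_3\geq 1$; you then spend your effort proving carefully that the lift is again Calabi--Yau (trivializing $\omega_{\mathcal{X}/R}$ via cohomology-and-base-change from $H^1(X,\O_X)=0$ plus properness), so that $h^{3,0}$ of the geometric generic fibre is $1$ and $b_3>0$ follows from the Hodge filtration and smooth proper base change. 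What your version buys is that it makes explicit a point the paper elides: the inequality $b_3\geq 2$ in the paper's proof already presupposes that the generic fibre has $h^{3,0}=1$, i.e.\ that ``Calabi--Yau'' persists under lifting, and your formal-functions/Nakayama argument supplies exactly that. What the paper's version buys is brevity and independence from Theorem~\ref{non-liftable-b3} (it needs only Proposition~\ref{calabi-yau-formulaire} and the Betti number identity), whereas your reduction leans on the full equivalence proved there. Both arguments are sound.
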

\begin{proof}
Under the hypothesis, we know that  $b_1=b_5=0$ so that
$c_3=2+2b_2-b_3$, so that $b_2-\frac{1}{2}c_3=\frac{b_3}{2}-1$ and
by the Hodge decomposition, $b_3\geq 2$ is even and so the
assertion holds.
\end{proof}

\begin{remark}
The examples (for $p=2,3$) constructed in \cite{schroer04} has $b_2=23,c_3=48$ so $c_3>2b_2$. For the example of \cite{hirokado99} we have also have $c_3>2b_2$.
\end{remark}
\subsection{Classical and non-classical Calabi-Yau threefolds}
The above results motivate the following definition. Let $X$ be a Calabi-Yau threefold over a perfect field of characteristic $p>0$. Then we say that $X$ is a \emph{classical Calabi-Yau threefold} if $h^{1,2}_W\geq 0$, other wise we say that $X$ is \emph{non-classical Calabi-Yau threefold}.

Note that by \eqref{calabi-yau-formulaire} any non-classical Calabi-Yau threefold is not Hodge-Witt i.e $T^{0,3}\neq0$.

It is clear from the definition and the above results that the examples of \cite{hirokado99,schroer04} are non-classical Calabi-Yau threefolds. In particular non-classical Calabi-Yau threefolds exist (in general).

\subsection{Hodge-Witt rigidity}
For a Calabi-Yau threefolds recall that by Serre duality, and using the perfect pairing $\Omega^1\tensor \Omega^2_X \to \Omega^3_X=\O_X$ one has for any Calabi-Yau
\be 
h^{1,2}=0\iff \dim H^1(X,T_X)=0.
\ee
If $X$ satisfies $h^{1,2}=0$ (equivalently $H^1(X,T_X)=0$) then one says that $X$ is a \emph{rigid} Calabi-Yau threefold. Using the Hodge decomposition theorem, it is easy to verify that rigidity of a Calabi-Yau threefold over complex numbers is equivalent to the condition $b_2=\frac{1}{2}c_3$.

Motivated by the above definition of rigidity we introduce a weaker notion which is valid over fields of characteristic $p>0$. Let $X/k$ be a smooth, Calabi-Yau threefold. We say that $X$ is \emph{Hodge-Witt rigid} if $$h_W^{1,2}=0.$$ From Proposition~\ref{calabi-yau-formulaire} we see that 
\be 
 X \text{ is Hodge-Witt rigid} \iff b_2=\frac{1}{2}c_3.
\ee 
If $X$ is a classical Calabi-Yau threefold which is rigid then $0=h^{1,2}\geq h^{1,2}_W\geq 0$ shows that $h^{1,2}_W=0$. In other words
\be 
X \text{ is classical and rigid}\implies X \text{ is Hodge-Witt rigid}.
\ee 
But in general the two notions are different: for example the Hirokado threefold is rigid but not Hodge-Witt rigid. However the following result shows that if $X$ is a Mazur-Ogus threefold then Hodge-Witt rigidity and rigidity are equivalent.
\begin{proposition}\label{pro:hodge-witt-rigidity}
Let $X$ be a Mazur-Ogus, Calabi-Yau threefold over a perfect field of characteristic $p>0$. Then the following are equivalent:
\benum 
\item $X$ is Hodge-Witt rigid i.e. $h_W^{1,2}=0$.
\item $X$ is rigid i.e. $h^{1,2}=0$.
\item $b_3=2$.
\item $c_3=2b_2$
\eenum
\end{proposition}
\bp 
The assertion is clear from Proposition~\ref{calabi-yau-formulaire} and the fact that if $X$ is Mazur-Ogus then $h^{1,2}=h_W^{1,2}$.
\ep

\begin{remark}
Let us note that the there exists Calabi-Yau threefolds which are Hodge-Witt rigid. Indeed by the above proposition it is enough to find Calabi-Yau threefolds which are Mazur-Ogus and rigid. This is not too difficult as the reduction modulo any sufficiently large, unramified prime of good reduction of any rigid  Calabi-Yau threefold is both rigid and Mazur-Ogus (as it arises from characteristic zero).
\end{remark}

\subsection{Geography of Calabi-Yau Threefolds}\label{Calabi-Yau-Geography}
\pgfplotsset{every tick label/.append style={font=\tiny}}
\begin{figure}
\centering
\subfloat{
\begin{tikzpicture}[scale=1]
\begin{axis}[
axis lines=middle,
xlabel={\tiny{$c_3$}},
ylabel={\tiny{$b_2$}},
legend style={legend pos=outer north east},
xmin=-2,
xmax=10,
ymin=-5,
ymax=5,
samples=100,
xtick distance=2,
ytick distance=2,
title={Figure 1: Geography of Calabi-Yau Threefolds over  $k=\C$},
] 
\node at  (axis cs:3,4) {\tiny{classical region $b_2\geq \frac{c_3}{2}$}};
\node at (axis cs:4,-1.5) {{\tiny{no Calabi-Yau's in $b_2< \frac{c_3}{2}-1$}}};
\node[rotate=25.8] at (axis cs:4,2.5) {\tiny{rigidity $h^{1,2}=h^{2,1}=0$}} ;
\addplot+[name path=classical,no marks,blue,domain=-2:10] {max((1/2)*x,1)};
\addlegendentry{\tiny{$b_2=\max(\frac{1}{2}c_3,1)$}}
\path[name path=lower](axis cs:-5,-4) -- (axis cs:10,-4);
\addplot[olive!10] fill between[ 
of = classical and lower];
\end{axis}
\end{tikzpicture}
}\qquad
\subfloat{
	\begin{tikzpicture}[scale=1]
	\begin{axis}[
	axis lines=middle,
	xlabel={\tiny{$c_3$}},
	ylabel={\tiny{$b_2$}},
	legend style={legend pos=outer north east},
	xmin=-2,
	xmax=10,
	ymin=-5,
	ymax=5,
	samples=100,
	xtick distance=2,
	ytick distance=2,
	title={Figure 2: Geography of Calabi-Yau Threefolds},
	] 
	\node at  (axis cs:3,4) {\tiny{classical region $b_2\geq \frac{c_3}{2}$}};
	\node at (axis cs:4,-1.5) {{\tiny{no Calabi-Yau's in $b_2< \frac{c_3}{2}-1$}}};
	\node[rotate=25.8] at (axis cs:4,2.5) {\tiny{Hodge-Witt rigidity $h^{1,2}_W=0$}} ;
	\node[rotate=25.8] at (axis cs:8,2.5) {\tiny{$h^{1,1}_W=-1$}} ;
	\node[rotate=25.8] at (axis cs:6,2.5) {\tiny{no Calabi-Yau in this region}} ;
	\addplot+[name path=classical,no marks,blue,domain=-2:10] {max((1/2)*x,1)};
	\addlegendentry{\tiny{$b_2=\max(\frac{1}{2}c_3,1)$}}
	\addplot+[name path=nonclassical,no marks,red,domain=4:10] {max((1/2)*x-1,1)};
	\addlegendentry{\tiny{$b_2=\max(\frac{1}{2}c_3-1,1)$}}
	\path[name path=lower](axis cs:-5,-4) -- (axis cs:10,-4);
	\addplot[olive!10] fill between[ 
	of = classical and lower];
	\end{axis}
	\end{tikzpicture}
}
\end{figure}
Note that for any Calabi-Yau threefold, $b_2,c_3$ determine $b_3$. Thus one should view $b_2,c_3$ as variables for studying geography of Calabi-Yau threefolds.
Preceding results establish some results in the subject of geography of Calabi-Yau threefolds. 
These results are  best summarized in the two maps  shown in Figure~1 (for $k=\C$) and in Figure~2 (for $k$ perfect of characteristic $p>0$). In either of these maps a quintic Calabi-Yau threefold in $\P^4$  such as $x^5_0+\cdots+x_4^5=0$, which is a classical, Mazur-Ogus Calabi-Yau threefold, corresponds to the point $(-200,1)$ on the line $b_2=\max(\frac{1}{2}c_3,1)$. Rigid Calabi-Yau threefolds live on the line $b_2=\frac{1}{2}c_3$. \emph{Over complex numbers geography of Calabi-Yau threefolds does not have a non-classical component as $h^{1,2}\geq 0$.} 

Now suppose that we are in postive characteristic. The geography now looks different from that over $k=\C$. One can have non-classical Calabi-Yau threefolds and if these exist then such threefolds live on the line $b_2=\frac{1}{2}c_3-1$. In particular the Hirokado and Schr\"oer threefolds are on the line $b_2=\frac{1}{2}c_3-1$.  The region $\frac{1}{2}c_3-1<b_2<\frac{1}{2}c_3$ is unpopulated as in this region one would have $-1<h^{1,2}_W=b_2-\frac{1}{2}c_3<0$ which is impossible as $h^{1,2}_W$ is an integer.
Note that if $X$ is a non-classical Calabi-Yau threefold then $h^{1,2}_W=-1=b_2-\frac{1}{2}c_3$ from which we see that $c_3>0$ and as $b_2\geq 1$ by projectivity, so one has $c_3\geq 4$. \emph{In particular one deduces that the half line $b_2=\frac{1}{2}c_3-1, c_3\geq 4$, which corresponds to non-classical Calabi-Yau threefolds, is an island in the geography of Calabi-Yau threefolds} as it is unconnected from the continent $b_2\geq\frac{1}{2}c_3$ which is the realm of classical Calabi-Yau threefolds. The line $b_2=\frac{1}{2}c_3$ corresponds to Hodge-Witt rigidity $h^{1,1}_W=0$ and is populated, for instance, by Mazur-Ogus rigid Calabi-Yau threefolds.

\subsection{A Conjecture about Calabi-Yau threefolds}
Let $X$ be a smooth proper Calabi-Yau threefold over a perfect field of characteristic $p>0$. The following conjecture provides a necessary and sufficient condition for $X$ to admit a lifting to characteristic zero.  Note that in characteristic zero any Calabi-Yau threefold does not have non-vanishing global one forms (this is a consequence of Hodge symmetry) on the other hand in positive characteristic we do not know if this vanishing assertion always holds. On the other hand if $H^*_{cris}(X/W)$ is torsion-free then certainly $H^0(X,\Omega^1_X)=0$ on the other hand if $b_3=0$ then $H^3_{cris}(X/W)$ is torsion (possibly zero). As had been pointed out in the preceding discussion, any Calabi-Yau threefold which lifts to characteristic zero is classical. So the following is a very optimistic conjecture:
\begin{conj}\label{calabi-yau-conjecture}
Let $X$ be a smooth, proper Calabi-Yau threefold. Then $X$ lifts to characteristic zero if and only  if
\benum
	\item $H^0(X,\Omega^1_X)=0$ and
	\item $h^{1,2}_W\geq 0$, equivalently:
	\item $c_3 \leq 2b_2$, equivalently:
	\item $X$ is classical.
\eenum
\end{conj}
Note that equivalence of (2) and (3) i.e. $h^{1,2}_W\geq 0 \iff c_3\leq 2b_2$ is clear from Proposition~\ref{calabi-yau-formulaire}. 
Also note that if $X$ is Hodge-Witt then $h^{1,2}_W\geq0$. If  $X$ is Hodge-Witt and also has torsion-free crystalline cohomology then $X$ is Mazur-Ogus by \cite[Theorem 4.7, page 204]{illusie83b}. Thus the Conjecture
~\ref{calabi-yau-conjecture} predicts that any Hodge-Witt Calabi-Yau threefold with torsion free $H^*_{cris}(X/W)$ lifts to characteristic zero. 

\subsection{A remarkable theorem of F.~Yobuko}\label{sec:yobuko-theorem}
At the time we made this conjecture there was not much evidence for it. But recently \cite{yobuko16} has proved the following remarkable Theorem (the formulation provided below is  our reformulation of \cite{yobuko16} for Calabi-Yau Threefolds)  which provides some evidence Conjecture~\ref{calabi-yau-conjecture}.

\bthm\label{th:yobuko-reformulation}
Suppose $X$ is a smooth, proper Hodge-Witt Calabi-Yau threefold. Consider the following assertions.
\benum[label={(\bf\arabic{*})}]
\item\label{th:yobuko-reformulation-1} $X$ is quasi-Frobenius split.
\item\label{th:yobuko-reformulation-2} $X$ has finite height.
\item\label{th:yobuko-reformulation-3} $X$ is Hodge-Witt (hence classical).
\item\label{th:yobuko-reformulation-4} $X$ lifts to $W_2$. 
\eenum
Then one has $\ref{th:yobuko-reformulation-1}\iff\ref{th:yobuko-reformulation-2}\iff\ref{th:yobuko-reformulation-3}\implies\ref{th:yobuko-reformulation-4}$.
In particular any Hodge-Witt Calabi-Yau threefold lifts to $W_2$.
\ethm
\bp
The implication $\ref{th:yobuko-reformulation-1}\iff\ref{th:yobuko-reformulation-2}$ is due to \cite{yobuko16} and  the implication $\ref{th:yobuko-reformulation-1}\implies\ref{th:yobuko-reformulation-4}$ is the main theorem of \cite{yobuko16}. The assertion $\ref{th:yobuko-reformulation-3}\implies\ref{th:yobuko-reformulation-2}$ is  standard: Hodge-Witt hypothesis implies $H^3(W(\O_X))$ is free of finite type  (as $H^2(X,\O_X)=0$) so $X$ is of finite height.  Now $\ref{th:yobuko-reformulation-2}\implies\ref{th:yobuko-reformulation-3}$ follows from \cite{ekedahl-diagonal}, \cite{joshi00b} as there is only one possibly non-trivial domino number $T^{0.3}$. If $X$ has finite height then $H^3(W(\O_X))$ is free of finite type and hence $T^{0,3}=0$. So $X$ is Hodge-Witt. .
\ep

\begin{remark}
Let us remark that for Calabi-Yau variety $X$ of dimension $\leq 3$, $X$ is Hodge-Witt if and only if $X$ is of finite height. If $\dim(X)>3$ and $X$ is Hodge-Witt then $X$ is of finite height but the converse may not hold. The main theorem of \cite{yobuko16} proves more generally that if $X$ is of finite height then $X$ lifts to $W_2$. In particular it follows that any Hodge-Witt Calabi-Yau variety (of any dimension) lifts to $W_2$.
\end{remark}

\begin{remark}
Let us remark that if $k=\C$, then $c_3=2b_2$ holds if and only if $X$ is a rigid Calabi-Yau threefold. Indeed if $k=\C$, $h^{1,2}=b_2-\frac{1}{2}c_3=0$ if and only if $H^2(X,\Omega^1_X)=0$. By Serre duality and the fact that $X$ is a Calabi-Yau threefold we get $$H^2(X,\Omega^1_X)=H^1(X,T_X)=0.$$
So $X$ is rigid and conversely.
\end{remark}

\begin{remark}
Let us point out that the hypothesis $h^{1,2}_W\geq0\Longrightarrow b_3\neq 0$. To see this we use our formula
$$
h^{1,2}_W=m^{1,2}-T^{0,3}.
$$
We see that the assertion is immediate if $m^{1,2}\geq 1$, as $b_3=m^{0,3}+m^{1,2}+m^{2,1}+m^{3,0}$. So assume $m^{1,2}=0$. As $h^{1,2}_W\geq 0$, we see that $m^{1,2}=0$ gives $T^{0,3}=0$. Thus $X$ is a Hodge-Witt Calabi-Yau threefold. Thus $H^3(W(\O_X))$ is of finite type and $V$ is injective on it and hence $H^3(W\O_X))$ is torsion free and its $W$-rank is at least the length of $H^3(W(\O_X))/VH^3(W(\O_X))=H^3(\O_X)\neq 0$, so $H^3(W\O_X))\tensor_W K\neq 0$, so $H^3_{cris}(X/W)\tensor K\neq 0$ which gives $b_3\neq 0$. 
\end{remark}

\bibliographystyle{plain}

\end{document}